\let\mathcal\mathscr
\numberwithin{equation}{section}
\newtheorem{theorem}{Theorem}[section] 
\newtheorem{lemma}[theorem]{Lemma}
\newtheorem{proposition}[theorem]{Proposition}
\newtheorem{corollary}[theorem]{Corollary}
\newtheorem*{goal}{Goal}
\theoremstyle{definition}
\newtheorem*{acknowledgements}{Acknowledgements}
\newtheorem{remark}[theorem]{Remark}
\newtheorem{definition}[theorem]{Definition}
\newtheorem{example}[theorem]{Example}
\renewcommand{\phi}{\varphi}
\renewcommand{\leq}{\leqslant}
\renewcommand{\geq}{\geqslant}
\renewcommand{\bar}{\overline}
\renewcommand{\c}{\mathbf{c}}
\DeclareMathOperator{\disc}{D}
\DeclareMathOperator{\Hom}{Hom} 
\DeclareMathOperator{\epi}{Epi} 
\let\emptyset\varnothing
\DeclareSymbolFont{bbold}{U}{bbold}{m}{n}
\DeclareSymbolFontAlphabet{\mathbbold}{bbold}
\renewcommand{\c}{\mathcal}
\renewcommand{\epsilon}{\varepsilon}
\renewcommand{\leq}{\leqslant}
\renewcommand{\geq}{\geqslant}
\title
[Jump sets in local fields]
{
Jump sets in local fields
}
\author{C. Pagano}
\address{Max Planck Institute for mathematics\\ 
Vivatgasse 7 \\ 
Bonn \\ 
5311\\ 
Germany} 
\email{carlein90@gmail.com}
\subjclass[2010]{11F85}
\date{\today}
\begin{document}

\begin{abstract}
We show how to use the combinatorial notion of jump sets to parametrize the possible structures of the group of principal units of local fields, viewed as filtered modules. We establish a natural bijection between the set of jump sets and the orbit space of a $p$-adic group of filtered automorphisms acting on a free filtered module. This, together with a Markov process on Eisenstein polynomials, culminates into a \emph{mass-formula} for unit filtrations. As a bonus the proof leads in many cases to explicit invariants of Eisenstein polynomials, yielding a link between the filtered structure of the unit group and ramification theory. Finally, with the basic theory of filtered modules developed here, we recover, with a more conceptual proof, a classification, due to Miki, of the possible sets of upper jumps of a wild character: these are all jump sets, with a set of exceptions explicitly prescribed by the jump set of the local field and the size of its residue field.
\end{abstract}

\maketitle

\setcounter{tocdepth}{1}
\setcounter{page}{1}
\tableofcontents

\section{Introduction}
In this paper we introduce \emph{jump sets}, elementary combinatorial objects, and use them to establish several fundamental results concerning two natural filtrations in the theory of local fields. These are the unit filtration and the ramification filtration. We subdivide our main results into three themes and introduce each of the themes with a basic question. We use the answer to each question as a starting point to explain our main results.    

\subsection{Three questions} \label{Three questions}
 
\subsubsection{Principal units} \label{principal units}
Let $p$ be a prime number. A non-archimedean local field is a field $K$, equipped with a non-archimedean absolute value $|\cdot|$, such that $K$ is a non-discrete locally compact space with respect to the topology induced by $|\cdot|$. Write $O:=\{x \in K:|x| \leq 1 \}$ for the ring of integers and $m:=\{x \in K:|x|<1 \}$ for its unique maximal ideal. We assume that $p$ is the residue characteristic of $K$, i.e. the characteristic of the finite field $O/m$. Denote by $f_K$ the positive integer satisfying $p^{f_K}=\#O/m$. Recall that $O$ is a discrete valuation ring, and denote by $\text{v}_K:K^{*} \to \mathbb{Z}$ the valuation that maps any generator of the ideal $m$ to $1$. 

The inclusions $K^{*} \supseteq O^{*} \supseteq U_1(K)=1+m=\{\text{principal units}\}$ \ split in the category of topological groups. So, as topological groups, we have $K^{*} \simeq_{\text{top.gr.}} \mathbb{Z} \times O^{*}$, $O^{*}=(O/m)^{*} \times U_1(K)$, where $\mathbb{Z}$ is taken with the discrete topology. This paper focuses on $U_1(K)$. The profinite group $U_1(K)$ is a pro-$p$ group, thus, being abelian, it has a natural structure of $\mathbb{Z}_p$-module. As a topological $\mathbb{Z}_p$-module $U_1(K)$ is very well understood. If $\text{char}(K)=0$ then $U_1(K) \simeq \mathbb{Z}_p^{[K:\mathbb{Q}_p]} \times \mu_{p^{\infty}}(K)$, while if $\text{char}(K)=p$ then $U_1(K) \simeq \mathbb{Z}_p^{\omega}$. Here $\omega$ denotes the first infinite ordinal number and $\mu_{p^{\infty}}(K)$ denotes the $p$-part of the group of roots of unity of $K$. In both cases the isomorphism is meant in the category of topological $\mathbb{Z}_p$-modules. For a reference see \cite[Chapter 1, Section 6]{Fesenko--Vostokov}

The $\mathbb{Z}_p$-module $U_1(K)$ comes naturally with some additional structure, namely the filtration $U_1(K) \supseteq U_2(K) \supseteq \ldots \supseteq U_i(K) \supseteq \ldots $, where $ U_i(K)=1+m^i$. In order to take into account this additional structure we make the following definition. A \emph{filtered $\mathbb{Z}_p$-module} is a sequence of $\mathbb{Z}_p$-modules, $M_1 \supseteq M_2 \supseteq \ldots \supseteq M_i\supseteq \ldots $, with $\bigcap_{i \in \mathbb{Z}_{\geq 1}}M_i=\{0\}$. We will use the symbol $M_{\bullet}$ to denote a filtered $\mathbb{Z}_p$-module. A morphism of filtered $\mathbb{Z}_p$-modules is a morphism of $\mathbb{Z}_p$-modules $\phi:M_1 \to N_1$ such that $\phi(M_i) \subseteq N_i$ for each positive integer $i$. A filtered module can be also described in terms of its \emph{weight} map $w:M_1 \to \mathbb{Z}_{\geq 1} \cup \{\infty\}$ attaching to each $x$ the sup of the set of integers $i$ such that $x \in M_i$. 
\\

$\mathbf{Question \ (1)}$ What does $U_1(K)$ look like as a filtered $\mathbb{Z}_p$-module?\\
\\
In other words, we ask what is, as a function of $K$, the isomorphism class of $U_1(K)$ in the category of filtered $\mathbb{Z}_p$-modules. We will sometimes use the symbol $U_{\bullet}(K)$ to stress the presence of the additional structure present in $U_1(K)$, coming from the filtration. Denote by $G_K$ the absolute Galois group of $K$. Thanks to local class field theory, the above question is essentially asking to describe $G_K^{\text{ab}}$ as a filtered group, where the filtration is given by the upper numbering on $G_K^{\text{ab}}$. Equipping any quotient of $G_K$ with the upper numbering filtration and studying it in the category of filtered groups is a natural thing to do. Indeed it is a fact that the local field $K$ can be uniquely determined from the filtered group $G_K$, see \cite{Mochizuki}. 
\subsubsection{Galois sets} \label{Question 2}
Fix $K^{\text{sep}}$ a separable closure of $K$. Denote by $G_K:=\text{Gal}(K^{\text{sep}}/K)$ the absolute Galois group. Denote by $|\cdot|$ the unique extension of $|\cdot|$ to $K^{\text{sep}}$. Take $L/K$ finite separable. Thus $L$ naturally comes with a \emph{Galois set}: $\Gamma_L=\{K\text{-embeddings} \ L \to K^{\text{sep}}\}$. 
Recall by Galois theory that this is a transitive $G_K$-set with $|\Gamma_L|=[L:K]$. This holds for any field $K$. But, if $K$ is a local field, there is an additional piece of structure, namely a $G_K$-invariant metric on $\Gamma_L$, defined as follows:
$d(\sigma,\tau)=\text{max}_{x \in O_L}|\sigma(x)-\tau(x)|$ \  $(\sigma,\tau \in \Gamma_L)$. Here $O_L$ denotes the ring of integers of $L$. Observe that the maximum is attained since $O_L$ is compact and the function in consideration is continuous. If $L/K$ is \text{unramified} then the metric space $\Gamma_L$ is a simple one: $d(\sigma,\tau)=1$ whenever $\sigma \neq \tau$. Since every finite separable extension of local fields splits canonically as an unramified one and a totally ramified one, we go to the other extreme of the spectrum and consider $L/K$ totally ramified: in other words we put $L=K(\pi)$, with $g(\pi)=0$, where $g \in K[x]$ is \emph{Eisenstein}. We can now phrase the second question. 
\\

$\mathbf{Question \ (2)}$
Which invariants does the metric space impose on the coefficients of $g$?\\
\\
As we shall see, the answer to our second question comes often with a surprising link to the answer to our first question. 

\subsubsection{Jumps of characters}
 A \emph{character} of $U_1(K)$ is a continuous group homomorphism \\
$\chi:U_1(K) \to \mathbb{Q}_p/\mathbb{Z}_p \simeq \mu_{p^{\infty}}(\mathbb{C})$. Define $J_{\chi}=\{i \in \mathbb{Z}_{\geq 1}:\chi(U_i(K)) \neq \chi(U_{i+1}(K))\}=\{\text{jumps for $\chi$}\}$. Since $U_1(K)$ is a profinite group, a character $\chi$ has always finite image. Moreover it is easy to check that at each jump the size of the image gets divided exactly by $p$. So one has that $\text{order}(\chi)=p^{|J_{\chi}|}<\infty$.  In particular $J_{\chi}$ is always a finite subset of $\mathbb{Z}_{\geq 1}$. We can now phrase our third question.
\\

$\mathbf{Question \  (3)}$
Given a local field $K$, which subsets of $\mathbb{Z}_{\geq 1}$ occur as $J_{\chi}$ for a character of $U_1(K)$? \\
\\
Thanks to local class field theory this question is essentially asking to determine which sets $A \subseteq \mathbb{Z}_{\geq 1}$ occur as the set of jumps in the upper filtration of $\text{Gal}(L/K)$, for some $L$, a finite cyclic totally ramified extension of $K$, with $[L:K]$ a power of $p$. This connection is articulated in Section \ref{wild extension}. 
\subsection{Shifts and jump sets} \label{main results}
The goal of this subsection is to explain the notion of a \emph{jump set}. Jump sets are defined using  \emph{shifts}. A \emph{shift} is a strictly increasing function $\rho:\mathbb{Z}_{\geq 1} \to \mathbb{Z}_{\geq 1}$, with $\rho(1)>1$. If $T_{\rho}=\mathbb{Z}_{\geq 1}-\rho(\mathbb{Z}_{\geq 1})$ is finite, put $e^{*}=\text{max}(T_{\rho})+1$. The example of shift relevant for local fields is the following:
\[
\rho_{e,p}(i)=\text{min}\{i+e,pi\} \ \text{for} \ p \ \text{prime}, e \in \mathbb{Z}_{>0} \cup \{\infty\}.
\]
In this example one has that if $e \neq \infty$, then $e^{*}=\lceil \frac{pe}{p-1} \rceil$. The case $e \neq \infty$ will be used for local fields of characteristic $0$, and the case $e=\infty$ will be used for local fields of characteristic $p$.

The following property explains how this shift can be used to express how $p$-powering in $U_1$ changes the weights in the filtration.

\emph{Crucial property}: If $K$ is local field, $e=\text{v}_K(p)$, then
$$
U_i^p \subset U_{\rho(i)} \ \text{for} \ \rho=\rho_{e,p}. 
$$
This follows at once inspecting valuations in the binomial expansion $(1+x)^p=1+px+ \ldots +x^p$. 
For a local field $K$ we denote by $\rho_K$ the shift $\rho_{e,p}$. 

We can now provide the notion of a jump set for a shift $\rho$ and respectively, in case $T_{\rho}$ is finite, of an extended jump set for $\rho$.
A \emph{jump set} for $\rho$ (resp.\ an \emph{extended jump set} for $\rho$) is a finite subset $A \subseteq \mathbb{Z}_{\geq 1}$, satisfying the following two conditions: \\
\\
$(C.1)$ if $a,b \in A$, and $a<b$ then $\rho(a) \leq b$,  \\
$(C.2)$ one has that $A - \rho(A) \subseteq T_{\rho}$ \ (resp.\ $A - \rho(A) \subseteq T_{\rho}^{*}=T_{\rho} \cup \{e^{*}\}$). \\

Write $\text{Jump}_{\rho}=\{\text{jump sets for $\rho$}\}$ \ (resp.\ $\text{Jump}_{\rho}^{*}=\{\text{extended jump sets for $\rho$}\}$). The jump set $A$ can be reconstructed from the following data. \\
\\
$(a)$ $I_{A}=A-\rho(A)$. \\
$(b)$ The function $\beta_A:A-\rho(A) \to \mathbb{Z}_{\geq 1}$, $i \to |[i,\infty) \cap A|$. \\
\\
The pair $(I_A,\beta_A)$ satisfies the following three conditions.\\
\\
$(C.1)'$ One has that $I_A \subseteq T_{\rho}$ (resp.\ $I_A \subseteq T_{\rho}^{*}$), \newline
$(C.2)'$ the map $\beta_A$ is a strictly decreasing map $\beta:I_A \to \mathbb{Z}_{\geq 1}$, \newline
$(C.3)'$ the map $i \mapsto \rho^{\beta(i)}(i)$  from $I_A$ to $\mathbb{Z}_{\geq 1}$ is strictly increasing. 
\\

Conversely, given any pair $(I,\beta)$ satisfying properties $(C.1)', (C.2)'$ and $(C.3)'$, we can attach to it a jump set for $\rho$ denoted by $A_{(I,\beta)}$ (resp.\ an extended jump set for $\rho$). The assignments  $A \mapsto (I_A,\beta_A)$ and $(I,\beta) \mapsto A_{(I,\beta)}$ are inverses to each other. Namely we have
$$A_{(I_A,\beta_A)}=A,$$
and
$$(I_{A_{(I,\beta)}},\beta_{A_{(I,\beta)}})=(I,\beta).
$$
We will refer also to the pair $(I,\beta)$ as a jump set. 
\subsubsection{Answer to question $(1)$} \label{Answer (1)}
We will answer question (1) exploiting the following analogy with usual $\mathbb{Z}_p$-modules. We denote by $\mu_p(K):=\{\alpha \in K: \alpha^p=1\}$. It is not difficult to show that $\mu_p(K)=\{1\}$ if and only if
$$U_1(K) \simeq_{\mathbb{Z}_p\text{-mod}}\prod_{i \in T_{\rho_K}} \mathbb{Z}_p^{f_K}.
$$
Suppose that $\mu_p(K) \neq \{1\}$. Then $U_1(K)$ has a presentation:
$$ 0\to \mathbb{Z}_p \to \mathbb{Z}_p^{[K:\mathbb{Q}_p]+1} \to U_1(K) \to 0.
$$
Denote by $v_0$ the image of $1$ in the inclusion of $\mathbb{Z}_p$ into $\mathbb{Z}_p^{[K:\mathbb{Q}_p]+1}$. 
One can obtain a different presentation using the natural action of $\text{Aut}_{\mathbb{Z}_p}(\mathbb{Z}_p^{[K:\mathbb{Q}_p]+1})$ on $\text{Epi}_{\mathbb{Z}_p}(\mathbb{Z}_p^{[K:\mathbb{Q}_p]+1},U_1(K))$, which denotes the set of surjective morphisms of $\mathbb{Z}_p$-modules from $\mathbb{Z}_p^{[K:\mathbb{Q}_p]+1}$ to $U_1(K)$. In this way all presentations are obtained. 
That is, $\text{Aut}_{\mathbb{Z}_p}(\mathbb{Z}_p^{[K:\mathbb{Q}_p]+1})$  acts transitively on $\text{Epi}_{\mathbb{Z}_p}(\mathbb{Z}_p^{[K:\mathbb{Q}_p]+1},U_1(K))$. Thus knowing $U_1(K)$ as a $\mathbb{Z}_p$-module is tantamount to knowing the orbit of the vector $v_0$ under the action of $\text{Aut}_{\mathbb{Z}_p}(\mathbb{Z}_p^{[K:\mathbb{Q}_p]+1})$. But recall that for all $v_1, v_2 \in \mathbb{Z}_p^{[K:\mathbb{Q}_p]+1}$ one has that
$$
 v_1 \sim_{\text{Aut}_{\mathbb{Z}_p}} v_2 \leftrightarrow \text{ord}(v_1)=\text{ord}(v_2).
  $$
Here $\text{ord}$ of a vector $v \in \mathbb{Z}_p^{[K:\mathbb{Q}_p]+1}$ denotes the minimum of $\text{v}_{\mathbb{Q}_p}(a)$ as $a$ varies among the coordinates of $v$ with respect to the standard basis of $\mathbb{Z}_p^{[K:\mathbb{Q}_p]+1}$. Therefore we have that $$
\{v: \mathbb{Z}_p^{[K:\mathbb{Q}_p]+1}/\mathbb{Z}_pv \simeq U_1(K) \}=\{v:|\mu_{p^{\infty}}(K)|=p^{\text{ord}(v)}
\}.
$$
We will see that in the finer category of filtered $\mathbb{Z}_p$-modules the story is very similar. To reach an analogous picture we need to introduce the analogues of the actors appearing above. Namely we need a notion of a ``\emph{free-filtered-module}".

As we shall explain in section \ref{def direct sums}, with filtered modules one can do the usual operations of direct sums, direct product, and when the modules are finitely generated of taking quotients. Having this in mind, one defines what may be thought of as the building blocks for ``free-filtered-modules", namely the analogue of rank $1$ modules over $\mathbb{Z}_p$ (but now there will be many different rank $1$ filtered modules), as follows. Let $\rho$ be a shift, and let $i$ be a positive integer.
\begin{definition}
The $i$-th standard filtered module, $S_i$, for $\rho$, is given by setting $S_i=\mathbb{Z}_p$, with weight map
$$w(x)=\rho^{\text{ord}_p(x)}(i).$$
\end{definition}
The analogues of a ``free-filtered-module" used to describe $U_1(K)$ will be
$$M_{\rho}=\prod_{i \in T_{\rho}}S_i,$$ \ \ $$M_{\rho}^{*}=\prod_{i \in T_{\rho}^{*}}S_i.$$
We have the following theorem.
\begin{theorem}\label{the free guys}
Let $K$ be a local field, with $|O/m|=p^{f_K}$. Then
$U_1 \simeq M_{\rho_K}^{f_K}$ as filtered $\mathbb{Z}_p$-modules if and only if $\mu_p(K)=\{1\}$.
\end{theorem}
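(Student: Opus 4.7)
The plan is to prove the two directions separately. The forward direction is essentially formal: each $S_i$ is the $\mathbb{Z}_p$-module $\mathbb{Z}_p$ (with a particular weight map), so every $S_i$ is torsion-free, and hence so is $M_{\rho_K}^{f_K}$. An isomorphism $U_1(K) \simeq M_{\rho_K}^{f_K}$ would force $U_1(K)$ to be torsion-free; since $\mu_p(K) \subseteq U_1(K)$, this yields $\mu_p(K) = \{1\}$.

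For the reverse direction, assume $\mu_p(K) = \{1\}$. I would choose, for each $i \in T_{\rho_K}$, a family $u_{i,1}, \dots, u_{i, f_K} \in U_i(K)$ whose images in the $f_K$-dimensional $\mathbb{F}_p$-vector space $U_i(K)/U_{i+1}(K) \simeq O/m$ form an $\mathbb{F}_p$-basis. Define $\phi: M_{\rho_K}^{f_K} \to U_1(K)$ by sending the generator of the $(i,j)$-th copy of $S_i$ to $u_{i,j}$. Iterating the crucial property $U_\ell^p \subseteq U_{\rho_K(\ell)}$ gives $u_{i,j}^{p^k} \in U_{\rho_K^k(i)}(K)$, which matches the weight of $p^k$ times the generator of $S_i$; hence $\phi$ is a morphism of filtered $\mathbb{Z}_p$-modules.

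To show that $\phi$ is an isomorphism I would pass to the finite filtered quotients $\phi_n : M_{\rho_K}^{f_K}/F^{n+1} \to U_1(K)/U_{n+1}(K)$; by an inverse-limit argument it suffices to prove each $\phi_n$ bijective. Both sides are filtered by $\mathbb{F}_p$-vector spaces of dimension $f_K$ at every index $j \in \{1,\dots,n\}$, so it is enough to show surjectivity on graded pieces. For $j \in T_{\rho_K}$ this is immediate from the choice of $u_{j,\star}$; for $j = \rho_K^k(i)$ with $k\geq 1$, the required $\mathbb{F}_p$-linear independence of $u_{i,1}^{p^k}, \ldots, u_{i,f_K}^{p^k}$ modulo $U_{j+1}(K)$ reduces by induction on $k$ to showing that the induced $p$-th power map $\bar\pi_\ell: U_\ell(K)/U_{\ell+1}(K) \to U_{\rho_K(\ell)}(K)/U_{\rho_K(\ell)+1}(K)$ is injective for every $\ell$.

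The main obstacle is this last injectivity claim at the critical index. Expanding $(1+x)^p$ identifies $\bar\pi_\ell$ with: the Frobenius $x \mapsto x^p$ when $p\ell < \ell + e$; multiplication by the non-zero constant $p/\pi^e$ when $\ell + e < p\ell$; and, at the exceptional index $\ell = e/(p-1)$ (possible only if $(p-1)\mid e$), a map of the form $x \mapsto x^p + cx$ with $c$ a unit. The first two cases are manifestly injective; in the exceptional case the kernel consists of the $\mathbb{F}_p$-span of the roots of $y^{p-1}+c$ in the residue field, and a short classical computation (lifting any such root to an element $1+y \in U_\ell$ with $(1+y)^p = 1$ by Hensel-type successive approximation) identifies non-triviality of this kernel with the existence of $\zeta_p \in K$. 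Thus the hypothesis $\mu_p(K) = \{1\}$ is exactly what forces injectivity of $\bar\pi_\ell$ for every $\ell$, which is the heart of the argument.
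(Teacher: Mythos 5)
Your proof is correct, and the core computation it rests on is the same one that underlies the paper's argument: the graded $p$-th power map $\bar\pi_\ell: U_\ell/U_{\ell+1} \to U_{\rho_K(\ell)}/U_{\rho_K(\ell)+1}$, the three-case analysis of its shape (Frobenius for $\ell < e/(p-1)$, multiplication by the unit $\overline{p/\pi^e}$ for $\ell > e/(p-1)$, and $x\mapsto x^p + cx$ at the critical index), and the identification of its kernel at $\ell = e/(p-1)$ with $p$-th roots of unity. Where you differ from the paper is in packaging. The paper does not construct the isomorphism directly for this theorem; instead it first proves the uniform statement that $U_{\bullet}(K)$ is always $(f_K,\rho_K)$-quasi-free (Proposition \ref{rho for loc fields}, whose proof is precisely the $\bar\pi_\ell$ analysis plus the observation that $\mu_p(K)$ is cyclic), and then reads Theorem \ref{the free guys} off from the general filtered-module facts that a quasi-free module is free iff its $\pi_R$-torsion vanishes (Proposition \ref{charact. free-filt-mod}, Proposition \ref{quasi free via torsion}, Corollary \ref{where is the torsion}). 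Your version is more elementary and self-contained — it avoids the whole investment in the abstract quasi-free theory by lifting a filtered basis, checking surjectivity on graded pieces, and invoking a finiteness/inverse-limit argument (the bare-hands content of Proposition \ref{charact. free-filt-mod} restricted to this case). What the paper's abstraction buys is reuse: the same quasi-free framework simultaneously treats the $\mu_p(K)\neq\{1\}$ case in Theorem \ref{the quasi-free guys} and sets up the orbit parametrization that drives the rest of the paper, whereas your direct construction would have to be supplemented by a separate argument for the torsion case. One small point worth making precise in your sketch: the Hensel-type lifting at the critical index should be justified by noting that for all $m > e/(p-1)$ the map $\bar\pi_m$ is an isomorphism, so the $p$-th power map $U_{\ell+1}\to U_{\rho(\ell)+1}$ is surjective by completeness; this lets you correct a mod-$U_{\rho(\ell)+1}$ relation $(1+a\pi^{\ell})^p\equiv 1$ to an exact $p$-th root of unity in $U_\ell\setminus U_{\ell+1}$.
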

So we are left with the case $\mu_p(K) \neq \{1\}$. In particular we have that $\text{char}(K)=0$. We proceed in analogy with the case of $\mathbb{Z}_p$-modules described above. 

To describe $U_{\bullet}$ as a \emph{filtered} $\mathbb{Z}_p$-module one \emph{constructs} a filtered presentation: 
$$M_{\rho_K}^{f_K-1} \oplus M_{\rho_K}^{*} \twoheadrightarrow U_{\bullet}(K).
$$
Just as with $\mathbb{Z}_p$-modules, one can obtain a different presentation using the natural action of $\text{Aut}_{\text{filt}}(M_{\rho_K}^{f_K-1} \oplus M_{\rho_K}^{*})$ on $\text{Epi}_{\text{filt}}(M_{\rho_K}^{f_K-1} \oplus M_{\rho_K}^{*},U_{\bullet}(K))$. As established in Proposition \ref{transitive action} we obtain a statement in perfect analogy with the case of $\mathbb{Z}_p$-modules explained above.  Namely we have the following crucial proposition. 
\begin{proposition} \label{orbit for intro}
Let $K$ be a local field with $\mu_p(K) \neq \{1\}$. Then the action of $\emph{Aut}_{\emph{filt}}(M_{\rho_K}^{f_K-1} \oplus M_{\rho_K}^{*})$ upon the set $\emph{Epi}_{\emph{filt}}(M_{\rho_K}^{f_K-1} \oplus M_{\rho_K}^{*},U_{\bullet}(K))$ is transitive. 
\end{proposition}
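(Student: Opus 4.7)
The plan is to show that, given any two filtered epimorphisms $\phi_1, \phi_2 \in \text{Epi}_{\text{filt}}(M, U_\bullet(K))$ with $M := M_{\rho_K}^{f_K - 1} \oplus M_{\rho_K}^{*}$, there exists $\psi \in \text{Aut}_{\text{filt}}(M)$ satisfying $\phi_2 \circ \psi = \phi_1$. I would organize the argument around three steps: strictness, construction, and invertibility.

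First, I would verify that every $\phi \in \text{Epi}_{\text{filt}}(M, U_\bullet(K))$ is \emph{strictly} filtered, meaning $\phi(M_i) = U_i(K)$ for each $i \geq 1$. This should follow by comparing the $\FF_p$-dimensions of the successive quotients $M_i / M_{i+1}$ and $U_i(K) / U_{i+1}(K) \cong O/m$: the decomposition $M = M_{\rho_K}^{f_K-1} \oplus M_{\rho_K}^{*}$ places exactly $f_K$ generators of each weight in $T_{\rho_K}$, plus one extra generator of weight $e^{*}$, matching the graded structure of $U_\bullet(K)$ when $\mu_p(K) \neq \{1\}$. A filtered surjection that already induces a surjection on the associated graded is then automatically strict.

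Next, let $\{e_s\}_{s \in S}$ be the canonical topological basis of $M$, where $e_s$ generates one summand $S_{i_s}$ and therefore has weight $w_M(e_s) = i_s$. Set $u_s := \phi_1(e_s) \in U_{i_s}(K)$. By strictness of $\phi_2$, choose a preimage $v_s \in \phi_2^{-1}(u_s) \cap M_{i_s}$ for every $s$, and define $\psi \colon M \to M$ as the unique continuous $\ZZ_p$-linear extension of $e_s \mapsto v_s$. By construction $\psi$ respects the filtration and satisfies $\phi_2 \circ \psi = \phi_1$.

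The third and most delicate step is proving that $\psi$ is an automorphism. Since $M$ has finite $\ZZ_p$-rank, it is enough to establish surjectivity. Because the $u_s$ topologically generate $U_\bullet(K)$, one has $\psi(M) + \ker \phi_2 = M$, and everything reduces to arranging $\ker \phi_2 \subseteq \psi(M)$. Both kernels $\ker \phi_1, \ker \phi_2$ are rank-one $\ZZ_p$-submodules, and since the quotients $M / \ker \phi_j$ are filtered-isomorphic to $U_\bullet(K)$, the weight sequences $\bigl(w_M(p^n \pi_j)\bigr)_{n \geq 0}$ of generators $\pi_j \in \ker \phi_j$ coincide and are controlled by the jump set of $U_\bullet(K)$. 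Using this equality of weight sequences, the preimages $v_s$ can be perturbed by elements of suitably high weight so that some $\ZZ_p$-combination of them equals $\pi_2$, without losing either filteredness or the identity $\phi_2 \circ \psi = \phi_1$. This final bookkeeping is the main obstacle, and it is precisely where the hypothesis $\mu_p(K) \neq \{1\}$ enters: it forces the distinguished generator of weight $e^{*}$ in $M_{\rho_K}^{*}$, which is what accommodates the kernel generator $\pi_2$.
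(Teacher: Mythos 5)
Your overall blueprint follows the paper's: you fix a set of generators for the source (the standard filtered basis), use strictness to lift the image of each generator to a preimage under $\phi_2$ of the correct weight, and then argue the resulting endomorphism $\psi$ of $M := M_{\rho_K}^{f_K-1} \oplus M_{\rho_K}^{*}$ is a filtered automorphism. Steps 1 and 2 are essentially the paper's own argument (Remark \ref{phi_1}, Propositions \ref{first non iso is epi} and \ref{a lot surjective} give strictness; Proposition \ref{Universal property of free filtered modules} gives the extension to $\psi$). The divergence — and the gap — is Step 3.

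Your Step 3 rests on the sentence ``the preimages $v_s$ can be perturbed by elements of suitably high weight so that some $\ZZ_p$-combination of them equals $\pi_2$,'' but this is never made precise, and as stated it does not obviously succeed: the only perturbations that preserve $\phi_2 \circ \psi = \phi_1$ are by elements of $\ker\phi_2 = \ZZ_p\pi_2$, and since $\pi_1$ is itself imprimitive (it equals $p^{m}$ times a primitive vector, $m := \mathrm{ord}_p(\#\mu_{p^\infty}(K)) \geq 1$), the coefficients $a_s$ of $\pi_1$ in the basis all lie in $p^{m}\ZZ_p$, so the perturbation $\sum_s a_s c_s\,\pi_2$ can only change the scalar $\mu$ in $\psi(\pi_1) = \mu\pi_2$ by an element of $p^{m}\ZZ_p$. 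That cannot rescue a bad $\mu \in p\ZZ_p$. What actually saves the argument is that $\mu$ is \emph{forced} to be a unit — no perturbation needed: writing $\pi_j = p^{m} e_j'$ with $e_j'$ primitive, cancelling $p^m$ in $p^m\psi(e_1') = \mu p^m e_2''$ gives $\psi(e_1') = \mu e_2''$, and applying $\phi_2$ and using $\phi_2\circ\psi = \phi_1$ gives $\phi_1(e_1') = \mu\,\phi_2(e_2'')$; both sides must generate $\mu_{p^\infty}(K) \simeq \ZZ/p^m\ZZ$ (each $e_j'$ maps to an element of exact order $p^m$), which forces $\mu \in \ZZ_p^*$. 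This is the computation your proof is missing. The paper's Step 2 reaches the same goal via a different (and cleaner) route: it shows that any weight-$e_\rho^*$ preimage of the quasi-basis generator of $\mathrm{coker}[\pi_R]_{e'_\rho}$ must itself generate $\mathrm{coker}[\pi_R]_{e'_\rho}$, so the constructed preimages literally form a filtered basis and the automorphism is immediate from the universal property (including the filteredness of the inverse).

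One further point you leave implicit: even after establishing that $\psi$ is a $\ZZ_p$-linear bijection, you have not checked that $\psi^{-1}$ is filtered, which is what being in $\mathrm{Aut}_{\mathrm{filt}}(M)$ requires. It is true here — since every $F_i(M)$ is finite, a filtered $\ZZ_p$-bijection of $M$ automatically satisfies $\psi(M_i) = M_i$ for all $i$, or alternatively one can show $F_i(\psi)$ is an isomorphism for each $i$ and invoke Proposition \ref{phi surj}(d) — but as written this is an unstated step, and it is precisely the kind of thing the paper's basis-bijection argument sidesteps for free.
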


For a local field $K$ as in Proposition \ref{orbit for intro} knowing the filtered module $U_{\bullet}(K)$ is tantamount to knowing the set of vectors $v \in M_{\rho_K}^{f_K-1} \oplus M_{\rho_K}^{*}$ such that 
$$(M_{\rho_K}^{f_K-1} \oplus M_{\rho_K}^{*})/\mathbb{Z}_pv \simeq_{\text{filt}} U_{\bullet}(K).
$$ 
Thanks to Proposition \ref{orbit for intro} the set of such vectors $v$ consists of a single orbit under the action of the group $\text{Aut}_{\text{filt}}(M_{\rho_K}^{f_K-1} \oplus M_{\rho_K}^{*})$. Thus we are led to study the orbits of $\text{Aut}_{\text{filt}}(M_{\rho_K}^{f_K-1} \oplus M_{\rho_K}^{*})$ acting on $M_{\rho_K}^{f_K-1} \oplus M_{\rho_K}^{*}$, just as we did above in the case of $\mathbb{Z}_p$-modules. In particular we are led to find the filtered analogue of the function $\text{ord}$. It is in this context that jump sets come into play. For two vectors $v_1,v_2 \in M_{\rho}^{f-1} \oplus M_{\rho}^{*}$ we will use the notation
$$v_1 \sim_{\text{Aut}_{\text{filt}}} v_2
$$
to say that $v_1$ and $v_2$ are in the same orbit under the action of $\text{Aut}_{\text{filt}}(M_{\rho}^{f-1} \oplus M_{\rho}^{*})$. Observe that if $\phi \in \text{Epi}_{\text{filt}}(M_{\rho_K}^{f_K-1} \oplus M_{\rho_K}^{*},U_{\bullet}(K))$, then in particular $\text{ker}(\phi) \subseteq p \cdot (M_{\rho_K}^{f_K-1} \oplus M_{\rho_K}^{*})$. Therefore we proceed to describe only orbits of $\text{Aut}_{\text{filt}}(M_{\rho}^{f-1} \oplus M_{\rho}^{*})$ acting upon $p \cdot (M_{\rho}^{f-1} \oplus M_{\rho}^{*})$. However there is no loss of generality in doing so. Indeed it is clear that given $v_1,v_2$ in $M_{\rho}^{f-1} \oplus M_{\rho}^{*}$ one has that $v_1 \sim_{\text{Aut}_{\text{filt}}} v_2$ if and only if $p \cdot v_1 \sim_{\text{Aut}_{\text{filt}}}p \cdot v_2$.  We attach to each extended jump set $(I,\beta)$ a vector in $p\cdot(M_{\rho}^{f-1} \oplus M_{\rho}^{*})$ defined as follows:
$$
v_{(I,\beta)}=(x_j)_{j \in T_{\rho}^{*}} \in p \cdot M_{\rho}^{*}=\prod_{j \in T_{\rho}^{*}}p \cdot S_j 
$$
$$\text{by} \ x_j=0  \ \text{if} \ j \not\in I, \ x_j=p^{\beta(j)} \ \text{if} \ j \in I.
$$

\begin{theorem}\label{j.s.param.orbit}(\emph{Jump sets parametrize orbits}) Let $\rho$ be any shift with $\#T_{\rho}<\infty$ and  $f$ be a positive integer. Then there exists a unique map 
\[\emph{\text{filt-ord}}:p\cdot(M_{\rho}^{f-1} \oplus M_{\rho}^*) \to \emph{Jump}_{\rho}^{*}
\]
having the following two properties. \\
$(1)$ For all $v_1,v_2 \in p\cdot(M_{\rho}^{f-1} \oplus M_{\rho}^*) $ one has
$$
 v_1 \sim_{{\emph{\text{Aut}}}_{\emph{\text{filt}}}} v_2 \leftrightarrow \emph{\text{filt-ord}}(v_1)=\emph{\text{filt-ord}}(v_2).
  $$
$(2)$ For each $(I,\beta) \in \emph{Jump}_{\rho}^{*}$, we have that $$ \emph{\text{filt-ord}}(v_{(I,\beta)})=(I,\beta).
$$
\end{theorem}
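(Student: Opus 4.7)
The plan is to prove that the assignment $(I,\beta)\mapsto [v_{(I,\beta)}]$ gives a bijection between $\text{Jump}_{\rho}^{*}$ and the set of $\text{Aut}_{\text{filt}}$-orbits on $p\cdot(M_{\rho}^{f-1}\oplus M_{\rho}^{*})$. Granted such a bijection, $\text{filt-ord}$ is forced to be its inverse, which yields uniqueness and properties $(1)$--$(2)$ in one stroke. The two contents to establish are therefore (A) every orbit contains a vector of the form $v_{(I,\beta)}$, and (B) distinct extended jump sets yield distinct orbits.

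For (A) I would argue by an inductive reduction, an echelon-form procedure adapted to filtered modules. Write $M=M_{\rho}^{f-1}\oplus M_{\rho}^{*}$. Given a nonzero $v\in p\cdot M$, extract the largest power of $p$ by writing $v=p^{k_{0}}v_{0}$ with $k_{0}\geq 1$ maximal such that $v_{0}\notin p\cdot M$. Because $v_{0}$ then has some unit coordinate, its weight $i_{0}:=w(v_{0})$ lies in $T_{\rho}^{*}$, and $k_{0}$ will play the role of $\beta(i_{0})$. Using filtered automorphisms -- permutations among copies of the same standard summand together with the elementary moves that add a scalar multiple of one coordinate to another when their weights permit -- one normalises $v_{0}$ so that the standard generator of $S_{i_{0}}$ realises its minimal-weight coordinate. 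Subtracting this standard contribution from $v$ leaves a residual of strictly larger weight, to which the same procedure applies. The recursion terminates in at most $\#T_{\rho}^{*}$ steps, since the successive starting indices $i_{0}<i_{1}<\cdots$ are distinct elements of $T_{\rho}^{*}$. The conditions $(C.1')$--$(C.3')$ on the output $(I,\beta)$ are visibly forced by the construction: $I\subseteq T_{\rho}^{*}$ by the extraction step, $\beta$ strictly decreases because each residual's weight strictly exceeds the previous, and $\rho^{\beta(i_{k})}(i_{k})$ strictly increases because that quantity is literally the weight of the $k$-th residual before extraction.

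For (B) I would define an $\text{Aut}_{\text{filt}}$-invariant of $v$ which recovers $(I,\beta)$ when $v=v_{(I,\beta)}$. The plain weight sequence $(w(p^{k}v))_{k\geq 0}$ is not sufficient: whenever $\rho^{k_{1}}(i_{1})=\rho^{k_{2}}(i_{2})$ for two distinct $i_{1},i_{2}\in T_{\rho}^{*}$, the singleton jump sets with data $(\{i_{1}\},k_{1})$ and $(\{i_{2}\},k_{2})$ share the same weight sequence yet lie in different orbits. The right invariant is instead the filtered isomorphism class of the pair $(M,\,\mathbb{Z}_{p}\cdot v)$, equivalently the quotient $M/\mathbb{Z}_{p}v$ with its induced filtration; one reads $(I,\beta)$ off by comparing, at each weight $n$, the $\F_{p}$-dimension of the graded piece of $M/\mathbb{Z}_{p}v$ with that of $M$, the ``drops'' detecting exactly the elements of $I$ and the corresponding values of $\beta$.

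The main obstacle is the presence of the extra summand $S_{e^{*}}$. Because $e^{*}\in\rho(\mathbb{Z}_{\geq 1})$, a given weight admits several representations $\rho^{k}(i)$ with $i\in T_{\rho}^{*}$, so the leading index $i_{k}$ in the reduction (A) is not determined by the weight alone. Keeping the choice $\text{Aut}_{\text{filt}}$-invariant -- that is, showing that whether a unit coordinate of $v$ ends up in $S_{e^{*}}$ or in some $S_{j}$ with $j\in T_{\rho}$ is determined purely by the orbit of $v$ -- is what forces the extension from $T_{\rho}$ to $T_{\rho}^{*}$ and motivates the notion of extended jump set. Once this book-keeping is handled carefully, (A) and (B) together yield the desired bijection and the theorem.
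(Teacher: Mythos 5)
Your overall architecture---prove that $(I,\beta)\mapsto[v_{(I,\beta)}]$ is a bijection onto the orbit set, first that every orbit is hit (your step (A)), then that distinct jump sets land in distinct orbits (your step (B))---matches the proof of Theorem~\ref{bijection orbits jumps}, which delivers the present statement via Propositions~\ref{reduction process} and~\ref{reconstruction process} and Corollary~\ref{at most one}. However, both of your intermediate steps contain genuine errors.

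In (A), the assertion that $w(v_0)\in T_{\rho}^{*}$ whenever $v_0\notin pM$ is false. A unit coordinate at index $j\in T_{\rho}^{*}$ only forces $w(v_0)\leq j$; the overall minimum may come from a coordinate $p^{a}\epsilon$ with $a\geq 1$ sitting in some $S_{j'}$, and then $w(v_0)=\rho^{a}(j')\in\rho(\ZZ_{\geq 1})$, which generically lies outside $T_{\rho}^{*}$. Take $p=2$, $\rho=\rho_{3,2}$ (so $T_{\rho}=\{1,3,5\}$, $e^{*}=6$) and let $v_0$ have a unit in $S_3$ and the value $2$ in $S_1$: then $w(v_0)=\rho(1)=2\notin T_{\rho}^{*}$. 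Repairing $i_0$ to mean the smallest index carrying a unit coordinate of $v_0$ (which is the right starting datum: that index equals $\max(I_v)$ and $\beta_v(i_0)=k_0$) does not save the normalisation step, either. In the example no filtered automorphism can move the minimal-weight position into $S_3$: every element of $S_3$ has weight $\geq 3>2=w(v_0)$, and an elementary move $e_3\mapsto e_3+ce_1$ is filtered only for $\mathrm{ord}_p(c)\geq 2$, which cannot cancel the coordinate $2e_1$. Proposition~\ref{reduction process} avoids this by inverting the order of operations: first normalise $v$ to the single-monomial-per-block form $v_{(A_v,b_v)}$, and only then delete, via the elementary moves $\theta_{i,j}$, the coordinates whose points in $\text{Graph}(b_v)$ are $\leq_{\rho}$-dominated. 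The weight-residual bookkeeping you envisage is replaced entirely by the combinatorics of $\leq_{\rho}$.

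In (B), the concrete invariant you propose---the graded $\F_p$-dimensions of $M/\ZZ_p v$---is insufficient; in fact it carries exactly the same information as $w(v)$, the invariant you already correctly discarded. From the short exact sequence of filtered modules $0\to\ZZ_p v\to M\to M/\ZZ_p v\to 0$ one obtains, for each $n$,
$$\dim_{\F_p}F_n(M/\ZZ_p v)=\dim_{\F_p}F_n(M)-\dim_{\F_p}F_n(\ZZ_p v),$$
and $\dim_{\F_p}F_n(\ZZ_p v)=1$ precisely when $n=w(p^{k}v)=\rho^{k}(w(v))$ for some $k\geq 0$, and is $0$ otherwise. Hence the graded dimensions of the quotient are a function of $w(v)$ alone. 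Concretely, with $\rho=\rho_{4,2}$ ($T_{\rho}=\{1,3,5,7\}$, $e^{*}=8$) and $f=1$, the vectors $v_1=p^{2}e_1+pe_3$ and $v_2=p^{2}e_1$ have $w(v_1)=w(v_2)=4$, hence identical graded dimensions of $M/\ZZ_p v$, yet their jump sets $(\{1,3\},\,1\mapsto 2,\,3\mapsto 1)$ and $(\{1\},\,1\mapsto 2)$ differ and the two vectors lie in distinct orbits. The invariant that actually separates orbits, used in Proposition~\ref{reconstruction process}, is $g_v(i):=w_{M/p^{i}M}(v)$, the weight of $v$ in $M/p^{i}M$. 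Because $v$ may be adjusted modulo $p^{i}M$ rather than merely by a $\ZZ_p$-multiple of itself, this function sees strictly more than $w(v)$: its breakpoints are exactly the set $\beta(I)$, and $g_v(\beta(i)+1)=\rho^{\beta(i)}(i)$ for $i\in I$, which recovers the full extended jump set.
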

In fact the proof of Theorem \ref{j.s.param.orbit}, as given in Section \ref{filtered modules}, provides us with an effective way to \emph{compute} the map $\text{filt-ord}$.
This goes as follows. Let $v$ be in $p\cdot(M_{\rho}^{f-1} \oplus M_{\rho}^*)$. Firstly define the following subset of $\mathbb{Z}_{\geq 1}^2$
$$S_v:=\{(i,\text{ord}(v_i)) \}_{i \in T_{\rho}^{*}:v_i \neq 0},
$$
where $v_i$ is the projection of $v$ on the factor $S_{i}^f$ if $i<e_{\rho}^{*}$ and on $S_{e_{\rho}^{*}}$ in case $i=e_{\rho}^{*}$.
Next, for any shift $\rho$ consider the following partial order $\leq_{\rho}$ defined on $\mathbb{Z}_{\geq 1}^2$.  We let $(a_1,b_1) \leq_{\rho} (a_2,b_2)$ if and only if 
$$b_2 \geq b_1 \ \text{and} \ \rho^{b_2}(a_2) \geq \rho^{b_1}(a_1).
$$
Finally define $S_v^{-}$ to be the set of minimal points of $S_v$ with respect to $\leq_{\rho}$. One can easily show that there is a unique extended jump set $(I_v,\beta_v) \in \text{Jump}_{\rho}^{*}$ such that
$$ S_{v}^{-}=\text{Graph}(\beta_v).
$$
It is shown in Section \ref{filtered modules} that $\text{filt-ord}(v)=(I_v,\beta_v)$. This phenomenon of a jump set arising as the set of minimal or maximal elements of some finite subset of $\mathbb{Z}_{\geq 1}^{2}$ is a leitmotif of this paper. Another instance of this phenomenon will emerge at the end of this sub-section in Theorem \ref{a surprising relation}, in the context of Eisenstein polynomials. We mention that this way of computing $\text{filt-ord}$ is used in \cite{de Boer--Pagano} where, among other things, algorithmic problems of this subject are explored.

From Theorem \ref{j.s.param.orbit} one concludes the following. 
\begin{theorem}\label{the quasi-free guys}
Let $K$ be a local field, with $\mu_p(K) \neq \{1\}$ and $|O/m|=p^{f_K}$. Then there is a \emph{unique} $(I_K,\beta_K) \in \emph{Jump}_{\rho_K}^{*}$ such that
 $$U_1(K) \simeq M_{\rho_K}^{f_K-1} \oplus (M_{\rho_K}^{*}/\mathbb{Z}_pv_{(I_K,\beta_K)})$$ 
 as filtered $\mathbb{Z}_p$-modules.
 \end{theorem}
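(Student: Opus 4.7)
The plan is to read Theorem \ref{the quasi-free guys} as a direct corollary of three already-assembled ingredients: (a) the existence of a filtered presentation $\phi\colon M_{\rho_K}^{f_K-1} \oplus M_{\rho_K}^{*} \twoheadrightarrow U_{\bullet}(K)$ mentioned after Proposition \ref{orbit for intro}; (b) Proposition \ref{orbit for intro} on transitivity of $\mathrm{Aut}_{\mathrm{filt}}$ on $\mathrm{Epi}_{\mathrm{filt}}$; and (c) Theorem \ref{j.s.param.orbit} classifying $\mathrm{Aut}_{\mathrm{filt}}$-orbits on $p \cdot (M_{\rho_K}^{f_K-1} \oplus M_{\rho_K}^{*})$ via extended jump sets. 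All of the substantive analytic/arithmetic content is packaged in (a)--(c); what remains is organized bookkeeping.

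For existence, fix a presentation $\phi$ as in (a). Because $\mu_p(K) \neq \{1\}$ forces $\mathrm{char}(K) = 0$, the $\mathbb{Z}_p$-rank of $M_{\rho_K}^{f_K-1} \oplus M_{\rho_K}^{*}$ exceeds that of the free part of $U_1(K)$ by exactly one, the extra $S_{e^{*}}$-factor absorbing the cyclic torsion summand $\mu_{p^{\infty}}(K)$; consequently $\ker(\phi)$ is $\mathbb{Z}_p$-cyclic, say $\ker(\phi) = \mathbb{Z}_p v$. The observation recorded in the excerpt that $\ker(\phi) \subseteq p \cdot (M_{\rho_K}^{f_K-1} \oplus M_{\rho_K}^{*})$ places $v$ in the domain of $\mathrm{filt}\text{-}\mathrm{ord}$; set
\[
(I_K, \beta_K) \;:=\; \mathrm{filt}\text{-}\mathrm{ord}(v) \;\in\; \mathrm{Jump}_{\rho_K}^{*}.
\]
By Theorem \ref{j.s.param.orbit}(1) combined with (2), there exists $\psi \in \mathrm{Aut}_{\mathrm{filt}}(M_{\rho_K}^{f_K-1} \oplus M_{\rho_K}^{*})$ with $\psi(v) = v_{(I_K, \beta_K)}$; then $\psi$ descends to a filtered isomorphism
\[
U_{\bullet}(K) \;\simeq\; (M_{\rho_K}^{f_K-1} \oplus M_{\rho_K}^{*})/\mathbb{Z}_p v \;\simeq\; (M_{\rho_K}^{f_K-1} \oplus M_{\rho_K}^{*})/\mathbb{Z}_p v_{(I_K, \beta_K)}.
\]
Since $v_{(I_K, \beta_K)}$ is by its very definition concentrated in the $M_{\rho_K}^{*}$-factor (zero component in $M_{\rho_K}^{f_K-1}$), the final quotient splits as $M_{\rho_K}^{f_K-1} \oplus (M_{\rho_K}^{*}/\mathbb{Z}_p v_{(I_K, \beta_K)})$, giving the desired isomorphism.

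For uniqueness, suppose $(I,\beta), (I',\beta') \in \mathrm{Jump}_{\rho_K}^{*}$ both realize the isomorphism in the theorem. Composing each presumed isomorphism with the natural quotient map produces two elements $\phi, \phi' \in \mathrm{Epi}_{\mathrm{filt}}(M_{\rho_K}^{f_K-1} \oplus M_{\rho_K}^{*}, U_{\bullet}(K))$ whose kernels are $\mathbb{Z}_p v_{(I,\beta)}$ and $\mathbb{Z}_p v_{(I',\beta')}$ respectively. By Proposition \ref{orbit for intro} there is $\psi \in \mathrm{Aut}_{\mathrm{filt}}$ with $\phi' = \phi \circ \psi$, whence $\psi(v_{(I',\beta')}) = u \cdot v_{(I,\beta)}$ for some $u \in \mathbb{Z}_p^{*}$. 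Multiplication by $u$ is itself a filtered automorphism of $M_{\rho_K}^{f_K-1} \oplus M_{\rho_K}^{*}$, so $v_{(I',\beta')} \sim_{\mathrm{Aut}_{\mathrm{filt}}} v_{(I,\beta)}$, and Theorem \ref{j.s.param.orbit}(1)--(2) forces $(I,\beta) = (I',\beta')$.

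The only points requiring care beyond invoking the three ingredients are the cyclicity of $\ker(\phi)$ (a rank count using $\mathrm{char}(K) = 0$ and $\mu_p(K) \neq \{1\}$) and the placement of $v_{(I_K,\beta_K)}$ inside the $M_{\rho_K}^{*}$-summand (immediate from the definition of $v_{(I,\beta)}$). The genuinely difficult work---building the presentation, proving transitivity, and proving the orbit parametrization---has already been done upstream, so the main obstacle in this last step is simply keeping the actions of $\mathrm{Aut}_{\mathrm{filt}}$ on vectors versus on presentations straight when extracting uniqueness.
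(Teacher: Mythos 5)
Your argument is correct and is essentially the paper's own: the paper deduces the theorem in one line from Proposition \ref{rho for loc fields} (which establishes quasi-freeness of $U_{\bullet}(K)$, hence the presentation you use in ingredient (a)) together with Theorem \ref{classification of quasi free}, and the latter is itself proved via exactly the presentations, transitivity, and orbit parametrization you invoke. Your rank-count justification for the cyclicity of $\ker(\phi)$ is the right argument and is left tacit in the paper's write-up of Theorem \ref{classification of quasi free}.
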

So when $\mu_p(K) \neq \{1\}$, knowing $U_1(K)$ as a filtered module is tantamount to knowing the extended $\rho_K$-jump set $(I_K,\beta_K)$.

The next theorem tells us, for given $e,f$, which orbits of the action of $\text{Aut}_{\text{filt}}(M_{\rho_{e,p}}^{f-1} \oplus M_{\rho_{e,p}}^{*})$ on $M_{\rho_{e,p}}^{f-1} \oplus M_{\rho_{e,p}}^{*}$  are realized by a local field $K$ with $\mu_p(K) \neq \{1\}$, $e_K=e$ and $f_K=f$. In other words, together with Theorem \ref{the free guys} this provides a complete classification of the filtered $\mathbb{Z}_p$-modules $M_{\bullet}$ such that
$$U_{\bullet}(K) \simeq_{\text{filt}} M_{\bullet},
$$
for some local field $K$, therefore answering Question $(1)$. 
 
\begin{theorem} \label{realizable j.s. are realized}
Let $p$ be a prime number, let $e,f \in \mathbb{Z}_{>0}$, and let $(I,\beta)$ be an extended $\rho_{e,p}$-jump set. Then the following are equivalent. \\ 
(1) There exists a local field $K$ with residue characteristic $p$ and
$$\mu_p(K) \neq \{1\}, \ f_K=f, \ e=\emph{v}_K(p), \ (I_K,\beta_K)=(I,\beta).$$
(2) We have that $p-1|e, I \neq \emptyset$ and
$$\rho_{e,p}^{\beta(\emph{min}(I))}(\emph{min}(I))=\frac{pe}{p-1} \ (=e^{*}).$$
\end{theorem}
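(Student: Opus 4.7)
The plan is to prove the two implications separately: necessity by a Hilbert-function rigidity argument, and sufficiency by an explicit construction via Eisenstein polynomials. Throughout, write $\rho := \rho_{e,p}$.

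\textbf{Necessity, $(1) \Rightarrow (2)$.} Since $\mu_p(K) \neq \{1\}$, we have $\mathbb{Q}_p(\zeta_p) \subseteq K$, and this extension is totally ramified of degree $p-1$, so $(p-1) \mid e$. By Theorem \ref{the quasi-free guys}, $(I,\beta) := (I_K, \beta_K)$ is the unique extended jump set for which there is a filtered surjection $\phi : F := M_{\rho}^{f-1} \oplus M_{\rho}^* \twoheadrightarrow U_{\bullet}(K)$ with kernel $\mathbb{Z}_p v_{(I,\beta)}$. Applying the snake lemma to the multiplication-by-$p$ map on $0 \to \mathbb{Z}_p v_{(I,\beta)} \to F \to U_1(K) \to 0$ identifies $\mu_p(K)$ with $\mathbb{Z}_p v_{(I,\beta)}/p$; since $\mu_p(K) \neq \{1\}$, this forces $v_{(I,\beta)} \neq 0$, so $I \neq \emptyset$. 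For the equation $\rho^{\beta(\min I)}(\min I) = e^*$, compare the Hilbert functions $j \mapsto \dim_{\mathbb{F}_p}(M^j/M^{j+1})$ across the short exact sequence. The target $U_{\bullet}(K)$ has Hilbert function identically $f$ because $U_j(K)/U_{j+1}(K) \simeq O/m$. On the source, the $\rho$-orbits of $T_{\rho}$ partition $\mathbb{Z}_{\geq 1}$, so $M_{\rho}$ has Hilbert function identically $1$, and the additional summand $S_{e^*}$ in $M_{\rho}^*$ contributes $1$ exactly on $\Omega := \{\rho^n(e^*) : n \geq 0\}$; hence $F$ has Hilbert function $f$ off $\Omega$ and $f+1$ on $\Omega$. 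The rank-one submodule $\mathbb{Z}_p v_{(I,\beta)}$ places its nonzero elements at weights forming the $\rho$-orbit of $w(v_{(I,\beta)}) = \rho^{\beta(\min I)}(\min I)$; strictness of the presentation forces this orbit to equal $\Omega$, and matching least elements yields $\rho^{\beta(\min I)}(\min I) = e^*$.

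\textbf{Sufficiency, $(2) \Rightarrow (1)$.} The strategy is constructive. Let $K_0$ be the unramified extension of $\mathbb{Q}_p(\zeta_p)$ of residue degree $f$; then $v_{K_0}(p) = p-1$ and the residue field of $K_0$ has size $p^f$. Since $(p-1) \mid e$, take $K/K_0$ totally ramified of degree $e/(p-1)$, presented by an Eisenstein polynomial $g \in K_0[x]$; then $v_K(p) = e$, $f_K = f$, $\mu_p(K) \neq \{1\}$. The extended jump set $(I_K, \beta_K)$ is computed via the effective $\text{filt-ord}$ recipe given after Theorem \ref{j.s.param.orbit}: choose a filtered basis of $F$, express $\zeta_p - 1$ in its coordinates $v$, form $S_v = \{(i, \mathrm{ord}(v_i))\}$, and take its $\leq_{\rho}$-minimal points. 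The task reduces to: for each $(I, \beta)$ satisfying (2), choose $g$ so that this minimal profile equals the graph of $\beta$. The top-weight equation $\rho^{\beta(\min I)}(\min I) = e^*$ is automatic since $v_K((\zeta_p - 1)^p) = pe/(p-1) = e^*$, while the lower-index data $(j, \beta(j))$ for $j \in I$ can be realized by inductively tuning the lower-order coefficients of $g$; alternatively, existence can be deduced from a positive-mass argument on Eisenstein polynomials via the mass formula developed in the paper.

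\textbf{Main obstacle.} Necessity reduces to additivity of Hilbert functions in a strict short exact sequence and is largely formal. The real difficulty is sufficiency: one must simultaneously realize the entire profile $\{(j, \beta(j))\}_{j \in I}$, not merely match the top weight. The $\text{filt-ord}$ recipe provides a concrete combinatorial target, but hitting it requires controlling all coordinates of $\zeta_p - 1$ in a filtered basis of $U_{\bullet}(K)$ through a delicate inductive adjustment of the coefficients of $g$; this coefficient-matching is the main technical step.
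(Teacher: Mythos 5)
Your necessity argument is correct and takes a genuinely different route from the paper's. The paper establishes $(1)\Rightarrow(2)$ by tracking the torsion: Corollary \ref{where is the torsion} locates the $\pi_R$-torsion of a quasi-free, non-free filtered module at weight $e_{\rho}'$, and Proposition \ref{reconstruction process} (equivalently Proposition \ref{break points of g}) then forces $\rho^{\beta(\min I)}(\min I)=e_{\rho}^{*}$. You instead compare graded pieces across the strict short exact sequence $0\to \mathbb{Z}_p v_{(I,\beta)}\to F\to U_{\bullet}(K)\to 0$, using that in a free filtered module multiplication by $p$ shifts weight exactly by $\rho$ (so the weights of the nonzero elements of $\mathbb{Z}_p v$ trace out the $\rho$-orbit of $w(v)$); this orbit must account for the discrepancy between the graded dimensions of $F$ and $U_{\bullet}(K)$, which is supported on $\Omega=\{\rho^n(e_{\rho}^{*})\}_{n\geq 0}$, forcing $w(v)=e_{\rho}^{*}$. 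That is a clean alternative, provided you record the step hidden inside your word ``strictness'': one needs $\phi(F_j)=U_j(K)$ for all $j$ and not merely $\phi(F_j)\subseteq U_j(K)$, so that the quotient filtration on $U_1(K)$ agrees with the unit filtration. This is exactly Propositions \ref{presentations} and \ref{a lot surjective}; without it, additivity of the Hilbert function is not available.

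The sufficiency direction has a genuine gap. Your sketch works throughout over $K_0=\mathbb{Q}_{p^f}(\zeta_p)$ and hopes to hit the profile $\{(j,\beta(j))\}_{j\in I}$ by ``inductively tuning the lower-order coefficients of $g$''. When $e_{\rho}^{*}\in I$, say $\beta(e_{\rho}^{*})=j$, no amount of coefficient tuning over $K_0$ can succeed: Propositions \ref{when the star is in I} and \ref{when star is in  for tot ram} show that $e_{\rho}^{*}\in I_K$ with $\beta_K(e_{\rho}^{*})=j$ forces $K$ to contain exactly one of the twisted cyclotomic fields $\mathbb{Q}_{p^f}(\zeta_{p^j})(i)$, $i\in\{1,\ldots,p-1\}$, and not to contain $\zeta_{p^{j+1}}$. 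This is a rigid global constraint on where $K$ sits inside the lattice of subfields of $\mathbb{Q}_{p^{pf}}(\zeta_{p^{j+1}})$, not something visible from the valuations of the coefficients of an arbitrary Eisenstein $g$ over $K_0$. The paper's proof of Theorem \ref{admissible j.s. occur} therefore splits into two cases: if $e_{\rho}^{*}\notin I$ it takes $G(x)=\prod_{i\in I}(1+x^i)^{p^{\beta(i)-1}}-\zeta_p$ over $\mathbb{Q}_{p^f}(\zeta_p)$, uses admissibility to produce a degree-$\frac{e}{p-1}$ Eisenstein factor $g$, and then reads off $(I_K,\beta_K)$ from the relation $\prod_{i\in I}(1+\pi^i)^{p^{\beta(i)}}=1$ via Corollary \ref{The relation between the units}; if $e_{\rho}^{*}\in I$ it works instead over the twisted base $\mathbb{Q}_{p^f}(\zeta_{p^j})(1)$ with a pair of units $u_1,u_2$ from Corollary \ref{I,beta for twisted cyclotomic}, and must additionally verify $u_2\notin\tilde K^{*p}$. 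Your sketch never introduces the twisted base, so as written it addresses only a subcase. The suggested fallback to a positive-mass argument is not a shortcut: Theorem \ref{counting rephrased} itself leans on Proposition \ref{when star is in  for tot ram} and Corollary \ref{I,beta for twisted cyclotomic}, and one would have to trace the dependency graph carefully before invoking it to re-derive the very existence statement those constructions were built to prove.
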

For a shift $\rho$ such that $T_{\rho}$ is finite, the extended jump sets $(I,\beta) \in \text{Jump}_{\rho}^{*}$ such that $I \neq \emptyset$ and $\rho^{\beta(\text{min}(I))}(\text{min}(I))=e^{*}$ are said to be \emph{admissible}. The implication $(2) \to (1)$, in the above theorem, is proved in Section \ref{U1 as filtered module} in Theorem \ref{admissible j.s. occur}. The implication $(1) \to (2)$ follows from Proposition \ref{rho for loc fields} and Theorem \ref{classification of quasi free} combined.

Our next main result provides a quantitative strengthening of Theorem \ref{realizable j.s. are realized}. Once we fix $e \in (p-1)\mathbb{Z}_{\geq 1}$ and a positive integer $f$, then, thanks to Theorem \ref{realizable j.s. are realized}, we know precisely which $(I,\beta) \in \text{Jump}_{\rho_{e,p}}^{*}$ occur as $(I_K,\beta_K)$ for some local field $K$ with $\mu_p(K) \neq \{1\}, e_K=e, f_K=f$. But Theorem \ref{realizable j.s. are realized} doesn't tell us ``\emph{how often}" each $(I,\beta)$ occurs. To make this point precise we should firstly agree in which manner we \emph{weight} local fields. A very natural way to do this is provided by Serre's Mass formula \cite{Serre m.f.}. We briefly recall how this works.

Let $E$ be a local field. Write $q=|O_E/m_E|$. Let $e$ be a positive integer. Let $S(e,E)$ be the set of isomorphism classes of separable totally ramified degree $e$ extensions $K/E$.
To $K\in S(e,E)$ one gives mass $\mu_{e,E}(K):=\frac{1}{q^{c(K/E)}|\text{Aut}_{E}(K)|}$, where $c(K/E)=\text{v}_K(\delta_{K/E})-e+1$, and $\delta_{K/E}$ denotes the different of the extension $K/E$. 
Serre's Mass formula \cite{Serre m.f.} states that $\mu_{e,E}$ is a probability measure on $S(e,E)$, i.e. 
$$ \sum_{K \in S(e,E)}\mu_{e,E}(K)=1.$$

Now we can make the ``\emph{how often}" written above precise. Namely given $e \in (p-1)\mathbb{Z}_{\geq 1}, f \in \mathbb{Z}_{\geq 1}$ and $(I,\beta) \in \text{Jump}_{\rho}^{*}$, write $E_f:=\mathbb{Q}_{p^f}(\zeta_p)$. Here $\mathbb{Q}_{p^f}$ denotes the degree $f$ unramified extension of $\mathbb{Q}_p$. We can ask to evaluate
$$\sum_{K\in S(\frac{e}{p-1},E_f): (I_K,\beta_K)=(I,\beta)}\mu_{\frac{e}{p-1},E_f}(K),
$$
in words we are asking to evaluate the probability that a random $K$, totally ramified degree $\frac{e}{p-1}$ extension of $E_f$, has $(I_K,\beta_K)=(I,\beta)$. 

Observe that, thanks to Proposition \ref{orbit for intro} and Theorems \ref{j.s.param.orbit} and \ref{the quasi-free guys} combined, we know that for $K \in S(\frac{e}{p-1},E_f)$ the set of vectors $O:=\{v \in M_{\rho_K}^{f_K-1} \oplus M_{\rho_K}^{*}: U_{\bullet}(K) \simeq_{\text{filt}} (M_{\rho_K}^{f_K-1} \oplus M_{\rho_K}^{*})/\mathbb{Z}_pv \}$ is precisely equal to the orbit of the vector $v_{(I_K,\beta_K)}$ under $\text{Aut}_\text{filt}(M_{\rho_K}^{f_K-1} \oplus M_{\rho_K}^{*})$. Moreover $M_{\rho_K}^{f_K-1} \oplus M_{\rho_K}^{*}$ viewed as a topological group is compact, and hence has a Haar measure. It is then natural to think that, for a given admissible extended $\rho_{e,p}$-jump set $(I,\beta)$, a randomly chosen totally ramified degree $\frac{e}{p-1}$ extension $K$ of $E_f$, satisfies
$$(I_K,\beta_K)=(I,\beta)
$$ with probability \emph{proportional} to the Haar measure of the orbit of $v_{(I,\beta)}$. Our next theorem shows that this turns out to be exactly right. 

 For $(I,\beta) \in \text{Jump}_{\rho_{e,p}}^{*}$, with $I \neq \emptyset$, it is easy to see that the set
$\text{filt-ord}^{-1}((I,\beta))$ is an open subset of $M_{\rho_{e,p}}^{f-1} \oplus M_{\rho_{e,p}}^{*}$. Normalize $\mu_{\text{Haar}}$, imposing that
$$\mu_{\text{Haar}}(\bigcup\limits_{(I,\beta) \ \text{admissible}}\text{filt-ord}^{-1}(I,\beta))=1.$$
In other words, choose the unique normalization of the Haar measure that induces a probability measure on the union of the orbits of the vectors $v_{(I,\beta)}$ as $(I,\beta)$ runs among admissible extended jump sets for $\rho_{e,p}$. We call admissible those orbits of $M_{\rho_{e,p}}^{f-1} \oplus M_{\rho_{e,p}}^{*}$, under the action of $\text{Aut}_{\text{filt}}(M_{\rho_{e,p}}^{f-1} \oplus M_{\rho_{e,p}}^{*})$, that contain a vector $v_{(I,\beta)}$ with $(I,\beta)$ admissible. 
 Let $E_f$ be $\mathbb{Q}_{p^f}(\zeta_p)$, the unramified extension of $\mathbb{Q}_p(\zeta_p)$ of degree $f$.
\begin{theorem} \label{counting}
 Let $e \in (p-1)\mathbb{Z}_{\geq 1}, f \in \mathbb{Z}_{\geq 1}$ and $(I,\beta) \in \emph{Jump}_{\rho_{e,p}}^{*}$ be an admissible jump set. Then the probability that a random totally ramified degree $\frac{e}{p-1}$ extension $K$ of $E_f$ satisfies $(I_K,\beta_K)=(I,\beta)$, is equal to the probability that a vector $v \in M_{\rho_{e,p}}^{f-1} \oplus M_{\rho_{e,p}}^{*} $, randomly chosen among admissible orbits, is in the orbit of $v_{(I,\beta)}$. In other words
$$\sum_{K\in S(\frac{e}{p-1},E_f): (I_K,\beta_K)=(I,\beta)}\mu_{\frac{e}{p-1},E_f}(K)=\mu_{\emph{\text{Haar}}}({\emph{\text{filt-ord}}}^{-1}(I,\beta)).$$
\end{theorem}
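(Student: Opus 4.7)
The plan is to rewrite both sides of the identity as integrals against Haar-type measures and then construct an explicit correspondence between them. On the LHS, by the parametrization underlying Serre's mass formula, every $K \in S(n, E_f)$ with $n := e/(p-1)$ arises from an Eisenstein polynomial $g \in \mathcal{E}_n(E_f) \subset O_{E_f}[X]$, and after absorbing the average over $\aut_{E_f}(K)$ together with the different weight one obtains a Haar-type probability measure $\nu$ on $\mathcal{E}_n(E_f)$ satisfying
\[
\sum_{K\in S(n, E_f):\,(I_K, \beta_K)=(I,\beta)} \mu_{n, E_f}(K) \;=\; \nu\bigl(\{\,g : (I_{K_g}, \beta_{K_g}) = (I,\beta)\,\}\bigr).
\]
This is just Serre's original formulation repackaged as the integration of an indicator function.

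First I would define, for each $g \in \mathcal{E}_n(E_f)$ with root $\pi$ and $K_g := E_f(\pi)$, an explicit filtered surjection $\varphi_g : M_{\rho_K}^{f-1} \oplus M_{\rho_K}^{*} \twoheadrightarrow U_\bullet(K_g)$ whose kernel is spanned over $\mathbb{Z}_p$ by a vector $v(g)$ that can be read off from the coefficients of $g$. Because $E_f \supseteq \mathbb{Q}_p(\zeta_p)$ we have $\zeta_p \in K_g$ automatically, so the unique nontrivial relation among the chosen filtered generators comes from expanding the identity $\zeta_p^p = 1$: writing $1-\zeta_p$ in a normalized $\pi$-adic basis and comparing with the binomial expansion yields an explicit formula for $v(g) \in p \cdot (M_{\rho_K}^{f-1} \oplus M_{\rho_K}^{*})$. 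Proposition \ref{orbit for intro} and Theorem \ref{j.s.param.orbit} combined give the key identification $(I_{K_g}, \beta_{K_g}) = \text{filt-ord}(v(g))$, reducing the theorem to the statement
\[
\nu(\{\,g : \text{filt-ord}(v(g)) = (I,\beta)\,\}) \;=\; \mu_{\text{Haar}}(\text{filt-ord}^{-1}(I,\beta)).
\]

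Next I would compute the pushforward measure $v_{*}\nu$ on $M_{\rho_{e,p}}^{f-1} \oplus M_{\rho_{e,p}}^{*}$ via the Markov process on Eisenstein polynomials alluded to in the abstract. The idea is to reveal the coefficients $a_0, a_1, \ldots, a_{n-1}$ of $g$ in the order matching their $\pi$-adic contribution to $v(g)$: at each step the newly added freedom corresponds to a uniform shift on a specific coset in the target lattice, so by induction $v_{*}\nu$ is the normalized Haar measure on the admissible locus imposed just before the theorem statement. Restricting to the preimage of $(I,\beta)$ under filt-ord then yields the claimed equality.

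The main obstacle is the analysis of the map $g \mapsto v(g)$, both algebraically and measure-theoretically. On the algebraic side, one must carefully track the binomial expansion arising from $\zeta_p^p = 1$ in $K_g$, keeping every valuation aligned with the shift $\rho_{e,p}$ and with the special weight $e^{*} = pe/(p-1)$, so that $v(g)$ is correctly identified as an element of $p\cdot M$; the admissibility condition $\rho_{e,p}^{\beta(\min I)}(\min I) = e^{*}$ is exactly what gets encoded here. On the measure-theoretic side, one must verify that the change of variables from Eisenstein coefficients to filtered-module coordinates is Haar-preserving after quotienting by the Galois symmetry already absorbed into Serre's mass formula; this is the step where the Markov structure is needed to organize the induction and where the bulk of the bookkeeping sits.
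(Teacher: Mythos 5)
Your reduction to Eisenstein polynomials via Serre's mass formula, and your identification of the heart of the matter as the equality $\nu(\{g : \text{filt-ord}(v(g)) = (I,\beta)\}) = \mu_{\text{Haar}}(\text{filt-ord}^{-1}(I,\beta))$, are both on target and match the paper's outlook. But the central device in your plan --- a single explicit map $g \mapsto v(g)$ read off from the binomial expansion of $\zeta_p^p=1$, followed by a change-of-variables argument --- does not work in the generality you need. The paper shows (Section \ref{finding jump sets inside}, Theorem \ref{valuation coefficients}, and the closing remark of that section) that the jump set can be read directly off the valuations of the coefficients of $g$ only when $g$ is strongly separable; this hypothesis excludes an open set of Eisenstein polynomials, and for $p=2$ over $\mathbb{Q}_2$ it excludes \emph{all} of them. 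Moreover $v(g)$ is not canonically determined by $g$: only its orbit under $\text{Aut}_{\text{filt}}$ is, so defining a $\nu$-to-Haar pushforward requires a coherent choice of presentations $\phi_g$ across the whole space, which your plan does not supply. The paper deliberately sidesteps both issues by routing through an intermediary probability space, the shooting game, and proves separately that the Haar measure on admissible orbits equals the shooting-game measure (Proposition \ref{filtered orbits and shootings}) and that the Serre mass measure also equals the shooting-game measure (Theorem \ref{counting rephrased}).

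The second, and larger, gap is that the Markov process you invoke is not a reordering of the coefficients $a_0,\dots,a_{n-1}$ of a single $g$. In the paper it is an iterative multiplication by units $(1+\epsilon x^i)^{p^\beta}$ followed by reduction modulo $g(x)$; each step depends on all coefficients of $g$, and --- this is the crucial point --- a single Eisenstein polynomial over $E_f = \mathbb{Q}_q(\zeta_p)$ only controls the shooting game up to the cutoff weight $pe/(p-1)$ (the spaces $\mathcal{S}_{\frac{pe}{p-1},\text{stop}}$). To continue the game beyond this cutoff, i.e., to handle jump sets with $\min(\beta)>1$ or with $e^{*}\in I$, the paper conditions on the event that $K$ contains $\mathbb{Q}_q(\zeta_{p^2})$ (resp.\ one of the twisted fields $\mathbb{Q}_q(\zeta_p)(j)$), uses Serre's mass formula a second time to pass to Eisenstein polynomials of degree $e/(p(p-1))$ over these base fields, and iterates up the cyclotomic tower $\mathbb{Q}_q(\zeta_{p^{j+1}})$. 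Matching the conditional probabilities on the shooting-game side and the Eisenstein side across this transition is exactly what Remark \ref{dec. remark 5} and Proposition \ref{when star is in  for tot ram} are for, and this tower induction is where the real content of the proof lives. Your write-up acknowledges that ``the bulk of the bookkeeping sits'' in the Haar-preservation step but does not identify this multi-level structure, so as it stands the proposal cannot close.
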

From the first proof given by Serre \cite{Serre m.f.}, Theorem \ref{counting} can be equivalently expressed as a volume computation in a space of Eisenstein polynomials. Namely for $e \in (p-1)\mathbb{Z}_{\geq 1}$ and $f \in \mathbb{Z}_{\geq 1}$, denote by $\text{Eis}(\frac{e}{p-1}, \mathbb{Q}_{p^f}(\zeta_p))$ the set of degree $\frac{e}{p-1}$-Eisenstein polynomials over $\mathbb{Q}_{p^f}(\zeta_p)$. This can be viewed as a topological space equipped with a natural probability measure, simply by using the Haar measure on the coefficients. For a $g(x) \in \text{Eis}(\frac{e}{p-1}, \mathbb{Q}_{p^f}(\zeta_p))$, denote by $F_{g(x)}:=\mathbb{Q}_{p^f}(\zeta_p)[x]/(g(x))$. We can reformulate Theorem \ref{counting} in the following manner.

\begin{theorem} \label{counting2}
 Let $e \in (p-1)\mathbb{Z}_{\geq 1}, f \in \mathbb{Z}_{\geq 1}$ and $(I,\beta) \in \emph{Jump}_{\rho_{e,p}}^{*}$ be an admissible jump set. Then the volume of the set of $g(x) \in \emph{Eis}(\frac{e}{p-1}, \mathbb{Q}_{p^f}(\zeta_p))$ satisfying $(I_{F_{g(x)}},\beta_{F_{g(x)}})=(I,\beta),$
equals
$$\mu_{\emph{\text{Haar}}}({\emph{\text{filt-ord}}}^{-1}(I,\beta)).
$$
\end{theorem}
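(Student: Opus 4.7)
The plan is to deduce Theorem~\ref{counting2} from Theorem~\ref{counting} via Serre's original reformulation of his mass formula as a volume computation on the space of Eisenstein polynomials; the sentence immediately preceding the statement already flags this equivalence. Write $e' = e/(p-1)$ and $E = E_f = \mathbb{Q}_{p^f}(\zeta_p)$, and consider the natural continuous surjection
$$\Phi : \text{Eis}(e', E) \longrightarrow S(e', E), \qquad g \longmapsto [F_g],$$
where $F_g = E[x]/(g(x))$. The goal is to establish the pushforward identity $\Phi_{*}\mu_{\text{Haar}} = \mu_{e', E}$ on $S(e', E)$, which mechanically converts the weighted sum appearing in Theorem~\ref{counting} into the volume appearing in Theorem~\ref{counting2}.

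To verify this pushforward identity---the main step, and the only one that requires real work---I would follow Serre's original argument in \cite{Serre m.f.}. For a fixed $K \in S(e', E)$, the fiber $\Phi^{-1}([K])$ is canonically identified with the set of uniformizers of $K$ (each such uniformizer has an Eisenstein minimal polynomial of the correct degree), modulo the action of $\text{Aut}_E(K)$ on that set. Comparing the natural Haar measure on the space of uniformizers (an open subset of $\mathfrak{m}_K$) with the Haar measure on $\text{Eis}(e', E)$ introduces a Jacobian-type factor which, after an elementary computation involving the different, is exactly $q^{-c(K/E)}$; finally, dividing by $|\text{Aut}_E(K)|$ to account for the identification produces the Serre mass $\mu_{e', E}(K)$. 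This is the one technical obstacle, but it is purely classical.

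With the pushforward identity in hand, the conclusion is immediate. By Theorems~\ref{j.s.param.orbit} and \ref{the quasi-free guys}, the extended jump set $(I_K, \beta_K)$ is an invariant of the filtered $\mathbb{Z}_p$-module $U_{\bullet}(K)$, and therefore of the topological-field isomorphism class of $K$. Hence
$$\{g \in \text{Eis}(e', E) : (I_{F_g}, \beta_{F_g}) = (I, \beta)\} \;=\; \Phi^{-1}(T),$$
where $T = \{K \in S(e', E) : (I_K, \beta_K) = (I, \beta)\}$. The pushforward identity yields
$$\mu_{\text{Haar}}\bigl(\Phi^{-1}(T)\bigr) \;=\; \sum_{K \in T} \mu_{e', E}(K),$$
and Theorem~\ref{counting} equates the right-hand side with $\mu_{\text{Haar}}(\text{filt-ord}^{-1}(I,\beta))$, completing the proof. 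All genuinely new content is already invested in Theorems~\ref{realizable j.s. are realized} and \ref{counting}; Theorem~\ref{counting2} is then a repackaging via the Serre correspondence between Eisenstein polynomials and extensions.
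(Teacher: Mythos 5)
Your proposal is correct, and it follows the route the paper itself indicates: the paper states, just before Theorem~\ref{counting2}, that from Serre's original proof of his mass formula Theorem~\ref{counting} can be equivalently phrased as a volume computation on Eisenstein polynomials, and that both Theorems~\ref{counting} and \ref{counting2} are implied by Theorem~\ref{counting rephrased}. Your deduction of \ref{counting2} from \ref{counting} via the pushforward identity $\Phi_{*}\mu_{\text{Haar}} = \mu_{e',E}$ is precisely the content of that equivalence, and your sketch of the fiber decomposition (uniformizers modulo $\text{Aut}_E(K)$, Jacobian factor $q^{-c(K/E)}$) is the classical Serre argument and is accurate — including the subtle point that two uniformizers of a totally ramified extension $K/E$ are $\text{Gal}(\bar{E}/E)$-conjugate iff they lie in the same $\text{Aut}_E(K)$-orbit, since either uniformizer generates $K$ over $E$. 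The one cosmetic inaccuracy is attributing new content to Theorem~\ref{realizable j.s. are realized}; what you actually use is Theorems~\ref{j.s.param.orbit} and \ref{the quasi-free guys} (to ensure $(I_K,\beta_K)$ is a well-defined invariant of the isomorphism class of $K$), which you correctly invoke in the body of the argument.
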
 
The above two Theorems are implied by Theorem \ref{counting rephrased}. As a bonus, the method of the proof of Theorem \ref{counting rephrased} allows us to \emph{explicitly compute} the jump set $(I_{F_{g(x)}},\beta_{F_{g(x)}})$ out of the valuation of the coefficients of $g(x)$, for a large class of Eisenstein polynomials $g(x)$. This will be the class of \emph{strongly separable Eisenstein polynomials}, which are defined right after Proposition \ref{strongly sep. poly}. To state our next Theorem, we begin attaching to any $g(x) \in \text{Eis}(\frac{e}{p-1}, \mathbb{Q}_{p^f}(\zeta_p))$, an element $(I_{g(x)},\beta_{g(x)}) \in \text{Jump}_{\rho_{\infty,p}}$. Under certain conditions, given below, we have that actually $(I_{g(x)},\beta_{g(x)}) \in \text{Jump}_{\rho_{e,p}}^{*}$ and $(I_{F_{g(x)}},\beta_{F_{g(x)}})=(I_{g(x)},\beta_{g(x)})$. We shall begin by explaining the construction of $(I_{g(x)},\beta_{g(x)})$. Write
$$g(x):=\sum_{i=0}^{\frac{e}{p-1}}a_ix^i.
$$ 
Firstly consider the following subset of $\mathbb{Z}^2$
$$S_{g(x)}:=\{\big(\frac{\text{v}_{E_f}(a_i)\frac{e}{p-1}+i}{p^{\text{v}_{\mathbb{Q}_p}(i)}},\text{v}_{\mathbb{Q}_p}(i)+1 \big) \}_{1 \leq i \leq \frac{e}{p-1}: \text{v}_{\mathbb{Q}_p}(i) \leq \text{v}_{\mathbb{Q}_p}(e) \ \text{and} \ a_i \neq 0}.
$$
Recall the definition of the partial order $\leq_{\rho}$ attached to a shift $\rho$ given right after Theorem \ref{j.s.param.orbit}. We denote by $S_{g(x)}^{-}$ the set of \emph{minimal} elements of $S_{g(x)}$ with respect to the order $\leq_{\rho_{\infty,p}}$. One can prove that there is a unique pair
$$(I_{g(x)},\beta_{g(x)}) \in \text{Jump}_{\rho_{\infty,p}},
$$
such that $S_{g(x)}^{-}=\text{Graph}(\beta_{g(x)})$. It turns out that if $g(x)$ is strongly separable, a notion that we are going to provide right after Proposition \ref{strongly sep. poly}, then the pair $(I_{g(x)},\beta_{g(x)})$ is also in $\text{Jump}_{\rho_{e,p}}$. 
 
We next make a definition that will have the effect of sub-dividing the characteristic $0$ local field extensions into two sub-categories. Loosely speaking, when the ramification of $E/F$ will not be ``too big" compared to $\text{v}_{E}(p)$, then the arithmetic of this extension will be, for our purposes, indistinguishable from the arithmetic of a characteristic $p$ extension. We make this notion precise in the following definition, while the relation with characteristic $p$ fields will only become visible in Theorem \ref{a surprising relation 2}. For an extension of local fields $F/E$ we denote by $\delta_{F/E}$ the different of the extension. 
\begin{definition} \label{strongly Eis}
Let $F/E$ be any extension of local fields of residue characteristic $p$. We say that $F/E$ is \emph{strongly separable} if 
$$\text{v}_{F}(\delta_{F/E})<\text{v}_{F}(p).
$$
\end{definition}
Observe that in characteristic $p$ the notions of strongly separable and separable coincide. One can easily show the following general fact.
\begin{proposition} \label{strongly sep. poly}
Let $n$ be a positive integer. Consider $F/E$ a monogenic degree $n$ extension given by an Eisenstein polynomial $g(x):=\sum_{i=0}^{n}a_ix^i$. Then $F/E$ is strongly separable if and only if there exists $i \in \{1,\ldots ,n\}$ such that $(i,p)=1$ and $\emph{v}_{E}(a_i)<\emph{v}_E(p)$. 
\end{proposition}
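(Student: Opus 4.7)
The plan is to use the classical identity $\delta_{F/E} = (g'(\pi))$ for a monogenic totally ramified extension generated by a root $\pi$ of the Eisenstein polynomial $g$. This reduces the claim to computing $\text{v}_F(g'(\pi))$, where
\[
g'(\pi) \;=\; \sum_{i=1}^{n} i\, a_i \pi^{i-1}.
\]

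Since $g$ is Eisenstein, $\pi$ is a uniformizer of $F$ and the ramification index of $F/E$ is $n$; hence $\text{v}_F(\pi) = 1$ and $\text{v}_F|_{E^{\times}} = n \cdot \text{v}_E$. Consequently, for each index $i$ with $i a_i \neq 0$,
\[
\text{v}_F\bigl(i\, a_i \pi^{i-1}\bigr) \;=\; n\cdot \text{v}_E(i\, a_i) + (i-1).
\]
The key observation is that these valuations are pairwise distinct modulo $n$, as their residues $0,1,\ldots,n-1$ exhaust $\mathbb{Z}/n\mathbb{Z}$. The ultrametric minimum is therefore attained uniquely, and
\[
\text{v}_F(\delta_{F/E}) \;=\; \min_{1 \leq i \leq n,\ i a_i \neq 0} \bigl( n\cdot \text{v}_E(i\, a_i) + i - 1 \bigr).
\]

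Now $\text{v}_F(p) = n\cdot \text{v}_E(p)$, and because $0 \leq i - 1 < n$, the inequality $n \cdot \text{v}_E(i a_i) + i - 1 < n \cdot \text{v}_E(p)$ is equivalent to $\text{v}_E(i a_i) < \text{v}_E(p)$. Decomposing $\text{v}_E(i a_i) = \text{v}_E(p) \cdot \text{v}_{\mathbb{Q}_p}(i) + \text{v}_E(a_i)$ (with the convention $\infty \cdot 0 = 0$ in characteristic $p$), any such $i$ must satisfy $\text{v}_{\mathbb{Q}_p}(i) = 0$, i.e.\ $(i,p)=1$, in which case the condition reduces precisely to $\text{v}_E(a_i) < \text{v}_E(p)$. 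Thus the existence of $i$ with $(i,p) = 1$ and $\text{v}_E(a_i) < \text{v}_E(p)$ is equivalent to strong separability, as claimed.

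The one non-trivial external ingredient is the formula $\delta_{F/E} = (g'(\pi))$, which I would invoke from a standard reference on the different of monogenic extensions (e.g.\ Serre's \emph{Local Fields}, Chapter III). The remaining obstacle is purely bookkeeping for terms with $i a_i = 0$: in characteristic $p$ the terms with $p \mid i$ vanish automatically and strong separability degenerates to ordinary separability, while the criterion $\text{v}_E(a_i) < \text{v}_E(p) = \infty$ correctly becomes $a_i \neq 0$, so both cases fit seamlessly into the same argument.
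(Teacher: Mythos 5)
Your argument is correct. The paper does not actually supply a proof of this proposition (it is introduced with the remark that it ``can easily be shown''), so there is no written argument to compare against; your route through the identity $\delta_{F/E}=(g'(\pi))$ for $O_F=O_E[\pi]$, followed by the observation that the $n$ candidate terms in $g'(\pi)$ have pairwise distinct valuations mod $n$ so the ultrametric minimum is attained uniquely, is exactly the standard and expected one. The bookkeeping at the end is also right: $0\le i-1<n$ makes the strict inequality $n\cdot\mathrm{v}_E(ia_i)+(i-1)<n\cdot\mathrm{v}_E(p)$ equivalent to $\mathrm{v}_E(ia_i)<\mathrm{v}_E(p)$, which forces $p\nmid i$; and in characteristic $p$ the terms with $p\mid i$ drop out of $g'$, so the criterion collapses to ordinary separability, matching the sentence in the paper immediately preceding the proposition.
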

An Eisenstein polynomial $g(x) \in \text{Eis}(n,E)$ giving rise to a strongly separable extension is itself called strongly separable. So Proposition \ref{strongly sep. poly} says that $g(x)$ is strongly separable if and only if it has a coefficient $a_i$ with $(i,p)=1$ and $\text{v}_E(a_i)<\text{v}_E(p)$. We can now state our next result. For a positive integer $f$, recall that $E_f$ denotes $\mathbb{Q}_{p^f}(\zeta_p)$, the unramified extension of $\mathbb{Q}_p(\zeta_p)$ of degree $f$. 
\begin{theorem} \label{valuation coefficients}
Let $e \in (p-1)\mathbb{Z}_{\geq 1}, f \in \mathbb{Z}_{\geq 1}$ and $g(x) \in \emph{Eis}(\frac{e}{p-1}, E_f)$ be strongly separable. Then
$$(I_{g(x)},\beta_{g(x)})=(I_{E_f[x]/g(x)},\beta_{E_f[x]/g(x)}).
$$

\end{theorem}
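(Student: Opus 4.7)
The plan is to produce an explicit filtered surjection
$\phi\colon M_{\rho_K}^{f-1}\oplus M_{\rho_K}^{*}\twoheadrightarrow U_\bullet(K)$,
where $K:=E_f[x]/(g(x))$, whose kernel generator $v\in p\cdot(M_{\rho_K}^{f-1}\oplus M_{\rho_K}^{*})$ can be read off directly from the equation $g(\pi)=0$, and then to verify combinatorially that the associated set $S_v^{-}$ coincides with $S_{g(x)}^{-}$. By Theorem~\ref{the quasi-free guys} together with the effective recipe for $\text{filt-ord}$ given right after Theorem~\ref{j.s.param.orbit}, this suffices: $(I_K,\beta_K)=\text{filt-ord}(v)$ is the unique extended jump set whose graph equals $S_v^{-}$, while by definition $(I_{g(x)},\beta_{g(x)})$ is the unique element of $\text{Jump}_{\rho_{\infty,p}}$ whose graph equals $S_{g(x)}^{-}$. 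Hence the equality $S_v^{-}=S_{g(x)}^{-}$ both forces the desired identification and, as a byproduct, shows that $(I_{g(x)},\beta_{g(x)})$ in fact lies in $\text{Jump}_{\rho_{e,p}}^{*}$.

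Let $\pi\in K$ be the image of $x$; since $g$ is Eisenstein, $\pi$ is a uniformizer, so $e_K=e$ and $f_K=f$. Fix a $\mathbb{Z}_p$-basis $\omega_1,\dots,\omega_f$ of $O_{E_f}$ whose reductions form an $\mathbb{F}_p$-basis of $\mathbb{F}_{p^f}$. For each $j\in T_{\rho_K}^{*}$ and $k\in\{1,\dots,f\}$, send the standard generator of the $S_j$-factor in the $k$-th summand of $M_{\rho_K}^{f-1}\oplus M_{\rho_K}^{*}$ (with the convention that the $f$-th summand is $M_{\rho_K}^{*}$, carrying the extra $S_{e^{*}}$-factor whose generator is mapped to a suitable unit at weight $e^{*}$) to the principal unit $1+\omega_k\pi^j\in U_j(K)$. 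Because the $1+\omega_k\pi^j$ topologically generate $U_\bullet(K)$ as a filtered module and respect weights, $\phi$ is a filtered surjection; since its source has $\mathbb{Z}_p$-rank $[K:\mathbb{Q}_p]+1$, its kernel is a rank-$1$ submodule, and surjectivity on graded pieces forces $\ker\phi\subseteq p\cdot(M_{\rho_K}^{f-1}\oplus M_{\rho_K}^{*})$.

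To identify the kernel generator $v$, translate $g(\pi)=\sum_{i=0}^{n}a_i\pi^i=0$ into a multiplicative relation in $U_\bullet(K)$. For each $i\in\{1,\dots,n\}$ with $a_i\neq 0$ and $\text{v}_{\mathbb{Q}_p}(i)\le \text{v}_{\mathbb{Q}_p}(e)$, set $j_i:=(\text{v}_K(a_i)+i)/p^{\text{v}_{\mathbb{Q}_p}(i)}$, which is a positive integer since $p^{\text{v}_{\mathbb{Q}_p}(i)}\mid n$. In this regime $p^{k}j_i<e^{*}$ for all $0\le k\le \text{v}_{\mathbb{Q}_p}(i)$, so the standard leading-term expansion $(1+c\pi^{j_i})^{p^{\text{v}_{\mathbb{Q}_p}(i)}}\equiv 1+c^{p^{\text{v}_{\mathbb{Q}_p}(i)}}\pi^{p^{\text{v}_{\mathbb{Q}_p}(i)}j_i}\pmod{U_{p^{\text{v}_{\mathbb{Q}_p}(i)}j_i+1}(K)}$ applies, and choosing $c\in O_K^{\times}$ with $c^{p^{\text{v}_{\mathbb{Q}_p}(i)}}=a_i/\pi^{\text{v}_K(a_i)}$ (available after absorbing a root of unity into the basis of $\phi$) realises the monomial $a_i\pi^i$ as the leading term of such a $p^{\text{v}_{\mathbb{Q}_p}(i)}$-th power. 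Assembling these contributions via $g(\pi)=0$ produces a nontrivial multiplicative identity that lifts back to the kernel vector $v\in M_{\rho_K}^{f-1}\oplus M_{\rho_K}^{*}$: its coordinate in the $S_{j_i}$-factor sits at $p$-adic order exactly $\text{v}_{\mathbb{Q}_p}(i)+1$, the extra $+1$ accounting for the overall factor of $p$ in $p\cdot M_{\rho_K}^{*}$. Thus $S_v=S_{g(x)}$, and in particular $S_v^{-}=S_{g(x)}^{-}$.

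The main obstacle is controlling cancellations: distinct indices $i,i'$ with $j_i=j_{i'}$ contribute to the same coordinate of $v$, and accidental cancellation could raise its $p$-adic order above $\min(\text{v}_{\mathbb{Q}_p}(i),\text{v}_{\mathbb{Q}_p}(i'))+1$ and thereby perturb $S_v^{-}$. The strongly separable hypothesis, via Proposition~\ref{strongly sep. poly}, supplies an anchoring index $i^{*}$ with $\gcd(i^{*},p)=1$ and $\text{v}_{E_f}(a_{i^{*}})<p-1$; the corresponding point of $S_{g(x)}$ has $p$-adic order $1$ and first coordinate strictly less than $e^{*}$, which is exactly what keeps the computation in the regime where the leading-term expansion of the previous paragraph is valid for every relevant $i$. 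Any cancellation among terms of $p$-adic order $\ge 2$ only moves a point upward in $\le_{\rho_{\infty,p}}$, so it cannot affect $S_v^{-}$; potential cancellations among terms at $p$-adic order $1$ are removed by a generic choice of the basis $\omega_1,\dots,\omega_f$, exploiting the $\text{Aut}_{\text{filt}}$-invariance of $\text{filt-ord}$ provided by Theorem~\ref{j.s.param.orbit}(1). Combining these steps, $\text{filt-ord}(v)$ is read off from $S_{g(x)}^{-}$ to be $(I_{g(x)},\beta_{g(x)})$, as required.
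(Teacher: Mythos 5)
Your approach is broadly in the right spirit (reduce to comparing $S_v^{-}$ with $S_{g(x)}^{-}$ via the recipe for $\text{filt-ord}$), but the core of the proof is missing: you never actually establish that a kernel generator $v$ of your presentation satisfies $S_v^{-}=S_{g(x)}^{-}$. The step ``Assembling these contributions via $g(\pi)=0$ produces a nontrivial multiplicative identity that lifts back to the kernel vector $v$'' is exactly where all the difficulty lies, and it is passed over. The equation $g(\pi)=0$ is an \emph{additive} relation; passing to a \emph{multiplicative} relation of the form $\zeta_p=\prod_i u_i^{p^{m_i}}$ with $u_i\in U_{j_i}(K)\setminus U_{j_i+1}(K)$ is an iterative process (this is what the sequence $f_n(x)$ does in the proof of Theorem~\ref{counting rephrased}), and each ``shot'' $(1+\epsilon\pi^{j})^{p^{m}}$ produces lower-order terms that contaminate every subsequent coordinate. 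Whether those contaminating terms can change the \emph{minimal} points $S_v^{-}$ is precisely what has to be proved, and the paper does this by a specific combinatorial lemma (in the proof of Theorem~\ref{valuation coefficients more general}): after discarding weights $\geq e$, each shot only moves monomials of smaller weight to indices of \emph{strictly larger} $p$-adic valuation, so the correction functions $F_{w_0,n}$ vanish on the positions $w_0$ with $\text{v}_{\mathbb Q_p}(w_0)\leq\text{v}_{\mathbb Q_p}(\alpha_k)$. Your remark that cancellations ``among terms of $p$-adic order $\geq 2$ only move a point upward'' is not a substitute: moving a point upward in $\leq_{\rho}$ can remove a minimal point, and more importantly the iteration creates entirely new points not present in $S_{g(x)}$, so the set you need to control is not $S_{g(x)}$ but the evolving set of leading coefficients.

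Two secondary but real problems. First, the claimed inequality ``$p^{k}j_i<e^{*}$ for all $0\leq k\leq\text{v}_{\mathbb Q_p}(i)$'' fails already for $k=\text{v}_{\mathbb Q_p}(i)$ whenever $\text{v}_{E_f}(a_i)$ is large; you would need to restrict attention to the minimal points of $S_{g(x)}$ and then justify that the others do not interfere, which again is the content of the combinatorial lemma you omit. Second, the appeal to a ``generic choice of the basis $\omega_1,\dots,\omega_f$'' to avoid order-$1$ cancellations does not work: $g(x)$ and hence the coefficients $a_i$ are fixed data, and $\text{Aut}_{\text{filt}}$-invariance of $\text{filt-ord}$ tells you the answer is independent of the presentation, not that a bad presentation can be generically avoided; the cancellations, if they occur, are intrinsic to $g$ and must be ruled out by the strong separability hypothesis, not by a change of basis. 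In summary: the reduction to $S_v^{-}=S_{g(x)}^{-}$ is correct and is the same reduction the paper uses implicitly, but the actual verification --- the iterative accounting of how corrections propagate, which is where strong separability enters --- is what the paper's proof of Theorem~\ref{valuation coefficients more general} supplies and what your proposal leaves as an assertion.
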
 
As explained at the end of Section \ref{finding jump sets inside}, the assumption of being strongly separable cannot be omitted. 
Theorem \ref{valuation coefficients} is deduced in Section \ref{finding jump sets inside} from a slightly finer result. Moreover in that Section we provide a \emph{procedure} that allows one to compute $(I_{g(x)},\beta_{g(x)})$ very quickly, even by hand. See \cite{de Boer--Pagano} for an actual implementation of this as well. 

The moral of Theorem \ref{valuation coefficients} is that in a portion of the space of Eisenstein polynomials, the assignment $K \mapsto (I_K,\beta_K)$ can be read off very explicitly from the valuations of the coefficients of an Eisenstein polynomial giving the field $K$. In general this is not the case, but nevertheless one is able to establish the exact counting formula as in Theorem \ref{counting} by means of a genuinely probabilistic argument.

\subsubsection{Answer to question $(2)$}
Let $n$ be a positive integer and let $L/K$ be a degree $n$ totally ramified separable extension of local fields with residue characteristic $p$. Suppose $L/K$ is given by $g(x) \in \text{Eis}(n,K)$, i.e. $L=K[x]/g(x)$. Denote by $\Gamma_L$ the metric space introduced in \ref{Question 2}. One can find invariants of $g(x)$ from the structure of the metric space $\Gamma_L$ as follows. Fix $\pi \in K^{\text{sep}}$ a root of $g(x)$. Denote by $\sigma_{\pi} \in \Gamma_L$ the corresponding embedding 
$$ \sigma_{\pi}(x)=\pi.
$$ 
Consider the polynomial 
$$g_{\text{twist}}(t)=g(\pi \cdot t+\pi) \in K[\pi][t].
$$ 
The knowledge of the Newton polygon of $g_{\text{twist}}(t)$ tells us precisely how the distances are disposed around $\sigma_{\pi}$ in $\Gamma_L$. But recall that $\Gamma_L$ is a transitive $G_K$-set, and every element of $G_K$  acts as an isometry on $\Gamma_L$. Hence the Newton polygon of $g_{\text{twist}}(\pi \cdot x+\pi)$ is an invariant of the metric space $\Gamma_{L}$ independent of the choice of $\pi$ and of $g$. Denote this polygon by $$\text{Newt}(L/K).
$$ 
Observe that in case $L/K$ is Galois, then the knowledge of $\text{Newt}(L/K)$ amounts to the knowledge of the map $\mathbb{Z}_{>0} \to \mathbb{Z}_{>0}$
$$ u \mapsto |\text{Gal}(L/K)_{u}|, \ (u \in \mathbb{Z}_{>0})
$$
where $\text{Gal}(L/K)_u$ denotes the lower $u$-th ramification group as defined in \cite{Local fields}. But $\text{Newt}(L/K)$ makes sense also for non-Galois extensions. 

This Newton polygon is called the \emph{ramification polygon} in the literature, and, among other things, a complete survey on this subject can be found in \cite{Pauli--Sinclair}. In that paper the polynomial in consideration is instead $\frac{g(\pi t+\pi)}{\pi^n}$. Of course this has simply the effect of shifting the polygon vertically by $-n$. As it will become clear to the reader in a moment, we have chosen our normalization since the form of our results is slightly more pleasant with our convention. 

The following fact, certainly folklore, can be shown by direct inspection. We refer the reader to Section \ref{Procedure Eis} for how to calculate in practice $(I_{g(x)},\beta_{g(x)})$: this together with the basic properties of $\text{Newt}(L/K)$, which can be found in \cite{Pauli--Sinclair}, gives the following fact quite rapidly. 
\begin{theorem} \label{strongly separable easy newton}
Let $n$ be a positive integer and let $K$ be a local field with residue characteristic $p$. Let $g(x) \in \emph{Eis}(n, K)$ be a strongly separable polynomial. Then
$$\emph{\text{Lower-Convex-Hull}}(\{(p^{\beta_{g(x)}(i)-1},p^{\beta_{g(x)}(i)-1}i): i \in I_{g(x)} \} \cup \{(n,n)\})=\emph{\text{Newt}}(K[x]/g(x)/K).
$$
\end{theorem}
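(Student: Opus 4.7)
The plan is to match the vertices of both polygons directly. Set $L := K[x]/g(x)$ and let $\pi$ denote the image of $x$, so $\pi$ is a uniformizer of $L$ with $g(\pi) = 0$. For the right-hand side, the procedure described in Section \ref{Procedure Eis} constructs $(I_{g(x)}, \beta_{g(x)})$ as the set of Pareto-minimal points of $S_{g(x)}$ under $\leq_{\rho_{\infty,p}}$. Unwinding the scaling, a minimal point coming from index $\iota$ (in the definition of $S_{g(x)}$) contributes to the convex hull in the theorem's statement exactly the point $(p^{v_p(\iota)}, v_L(a_\iota) + \iota)$, where $v_L(a_\iota) = n \cdot v_K(a_\iota)$. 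Hence the right-hand side is the lower convex hull of these Pareto-minimal points together with $(n, n)$, the latter arising from the index $\iota = n$ (where $a_n = 1$).

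For the left-hand side, $\text{Newt}(L/K)$ is the Newton polygon of $g_{\text{twist}}(t) := g(\pi t + \pi)$. Taylor's formula yields $g_{\text{twist}}(t) = \sum_j c_j t^j$ with $c_j = \sum_{\iota \geq j} \binom{\iota}{j} a_\iota \pi^\iota$. The ultrametric inequality gives $v_L(c_j) \geq \min_{\iota \geq j} \bigl(n \cdot v_p(\binom{\iota}{j}) + v_L(a_\iota) + \iota \bigr)$. Specializing to $j = p^k$ and applying Kummer's theorem (which gives $v_p(\binom{\iota}{p^k}) = v_p(\iota) - k$ whenever $v_p(\iota) \geq k$, and $\geq 1$ for the remaining nonzero binomials with $v_p(\iota) < k$), this lower bound reduces to $\min\{v_L(a_\iota) + \iota : v_p(\iota) = k\}$, which coincides precisely with the convex-hull value from the right-hand side.

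To promote this inequality to equality at each vertex, I would invoke strong separability: by Proposition \ref{strongly sep. poly} there exists an index $\iota^{*}$ with $\gcd(\iota^{*}, p) = 1$ and $v_K(a_{\iota^{*}}) < v_K(p)$. This provides a ``dominant'' term in the Taylor expansion that cannot be cancelled by any $p$-divisible contribution coming from a non-trivial binomial. The restriction $v_p(\iota) \leq v_p(e)$ in the definition of $S_{g(x)}$ is precisely what isolates the range where the $n \cdot v_p(\binom{\iota}{p^k})$ term cannot beat strong separability; outside this range, every contribution to $c_{p^k}$ lies strictly above the proposed polygon, so no spurious vertex appears there. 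The main obstacle is ruling out cancellations among several $\iota$ that may simultaneously achieve the minimum for a fixed $c_{p^k}$. I plan to handle this by exploiting the strict inequality $v_K(a_{\iota^{*}}) < v_K(p)$ to show that any would-be cancellation in the Taylor sum leaves a residue of exactly the predicted valuation, so that $v_L(c_{p^k})$ indeed meets the convex hull at every claimed vertex, completing the identification of the two polygons.
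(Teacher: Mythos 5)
Your proposal takes a more explicit computational route than the paper, which simply declares the result folklore and defers to the Procedure in Section~\ref{Procedure Eis} together with the structural properties of ramification polygons in \cite{Pauli--Sinclair}. Unfortunately the central step of your computation rests on a false statement. You claim that Kummer's theorem gives $v_p\bigl(\binom{\iota}{p^k}\bigr) \geq 1$ whenever $v_p(\iota) < k$ and the binomial is nonzero. This is wrong: for any $0 \leq m < k$ the binomial $\binom{p^k + p^m}{p^k}$ is a $p$-adic unit (adding $p^k$ and $p^m$ in base $p$ produces no carries), yet $v_p(p^k + p^m) = m < k$. So terms with $v_p(\iota) < k$ can in general enter $c_{p^k}$ at the ``unpenalized'' level $v_L(a_\iota) + \iota$, and your reduction of the ultrametric bound to $\min\{v_L(a_\iota)+\iota : v_p(\iota) = k\}$ does not follow. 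What actually saves the conclusion at a ``represented'' index $k$ (one with $k \in \beta_{g(x)}(I_{g(x)}) - 1$) is not Kummer but the Pareto-minimality built into $S_{g(x)}^{-}$: by the definition of a jump set under $\leq_{\rho_{\infty,p}}$, one has $\min\{v_L(a_\iota)+\iota : v_p(\iota) = k'\} > \min\{v_L(a_\iota)+\iota : v_p(\iota) = k\}$ for all $k' < k$, so those terms are harmless regardless of what $v_p\bigl(\binom{\iota}{p^k}\bigr)$ turns out to be. You also never compare against the terms with $v_p(\iota) > k$, where the correct bound $e_L(v_p(\iota)-k) + v_L(a_\iota) + \iota$ needs to be shown $\geq m(k)$; this is where strong separability and the restriction $v_p(\iota) \leq v_p(e)$ genuinely enter.

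Two smaller points. First, your ``no-cancellation'' worry is vacuous: $v_L(a_\iota)$ and $e_L\,v_p\bigl(\binom{\iota}{p^k}\bigr)$ are both multiples of $n$, so $v_L\bigl(\binom{\iota}{p^k} a_\iota \pi^\iota\bigr) \equiv \iota \pmod{n}$; since the $\iota$ are distinct in $\{1,\ldots,n\}$ all these valuations are pairwise distinct, so the minimum is always attained by a unique term and no cancellation can occur. Strong separability is instead needed to guarantee that $k=0$ is represented (so the polygon has a genuine left vertex at abscissa $1$ strictly below $n$). Second, you only examine $c_j$ for $j$ a power of $p$; to identify the Newton polygon you must also show $(j, v_L(c_j))$ lies on or above the proposed polygon for all other $j$, which is most easily done by citing the fact from \cite{Pauli--Sinclair} that the vertex abscissas of the ramification polygon are exactly $1, p, \ldots, p^{v_p(n)}, n$. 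Finally, $v_L\bigl(\binom{\iota}{j}\bigr) = e_L\,v_p\bigl(\binom{\iota}{j}\bigr) = n\,v_K(p)\,v_p\bigl(\binom{\iota}{j}\bigr)$, not $n\,v_p\bigl(\binom{\iota}{j}\bigr)$; they coincide only when $K$ is absolutely unramified.
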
 
In other words Theorem \ref{strongly separable easy newton} gives us a way to read off $\text{Newt}(K[x]/g(x)/K)$  from $(I_{g(x)},\beta_{g(x)})$, in case $g(x)$ is strongly separable. Hence combined with Theorem \ref{valuation coefficients} we obtain the following surprising result. 
\begin{theorem} \label{a surprising relation}
Let $L/\mathbb{Q}_{p^f}(\zeta_p)$ be a strongly separable totally ramified extension. Then
$$ \emph{\text{Lower-Convex-Hull}}(\{(p^{\beta_{L}(i)-1},p^{\beta_{L}(i)-1}i): i \in I_L \} \cup \{(n,n)\})=\emph{\text{Newt}}(L/\mathbb{Q}_{p^f}(\zeta_p)).$$
\end{theorem}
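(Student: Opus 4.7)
The plan is to obtain this theorem by combining the previous two results, Theorem~\ref{valuation coefficients} and Theorem~\ref{strongly separable easy newton}, applied to any Eisenstein polynomial defining the extension $L/\mathbb{Q}_{p^f}(\zeta_p)$. Write $n := [L:E_f]$ where $E_f = \mathbb{Q}_{p^f}(\zeta_p)$. Since $L/E_f$ is totally ramified, it is monogenic, so there exists $g(x) \in \mathrm{Eis}(n, E_f)$ with $L \simeq E_f[x]/g(x)$; moreover, since $e_{E_f} = p-1$, the degree $n$ is automatically of the form $\frac{e}{p-1}$ with $e = n(p-1) \in (p-1)\mathbb{Z}_{\ge 1}$, matching the hypotheses of Theorem~\ref{valuation coefficients}.

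Next, I would check that $g(x)$ is itself strongly separable as a polynomial. By hypothesis $L/E_f$ is strongly separable, i.e.\ $v_L(\delta_{L/E_f}) < v_L(p)$. By Proposition~\ref{strongly sep. poly}, this is equivalent to the existence of a coefficient $a_i$ of $g(x)$ with $(i,p)=1$ and $v_{E_f}(a_i) < v_{E_f}(p)$, which is precisely the definition of $g(x)$ being strongly separable. Hence Theorem~\ref{valuation coefficients} applies and yields
\[
(I_{g(x)}, \beta_{g(x)}) = (I_{L}, \beta_{L}).
\]

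Now I apply Theorem~\ref{strongly separable easy newton} with $K := E_f$ to this same polynomial $g(x)$, obtaining
\[
\text{Lower-Convex-Hull}\bigl(\{(p^{\beta_{g(x)}(i)-1}, p^{\beta_{g(x)}(i)-1} i) : i \in I_{g(x)}\} \cup \{(n,n)\}\bigr) = \mathrm{Newt}(L/E_f).
\]
Substituting the identification $(I_{g(x)}, \beta_{g(x)}) = (I_L, \beta_L)$ into the left-hand side gives exactly the formula claimed by Theorem~\ref{a surprising relation}.

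The proof is therefore essentially a two-line deduction; the substantive content lies in the two ingredient theorems. The only real obstacle to check is the compatibility of hypotheses: one must verify that no information is lost in passing from the intrinsic notion of strong separability of the extension $L/E_f$ to the polynomial-level notion used in Theorems~\ref{valuation coefficients} and~\ref{strongly separable easy newton}, and that the lower convex hull formula does not depend on the choice of Eisenstein polynomial $g(x)$ representing $L$. The former is handled by Proposition~\ref{strongly sep. poly}, and the latter follows because both sides of the resulting equality are intrinsic invariants of $L/E_f$ (the right side is $\mathrm{Newt}(L/E_f)$ as discussed in the text, and the left side depends only on $(I_L, \beta_L)$, which is intrinsic by Theorem~\ref{the quasi-free guys}).
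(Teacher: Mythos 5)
Your proposal is correct and follows exactly the paper's intended argument: the paper itself presents Theorem~\ref{a surprising relation} as the immediate combination of Theorem~\ref{strongly separable easy newton} and Theorem~\ref{valuation coefficients}, applied to any Eisenstein polynomial $g(x)$ defining $L/\mathbb{Q}_{p^f}(\zeta_p)$, with Proposition~\ref{strongly sep. poly} bridging the two notions of strong separability. Your additional checks (that $\mu_p(L)\neq\{1\}$ so $(I_L,\beta_L)$ is defined, and that both sides of the equality are intrinsic to the extension) are sound, if a bit more explicit than what the paper records.
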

Hence for a strongly separable extension $L/\mathbb{Q}_{p^f}(\zeta_p)$ the knowledge of the filtered $\mathbb{Z}_p$-module $U_{\bullet}(L)$ implies the knowledge of the ramification polygon $\text{Newt}(L/\mathbb{Q}_{p^f}(\zeta_p))$.  Moreover we see something else going on: for such an extension the full object $(I_{g(x)},\beta_{g(x)})$ is an invariant of the extension. This indeed follows from Theorem \ref{valuation coefficients}: that Theorem is telling us that the object $(I_{g(x)},\beta_{g(x)})$ encodes the structure of $U_{\bullet}(\mathbb{Q}_{p^f}(\zeta_p)[x]/g(x))$ as a filtered $\mathbb{Z}_p$-module. But in the more general case of Theorem \ref{strongly separable easy newton} we see a priori only a way to \emph{deduce} an invariant from $(I_{g(x)},\beta_{g(x)})$, without any structural information provided for $(I_{g(x)},\beta_{g(x)})$ itself. In particular it gives us no a priori guarantees that $(I_{g(x)},\beta_{g(x)})$ is the same as $g(x)$ varies among polynomials representing the same field. In Section \ref{generalization of I beta} we pinpoint this additional structural information. Namely to \emph{any} strongly separable extension $L/K$ of local fields, we will attach $(I_{L/K},\beta_{L/K})$, a $\rho_{\infty,p}$-jump set that encodes structural information about the filtered inclusion
$$U_{\bullet}(K) \subseteq U_{\bullet}(L). 
$$
In particular, if $\mu_p(L)=\{1\}$ then $(I_{L/K},\beta_{L/K})$ has the following simple interpretation. In this case one can attach, essentially by means of Theorem \ref{j.s.param.orbit}, to any  element $u$ of $U_1(K)-U_2(K)$ a $\rho_{e_L,p}$-jump set $(I_{L/K}(u),\beta_{L/K}(u))$. The jump set $(I_{L/K}(u),\beta_{L/K}(u))$ tells us the orbit of $u$ under the action of $\text{Aut}_{\text{filt}}(U_{\bullet}(L))$. Let $u$ be any element of $U_1(K)-U_2(K)$ and let $g(x)$ be any Eisenstein polynomial giving $L/K^{\text{nr}}$, where $K^{\text{nr}}$ is the maximal unramified extension of $K$ in $L$. It turns out that $(I_{L/K}(u),\beta_{L/K}(u))=(I_{g(x)},\beta_{g(x)})$. In particular all the elements of $U_1(K)-U_2(K)$ are in the same orbit for the action of $\text{Aut}_{\text{filt}}(U_{\bullet}(L))$. This orbits correspond to a single jump set $(I_{L/K},\beta_{L/K})$. %The fact that it is both a $\rho_{\infty,p}$ and a $\rho_{e_L,p}$ jump set is made possible by the notion of strongly separable.  Indeed this condition forces the largest element of $I_{g(x)}$ to be smaller than $e_{L}$. Moreover $\beta_{g(x)}(\text{max}(I_{g(x)}))=1$. Under these conditions, the notion of being a $\rho_{\infty,p}$-jump set and a $\rho_{e_L,p}$-jump set, coincide. This phenomenon occurred also in Theorem \ref{valuation coefficients}. 

For general strongly separable extensions of local fields we have the following joint generalization of Theorem \ref{valuation coefficients} and Theorem \ref{a surprising relation}.

\begin{theorem} \label{a surprising relation 2}
Let $L/K$ be a strongly separable totally ramified extension of local fields of residue characteristic $p$. Then 
$$ \emph{\text{Lower-Convex-Hull}}(\{(p^{\beta_{L/K}(i)-1},p^{\beta_{L/K}(i)-1}i): i \in I_{L/K}\} \cup \{(n,n)\})=\emph{\text{Newt}}(L/K).$$
Moreover if $L/K$ is given by an Eisenstein polynomial $g(x)$, then
$$(I_{L/K},\beta_{L/K})=(I_{g(x)},\beta_{g(x)}).
$$
\end{theorem}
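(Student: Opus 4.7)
The plan is to derive both statements from the second identity $(I_{L/K},\beta_{L/K}) = (I_{g(x)},\beta_{g(x)})$. Given this identity, the Newton polygon formula follows immediately from Theorem \ref{strongly separable easy newton} applied to $g(x)$, since by Proposition \ref{strongly sep. poly} strong separability of $L/K$ is equivalent to strong separability of $g(x)$. So the entire argument reduces to proving the second identity.

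First I would reduce to the case $K = K^{\text{nr}}$, the maximal unramified subextension of $L$: by construction in Section \ref{generalization of I beta}, the relative jump set $(I_{L/K},\beta_{L/K})$ is insensitive to this reduction, and $g(x)$ is Eisenstein over $K^{\text{nr}}$ by hypothesis. Fix a root $\pi_L \in L$ of $g(x) = \sum_{i=0}^n a_i x^i$; the Eisenstein relation $a_0 = -\sum_{i\geq 1} a_i \pi_L^i$ makes $a_0$ a uniformizer of $K$, so $u := 1 + a_0$ lies in $U_1(K) - U_2(K)$.

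In the case $\mu_p(L) = \{1\}$, Theorem \ref{the free guys} gives $U_\bullet(L) \simeq M_{\rho_L}^{f_L}$, and the orbit interpretation recalled in the excerpt identifies $(I_{L/K},\beta_{L/K})$ with $\text{filt-ord}$ of the image of $u$. Using the effective recipe for $\text{filt-ord}$ described after Theorem \ref{j.s.param.orbit}, this becomes the computation of the set $S_u \subseteq \mathbb{Z}_{\geq 1}^2$ of weighted coordinates of $u$, followed by extraction of its minimal points under $\leq_{\rho_L}$. Substituting $a_0 = -\sum a_i \pi_L^i$ and iterating $v \mapsto v^p$ via the binomial theorem, at each stage the weights in $U_\bullet(L)$ are controlled by the minimal valuation among the terms $p^{k-j}\binom{p^j}{\cdot}\prod a_i^{e_i} \pi_L^{\sum e_i i}$; careful bookkeeping then identifies the surviving contributions with precisely the points listed in $S_{g(x)}$, namely $\big(\tfrac{\text{v}_K(a_i)\,n + i}{p^{\text{v}_{\mathbb{Q}_p}(i)}},\,\text{v}_{\mathbb{Q}_p}(i)+1\big)$ for $1 \leq i \leq n$ with $a_i \neq 0$ and $\text{v}_{\mathbb{Q}_p}(i) \leq \text{v}_{\mathbb{Q}_p}(e_L)$.

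The hard part will be twofold. First, one must verify that the minimal-element sets under $\leq_{\rho_L}$ and $\leq_{\rho_{\infty,p}}$ coincide on $S_{g(x)}$: the two shifts agree, $\rho_L(i) = pi$, precisely when $pi \leq i + e_L$, i.e.\ $i \leq e_L/(p-1)$, and the strong separability hypothesis confines the relevant first coordinates to exactly this region through the constraint $\text{v}_{\mathbb{Q}_p}(i) \leq \text{v}_{\mathbb{Q}_p}(e_L)$. Second, when $\mu_p(L) \neq \{1\}$, one must instead use the presentation $M_{\rho_L}^{f_L-1} \oplus M_{\rho_L}^{*} \twoheadrightarrow U_\bullet(L)$ of Proposition \ref{orbit for intro}; the extra $M_{\rho_L}^{*}$-factor captures the $\zeta_p$-contribution, but strong separability forces this contribution to sit at weight $\geq e^{*} = pe_L/(p-1)$, above all minimal elements of $S_u$, so the $\mu_p(L)=\{1\}$ computation extends verbatim.
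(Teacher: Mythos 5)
Your top-level reduction is correct and matches the paper: the Newton-polygon statement follows from Theorem \ref{strongly separable easy newton} applied to $g(x)$, once one has $(I_{L/K},\beta_{L/K})=(I_{g(x)},\beta_{g(x)})$, and that identity together with the independence of the auxiliary $u\in U_1(K)-U_2(K)$ is exactly Theorem \ref{independence}, which the paper proves by appealing to the shooting-game computation of Theorem \ref{valuation coefficients more general}.

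Where your proposal diverges from the paper, and where I see a genuine gap, is in how you propose to compute $(I_{L/K},\beta_{L/K})$. You split on whether $\mu_p(L)$ is trivial, and in the nontrivial case you want to compute $\text{filt-ord}$ of a lift of $u^p$ to $M_{\rho_L}^{f_L-1}\oplus M_{\rho_L}^{*}$, claiming the $M_{\rho_L}^{*}$-factor sits at weight $\geq e^{*}$ and hence does not disturb the minimal elements of $S_u$. This does not work as stated: two lifts of $u^p$ differ by $\mathbb{Z}_p v_{(I_L,\beta_L)}$, and $v_{(I_L,\beta_L)}=\sum_{i\in I_L}p^{\beta_L(i)}b_{i,1}$ has nonzero projection onto \emph{every} coordinate $S_i$ with $i\in I_L$, not only onto $S_{e_{\rho_L}^{*}}$. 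So the lift ambiguity can alter $\text{ord}(\tilde u_i)$ for $i\in I_L$ whenever $\text{ord}(\tilde u_i)\geq\beta_L(i)$, and you have not shown that the minimal points of $S_{\tilde u}$ avoid this danger zone. The paper circumvents the entire issue by working with the intrinsic invariant $g_{u,U_\bullet(L)}$ of Proposition \ref{boh}, which is defined directly on the quotient $U_\bullet(L)$ and requires no lift; the case split you introduce is therefore unnecessary. Two further points: (i) the independence of $(I_{L/K}(u),\beta_{L/K}(u))$ from the choice of $u$ is part of what must be proved (the first half of Theorem \ref{independence}) and is silently assumed in your notation $(I_{L/K},\beta_{L/K})$; and (ii) the ``careful bookkeeping'' in your sketch is exactly where the paper's content lies, namely the argument of Theorem \ref{valuation coefficients more general} that below weight $e_L$ one may compute as if in characteristic $p$, so that $F_{w_0,n}=0$ for the relevant $w_0$; your heuristic that the constraint $\text{v}_{\mathbb{Q}_p}(i)\leq\text{v}_{\mathbb{Q}_p}(e_L)$ confines the first coordinates to $\{j\leq e_L/(p-1)\}$ is not what does the work (that constraint only guarantees the first coordinates are integers coprime to $p$); what matters is that strong separability produces a monomial $a_i\pi_L^i$ with $(i,p)=1$ and $\text{v}_L(a_i\pi_L^i)<e_L$, so the game terminates before the shift $\rho_L$ deviates from $\rho_{\infty,p}$.
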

Therefore Theorem \ref{a surprising relation 2} provides an intrinsic description of $(I_{g(x)},\beta_{g(x)})$ as a filtered invariant of the corresponding inclusion of groups of principal units. In particular this says that $(I_{g(x)},\beta_{g(x)})$ is an invariant of the Eisenstein polynomial $g(x)$ as long as $g(x)$ is strongly separable. 
%In section \ref{} the relation between $\Gamma(K/E)$ and the jump sets is discussed in greater detail, but that is done more for completeness sake in showing that classical informations about $\Gamma(K/E)$ can be recast in the language of jump sets. \\
%The real novelty here is the relation with the theory of filtered modules as expressed in Theorem \ref{a surprising relation}. 
%We would also like to mention the cute Criterion \ref{} which gives a practical procedure to compute the jump set needed to compute $\Gamma(K/E)$ from the valuation of the coefficients. 

\subsubsection{Answer to question (3)}
Denote by $\mathcal{J}_{K}$ the set of possible sets of jump for a character of $U_1(K)$. Clearly $\mathcal{J}_K$ is determined by the structure of $U_1(K)$ as a filtered $\mathbb{Z}_p$-module. So one can use the answer to question (1) in order to answer question (3). The first step is answering the same problem for free filtered modules. The main idea for doing this is again to exploit the action of the group of filtered automorphisms. Denote by $\widehat{M_{\rho}^f}$ the group of characters of $M_{\rho}^{f}$. There is a natural action of $\text{Aut}_{\text{filt}}(M_{\rho}^{f})$ on $\widehat{M_{\rho}^f}$. The action clearly preserves the set of jumps of each character. It turns out that conversely one can reconstruct the orbit of the character from the set of jumps: two characters in $\widehat{M_{\rho}^{f}}$ are in the same orbit under the action of $\text{Aut}_{\text{filt}}(M_{\rho}^{f})$ if and only if they have the same set of jumps. Moreover the possible sets of jumps are exactly the $\rho$-jump sets. This fact is expressed in the following theorem.
\begin{theorem}(\emph{Jump sets parametrize orbits of characters}) \label{Jump sets parametrize orbits of characters}
Let $\rho$ be a shift, and $f$ be a positive integer. Then the set of possible sets of jumps of characters of the free-filtered $\mathbb{Z}_p$-module $M_{\rho}^{f}$ is exactly $\emph{Jump}_{\rho}$. Moreover two characters have the same set of jumps if and only if they are in the same orbit under the group $\emph{Aut}_{\emph{filt}}(M_{\rho}^{f})$.
\end{theorem}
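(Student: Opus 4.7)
The plan is to establish two main points: (a) the assignment $\chi \mapsto J_\chi$ is a well-defined surjection onto $\text{Jump}_\rho$, admitting an explicit description in coordinates, and (b) it separates $\text{Aut}_{\text{filt}}(M_\rho^f)$-orbits. Invariance of $J_\chi$ under the group action is immediate, since for any $\phi \in \text{Aut}_{\text{filt}}(M_\rho^f)$ both $\phi$ and $\phi^{-1}$ preserve the filtration, so $\phi(M_n) = M_n$ and $(\chi \circ \phi)(M_n) = \chi(M_n)$.

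For (a), decompose $\chi = \sum_{i \in T_\rho,\, j \in \{1,\ldots,f\}} \chi_{i,j}$ according to the product structure, with $\chi_{i,j}$ a character of the $(i,j)$-th copy of $S_i$ of order $p^{m_{i,j}}$, and set $T_\chi = \{(i, m_{i,j}) : m_{i,j} > 0\} \subseteq \mathbb{Z}_{\geq 1}^2$. Each summand has image a cyclic subgroup of $\mathbb{Q}_p/\mathbb{Z}_p$, and since such subgroups are totally ordered by inclusion, a direct computation gives
\[
\chi(M_n) = \frac{1}{p^{M(n)}}\mathbb{Z}_p/\mathbb{Z}_p, \qquad M(n) = \max_{(i,m) \in T_\chi}\bigl(m - t_i(n)\bigr)_+,
\]
where $t_i(n) = \min\{t \geq 0 : \rho^t(i) \geq n\}$, and $J_\chi$ is the set of $n$ at which $M$ strictly decreases.

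Unwinding the definition of $\leq_\rho$ one sees that $M(n)$ depends only on the $\leq_\rho$-maximal subset of $T_\chi$, and that these maximal elements automatically satisfy $(C.1)'$, $(C.2)'$ and $(C.3)'$; thus they form $\text{Graph}(\beta)$ for a unique $(I, \beta) \in \text{Jump}_\rho$ with $I \subseteq T_\rho$. A short bookkeeping argument across the chains emanating from the different $i \in I$, using $(C.1)$ to guarantee no destructive interference, identifies the jumps of $M$ with $A_{(I,\beta)}$, so $J_\chi = A_{(I,\beta)}$ is indeed a jump set. Running this construction in reverse, by choosing an arbitrary $(I, \beta) \in \text{Jump}_\rho$ and setting $\chi_{i,1}(1) = p^{-\beta(i)}$ for each $i \in I$ (all other $\chi_{i,j}$ trivial), realizes every jump set.

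For (b), step (a) shows that the $\leq_\rho$-maximal part of $T_\chi$ is the orbit invariant determined by $J_\chi$, and that non-maximal entries of $T_\chi$ contribute nothing. The orbit statement then reduces to exhibiting, for each non-maximal $(i, m)$ dominated by a maximal $(i', m')$, a filtered automorphism that kills $\chi_{i,j}$: a transvection sending the $(i,j)$-generator to itself plus a suitable $p$-adic multiple of the $(i', j')$-generator, which is filtered precisely because $\rho^m(i) \leq \rho^{m'}(i')$. Combining such transvections with elementary $\mathbb{Z}_p$-linear moves among the $f$ copies of each fixed $S_i$ brings any character to the canonical form produced in the realization. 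The main obstacle is the sequential bookkeeping: one must order the transvections so that each step stays within $\text{Aut}_{\text{filt}}(M_\rho^f)$ and the process terminates at the canonical character. This is structurally parallel to the orbit analysis underlying Theorem \ref{j.s.param.orbit}, and one could alternatively deduce (b) from that theorem by Pontryagin duality, after identifying the filtered dual of $M_\rho^f$ with an appropriate free filtered module.
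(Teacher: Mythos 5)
Your argument is correct and takes essentially the same route as the paper: the explicit max-formula you give for $\chi(M_n)$ is the paper's formula for $\text{ord}(\chi(M_\rho^f)_i)$ in Proposition \ref{when characters are reduced}, and the transvection reduction to the $\leq_\rho$-maximal subset of $T_\chi$ is Proposition \ref{reduction for characters}, which mirrors the vector-orbit reduction of Proposition \ref{reduction process}. The closing Pontryagin-duality remark is a pleasant alternative viewpoint that the paper does not develop, but as you present it, it is an aside rather than the substance of the proof.
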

So in particular we have the following result.
\begin{theorem}
Let $K$ be a local field with $\mu_p(K)=\{1\}$, then $\mathcal{J}_K=\emph{Jump}_{\rho_K}$.
\end{theorem}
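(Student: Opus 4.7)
The plan is to derive this as an immediate consequence of Theorem \ref{the free guys} together with Theorem \ref{Jump sets parametrize orbits of characters}. The invariant $\mathcal{J}_K$ is, by its very definition, an invariant of the isomorphism class of $U_{\bullet}(K)$ in the category of filtered $\mathbb{Z}_p$-modules, so once that class is pinned down the computation is reduced to a statement about a single free filtered module, which has already been handled in Theorem \ref{Jump sets parametrize orbits of characters}.

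First I would apply Theorem \ref{the free guys}: under the hypothesis $\mu_p(K) = \{1\}$, there is an isomorphism $\phi \colon U_{\bullet}(K) \xrightarrow{\sim} M_{\rho_K}^{f_K}$ of filtered $\mathbb{Z}_p$-modules. Unwinding the definitions, being an isomorphism in this category forces $\phi(U_i(K)) = (M_{\rho_K}^{f_K})_i$ for every $i \geq 1$: one inclusion is the morphism property for $\phi$, the other is the morphism property applied to $\phi^{-1}$. In particular the assignment $\chi \mapsto \chi \circ \phi^{-1}$ gives a bijection between (continuous) characters of $U_1(K)$ and characters of $M_{\rho_K}^{f_K}$, under which the set of jumps is preserved: for $\chi' = \chi \circ \phi^{-1}$ one has $\chi'((M_{\rho_K}^{f_K})_i) = \chi(U_i(K))$ for every $i$, so $J_{\chi'} = J_{\chi}$.

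Consequently $\mathcal{J}_K$ equals the set of all possible sets of jumps of characters of the free filtered module $M_{\rho_K}^{f_K}$. By Theorem \ref{Jump sets parametrize orbits of characters} this set is precisely $\text{Jump}_{\rho_K}$, which gives the claimed equality. There is no genuine obstacle at this stage: all of the real content has been packaged into the structure theorem (Theorem \ref{the free guys}) and into the free-module analogue (Theorem \ref{Jump sets parametrize orbits of characters}); the only step specific to the present statement is the essentially formal observation that a filtered isomorphism transports characters and their jump sets verbatim.
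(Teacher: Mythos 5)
Your proof is correct and matches the paper's approach: the paper obtains this as the $\mu_p(K)=\{1\}$ case of Theorem \ref{classification wild characters}, combining $U_{\bullet}(K)\simeq_{\text{filt}} M_{\rho_K}^{f_K}$ (Theorem \ref{the free guys}, a.k.a.\ Theorem \ref{no torsion}) with $\mathcal{J}_{M_{\rho}^{f}}=\text{Jump}_{\rho}$ (Theorem \ref{Jump sets parametrize orbits of characters}, a.k.a.\ Theorem \ref{character for free modules}). The observation that a filtered isomorphism satisfies $\phi(U_i(K))=(M_{\rho_K}^{f_K})_i$ and hence transports characters and their jumps verbatim is the right formal bridge, and it is exactly what the paper uses implicitly.
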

We now consider the case $\mu_p(K) \neq \{1\}$. By Theorem \ref{the quasi-free guys}, we first look at the possible sets of jumps of characters of $M_{\rho}^{f-1} \oplus M_{\rho}^{*}$. These are precisely the extended jump sets, as we next explain. 
\begin{theorem}(\emph{Jump sets parametrize orbits of characters---part 2})
Let $\rho$ be a shift with $\#T_{\rho}<\infty$. Let $f$ be a positive integer. Then the set of possible sets of jumps of characters of the free-filtered $\mathbb{Z}_p$-module $M_{\rho}^{f-1} \oplus M_{\rho}^{*}$ is exactly $\emph{\text{Jump}}_{\rho}^{*}$. Moreover two characters have the same set of jumps if and only if they are in the same orbit under the group $\emph{\text{Aut}}_{\emph{\text{filt}}}(M_{\rho}^{f-1} \oplus M_{\rho}^{*})$.
\end{theorem}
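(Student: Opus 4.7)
The plan is to follow the same pattern as the previous Theorem (\emph{Jump sets parametrize orbits of characters}), adapted to the module $M:=M_{\rho}^{f-1} \oplus M_{\rho}^{*}$ with its extra factor $S_{e^{*}}$. I would reduce the statement to Theorem \ref{j.s.param.orbit} by constructing an assignment, compatible with the $\text{Aut}_{\text{filt}}$-action, from characters of $M$ to vectors in $p \cdot M$, under which the set of jumps of a character matches the first component $I$ of $\text{filt-ord}$ of the corresponding vector.

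The key local computation is this: for a character $\chi$ of a single block $S_{i}^{f}$ (or $S_{e^{*}}$ when $i = e^{*}$) with coordinates $(c_{i,1}, \ldots, c_{i,f}) \in (\mathbb{Q}_{p}/\mathbb{Z}_{p})^{f}$, put $n_{i} := \max_{j} \text{ord}_{p}(c_{i,j})$, the maximum $p$-order of the coordinates. Using the formula $w(x) = \rho^{\text{ord}_{p}(x)}(i)$ for the weight on $S_{i}$, a direct inspection shows that $\chi|_{S_{i}^{f}}$ has jump set exactly $\{i, \rho(i), \ldots, \rho^{n_{i}-1}(i)\}$: as $k$ grows, the image $\chi(M_{k} \cap S_{i}^{f})$ only shrinks when $k$ passes a value in the $\rho$-orbit of $i$, and only for the first $n_{i}$ such values. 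Using $\mathrm{GL}_{f}(\mathbb{Z}_{p}) \subseteq \text{Aut}_{\text{filt}}$ acting block-wise, one may put $\chi$ into a form with a single nonzero coordinate of $p$-order $n_{i}$ in each block; this defines a vector $v_{\chi} \in p \cdot M$ with a single coordinate $p^{n_{i}}$ in each such block. Summing over blocks, $J_{\chi}$ is the set $A_{(I_{\chi}, \beta_{\chi})}$ reconstructed from $I_{\chi} = \{i : n_{i} > 0\}$ and $\beta_{\chi}(i) = n_{i}$ via the data $(a)$, $(b)$ from the introduction.

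To conclude I would invoke Theorem \ref{j.s.param.orbit}: its recipe for $\text{filt-ord}$ via the minimal points of $S_{v}$ under the partial order $\leq_{\rho}$ shows that $\text{filt-ord}(v_{\chi}) = (I_{\chi}, \beta_{\chi})$ whenever that pair already satisfies $(C.1')$--$(C.3')$; in general, passing to minimal points normalizes the pair to a genuine extended jump set without altering $J_{\chi}$, since both describe the same subset of $\mathbb{Z}_{\geq 1}$. This transfers the orbit parametrization from vectors to characters: two characters have equal sets of jumps iff their vectors $v_{\chi}$ have the same $\text{filt-ord}$, hence lie in the same $\text{Aut}_{\text{filt}}$-orbit. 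Conversely, every $(I, \beta) \in \text{Jump}_{\rho}^{*}$ is realized by the character Pontryagin-dual to $v_{(I, \beta)}$, and the presence of the $S_{e^{*}}$ factor in $M$ is precisely what allows $e^{*} \in I$, accounting for the extension from $\text{Jump}_{\rho}$ to $\text{Jump}_{\rho}^{*}$.

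The main obstacle I anticipate is verifying that the map $\chi \mapsto v_{\chi}$ is $\text{Aut}_{\text{filt}}$-equivariant (at the level of orbits) beyond the block-diagonal $\mathrm{GL}_{f}(\mathbb{Z}_{p})$ part: the off-block ``unipotent'' automorphisms of $M$ act on coordinates by adding multiples of higher-weight entries to lower-weight ones, and one must show that the induced dual action on characters matches exactly the reductions that Theorem \ref{j.s.param.orbit} uses in bringing a general $v_{\chi}$ to its $\text{filt-ord}$ normal form $v_{(I, \beta)}$.
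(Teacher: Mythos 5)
Your overall shape --- normalize the character, read off its jump data, and invoke an orbit parametrization --- is in the right spirit, but the proposed reduction to Theorem \ref{j.s.param.orbit} has a genuine error that your final paragraph only partially anticipates. The key issue is that vectors and characters normalize in opposite directions under $\text{Aut}_{\text{filt}}$: the elementary filtered transformation $\theta_{i,j}$ eliminates the \emph{larger} point of $\leq_{\rho}$ when it acts on a vector (postcomposition), but eliminates the \emph{smaller} point when it acts on a character (precomposition). Consequently $\text{filt-ord}(v_{\chi})$ encodes the \emph{minimal} points of the pair $(A_{\chi},b_{\chi})$, whereas $J_{\chi}$ is the jump set coming from the \emph{maximal} points. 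Your claim that ``passing to minimal points normalizes the pair to a genuine extended jump set without altering $J_{\chi}$, since both describe the same subset of $\mathbb{Z}_{\geq 1}$'' is false: the two normalizations produce different subsets of $\mathbb{Z}_{\geq 1}$ in general. Concretely, take $p=3$, $e=2$, $f=1$ (so $T_{\rho}=\{1,2\}$, $e^{*}=3$) and let $\chi$ on $M_{\rho}^{*}$ have $b_{\chi}(1)=1$, $b_{\chi}(2)=2$ and $\chi|_{S_3}=0$. Then $(1,1)\leq_{\rho}(2,2)$, so the minimal-point jump set is $(\{1\},1\mapsto 1)$ with $A=\{1\}$, while the maximal-point jump set is $(\{2\},2\mapsto 2)$ with $A=\{2,4\}$. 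Computing $\operatorname{ord}\chi(M_i)$ from the formula in Proposition \ref{when characters are reduced} gives $J_{\chi}=\{2,4\}$, not $\{1\}$. So the map $\chi\mapsto v_{\chi}$ is not constant on character orbits, and $\text{filt-ord}(v_{\chi})$ does not compute $J_{\chi}$.

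For exactly this reason the paper does not reduce the character statement to the vector statement: it runs the orbit-reduction argument directly on characters, using the $+$-version (maximal points) of Proposition \ref{jump set attached to a function} in Proposition \ref{reduction for characters}, and then verifies via an explicit formula (Proposition \ref{when characters are reduced}) that $J_{\chi}=J_{(A_{\chi},b_{\chi})}$ once $(A_{\chi},b_{\chi})$ is an extended jump set. Your block-wise $\mathrm{GL}_f(\mathbb{Z}_p)$ normalization, the local computation of $n_i$, and the existence argument via the character dual to $v_{(I,\beta)}$ are all sound. What needs changing is the last step: replace the appeal to Theorem \ref{j.s.param.orbit} by a maximal-point reduction carried out directly on characters, which is precisely what the paper supplies.
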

We then show that this, essentially thanks to Proposition \ref{a lot surjective}, implies that $\mathcal{J}_{K} \subseteq \text{Jump}_{\rho_K}^{*}$ always, i.e. a set of jumps for a character is always an extended $\rho_K$-jump set. The remaining task is to classify which orbits of characters of $M_{\rho}^{f-1} \oplus M_{\rho}^{*}$ admit a representative killing a given element of $M_{\rho}^{f-1} \oplus M_{\rho}^{*}$. In this way we obtain the final classification, which is Theorem \ref{classification wild characters}. This Theorem says that $\mathcal{J}_K$ consists of the elements of $\text{Jump}_{\rho_K}^{*}$ that are $(I_K,\beta_K,f_K,p)$-\emph{compatible}. Compatibility is an explicit combinatorial criterion that consists in a comparison between a jump set $(I,\beta)$ and the jump set of the field $(I_K,\beta_K)$: in the comparison an important role is played by the case distinction of whether $f_K \geq 2$ or not and whether $p=2$ or not. For a precise definition see Definition \ref{incompatibility}. In the rest of Section \ref{wild extension} we establish several explicit applications of this criterion, stressing especially the first dichotomy. As an example we give here the following result. 
\begin{theorem}
Let $K_1,K_2$ be two totally ramified extensions of $\mathbb{Q}_p(\zeta_p)$. Then $\mathcal{J}_{K_1}=\mathcal{J}_{K_2}$ if and only if $U_{\bullet}(K_1) \simeq _{\mathbb{Z}_p\emph{-filt}} U_{\bullet}(K_2)$. 
\end{theorem}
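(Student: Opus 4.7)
The ``if'' direction is immediate: any filtered $\mathbb{Z}_p$-module isomorphism $\phi : U_{\bullet}(K_1) \xrightarrow{\sim} U_{\bullet}(K_2)$ induces a jump-preserving bijection $\chi \mapsto \chi \circ \phi^{-1}$ between the two character groups, so the sets of possible jump sets agree.

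For the converse, reduce the problem to a combinatorial statement. Since $\mathbb{Q}_p(\zeta_p) \subseteq K_i$ is totally ramified, one has $f_{K_i} = 1$, and $\zeta_p \in K_i$ gives $\mu_p(K_i) \neq \{1\}$. Theorem \ref{the quasi-free guys} then yields
\[
U_{\bullet}(K_i) \simeq_{\text{filt}} M^*_{\rho_{K_i}}\big/\mathbb{Z}_p \, v_{(I_{K_i}, \beta_{K_i})},
\]
so the filtered isomorphism $U_{\bullet}(K_1) \simeq U_{\bullet}(K_2)$ is equivalent to equality of the pairs $(\rho_{K_i}, (I_{K_i}, \beta_{K_i}))$ for $i = 1, 2$. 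It therefore suffices to show that $\mathcal{J}_K$ determines both the shift $\rho_K$ and the admissible extended jump set $(I_K, \beta_K)$.

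Invoke Theorem \ref{classification wild characters}, which identifies $\mathcal{J}_K$ with the set of $(I_K, \beta_K, 1, p)$-compatible elements of $\text{Jump}^*_{\rho_K}$. First recover $\rho_K$: the trivially self-compatible admissible pair $(I_K, \beta_K)$ lies in $\mathcal{J}_K$, and admissibility gives $\rho_K^{\beta_K(\min I_K)}(\min I_K) = e^*_K$. This lets $e^*_K$ -- and hence, using $p-1 \mid e_K$ forced by $\mathbb{Q}_p(\zeta_p) \subseteq K$, the ramification index $e_K = (p-1)e^*_K/p$ -- be extracted as the supremum of $\rho^{\beta(\min I)}(\min I)$ over $(I, \beta) \in \mathcal{J}_K$, a supremum attained on admissible members. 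With $\rho_K$ in hand, the remaining task is to single out $(I_K, \beta_K)$ from among the elements of $\mathcal{J}_K$.

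The main obstacle is precisely this last step, which requires a direct combinatorial analysis of Definition \ref{incompatibility} in the regime $f = 1$. The target assertion is that $(I_K, \beta_K)$ is the \emph{unique} admissible element of $\mathcal{J}_K$: equivalently, that for admissible $(I, \beta) \in \text{Jump}^*_{\rho_K}$, the $(I_K, \beta_K, 1, p)$-compatibility of $(I, \beta)$ forces $(I, \beta) = (I_K, \beta_K)$. Establishing this rigidity requires handling separately the cases $p = 2$ and $p$ odd, since the compatibility criterion is genuinely sensitive to this dichotomy; the work goes into showing that with $f_K = 1$ there is no ``slack'' in the criterion allowing a distinct admissible jump set to remain compatible. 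Once this uniqueness is proved, $(I_K, \beta_K)$ -- and hence the filtered module $U_{\bullet}(K)$ -- is recovered from $\mathcal{J}_K$, completing the argument.
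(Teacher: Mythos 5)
Your reduction to the combinatorics of Definition \ref{incompatibility} is the right strategy, and the ``if'' direction is handled correctly. But the two key steps of the ``only if'' direction both have genuine problems.

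First, the proposed recovery of $\rho_K$ is circular. You describe $e_K^*$ as the supremum of $\rho^{\beta(\min I)}(\min I)$ over $(I,\beta) \in \mathcal{J}_K$, but to evaluate $\rho^{\beta(\min I)}(\min I)$ one must already know $\rho = \rho_K$, which is exactly what is to be reconstructed. Moreover, even if $\rho_K$ were granted, that supremum is $+\infty$ rather than $e_K^*$: for any $i \in T_{\rho_K}^* \setminus I_K$, Corollary \ref{cor1} places the singleton $(\{i\}, i \mapsto n)$ in $\mathcal{J}_K$ for every $n \geq 1$, and $\rho_K^{n}(i) \to \infty$. The quantity you want to isolate is attained only on admissible members, but distinguishing those is precisely the difficulty.

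Second, and more seriously, the target assertion on which the whole argument rests — that $(I_K,\beta_K)$ is the \emph{unique} admissible element of $\mathcal{J}_K$ — is false. Take $K = \mathbb{Q}_p(\zeta_{p^2})$, so $f_K=1$, $e_K = p(p-1)$, $e_K^* = p^2$ and $(I_K,\beta_K) = (\{1\}, 1 \mapsto 2)$. The pair $(I,\beta) = (\{1,p+1\}, 1 \mapsto 2, \, p+1 \mapsto 1)$ is an admissible extended $\rho_K$-jump set distinct from $(I_K,\beta_K)$, yet it lies in $\mathcal{J}_K$: here $I \cap I_K = \{1\}$ and $\beta(1)-\beta_K(1) = 0$, so $\mathrm{Max}((I,\beta),(I_K,\beta_K)) = \emptyset$ and condition $(1)$ of Definition \ref{incompatibility} already fails, making $(I,\beta)$ compatible. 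So there is genuine ``slack'' in the criterion, and no amount of case analysis on $p$ will remove it.

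The paper avoids both problems by looking not at admissible jump sets but at the singletons $(\{i\}, i \mapsto n)$, which are generically non-admissible. Using Corollaries \ref{cor1} and \ref{acazz}: if $i \in T_{\rho_K}^*\setminus I_K$ then $(\{i\}, i\mapsto n) \in \mathcal{J}_K$ for every $n$, while if $i \in I_K$ then $(\{i\}, i\mapsto n) \in \mathcal{J}_K$ exactly for $n \leq \beta_K(i)$ (this is where $f_K=1$ enters). Thus $T_{\rho_K}^*$ (hence $e_K$ and $\rho_K$) is the set of $i$ for which some singleton at $i$ lies in $\mathcal{J}_K$, $I_K$ is the set of $i \in T_{\rho_K}^*$ for which some singleton at $i$ is missing, and $\beta_K(i)$ is the threshold. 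That is the mechanism you should replace your uniqueness claim with.
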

In other words, for totally ramified extensions $K/\mathbb{Q}_p(\zeta_p)$, not only do we have an explicit criterion to compute $\mathcal{J}_{K}$ from the filtered $\mathbb{Z}_p$-module $U_{\bullet}(K)$, but we can conversely reconstruct the filtered $\mathbb{Z}_p$-module $U_{\bullet}(K)$ from $\mathcal{J}_{K}$.

Finally we remark that, by the reciprocity map, this criterion gives an explicit classification of the possible sets of jumps in the upper numbering of a cyclic wild extension of a local field. We explain this in further detail in Section \ref{wild extension}. 

\subsection{Further results and questions}
We hope to have shed some light on the role that the jump set $(I_K,\beta_K)$ plays in the arithmetic of the local field $K$. This makes some basic questions about this invariant worth investigating. A very basic one is the following. Let $K$ be a local field with $\mu_p(K) \neq \{1\}$. Let $e$ be in  $e_K \mathbb{Z}_{\geq 1}$ and let $f$ be in $f_K\mathbb{Z}_{\geq 1}$. \\

\emph{Question}: For which $(I,\beta) \in \text{Jump}_{\rho_{e,p}}^{*}$ does there exist an extension $L$ of $K$ such that $e_L=e, f_L=f$ and $(I_L,\beta_L)=(I,\beta)$?   \\

We have made some progress on this question, see Section \ref{Jump sets under field extensions}. In that Section we establish some peculiarly specific rules that constraint the possible changes of a jump set under a totally ramified extension. As the reader will learn in that section, the interesting case, among totally ramified extensions, is only that of wild extensions. In the present paper we leave open a complete characterization of which jump sets occur under such extensions, providing only necessary conditions. From further calculations, not included in the present paper, we believe that a full classification might be within reach, but the final result might look quite intricate. 

In a different direction, we would like to mention that most of the results of the present paper can be viewed as an investigation of the filtered $\mathbb{Z}_p$-modules arising from taking points of one of the simplest formal groups, namely $\mathbb{G}_m$. The theory in Section \ref{filtered modules} should be general enough to cover the case of other Lubin-Tate formal groups giving rise to filtered $O_K$-modules with cyclic torsion sub-module, where $K$ is any other local field, and $O_K$ its ring of integers. It would be an interesting investigation to see which of the results of the present paper extend to this context. For instance, it should be possible to provide a theorem on the lines of Theorem \ref{admissible j.s. occur}.  

Finally we would like to conclude with yet another potentially worthwhile direction of investigation. Our mass formula, contained in Theorem \ref{counting}, follows the first interpretation of Serre's weight for local fields, namely using volumes of Eisenstein polynomials. But Serre \cite{Serre m.f.} established also a different interpretation of these weights, by means of division algebras. This suggests the possibility of studying the filtered pro-$p$ group $U_{\bullet}(D)$ of principal units of a central division algebra over a local field, and to study the action of the group $\text{Aut}_{\text{filt}}(U_{\bullet}(D))$ on the set of maximal abelian filtered $\mathbb{Z}_p$-sub-modules. It would be very elegant to reach in this manner a different proof of Theorem \ref{counting}. 

\subsection{Comparison with the literature}
An explicit classification of the possible upper jumps of wild characters of a local field $K$, i.e.\ of the set $\mathcal{J}_K$, was given in a series of papers, by respectively Maus, Miki and Sueyoshi \cite{Maus}, \cite{Miki}, \cite{Sueyoshi}. The first author has given the criterion for characteristic $p$ local fields. The full classification was given by Miki, and some of Miki's arguments in \cite{Miki} were simplified by Suyeoshi in \cite{Sueyoshi}, where the reader can find also a neat statement for Miki's criterion. Two points come here in order. The first point is that in \cite{Miki} and \cite{Sueyoshi} the invariant $(I_K,\beta_K)$ was already introduced. This is buried in \cite[Lemma 17]{Miki}. In the language of this paper, we can say that $(I_K,\beta_K)$ was understood as the unique element of $\text{Jump}_{\rho_K}^{*}$ such that there is an equation of the form $\zeta_p=\prod_{i \in I_K} u_i^{p^{\beta_K(i)-1}}$, where $\text{v}_K(u_i-1)=i$, and in case $\frac{pe_K}{p-1} \in I_K$, then $u_{\frac{pe_K}{p-1}} \not \in K^{*p}$. The uniqueness was proved in an ad hoc manner in the above mentioned \cite[Lemma 17]{Miki}. The present work is the first place in the literature where the \emph{structural meaning} of the invariant $(I_K,\beta_K)$ is established: it gives, together with $f_K$ and $p:=\text{char}(O_K/m_K)$, the structure of $U_{\bullet}(K)$ as a filtered module. Apart from being conceptually more satisfying, this slightly more abstract approach has two practical advantages. Firstly it leads naturally to all the above mentioned additional results: the interpretation of jump sets in terms of \emph{filtered orbits} of vectors, see Theorem \ref{j.s.param.orbit}, leads to the mass formula for unit filtrations, Theorem \ref{counting}, which in turns leads naturally to Theorem \ref{a surprising relation}, which links the filtered structure of $U_{\bullet}(K)$ with ramification theory. To the best of our knowledge all these results are new. Secondly the interpretation of jump sets as parametrizing filtered orbits of characters, see \ref{Jump sets parametrize orbits of characters}, makes it an easy job to deduce, from first principles, our classification of the possible sets of jumps for a character, contained in Theorem \ref{classification wild characters}. This brings us to the second point. Namely the  combinatorial criterion of \cite{Miki} is not tautologically equal to the one contained in Theorem \ref{classification wild characters}. We check, by direct combinatorial inspection, that they coincide in Proposition \ref{inadequacy equivalent to incompatibility}, showing in this way that the tools of this paper give, among other things, a simple unified approach to deduce all the results in \cite{Maus}, \cite{Miki} and \cite{Sueyoshi}, by means of a general theory of filtered modules. 

Coming to more recent literature, in 2014, I. del Corso and L. Capuano \cite{Capuano--Del Corso} have obtained a classification of all possible upper jumps in an exponent $p$ extensions of a local field $K$. It would be interesting to push this further obtaining a classification, for \emph{any} finite abelian $p$-group $A$, of the possible structures $A_{\bullet}$ as \emph{filtered group} on $A$ such that $\text{Epi}_{\text{filt}}(U_{\bullet}(K),A_{\bullet}) \neq \emptyset$. For instance, this might be useful in counting the average number of extensions with prescribed ramification data at $p$, in families of number fields containing $\zeta_p$. For a first work in the direction of such counting with ``prescribed ramification", see \cite{Pagano--Sofos}.

Finally we would like to mention that the ramification polygon of an Eisenstein polynomial has been the object of study of several papers \cite{Pauli}, \cite{Romano}, \cite{Pauli--Sinclair}, especially in relation to the problem of calculating Galois groups of Eisenstein polynomials. In his Ph.D. thesis, D. Romano \cite{Romano} provided a characterization of \emph{strongly Eisenstein} polynomials in terms of their Galois group. In a sense these are the polynomials with the simplest possible ramification polygon. It is then interesting that strongly Eisenstein polynomials $g(x)$ over $\mathbb{Q}_{p^f}(\zeta_{p^j})$ with $(p,j) \neq (2,1)$ and $\text{v}_{\mathbb{Q}_p}(\text{deg}(g(x)))>j$, can be also characterized in terms of filtered modules, see Theorem \ref{strongly Eisenstein characterized}. Under the assumption $(p,j) \neq (2,1)$ and $\text{v}_{\mathbb{Q}_p}(\text{deg}(g(x)))>j$, these polynomials are the ones giving the simplest possible filtered module, which is also the most frequent one, in the sense of Theorem \ref{counting}: it occurs $\frac{p^f-1}{p^f}$ of the times, just as the probability for an Eisenstein polynomial over $\mathbb{Q}_{p^f}(\zeta_{p^j})$ to be strongly Eisenstein. The work of Romano has been substantially refined by S. Pauli and C. Greve \cite{Pauli}. 

\begin{acknowledgements}
This project is part of my Ph.D. work. I would like to warmly thank my Ph.D. advisor, Hendrik Lenstra, for suggesting that I could think about this subject. In particular I would like to thank him for suggesting several of the starting ideas of the project and for frequent insightful feedback on my progress. I would also like to thank him for relevant pointers to the literature on local fields: this considerably enriched the scope of the results of this paper. 
 
Many thanks go to Ted Chinburg and Sebastian Moore, for showing interest in this research, for following its development and for providing several useful suggestions. In particular I would like to thank them for suggesting to seek for an \emph{explicit} relation between the jump set $(I_K,\beta_K)$ and the set of sets of jumps $\mathcal{J}_K$. 

I am thankful to Ilya Nekrasov and Sergey Vostokov for showing interest in these results,  asking me for a talk on this subject, where they provided useful feedback. 

I would like to thank Tim Dokchitser for suggesting to contact Maurizio Monge. 

Many thanks to Maurizio Monge for pointing out the papers \cite{Maus}, \cite{Miki} and \cite{Sueyoshi}, when I announced to him Theorem \ref{classification wild characters}. Also I would like to thank him for providing me with some of his notes on these papers. 
\end{acknowledgements}

\section{Jump sets} \label{jump sets}
The goal of this section is to define and explain the notion of a jump set, which is the key object of this paper. Jump sets are defined in terms of shifts. A \emph{shift} is a strictly increasing function $\rho:\mathbb{Z}_{\geq 1} \to \mathbb{Z}_{\geq 1}$, with $\rho(1)>1$. For a shift $\rho$, we denote by $T_{\rho}$ the set $\mathbb{Z}_{\geq 1}-\rho(\mathbb{Z}_{\geq 1})$. If $T_{\rho}$ is finite, we denote by $e^{*}$ the positive integer $\text{max}(T_{\rho})+1$. We denote by $e'_{\rho}$ the positive integer $\rho^{-1}(e^*)$. The shifts that will be relevant for local fields are the ones explained in the following.
\begin{example} \label{main example shift}
For $p$ a prime, and $e \in \mathbb{Z}_{>0} \cup \{\infty\}$ denote $\rho_{e,p}(i)=\text{min}\{i+e,pi\}$. It is a shift. 
Clearly $T_{\rho_{e,p}}$ is finite iff $e$ is finite. Indeed one has always $e=|T_{\rho_{e,p}}|$. If $e \neq \infty$, then $e^{*}=\lceil \frac{pe}{p-1} \rceil$. 
The reason why these shifts will play a role is due to the following property. 

\emph{Crucial property}: let $K$ local field, of residue characteristic $p$, let $e=\text{v}_K(p)$, then we have that
 $$U_i^p \subset U_{\rho(i)},$$ 
for $\rho=\rho_{e,p} \ (=\rho_K)$. One can see this by inspection of the valuations in the expansion $(1+x)^p=1+px+ \ldots +x^p$.
\end{example}
We now define $\rho$-jump sets (resp.\ extended $\rho$-jump sets).
\begin{definition}
 A \emph{jump set} for $\rho$ (resp.\ an \emph{extended jump} set for $\rho$) is a finite subset $A \subseteq \mathbb{Z}_{\geq 1}$ such that:\\ \\
$\bullet$ \ if $a,b \in A$, and $a<b$ then $\rho(a) \leq b$, \\
 $\bullet$ \ $A - \rho(A) \subseteq T_{\rho}$ (resp.\ $A - \rho(A) \subseteq T_{\rho}^{*}=T_{\rho} \cup \{e^{*}\}$). \\
\\
 Write $\text{Jump}_{\rho}=\{\text{jump sets for $\rho$}\}$ \ (resp.\ $\text{Jump}_{\rho}^{*}=\{\text{extended jump sets for $\rho$}\}$).
\end{definition}
A jump set for $\rho$ will also be called $\rho$-jump set (resp.\ an extended jump set for $\rho$ will also be called an extended $\rho$-jump set).

If $A$ is a $\rho$-jump set (resp.\ an extended jump set) then we denote by $I_A$ the set $A - \rho(A)$, and by $\beta_A$ the map $\beta_A:I_{A} \to \mathbb{Z}_{\geq 1}$, $i \to |[i,\infty) \cap A|$. This allows us to express the notion of jump sets in different, but equivalent, terms. Namely the pair $(I_A,\beta_A)$ evidently has the following three properties. \\
\\
$(1)$ \ $I_A \subseteq T_{\rho}=\mathbb{Z}_{>0}-\rho(\mathbb{Z}_{>0})$ (resp.\ $I_A \subseteq T_{\rho}^{*}=T_{\rho} \cup \{e^{*}\}$), \\
$(2)$ \ $\beta_A$ is a strictly decreasing map $\beta:I_A \to \mathbb{Z}_{\geq 1}$, \\
$(3)$ \ the map $i \mapsto \rho^{\beta(i)}(i)$  from $I_A$ to $\mathbb{Z}_{\geq 1}$ is strictly increasing. \\
\\
Suppose now we have a pair $(I,\beta)$ with the three above properties $(1),(2),(3)$. We can attach to such an $(I,\beta)$ an element $A_{(I,\beta)}$ of $\text{Jump}_{\rho}$ (resp.\ of $\text{Jump}_{\rho}^{*}$) defined as follows. If $I=\emptyset$ then $A_{(I,\beta)}=\emptyset$. Suppose now that $I$ is not empty. Then put
$$A_{(I,\beta)}:=\{\rho^{n}(i)\}_{i \in I-\{\text{max}(I)\}, 0 \leq n <\beta(i)-\beta(s(i))} \cup \{\rho^{n}(\text{max}(I))\}_{0 \leq n <\beta(\text{max}(I))},
$$
where, for $i \in I-\{\text{max}(I)\}$, the element $s(i)$ denotes the successor of $i$ in $I$. The following proposition follows in a straightforward manner from the definitions. 
\begin{proposition} \label{equivalent data}
The assignments $A \mapsto (I_A,\beta_A)$ and $(I,\beta) \mapsto A_{(I,\beta)}$ are inverse to each other yielding a bijection between $\emph{Jump}_{\rho}$ (resp.\ $\emph{Jump}_{\rho}^{*}$) and the set of pairs $(I,\beta)$ having the following properties: \\
\\
$\bullet$ \ $I \subseteq T_{\rho}=\mathbb{Z}_{>0}-\rho(\mathbb{Z}_{>0})$ (resp.\ $I \subseteq T_{\rho}^{*}=T_{\rho} \cup \{e^{*}\}$), \\
$\bullet$ \ $\beta$ is a strictly decreasing map $\beta:I \to \mathbb{Z}_{\geq 1}$, \\
$\bullet$ \ the map $i \mapsto \rho^{\beta(i)}(i)$  from $I$ to $\mathbb{Z}_{\geq 1}$ is strictly increasing.
\end{proposition}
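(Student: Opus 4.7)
The strategy is to observe that a $\rho$-jump set $A$ admits a canonical decomposition into finite $\rho$-orbits, one for each element of $I_A = A - \rho(A)$; the pair $(I_A, \beta_A)$ encodes precisely the starting points and the accumulated orbit lengths, and the inverse direction reassembles $A$ from this data. Everything reduces to direct checks of strict inequalities, powered by the fact that $\rho$ is strictly increasing (hence injective and satisfying $\rho(n) > n$ for all $n \in \mathbb{Z}_{\geq 1}$, since $\rho(1) > 1$ is built into the definition of a shift).

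I would first establish the orbit decomposition. For any $a \in A$, iterating $\rho^{-1}$ as long as the result lies in $A$ produces a strictly decreasing sequence of positive integers, which must terminate at some $i \in A \setminus \rho(A) = I_A$. Hence $a = \rho^k(i)$ for a unique pair $(i, k)$ with $i \in I_A$ and $k \geq 0$; denote by $\ell(i) \geq 1$ the length of the $\rho$-orbit of $i$ inside $A$. Different starting points give disjoint orbits: if $i < j$ were both in $I_A$ with $\rho^k(i) = \rho^m(j)$, then injectivity of $\rho$ would force $j = \rho^{k-m}(i)$ with $k > m$, contradicting $j \in T_\rho$. I then verify that $(I_A, \beta_A)$ satisfies $(1), (2), (3)$. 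Condition $(1)$ is the definition of a jump set. The identity
\[
\beta_A(i) \;=\; |[i, \infty) \cap A| \;=\; \sum_{i' \in I_A,\ i' \geq i} \ell(i')
\]
yields $\beta_A(i) - \beta_A(s(i)) = \ell(i) \geq 1$ for $i$ non-maximal in $I_A$ (where $s(i)$ denotes the successor of $i$ in $I_A$), giving $(2)$. For $(3)$, the top of the $i$-orbit is $\rho^{\beta_A(i) - 1}(i) \in A$, so the jump-set inequality applied to it and to $s(i) \in A$ gives $\rho^{\beta_A(i)}(i) \leq s(i) < \rho^{\beta_A(s(i))}(s(i))$.

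For the two round trips: starting from $A$, the defining formula for $A_{(I_A, \beta_A)}$ glues at each $i \in I_A$ exactly $\beta_A(i) - \beta_A(s(i)) = \ell(i)$ successive $\rho$-iterates of $i$ (with the term at $\max(I_A)$ using $\beta_A(\max(I_A))$ directly), so $A_{(I_A, \beta_A)} = A$. Starting from $(I, \beta)$ satisfying $(1), (2), (3)$, I first verify that $A := A_{(I, \beta)}$ is a $\rho$-jump set: given $a < b$ in $A$, write $a = \rho^k(i), b = \rho^l(j)$ with $i, j \in I$; the case $i = j$ is immediate, while for $i < j$ one necessarily has $k + 1 \leq \beta(i) - \beta(s(i))$, so property $(3)$ combined with injectivity of $\rho$ gives
\[
\rho(a) \;\leq\; \rho^{\beta(i) - \beta(s(i))}(i) \;<\; s(i) \;\leq\; j \;\leq\; b.
\]
The inclusion $A - \rho(A) \subseteq T_\rho$ (resp.\ $T_\rho^*$) follows because $A - \rho(A) = I$, using that any $i \in I \subseteq T_\rho$ cannot lie in $\rho(\mathbb{Z}_{\geq 1}) \supseteq \rho(A)$. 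A telescoping sum then gives $\beta_{A_{(I, \beta)}}(i) = \sum_{i' \in I,\, i' \geq i}(\beta(i') - \beta(s(i'))) = \beta(i)$. The one delicate point throughout is the role of property $(3)$: it is exactly what guarantees that the truncated $\rho$-orbits assembled into $A_{(I, \beta)}$ are spaced so as to satisfy the jump-set inequality, and equivalently that the orbit decomposition of a jump set is recorded faithfully by the map $i \mapsto \rho^{\beta_A(i)}(i)$.
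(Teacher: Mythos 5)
Your orbit-decomposition strategy is the natural way to fill in what the paper dispatches with ``follows in a straightforward manner from the definitions,'' and the skeleton of the argument is sound. There is, however, a genuine error in the verification of condition $(3)$, and a couple of spots where the argument for the extended case ($e^{*} \in I_A$) does not quite close.

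The main error: you write that ``the top of the $i$-orbit is $\rho^{\beta_A(i)-1}(i) \in A$.'' By your own earlier computation the orbit of $i$ inside $A$ has length $\ell(i) = \beta_A(i) - \beta_A(s(i))$ when $i$ is not maximal in $I_A$, so the top of that orbit is $\rho^{\ell(i)-1}(i)$, not $\rho^{\beta_A(i)-1}(i)$ (these agree only for $i = \max(I_A)$, where there is no $s(i)$ to compare against). Consequently the displayed inequality $\rho^{\beta_A(i)}(i) \leq s(i)$ is simply false: take $\rho = \rho_{3,2}$ and $A = \{1,3\}$, so $I_A=\{1,3\}$, $\beta_A(1)=2$, $\beta_A(3)=1$; then $\rho^{\beta_A(1)}(1) = \rho^2(1) = 4 > 3 = s(1)$. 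The correct route is the one you almost wrote: $\rho^{\ell(i)-1}(i) < s(i)$ (from the sortedness of orbits you implicitly used to get the formula for $\beta_A$), hence $\rho^{\ell(i)}(i) \leq s(i)$ by $(C.1)$, with strictness because $s(i) \in I_A$ lies outside $\rho(A)$; applying the strictly increasing map $\rho^{\beta_A(s(i))}$ to both sides yields $\rho^{\beta_A(i)}(i) < \rho^{\beta_A(s(i))}(s(i))$, which is $(3)$.

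Two smaller points. First, in your uniqueness argument for the decomposition and in the step ``$A - \rho(A) = I$,'' you derive contradictions from $j \in T_\rho$ and $I \subseteq T_\rho$; in the extended case $e^{*}$ need not lie in $T_\rho$ (indeed $e^{*} \in \rho(\mathbb{Z}_{\geq 1})$ whenever $T_\rho$ is finite), so you should instead derive the contradiction from membership in $I_A = A - \rho(A)$, which covers both cases. Second, in the round-trip check that $A_{(I,\beta)}$ satisfies $(C.1)$, you treat $i = j$ and $i < j$ but omit $i > j$; that case is impossible precisely because of the orbit sortedness your chain provides, but it deserves a sentence.
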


From now on, we shall often write $(I,\beta)$ to denote a jump set (resp.\ an extended jump set), meaning implicitly that we are identifying it with an actual jump set via the above mentioned bijection.
\begin{example}
$\bullet$ There is a unique jump set having $|I|=0$, namely the empty set $A=\emptyset \in \text{Jump}_{\rho}$. \\
$\bullet$ \ A $\rho$-jump set (resp.\ extended $\rho$-jump set) $(I,\beta)$ with $|I|=1$ is given by the choice of an element, $a$, of $T_{\rho}$ (resp.\ $T_{\rho}^{*}$), and of a positive integer $m=\beta(a)$. The actual jump set will then be $\{a,\rho(a),\ldots,\rho^{m-1}(a)\}$. \\
$\bullet$ \ A $\rho$-jump set (resp.\ extended $\rho$-jump set) $(I,\beta)$ with $|I|=2$ is given by the choice of two elements, $a<b$, of $T_{\rho}$ (resp.\ $T_{\rho}^{*}$)), and of two positive integers $m_1=\beta(a)>\beta(b)=m_2$, such that $\rho^{m_1-m_2}(a)<b$ (or equivalently $\rho^{m_1}(a)<\rho^{m_2}(b)$). The actual jump set will then be $\{a,\rho(a),\ldots ,\rho^{m_1-m_2-1}(a) \} \cup \{b,\rho(b),\ldots,\rho^{m_2-1}(b) \}$. 
\end{example}
\begin{example}
We now explain a general procedure to inductively construct any jump set $A$ for $\rho$ (resp.\ extended jump set). As a first step one decides whether $A=\emptyset$ or not. In case $A=\emptyset$ one has obtained a jump set and stops. Suppose instead that one wants to construct a jump set $A \neq \emptyset$. Then pick an $i_1 \in T_{\rho}$ (resp.\ in $T_{\rho}^{*}$) and a positive integer $n_1$. Consider the set $$A_1:=\{\rho^{j}(i_1)\}_{0 \leq j<n_1}.
$$
Now you can stop and have obtained a jump set $A:=A_1$. In this case $I=\{i_1\}$ and $\beta(i_1)=n_1$. If you want instead a jump set with $|I|>1$, then you check whether there is a $y \in T_{\rho}$ (resp.\ in $T_{\rho}^{*}$) such that $\rho^{n_1}(i_1)<y$. If such a $y$ doesn't exist, then we set $A:=A_1$ and we stop having obtained a jump set (resp.\ an extended jump set). Otherwise you pick any such $y$ and put $y:=i_2$ and pick a positive integer $n_2$. Then write 
$$A_2:=A_1 \cup \{\rho^{j}(i_2)\}_{0 \leq j<n_2}. 
$$
Now you can stop and have obtained a jump set $A:=A_2$. In this case $I=\{i_1,i_2\}$ and $\beta(i_1)=n_1+n_2, \beta(i_2)=n_1$. If you want instead a jump set with $|I|>2$, then you check whether there is a $y \in T_{\rho}$ (resp.\ $T_{\rho}^{*}$) such that $\rho^{n_2}(i_2)<y$. If such a $y$ doesn't exist, then we set $A:=A_2$ and we stop having obtained a jump set (resp.\ an extended jump set). Otherwise you pick any such $y$ and put $y:=i_3$ and pick a positive integer $n_3$. Then write
$$A_3:=A_2 \cup \{\rho^{j}(i_3)\}_{0 \leq j<n_3}.
$$
In this case we have $I=\{i_1,i_2,i_3\}$ and $\beta(i_1)=n_1+n_2+n_3, \beta(i_2)=n_2+n_3, \beta(i_3)=n_3$.

One continues inductively as follows. Having arrived at $A_k$, together with $i_k, n_k$, for $k \in \mathbb{Z}_{\geq 3}$, either we set $A:=A_k$ and we have obtained a jump set, or we verify whether there exists a $y \in T_{\rho}$ (resp.\ in $T_{\rho}^{*}$) such that $\rho^{n_k}(i_k)<y$. If such a $y$ doesn't exist then we set $A:=A_k$ and we stop having obtained a jump set (resp.\ an extended jump set). Otherwise we pick any such $y$ and set $y:=i_{k+1}$, we choose a positive integer $n_{k+1}$ and write
$$A_{k+1}=A_k \cup \{\rho^{j}(i_{k+1})\}_{0 \leq j<n_k}.
$$
The set $A_{k+1}$ is a jump set  for $\rho$ (resp.\ an extended jump set). In this case we have $I=\{i_1,\ldots ,i_{k+1} \}$ with $\beta(i_1)=n_1+\ldots+n_{k+1}, \beta(i_2)=n_2+\ldots+n_{k+1},\ldots, \beta(i_{k})=n_k+n_{k+1}, \beta(i_{k+1})=n_{k+1}$.
\end{example}
Jump sets will often arise as the set of \emph{maximal} or \emph{minimal} of certain sets, with respect to the following partial order. This partial order will also play an important role in the classification of the possible sets of jumps of a character. 
\begin{definition}
Let $(a_1,b_1),(a_2,b_2)$ be in $(\mathbb{Z}_{\geq 1})^2$. We let $(a_1,b_1) \leq_{\rho} (a_2,b_2)$ if and only if 
$$b_2 \geq b_2 \ \text{and} \ \rho^{b_2}(a_2) \geq \rho^{b_1}(a_1).
$$
 \end{definition}
 Let now $A$ be a subset of $T_{\rho}$ (resp.\ of $T_{\rho}^*$), and let $b:A \to \mathbb{Z}_{\geq 1}$. Let $\text{Max}(A,b)$ and $\text{Min}(A,b)$ be the subsets of $\text{Graph}(b)$ consisting of, respectively, the maximal and the minimal elements with respect to $\leq_{\rho}$. Then the following fact follows from the definition of a jump set. 
\begin{proposition} \label{jump set attached to a function} There are unique jump sets $(I_{(A,b)}^{+},\beta_{(A,b)}^{+})$ and $(I_{(A,b)}^{-},\beta_{(A,b)}^{-})$ \ (resp.\ extended jump sets) such that $\emph{Graph}(\beta_{(A,b)}^{+})=\emph{Max}(A,b)$ and $\emph{Graph}(\beta_{(A,b)}^{-})=\emph{Min}(A,b).$
\end{proposition}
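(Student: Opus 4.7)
The plan is to verify that the projections of $\text{Max}(A,b)$ and $\text{Min}(A,b)$ onto the first coordinate, together with the restriction of $b$, satisfy the three conditions listed in Proposition \ref{equivalent data}; once that is done, existence and uniqueness follow from the bijection established there. Since $\text{Graph}(b)$ is by definition the graph of a function, any subset of it automatically projects injectively onto the first coordinate. Thus $\text{Min}(A,b)$ is the graph of a partial function, whose domain $I^{-}$ sits inside $A$, and hence inside $T_{\rho}$ (resp.\ $T_{\rho}^{*}$). The same is true for $\text{Max}(A,b)$. This takes care of the first of the three axioms.

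The core of the proof is to establish the remaining two axioms---that $\beta^{-}$ is strictly decreasing and that $i \mapsto \rho^{\beta^{-}(i)}(i)$ is strictly increasing---by a contradiction argument that exploits the antisymmetry of $\leq_{\rho}$. That antisymmetry itself is easy: if $(a_1,b_1) \leq_{\rho} (a_2,b_2)$ and $(a_2,b_2) \leq_{\rho} (a_1,b_1)$, then $b_1 = b_2$ and $\rho^{b_1}(a_1) = \rho^{b_1}(a_2)$, whence strict monotonicity of $\rho$ forces $a_1=a_2$.

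Now let $i_1 < i_2$ lie in $I^{-}$ with values $\beta_1 = b(i_1)$ and $\beta_2 = b(i_2)$. If $\beta_1 \leq \beta_2$, then strict monotonicity of $\rho$ gives $\rho^{\beta_2}(i_2) \geq \rho^{\beta_2}(i_1) \geq \rho^{\beta_1}(i_1)$, so $(i_1,\beta_1) \leq_{\rho} (i_2,\beta_2)$; since the two points are distinct and $\leq_{\rho}$ is a partial order, this strict inequality contradicts the minimality of $(i_2,\beta_2)$. Hence $\beta^{-}$ is strictly decreasing. Similarly, having $\beta_1 > \beta_2$ together with $\rho^{\beta_1}(i_1) \geq \rho^{\beta_2}(i_2)$ would give $(i_2,\beta_2) \leq_{\rho} (i_1,\beta_1)$, contradicting minimality of $(i_1,\beta_1)$; therefore $\rho^{\beta_1}(i_1) < \rho^{\beta_2}(i_2)$. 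The analogous argument for $\text{Max}(A,b)$ is obtained by reversing inequalities throughout, using that an element fails to be maximal when something strictly larger in $\leq_{\rho}$ sits above it.

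With all three axioms of Proposition \ref{equivalent data} verified, we obtain jump sets (resp.\ extended jump sets) $(I^{\pm},\beta^{\pm})$ whose graphs are exactly $\text{Max}(A,b)$ and $\text{Min}(A,b)$. Uniqueness is immediate: the graph of $\beta$ determines the pair $(I,\beta)$, and this pair then determines the underlying jump set via the bijection of Proposition \ref{equivalent data}. I do not expect any genuine obstacle here; the proposition is essentially an unpacking of definitions, matching the order-theoretic notions of minimal and maximal element to the monotonicity axioms encoded in a jump set.
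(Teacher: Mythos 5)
Your proof is correct. The paper states this proposition as following directly from the definition of a jump set and provides no argument, so your careful verification---checking the three axioms of Proposition \ref{equivalent data} against minimality/maximality in $\leq_{\rho}$---is exactly the unpacking the author has in mind.
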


Proposition \ref{jump set attached to a function} is repeatedly used throughout this paper.  Moreover it occurs always in the same manner, namely to recover an intrinsic description of an object presented in a non-canonical fashion. This will firstly apply in the context of filtered modules in Proposition \ref{reduction process}, to reconstruct from a coordinate representation, with respect to a filtered basis (see \ref{definition of a basis}) the orbit of a vector of a free filtered module (see \ref{free modules def}) acted upon by the group of filtered automorphisms. Another example is given by Proposition \ref{reduction for characters}, where Proposition \ref{jump set attached to a function} is used to determine the set of jumps of a character. Finally it is used in the context of Eisenstein polynomials in Theorem \ref{a surprising relation} and Theorem \ref{a surprising relation 2}. 
\section{Filtered modules} \label{filtered modules}
\subsection{Overview}
The goal of this section is to use jump sets to parametrize quasi-free filtered modules (see definition \ref{def of quasi free}). As stated in Proposition \ref{rho for loc fields}, principal units give rise to a free or quasi-free filtered module. So the material of this section will provide exactly the amount of general (elementary) theory of filtered modules sufficient to classify, in terms of jump sets, the possible structures of $U_1$, as a filtered module. 

The rest of the section is organized as follows:

In \ref{generalities filt mod} we will collect very general facts about filtered modules that will be applied in the other sections.

In \ref{DVR} we will specialize to the case where the base ring, $R$, is a complete DVR.

In \ref{rho} we explain how one can attach to a filtered module $M$ a non-decreasing function $\rho_M$, by looking at the action of $\pi_R$, a uniformizer in $R$, on the filtration.

In \ref{shit} we introduce the notion of free filtered modules: in a precise sense they stand as universal modules among those having a fixed $\rho$-map (see \ref{Universal property of free filtered modules} for the precise universal property). Next we will introduce the notion of quasi-free filtered modules, which in a precise sense are just one step more complicated than the free ones. The goal of the rest of the section is classifying quasi-free modules.

In \ref{transitive} we will provide presentations of a quasi-free filtered module via a free filtered module and exploit the action of the filtered automorphism group of the free filtered module on the set of presentations of a given quasi-free filtered module.

In \ref{orbits} we will parametrize the set of orbits of lines in a free filtered module, under the filtered automorphism group, via jump sets.

In \ref{jump sets and quasi free} we will use \ref{transitive} and \ref{orbits} to explain how jump sets parametrize the set of quasi-free filtered modules.

In \ref{reading jump inside} we explain an internal procedure to reconstruct the jump set of a quasi-free filtered module. This will suggest a generalization which will be exploited in later sections. This will be used to detect a more general connection between phenomena in the filtration and ramification theory. See also Theorem \ref{a surprising relation 2}.  
\subsection{General facts about filtered modules} \label{generalities filt mod}
Let $R$ be a commutative ring with unity.
\begin{definition} \label{definition filtered module}
\emph{A filtered $R$-module} is a sequence of $R$-modules, $M_1 \supseteq M_2 \supseteq \ldots \supseteq M_i\supseteq \ldots$ \ with $\bigcap_{i \in \mathbb{Z}_{\geq 1}}M_i=\{0\}$. 
\end{definition}

We will usually denote by $M_{\bullet}$ a filtered $R$-module $M_1 \supseteq M_2 \supseteq \ldots \supseteq M_i\supseteq \ldots$. A filtered module comes with a weight map $w:M_1 \to \mathbb{Z}_{\geq 1} \cup \{\infty\}$, defined as $w(x):=\sup \{i \in \mathbb{Z}_{\geq 1}: x \in M_i\}$. The weight map $w$ enjoys the following conditions: $w^{-1}(\{\infty\})=\{0\}$ and if $x,y \in M_1, a \in R$, then $w(x+y) \geq \text{min}\{w(x),w(y)\}$ and $w(ax) \geq w(x)$. Clearly one can recover the filtration from the knowledge of $w$, and conversely given an $R$-module $M$, together with a map $w:M \to \mathbb{Z}_{\geq 1} \cup \{\infty\}$ enjoying the above conditions, one can define the filtration $M_i:=\{x \in M: w(x) \geq i \}$. It follows that one can equivalently speak of a filtered $R$-module as a pair $(M,w)$, where $M$ is an $R$-module and $w$ is a map with the above properties. We will interchangeably denote a filtered module as $M_{\bullet}$ and as a pair $(M,w)$.
\begin{definition}\label{morphism filtered module}
Given $M_{\bullet},N_{\bullet}$ two filtered $R$-modules, a morphism of filtered $R$-modules $\phi:M_{\bullet} \to N_{\bullet}$ is a morphism of $R$-modules $\phi:M_1 \to N_1$, such that, for each positive integer $i$, $\phi(M_i) \subseteq N_i$.
\end{definition}
With definitions \ref{definition filtered module} and \ref{morphism filtered module}, filtered $R$-modules form a category, which we will denote as $\text{Filt-}R\text{-mod}$. We next explain basic constructions in this category which we will use later in this section. 
\subsubsection{Direct products and direct sums} \label{def direct sums}
Let $\{{M_h}_{\bullet}\}_{h \in \mathcal{H}}$ be a collection of filtered $R$-modules. The filtration $\prod_{h \in \mathcal{H}}M_{h,1} \supseteq \prod_{h \in \mathcal{H}}M_{h,2} \supseteq \ldots \supseteq \prod_{h \in \mathcal{H}}M_{h,n} \supseteq \ldots $ gives to $\prod_{h \in \mathcal{H}}M_{h,1}$ the structure of a filtered $R$-module. This filtered module behaves as a categorical direct product. The filtration $\bigoplus_{h \in \mathcal{H}}M_{h,1} \supseteq \bigoplus_{h \in \mathcal{H}}M_{h,2} \supseteq \ldots \supseteq \bigoplus_{h \in \mathcal{H}}M_{h,n} \supseteq \ldots$ gives to $\bigoplus_{h \in \mathcal{H}}M_{h,1}$ the structure of a filtered $R$-module. This filtered module behaves as a categorical direct sum. 
\subsubsection{Metric structure} \label{metric structure}
Let $(M,w)$ be a filtered module. Fix a real number $c \in (0,1)$. Then we have a distance on $M$, defined as $d(x,y)=c^{w(x-y)}$, which gives to $M$ the structure of a metric space and of a Hausdorff topological group. In the notation $M_{\bullet}$, the topology can be alternatively described by saying that the $\{M_i\}_{i \in \mathbb{Z}_{\geq 1}}$ form a fundamental system of neighborhoods of $0_{M_1}$.\\

It is with respect to this metric that we will perform, in the rest of this paper, any metric or topological operation on a filtered $R$-module. For instance a filtered module $M_{\bullet}$ will be said to be complete, if $M_1$, with the above metric, is a complete metric space. There is a completion functor from $R\text{-filt-mod}$ to the full subcategory whose objects are complete filtered modules, $\text{Compl-$R$-filt-mod}$, which consists simply of completing the underlying metric space. We denote this functor by \ $\widehat{•}$\ . It is left adjoint to the inclusion functor $\text{Compl-$R$-filt-mod} \subseteq R\text{-filt-mod} $  which is the identity on both objects and morphisms. Thus one has a natural transformation of the identity, which we denote by $\text{compl}:\text{id}_{R\text{-filt-mod}} \to \widehat{•}$\ . This natural transformation consists of the natural inclusion of a filtered module $M_{\bullet}$ in its completion, which we denote by $\widehat{M_{\bullet}}$.
\subsubsection{Sub-modules}
If $(M,w)$ is a filtered $R$-module, and $N \subseteq M$ an $R$-sub-module of $M$, then $(N,w_{|N})$ is a filtered $R$-module. If the filtration for $M$ is $M_1 \supseteq M_2 \supseteq \ldots \supseteq M_i \supseteq \ldots$, the one for $N$ is $N \cap M_1 \supseteq N \cap M_2 \supseteq \ldots \supseteq N \cap M_i \supseteq \ldots$. It is in this sense that we will speak of a filtered $R$-sub-module.
\subsubsection{Quotients} \label{quotients}
Let $M_{\bullet}$ be a filtered $R$-module and $N \subseteq M_1$ an $R$-sub-module of $M$. Then the filtration $M_1/N=(M_1+N)/N \supseteq (M_2+N)/N \supseteq \ldots \supseteq (M_i+N)/N \supseteq \ldots$, gives to $M_1/N$ the structure of a filtered $R$-module if and only if $N$ is closed. Indeed this filtration defines a fundamental system of neighbours of $0_{M_1/N}$ corresponding to the quotient topology coming from $M_1$: the requirement of being a filtered module is equivalent to the requirement that this topology is Hausdorff, and the quotient of a topological group by a normal subgroup is Hausdorff iff the normal subgroup is closed, since a topological group is Hausdorff iff the origin is closed. \\

We now introduce the functors which will play an important role in the rest of the section.
\begin{definition} \label{functors}
(a) Let $M_{\bullet}, N_{\bullet}$ be two filtered $R$-modules, and $i,j$ two positive integers with $i \leq j$. Denote by $F_{i,j}(M_{\bullet}):=M_i/M_{j}$. Given a morphism of filtered $R$-modules $\phi:M_{\bullet} \to N_{\bullet}$, denote by $F_{i,j}(\phi)$, the induced morphism $F_{i,j}(\phi): M_i/M_{j} \to N_i/N_{j}$.
Denote by $F_{i,j}$ the functor, $F_{i,j}:\text{Filt-}R\text{-mod} \to R\text{-mod}$, obtained in this way. Denote by $F_i$ the functor $F_{i,i+1}$.
\end{definition}
The rest of this section describes the relations between a morphism $\phi:M_{\bullet} \to N_{\bullet}$ of filtered $R$-modules and the sequence of morphisms $\{F_j(\phi):F_j(M_{\bullet}) \to F_j(N_{\bullet})\}_{j \in \mathbb{Z}_{\geq 1}}$ of $R$-modules. 
We begin by describing the effect of $F_j$ on the completion morphism:
\begin{remark}{\label{F_i bar}}
For every positive integer $i$, the natural transformation $\text{compl}$ induces an isomorphism of functors $F_i \circ \widehat{•}  \simeq_{\text{functors}}F_i$. 
\end{remark}
Next we determine basic properties when applying $F_j$ to the inclusion of the direct sum in the direct product.
\subsubsection{More on direct sum and direct product}
\begin{remark}{\label{F_j preserve}}
For each positive integer $j$  and  $\{(M_i,w_i)\}_{i \in I}$ any collection of filtered $R$-modules,  we have that \\
\\
$\bullet$ $F_j(\prod_{i \in I}M_i)=\prod_{i \in I}F_j(M_i)$ \\
$\bullet$ $F_j(\bigoplus_{i \in I}M_i)=\bigoplus_{i \in I}F_j(M_i)$ \\
$\bullet$ $F_j(\bigoplus_{i \in I}M_i\subseteq\prod_{i \in I}M_i)=(\bigoplus_{i \in I}F_j(M_i)\subseteq\prod_{i \in I}F_j(M_i))$, where in both cases we mean the natural inclusion of the direct sum in the direct product.
\end{remark}
\
 
\begin{proposition}{\label{products}}
Given $\{M_{i,\bullet} \}_{i \in I}$ any collection of $R$-filtered modules, the following are equivalent: \\
\emph{(a)} The inclusion of filtered modules $\bigoplus_{i \in I}M_{i,\bullet} \subseteq \prod_{i \in I}M_{i,\bullet}$ induces a dense inclusion of metric spaces.  \\
\emph{(b)} For each $ m \in \mathbb{Z}_{\geq 1}$ there are only finitely many $i \in I$ such that $\emph{min}(w_{M_{i,\bullet}}(M_{i,1})) \leq m$.  \\
\emph{(c)} We have that $F_m(\bigoplus_{i \in I}M_{i,\bullet} \subseteq \prod_{i \in I}M_{i,\bullet})$  is an isomorphism for all $m \in \mathbb{Z}_{\geq 1}$.
\begin{proof}
$\text{(a)} \to \text{(b)}$ Fix $m \in \mathbb{Z}_{\geq 1}$. Pick a vector $v=(v_i)_{i \in I} \in \prod_{i \in I}M_{i,1}$ such that, for all $i \in I$, $v_i=0$ or $w_{M_{i,\bullet}}(v_i) \leq m$ holds. By assumption we can find a finite subset, $J$, of $I$, and a vector $(y_i)_{i \in I} \in \prod_{i \in I}M_{i,1}$, such that $y_i=0$ if $i \not \in J$ and $(w_{\prod_{i \in I}M_{i,\bullet}})(v_i-y_i)_{i \in I}>m$. It follows that for all $i \not \in J$, $w_{M_{i,\bullet}}(v_i)>m$. Thus for every $v=(v_i)_{i \in I} \in \prod_{i \in I}M_i$, $w_{M_{i,\bullet}}(v_i)\leq m$ holds for only finitely many $i \in I$, that is $\text{min}(w_{M_{i,\bullet}}(M_i))\leq m$ holds for only finitely many $i \in I$. 

$\text{(b)} \to \text{(a)}$ Observe that assumption $\text{(b)}$ implies that $M_i=0$ holds for all but countably many $i\in I$: indeed, by assumption, the function $M_{i,\bullet} \to \text{min}(w(M_{i,1}))$ has finite fiber over every positive integer, so, except for a countable set of indices, $w_{M_{i,\bullet}}(M_{i,1})=\{\infty\}$ holds, which is equivalent (by definition of filtered module) to $M_{i,1}=0$ for all but countably many indices. So we can assume that $I=\mathbb{Z}_{\geq 1}$. Thus fix $v:=(v_n)_{n \in \mathbb{Z}_{\geq 1}} \in \prod_{i \in \mathbb{Z}_{\geq 1}}M_{i,1}$. Consider the sequence $\{h_l\}_{l \in \mathbb{Z}_{\geq 1}}:=\{(w_{l,i})_{i \in \mathbb{Z}_{\geq 1}}\}_{l \in \mathbb{Z}_{\geq 1}}$,  %$\bigoplus_{i \in \mathbb{Z}_{\geq 1}}M_{i,1}$,
 where $w_{l,i}=v_i$ if $i \leq l $, $0$ otherwise. One has that for all $m \in \mathbb{Z}_{\geq 1}$, $(w_{\prod_{i \in \mathbb{Z}_{\geq 1}}M_i})(v-w_l)>m$, holds for all but finitely many values of $l$. This means exactly that $h_l \to v$ as $l \to \infty$. Thus the inclusion of filtered modules $(\bigoplus_{i \in I}M_i,d_{\bigoplus_{i \in I}M_{i,\bullet}}) \subseteq (\prod_{i \in I}M_i,d_{\prod_{i \in I}M_{i,\bullet}})$ induces a dense inclusion of metric spaces. For the equivalence between $\text{(b)}$ and $\text{(c)}$ see Remark \ref{equivalence between a and c}.
\end{proof}
\end{proposition}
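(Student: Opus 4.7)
The plan is to establish (a) $\Leftrightarrow$ (b) directly via approximation arguments, and then obtain (b) $\Leftrightarrow$ (c) by applying the functors $F_m$ and invoking Remark \ref{F_j preserve}.

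For (a) $\Rightarrow$ (b), I would argue by contradiction: if for some $m$ there were infinitely many indices $i$ with $\min w_{M_{i,\bullet}}(M_{i,1}) \leq m$, then I would pick from each such index a witness $v_i \in M_{i,1}$ of weight at most $m$, set the remaining coordinates to zero, and assemble these into a vector $v \in \prod_i M_{i,1}$. Any element $y$ of the direct sum has finite support, so $v-y$ still carries infinitely many coordinates of weight $\leq m$; this forces $w(v-y) \leq m$ in the product filtration, ruling out density. For (b) $\Rightarrow$ (a), given $v \in \prod_i M_{i,1}$ and $m \geq 1$, I would let $J$ be the finite set of indices with $\min w(M_{i,1}) \leq m$ supplied by (b) and set $y$ to be the truncation of $v$ supported on $J$. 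Every nonzero coordinate of $v-y$ then lies in some $M_{i,1}$ with $i \notin J$, where by construction all elements have weight strictly greater than $m$, so $w(v-y) > m$, yielding density.

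For (b) $\Leftrightarrow$ (c), Remark \ref{F_j preserve} sends the inclusion of interest under $F_m$ to the natural inclusion $\bigoplus_i F_m(M_i) \hookrightarrow \prod_i F_m(M_i)$, which is bijective precisely when the quotient $F_m(M_i) = M_{i,m}/M_{i,m+1}$ vanishes for all but finitely many $i$. Condition (b) at level $m$ says that for all but finitely many $i$ one has $M_{i,1} = M_{i,m+1}$, which via the sandwich $M_{i,m+1} \subseteq M_{i,m} \subseteq M_{i,1}$ forces $M_{i,m} = M_{i,m+1}$, hence (c). Conversely, from (c) applied at each level $k \leq m$ I would extract a finite exceptional set $B_k$ outside of which $M_{i,k} = M_{i,k+1}$; for $i$ outside the finite union $\bigcup_{k \leq m} B_k$ these equalities chain together to give $M_{i,1} = M_{i,m+1}$, recovering (b).

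The delicate point, as I see it, is the mild mismatch between (b), phrased in terms of the ambient module $M_{i,1}$, and (c), phrased level-by-level on the graded quotients; this is bridged by the short diagonal argument running across the first $m$ levels. Everything else is a routine unwinding of the definitions of the product and direct-sum filtrations, and of the topology they induce.
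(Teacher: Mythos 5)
Your proof is correct. The equivalence (a) $\Leftrightarrow$ (b) matches the paper's argument in substance: you phrase (a) $\Rightarrow$ (b) as a contradiction while the paper argues directly from a chosen vector of witnesses, and your (b) $\Rightarrow$ (a) is actually a little cleaner than the paper's, since you truncate at the finite exceptional set of indices rather than first reducing to a countable index set and then truncating by position.

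Where you genuinely diverge is (b) $\Leftrightarrow$ (c). The paper delegates this to Remark \ref{equivalence between a and c}, which in turn leans on Proposition \ref{phi surj comple} and the completion functor: it shows that (c) forces the completions of $\bigoplus_i M_{i,\bullet}$ and $\prod_i M_{i,\bullet}$ to agree, hence the inclusion is dense, and so (c) $\Rightarrow$ (a) $\Rightarrow$ (b), with (b) $\Rightarrow$ (c) called an immediate verification. You instead argue purely combinatorially at the level of the graded pieces: unpacking $\min w_{M_{i,\bullet}}(M_{i,1}) > m$ as $M_{i,1} = M_{i,m+1}$, sandwiching to kill $F_m(M_i)$, and for the converse chaining the finitely many exceptional sets $B_1, \ldots, B_m$. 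This is more elementary, avoids the completion machinery entirely, and makes the implication (b) $\Rightarrow$ (c) that the paper waves off actually explicit. The cost is that your route does not illustrate, as the paper's does, how $F_m$ interacts with completions, which is a theme the paper reuses later; but as a self-contained proof of this proposition, your version is arguably tighter.
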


Finally we look at the relation between injectivity/surjectivity of $\phi$ and the pointwise injectivity/surjectivity of the sequence $\{F_j(\phi)\}_{j \in \mathbb{Z}_{\geq 1}}$:
\subsubsection{Surjectivity and injectivity}
\begin{proposition}{\label{phi surj}}
Let $M_{\bullet},N_{\bullet}$ be two filtered modules, and $\phi \in \emph{Hom}_{\emph{filt}}(M_{\bullet},N_{\bullet})$. Then the following holds: \\
\emph{(a)} Assume $M_{\bullet}$ complete. If for all $i \in \mathbb{Z}_{\geq 1}$ we have that $\emph{coker}(F_i(\phi))=0$, then $\emph{coker}(\phi)=0$. \\
\emph{(b)} We have that for all $i \in \mathbb{Z}_{\geq 1}$ the module $\emph{ker}(F_i(\phi))$ is $0$ if and only if for all $x \in M_1$ the weights $w_{M_{\bullet}}(x)$ and $w_{N_{\bullet}}(\phi(x))$ coincide. \\
\emph{(c)} If for all $i \in \mathbb{Z}_{\geq 1}$ we have that $\emph{ker}(F_i(\phi))=0$, then $\emph{ker}(\phi)=0$. \\
\emph{(d)} If $\phi$ is an isomorphism then for all $i \in \mathbb{Z}_{\geq 1}$ the map $F_i(\phi)$ is an isomorphism. If $M_{\bullet}$ is complete, the converse holds as well. \\
\begin{proof}
(a) Let $x \in N_1$. We construct inductively sequences $\{x_n\}_{n \in \mathbb{Z}_{\geq 0}},\{y_n\}_{n \in \mathbb{Z}_{\geq 0}}$ respectively $N_1,M_1$-valued, which will do for us the following: $\{\sum_{i=0}^{n}y_i\}_{n \in \mathbb{Z}_{\geq 1}}$ will be a convergent sequence, with $\phi(\sum_{i=0}^{n}y_i)-x=x_{n+1}$, with $\text{lim}_{n \to \infty} x_n=0$. Since $\phi$ is a filtered morphism and in particular continuous, and $M_{\bullet}$ is complete, we can conclude then that $\phi(\sum_{i=0}^{\infty}y_i)=x$. The construction of $\{x_n\}_{n \in \mathbb{Z}_{\geq 0}},\{y_n\}_{n \in \mathbb{Z}_{\geq 0}}$ goes as follows. Put $x_0=x,y_0=0$; construct $x_{n+1},y_{n+1}$ from $x_{n}$ in the following way. If $x_n=0$ put $x_{n+1}=y_{n+1}=0$. Otherwise $w_{N_{\bullet}}(x_n) \in \mathbb{Z}_{\geq 1}$ holds. Since the map $F_{w_{N_{\bullet}}(x_n)}(\phi)$ is surjective, pick $y \in M_{w_{N_{\bullet}}(x_n)}$ such that $(\phi)(y) \equiv x_n \ \text{mod} \ N_{w_{N_{\bullet}}(x_n)+1}$, and denote $y_{n+1}=y$ and $x_{n+1}=-\phi(y)+x_{n}$. By construction, the sequences $\{x_n\}_{n \in \mathbb{Z}_{\geq 0}},\{y_n\}_{n \in \mathbb{Z}_{\geq 0}}$ both converge to $0$. So by the ultrametric inequality and completeness of $M_{\bullet}$ the series $\sum_{n \in \mathbb{Z}_{\geq 0}}y_n$ converges to an element of $M_1$, which we denote by $\bar{y}$. By construction $\phi(\sum_{1\leq j \leq n}y_j)-x=x_{n+1} \to 0$, so, since $\phi$ is continuous, $\phi(\bar{y})=x$. So $\text{coker}(\phi)=0$. 

(b) By definition $M_i-M_{i+1}=\{x \in M_i, w_{M_{\bullet}}(x)=i \}$, on the other hand $\text{ker}(\phi)_{i}=0$ iff $\phi(M_i-M_{i+1})\subseteq N_i-N_{i+1}=\{y \in N_i, w_{M_{\bullet}}(y)=i\}$, thus $\text{ker}(\phi)_i=0$ for all $i \in \mathbb{Z}_{\geq 1}$ iff $w_{M_{\bullet}}(x)=w_{N_{\bullet}}(\phi(x))$ for all $x \in M_1$. 

(c) Thanks to (b) the hypothesis in (c) is equivalent to $\phi(M_i-M_{i+1})\subseteq N_i-N_{i+1}$, which implies that $\text{ker}(\phi)\subseteq \bigcap_{i \in \mathbb{Z}_{\geq 1}}M_i=\{0\}$. 

(d) The first implication follows from the general fact that a functor preserves isomorphisms, applied to the functors $F_i$. For the second implication: assume $M_{\bullet}$ complete, then (a) implies that  $\phi$ is surjective. On the other hand (c) implies that $\phi$ is also injective. Thus $\phi$ is a filtered isomorphism.
\end{proof}
\end{proposition}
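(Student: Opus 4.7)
The plan is to prove the four parts in the order (b), (c), (a), (d), since (c) becomes trivial once (b) is in place, and (d) is then a formal consequence of (a) and (c).

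For part (b), I would simply unwind the definitions. Since $\phi$ is a filtered morphism, $w_{N_\bullet}(\phi(x)) \geq w_{M_\bullet}(x)$ always holds. The map $F_i(\phi)\colon M_i/M_{i+1}\to N_i/N_{i+1}$ is injective iff no nonzero class in $M_i/M_{i+1}$ is sent to $0$, i.e.\ iff $x\in M_i\setminus M_{i+1}$ forces $\phi(x)\in N_i\setminus N_{i+1}$. Rephrased, this is exactly the statement $w_{M_\bullet}(x)=i \Rightarrow w_{N_\bullet}(\phi(x))=i$, which must hold for every $i\geq 1$ iff weights are preserved on all of $M_1$.

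Part (c) is then a one-line consequence of (b): if weights are exactly preserved, then $\phi(x)=0$ means $w_{N_\bullet}(\phi(x))=\infty$, forcing $w_{M_\bullet}(x)=\infty$, i.e.\ $x\in\bigcap_i M_i=\{0\}$.

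The real work is part (a), which I would prove by a Hensel-style successive approximation using completeness of $M_\bullet$. Given $x\in N_1$, I inductively construct sequences $(x_n)\subset N_1$ and $(y_n)\subset M_1$ with $x_0=x$ such that, as long as $x_n\neq 0$, surjectivity of $F_{w_{N_\bullet}(x_n)}(\phi)$ yields some $y_{n+1}\in M_{w_{N_\bullet}(x_n)}$ with $\phi(y_{n+1})\equiv x_n\pmod{N_{w_{N_\bullet}(x_n)+1}}$; I then set $x_{n+1}:=x_n-\phi(y_{n+1})$. By construction $w_{N_\bullet}(x_{n+1})>w_{N_\bullet}(x_n)$, so both sequences tend to $0$. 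The ultrametric inequality and completeness of $M_\bullet$ give convergence of $\sum y_n$ to some $\bar{y}\in M_1$, and continuity of $\phi$ together with $\phi\big(\sum_{j=1}^n y_j\big)-x=-x_{n+1}\to 0$ yields $\phi(\bar{y})=x$. Finally, for (d), the forward implication is formal (the functor $F_i$ preserves isomorphisms), and the backward implication under completeness follows immediately by combining (a) (for surjectivity) with (c) (for injectivity). The main obstacle is the bookkeeping in (a): one must verify that the weights $w_{N_\bullet}(x_n)$ strictly increase, so that the series $\sum y_n$ is Cauchy in the filtration topology and so that completeness can be invoked.
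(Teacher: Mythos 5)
Your proposal is correct and follows essentially the same approach as the paper: the same unwinding of definitions for (b), the same one-line deduction of (c) from (b), the same Hensel-style successive approximation with completeness for (a), and the same formal argument for (d). The only difference is the order of presentation, which does not change the substance.
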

\begin{remark}{\label{phi_1}}
Suppose $\phi:M_{\bullet} \to N_{\bullet}$ is a filtered epimorphism. Then $F_1(\phi)$ is surjective. Indeed by definition of filtered epimorphism, and the fact that $1$ is minimal in $\mathbb{Z}_{\geq 1}$ we have ${\phi}^{-1}(N_1-N_2)\subseteq M_1-M_2$: since $\phi$ is surjective, applying $\phi$ to both sizes of this relation one gets $N_1-N_2 \subseteq \phi(M_1-M_2)$, which proves that $F_1(\phi)$ is surjective. 
\end{remark}
\begin{definition} \label{definition of shifted filtered modules}
Let $i$ be a positive integer and let $M_{\bullet}$  be a filtered $R$-module. We define $M_{\bullet+i}$ to be the filtered $R$-module
$$M_{i+1} \supseteq M_{i+2} \supseteq \ldots
$$
\end{definition}
\begin{proposition}{\label{first non iso is epi}}
Let $M_{\bullet},N_{\bullet}$ be two filtered modules. Let $\phi:M_{\bullet} \to N_{\bullet}$ be a filtered epimorphism. Let $i$ be a positive integer such that $F_j(\phi)$ is an isomorphism for every $j$ such that $1 \leq j \leq i$. Then $\phi_{|M_{\bullet+i}}:M_{\bullet+i} \to N_{\bullet+i}$ is a filtered epimorphism and $F_{i+1}(\phi)$ is surjective. 
\begin{proof}
Indeed, by Proposition \ref{phi surj}, the hypothesis is equivalent to $F_{1,i+1}(\phi)$ being a filtered isomorphism. Thus $\phi(M_1-M_{i+1}) \subseteq N_1-N_{i+1}$. Thus, since $\phi$ is an epimorphism, it follows that $\phi(M_{i+1})=N_{i+1}$, in particular by remark \ref{phi_1} we have that $F_{i+1}(\phi)=F_1(\phi_{|M_{\bullet+i}})$ is surjective, proving the statement. 
\end{proof}
\end{proposition}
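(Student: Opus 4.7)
The plan is to unpack the hypothesis ``$F_j(\phi)$ is an isomorphism for $1 \leq j \leq i$'' into a concrete statement about weights, and then use the (plain) surjectivity of $\phi$ to lift elements of $N_{i+1}$ all the way into $M_{i+1}$. The essential observation is that the injectivity half of the hypothesis forces $\phi$ to preserve weights up to level $i$, which prevents any element outside $M_{i+1}$ from being sent into $N_{i+1}$; once that is known, plain surjectivity of $\phi$ automatically upgrades to surjectivity of $\phi|_{M_{i+1}}\colon M_{i+1}\to N_{i+1}$.

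More concretely, first I would invoke Proposition \ref{phi surj}(b) (one direction) to argue that the vanishing of $\ker(F_j(\phi))$ for every $j$ with $1\le j\le i$ implies $w_{N_{\bullet}}(\phi(x))=w_{M_{\bullet}}(x)$ for every $x\in M_1$ with $w_{M_{\bullet}}(x)\le i$; equivalently, $\phi(M_j-M_{j+1})\subseteq N_j-N_{j+1}$ for such $j$. Summing these inclusions over $j=1,\dots,i$ yields $\phi(M_1-M_{i+1})\subseteq N_1-N_{i+1}$, which is the same as saying that the induced map $F_{1,i+1}(\phi)\colon M_1/M_{i+1}\to N_1/N_{i+1}$ is injective (it is surjective by Remark \ref{phi_1} together with a short diagram chase, but we only need injectivity).

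Next, pick any $y\in N_{i+1}$. Since $\phi$ is a filtered epimorphism it is in particular surjective on $M_1$, so there exists $x\in M_1$ with $\phi(x)=y$. If $x\notin M_{i+1}$ then $x\in M_1-M_{i+1}$, so by the inclusion just established $\phi(x)\in N_1-N_{i+1}$, contradicting $y\in N_{i+1}$. Hence $x\in M_{i+1}$, which shows $\phi(M_{i+1})=N_{i+1}$. This exactly says that the restriction $\phi|_{M_{\bullet+i}}\colon M_{\bullet+i}\to N_{\bullet+i}$ is a filtered epimorphism, and then Remark \ref{phi_1} applied to $\phi|_{M_{\bullet+i}}$ gives that $F_1(\phi|_{M_{\bullet+i}})$ is surjective; identifying $F_1(\phi|_{M_{\bullet+i}})$ with $F_{i+1}(\phi)$ finishes the proof.

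There is no real obstacle here: the only subtle point is making sure one uses the correct half of Proposition \ref{phi surj}(b), since that proposition is stated for all $j$ simultaneously, whereas we need only the partial version for $1\le j\le i$. The partial version follows from the same elementary argument as in (b), applied one $j$ at a time, so no new ingredients are required.
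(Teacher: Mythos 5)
Your proof is correct and follows essentially the same path as the paper's: the hypothesis forces $\phi$ to preserve weights up to level $i$, giving $\phi(M_1-M_{i+1})\subseteq N_1-N_{i+1}$, after which surjectivity of $\phi$ pins any preimage of an element of $N_{i+1}$ inside $M_{i+1}$, and Remark \ref{phi_1} applied to the restriction gives surjectivity of $F_{i+1}(\phi)$. The only cosmetic difference is that the paper packages the first step by noting the hypothesis is equivalent (via Proposition \ref{phi surj}(d)) to $F_{1,i+1}(\phi)$ being a filtered isomorphism, whereas you unpack the injectivity of each $F_j(\phi)$ directly in terms of weights; both are equally valid.
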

\begin{proposition}{\label{a lot surjective}}
Let $M_{\bullet},N_{\bullet}$ be two filtered modules with $M_{\bullet}$ complete. Let $\phi$ be an element of $\emph{Hom}_{\emph{filt}}(M_{\bullet},N_{\bullet})$. The following are equivalent: \\
\emph{(a)} For every positive integer $i$, we have that $\emph{coker}(F_i(\phi))=0$. \\
\emph{(b)} For every positive integer $i$, we have that $\emph{coker}(\phi_{|M_i}:M_i \to N_i)=0$. 
\begin{proof}
$\text{(a)} \to \text{(b)}$ Let $i$ be a positive integer. For a positive integer $j>i$, the equality $F_j(\phi_{|M_{\bullet+i}})=F_{i+j-1}(\phi)$ trivially holds. Thus assumption $(a)$ is preserved by restriction of $\phi$ to the filtered submodule $M_{\bullet+i}$. So Proposition \ref{phi surj} implies that $\text{coker}(\phi_{|M_i}:M_i \to N_i)=0$. 

$\text{(b)} \to \text{(a)}$ The statement trivially follows applying remark \ref{phi_1} to every filtered morphism $\phi_{|M_i}:M_{\bullet +i} \to N_{\bullet+i}$ since they are all assumed to be epimorphisms. 
\end{proof}
\end{proposition}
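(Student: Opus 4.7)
The plan is to reduce both implications to the two preceding results, Proposition \ref{phi surj}(a) and Remark \ref{phi_1}, by passing to the shifted filtration $M_{\bullet+(i-1)}$ (starting at $M_i$) and restricting $\phi$ there. The common identification I would use is $F_j(\phi_{|M_{\bullet+(i-1)}}) = F_{i+j-1}(\phi)$ for all $j\geq 1$, which is immediate from the definition of the shifted filtration.

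For the implication $(b)\Rightarrow(a)$, I would fix a positive integer $i$ and observe that hypothesis (b), namely $\phi(M_j)=N_j$ for every $j\geq i$, is precisely the statement that the restriction $\phi_{|M_{\bullet+(i-1)}}\colon M_{\bullet+(i-1)}\to N_{\bullet+(i-1)}$ is a strict (filtered) surjection, hence a filtered epimorphism in the sense of Remark \ref{phi_1}. Applying that remark to the restricted morphism yields that $F_1(\phi_{|M_{\bullet+(i-1)}}) = F_i(\phi)$ is surjective, which gives (a). This direction is essentially tautological.

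For the implication $(a)\Rightarrow(b)$, I would first verify that completeness passes to the shifted filtration: $M_i$ is a neighborhood of $0$ in the topology on $M_1$, hence an open subgroup of the topological group $M_1$; any open subgroup is also closed, so $M_i$ is a closed subspace of the complete metric space $M_1$ and is therefore itself complete. The hypothesis (a), reindexed via the identification above, then says that every layer functor applied to the restriction $\phi_{|M_{\bullet+(i-1)}}$ has vanishing cokernel. I would then invoke Proposition \ref{phi surj}(a) on this restriction to conclude $\text{coker}(\phi_{|M_i}\colon M_i\to N_i)=0$, which is exactly (b) for the chosen $i$.

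The hard part will really be no more than bookkeeping. The only mildly delicate step is the completeness check for the shifted filtration, which is settled by the topological observation above; the rest is a direct application of the two preceding results to the shifted restriction, and no extra hypotheses on $\phi$ or on the target $N_\bullet$ (in particular, no completeness of $N_\bullet$) are needed.
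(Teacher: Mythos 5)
Your proof is correct and follows essentially the same route as the paper: shift the filtration so that $M_i$ becomes the top layer, reindex the layer functors, and apply Remark \ref{phi_1} for one direction and Proposition \ref{phi surj}(a) for the other. If anything you are slightly more careful than the printed argument, both in using $M_{\bullet+(i-1)}$ (which does start at $M_i$ under Definition \ref{definition of shifted filtered modules}, whereas the paper's $M_{\bullet+i}$ starts at $M_{i+1}$) and in explicitly justifying that the shifted filtration inherits completeness via the open-hence-closed observation, a point the paper leaves tacit.
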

\begin{proposition}{\label{last ker}}
Let $M_{\bullet},N_{\bullet}$ be two filtered modules, $M_{\bullet}$ complete, and $\phi \in \emph{Hom}_{\emph{filt}}(M_{\bullet},N_{\bullet})$. Assume $i \in \mathbb{Z}_{\geq 1}$ is such that $\emph{ker}(F_j(\phi))=\emph{coker}(F_j(\phi))=0$ for all $j>i$. Then $\emph{ker}(\phi) \cap w_{M_{\bullet}}^{-1}\{i,\infty\}$ is an $R$-submodule, and the inclusion in $M_i$ induces an isomorphism $\emph{ker}(\phi) \cap w_{M_{\cdot}}^{-1}\{i,\infty\} \simeq \emph{ker}(F_i(\phi))$.
\begin{proof}
Since $\text{ker}(F_j(\phi))=0$ for all $j>i$, it follows that $\text{ker}(\phi) \cap M_i=\text{ker}(\phi) \cap w_{M_{\cdot}}^{-1}\{i,\infty\}$ proving thus that is an $R$-submodule, and that the inclusion in $F_i(M_{\bullet})$ is injective. Suppose $x \in M_i-M_{i+1}$, $\phi(x) \in N_{i+1}$ holds. Thanks to the assumption $\text{ker}(F_j(\phi))=\text{coker}(F_j(\phi))=0$ for all $j>i$, and to Proposition \ref{phi surj}, we see that $\phi_{|M_{\bullet+i+1}}$ is an isomorphism and thus it follows that there is exactly one $y \in M_{i+1}$ such that $\phi(x)=\phi(y)$. Thus, since $x \equiv x-y \ \text{mod} \ M_{i+1}$, and $x-y \in \text{ker}(\phi)$ we obtain that the natural map from $\text{ker}(\phi) \cap w_{M_{\bullet}}^{-1}\{i,\infty\}$ to $F_i(M_{\bullet})$ is also surjective.
\end{proof}
\end{proposition}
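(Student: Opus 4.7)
The plan is to show first that the intersection $\ker(\phi)\cap M_{i+1}$ is trivial, and then to use this together with an application of Proposition \ref{phi surj}(d) to the shifted restriction $\phi|_{M_{\bullet+i}}$ in order to identify $\ker(\phi)\cap w_{M_\bullet}^{-1}\{i,\infty\}$ with $\ker(F_i(\phi))$ via the natural reduction map.

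First I would observe that the hypothesis $\ker(F_j(\phi))=0$ for every $j>i$ immediately forces $\ker(\phi)\cap M_{i+1}=\{0\}$: any nonzero $x\in\ker(\phi)\cap M_{i+1}$ has finite weight $j=w_{M_\bullet}(x)\geq i+1>i$ (since filtered modules satisfy $\bigcap_kM_k=\{0\}$), hence its class in $F_j(M_\bullet)$ is a nonzero element of $\ker(F_j(\phi))$, a contradiction. In particular $\ker(\phi)\cap M_i$ consists of $0$ together with elements of weight exactly $i$, so $\ker(\phi)\cap M_i=\ker(\phi)\cap w_{M_\bullet}^{-1}\{i,\infty\}$. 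Since $\ker(\phi)\cap M_i$ is visibly an $R$-submodule, the first claim of the proposition is established.

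Next I would consider the natural map $\ker(\phi)\cap M_i\to\ker(F_i(\phi))$ sending $x\mapsto x+M_{i+1}$. Injectivity is immediate from the previous step: if $x$ lies in $M_{i+1}\cap\ker(\phi)$, then $x=0$. For surjectivity, I would apply Proposition \ref{phi surj}(d) to the restricted filtered morphism $\phi|_{M_{\bullet+i}}:M_{\bullet+i}\to N_{\bullet+i}$. Here $M_{\bullet+i}$ is complete as a closed submodule of the complete $M_\bullet$, and for every $j\geq 1$ the functor $F_j$ applied to the shifted restriction gives $F_{i+j}(\phi)$, which by hypothesis has zero kernel and cokernel; hence $\phi|_{M_{i+1}}:M_{i+1}\to N_{i+1}$ is a filtered isomorphism. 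Now for a class $[x]\in\ker(F_i(\phi))$ represented by $x\in M_i$, the element $\phi(x)$ lies in $N_{i+1}$, so there exists a unique $y\in M_{i+1}$ with $\phi(y)=\phi(x)$; then $x-y\in\ker(\phi)\cap M_i$ and $x-y\equiv x\pmod{M_{i+1}}$, proving surjectivity.

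The only genuinely delicate point is verifying that the hypotheses of Proposition \ref{phi surj}(d) apply to the shifted restriction, which requires that completeness is inherited by $M_{\bullet+i}$ and that the identification $F_j\circ(\bullet+i)\cong F_{i+j}$ is correctly made; both are essentially bookkeeping. Everything else is a direct diagram chase using the equivalence (from Proposition \ref{phi surj}(b)) between $\ker(F_j(\phi))=0$ for all $j>i$ and the weight-preserving property of $\phi$ above level $i$.
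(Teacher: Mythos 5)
Your proof is correct and follows essentially the same route as the paper: deduce from $\ker(F_j(\phi))=0$ for $j>i$ that $\ker(\phi)\cap M_{i+1}=\{0\}$ (hence $\ker(\phi)\cap M_i=\ker(\phi)\cap w_{M_\bullet}^{-1}\{i,\infty\}$ is a submodule and reduction mod $M_{i+1}$ is injective), then invoke Proposition \ref{phi surj}(d) on the shift $\phi|_{M_{\bullet+i}}$ to see that $\phi:M_{i+1}\to N_{i+1}$ is an isomorphism and pull back representatives to get surjectivity. Your bookkeeping about $F_j\circ(\bullet+i)\cong F_{i+j}$ and inherited completeness is the same verification the paper performs implicitly; there is no gap.
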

\begin{corollary}{\label{last ker +}}
Let $M_{\bullet},N_{\bullet}$ be two filtered modules, $M_{\bullet}$ complete, and $\phi \in \emph{Hom}_{\emph{filt}}(M_{\bullet},N_{\bullet})$. Assume $i \in \mathbb{Z}_{\geq 1}$ is such that $\emph{coker}(F_j(\phi))=0$ for all $j>i$ and $\emph{ker}(F_j(\phi))=0$ for all $j \neq i$. Then $\emph{ker}(\phi) \subseteq w_{M_{\cdot}}^{-1}\{i,\infty\}$, and this inclusion induces an isomorphism $\emph{ker}(\phi) \simeq_{R\emph{-mod}}\emph{ker}(F_i(\phi))$.  
\begin{proof}
Clearly the assumption that $\text{ker}(F_j(\phi))=0$ for all $j \neq i$ implies that $\text{ker}(\phi) \subseteq w_{M_{\bullet}}^{-1}\{i,\infty\}$. Thus lemma \ref{last ker} implies that this inclusion induces an isomorphism $$\text{ker}(\phi)=\text{ker}(\phi) \cap w_{M_{\bullet}}^{-1}\{i,\infty\} \simeq_{R\text{-mod}}\text{ker}(F_i(\phi)).$$
\end{proof}
\end{corollary}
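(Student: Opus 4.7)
The plan is to deduce Corollary \ref{last ker +} directly from Proposition \ref{last ker} by first upgrading the kernel condition ``$\ker(F_j(\phi))=0$ for all $j\neq i$'' into the containment $\ker(\phi)\subseteq w_{M_\bullet}^{-1}\{i,\infty\}$, and then quoting the preceding proposition to get the isomorphism with $\ker(F_i(\phi))$.

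First I would check the containment $\ker(\phi)\subseteq w_{M_\bullet}^{-1}\{i,\infty\}$. Take $x\in\ker(\phi)$ with $x\neq 0$ and let $j:=w_{M_\bullet}(x)$. Because $M_\bullet$ is a filtered module, $\bigcap_k M_k=\{0\}$, so either $j=\infty$ (and we are done), or $j$ is a finite positive integer. In the latter case $x\in M_j\setminus M_{j+1}$, so its class $\overline{x}\in F_j(M_\bullet)$ is nonzero; on the other hand $\phi(x)=0$ forces $F_j(\phi)(\overline{x})=0$, so $\overline{x}\in\ker(F_j(\phi))$. The hypothesis then gives $j=i$, and the containment is established.

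Next, the hypotheses of Proposition \ref{last ker} are clearly satisfied: for every $j>i$ we have $\ker(F_j(\phi))=0$ (from the kernel hypothesis at $j\neq i$) and $\text{coker}(F_j(\phi))=0$ (by the cokernel hypothesis). Proposition \ref{last ker} therefore yields that the inclusion in $M_i$ induces an isomorphism
\[
\ker(\phi)\cap w_{M_\bullet}^{-1}\{i,\infty\}\;\simeq\;\ker(F_i(\phi)).
\]
Combining this with the containment already shown, the left-hand side equals $\ker(\phi)$ itself, and the corollary follows.

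There is essentially no obstacle here: the entire content is the elementary weight argument of the first step, together with the recognition that Proposition \ref{last ker} applies verbatim once that step is in place. The completeness hypothesis on $M_\bullet$ is used only inside Proposition \ref{last ker}; it is not needed for the weight containment argument.
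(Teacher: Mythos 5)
Your argument is exactly the paper's: establish $\ker(\phi)\subseteq w_{M_\bullet}^{-1}\{i,\infty\}$ from the kernel hypothesis at all $j\neq i$, then invoke Proposition \ref{last ker} to identify $\ker(\phi)=\ker(\phi)\cap w_{M_\bullet}^{-1}\{i,\infty\}$ with $\ker(F_i(\phi))$. The only difference is that you spell out the weight argument which the paper compresses into the word ``Clearly,'' so the two proofs coincide in substance.
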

\begin{remark}
Part (a),(c) of Proposition \ref{phi surj} do not hold without the assumption of completeness. An example is given as follows: take a collection of filtered modules $\{(M_i,w_{M_i})\}_{i \in \mathbb{Z}_{\geq 1}}$ such that for all $ m \in \mathbb{Z}_{\geq 1}$ there are only finitely many $i \in I$ such that $\text{min}(w_i(M_i)) \leq m$. Now consider $\bigoplus_{i \in I}(M_i,w_i)\subseteq \prod_{i \in I}(M_i,w_i)$. Then $F_m(M_i)=0$ for all but finitely many $i$. Thus, by remark \ref{F_j preserve}, we have that the inclusion of the direct sum of the direct product is preserved by $F_m$, but since it is over a finite set of indices (the ones where $F_m$ does not vanish) it is also an isomorphism. But if $M_i \neq 0$ for infinitely many $i \in \mathbb{Z}_{\geq 1}$ the inclusion of the direct sum in the direct product is not an isomorphism. This suggests the following proposition.
\end{remark}
\begin{proposition}{\label{phi surj comple}}
Let $M_{\bullet},N_{\bullet}$ be two filtered modules, and $\phi \in \emph{Hom}_{\emph{filt}}(M_{\bullet},N_{\bullet})$, denote by $\hat{\phi}:\hat{M} \to \hat{N}$ the map induced on the completions. Then the following hold: \\
\emph{(a)} If $\emph{coker}(F_i(\phi))=0$ for all $i \in \mathbb{Z}_{\geq 1}$, then $\emph{coker}(\hat{\phi})=0$. \\
\emph{(b)} If $\emph{ker}(F_i(\phi))=0$ for all $i \in \mathbb{Z}_{\geq 1}$, then $\emph{ker}(\hat{\phi})=0$. \\
\emph{(c)} $F_i(\phi)$ is an isomorphism for every $i \in \mathbb{Z}_{\geq 1}$ iff $\hat{\phi}$ is an isomorphism.
\end{proposition}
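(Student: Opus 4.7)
The plan is to reduce each statement to the completed morphism $\hat{\phi}:\widehat{M_\bullet}\to\widehat{N_\bullet}$, whose domain is complete by construction, and then invoke Proposition \ref{phi surj}. The key bridge is Remark \ref{F_i bar}: the completion natural transformation $\mathrm{compl}:\mathrm{id}\to\widehat{\bullet}$ induces, for every positive integer $i$, an isomorphism of functors $F_i\circ\widehat{\bullet}\simeq F_i$. In particular, the commutative square arising from naturality gives $F_i(\hat{\phi})$ identified with $F_i(\phi)$ up to the canonical isomorphisms $F_i(M_\bullet)\simeq F_i(\widehat{M_\bullet})$ and $F_i(N_\bullet)\simeq F_i(\widehat{N_\bullet})$.

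For part (a), I would first observe that the hypothesis $\mathrm{coker}(F_i(\phi))=0$ for every $i$ transports via the above identification to $\mathrm{coker}(F_i(\hat{\phi}))=0$ for every $i$. Since $\widehat{M_\bullet}$ is complete, Proposition \ref{phi surj}(a) applied to $\hat{\phi}$ yields $\mathrm{coker}(\hat{\phi})=0$. For part (b), the same identification converts the hypothesis into $\mathrm{ker}(F_i(\hat{\phi}))=0$ for every $i$, and then Proposition \ref{phi surj}(c) applied to $\hat{\phi}$ gives $\mathrm{ker}(\hat{\phi})=0$; note that (c) of Proposition \ref{phi surj} does not require completeness, so both directions are clean.

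For part (c), the forward direction is immediate by combining (a) and (b): if every $F_i(\phi)$ is an isomorphism, then $\hat{\phi}$ is both injective and surjective, hence a filtered isomorphism (the inverse in the category of filtered modules is automatic since $\hat{\phi}$ is a continuous bijective homomorphism of complete Hausdorff filtered modules, and more concretely is again a direct application of Proposition \ref{phi surj}(d) with $\widehat{M_\bullet}$ complete). Conversely, if $\hat{\phi}$ is an isomorphism, then functoriality of $F_i$ makes $F_i(\hat{\phi})$ an isomorphism, and the functor isomorphism $F_i\circ\widehat{\bullet}\simeq F_i$ transfers this back to an isomorphism $F_i(\phi)$.

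There is no real obstacle here: the proposition is a packaging of Proposition \ref{phi surj} through the completion functor, and the only point requiring a bit of care is to keep the naturality square of Remark \ref{F_i bar} explicit so that hypotheses and conclusions concerning $F_i(\phi)$ and $F_i(\hat{\phi})$ are interchanged without ambiguity. Once this identification is stated once, parts (a), (b) and (c) follow as immediate corollaries and the proof is essentially a one-line reduction in each case.
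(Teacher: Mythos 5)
Your proof is correct and follows essentially the same route as the paper: transfer the hypotheses from $\phi$ to $\hat{\phi}$ via the functor isomorphism of Remark \ref{F_i bar}, and then apply Proposition \ref{phi surj} to the completed morphism, whose domain is complete. The paper's own proof is exactly this two-step reduction, stated even more tersely.
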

\begin{proof}
From remark \ref{F_i bar}, we know that $F_i$ and $F_i(\text{compl})$ are isomorphic functors. Thus $\text{coker}F_i(\phi)=0$ for all $i \in \mathbb{Z}_{\geq 1}$ is equivalent to $\text{coker}F_i(\hat{\phi})=0$ for all $i \in \mathbb{Z}_{\geq 1}$, and $\text{ker}(F_i(\phi))=0$ for all $i \in \mathbb{Z}_{\geq 1}$, is equivalent to $\text{ker}(F_i(\hat{\phi}))=0$ for all $i \in \mathbb{Z}_{\geq 1}$. Thus the proposition follows from Proposition \ref{phi surj}.
\end{proof}
\begin{remark}{\label{equivalence between a and c}}
Proposition \ref{phi surj comple} implies the equivalence between (b) and (c) in Proposition \ref{products}. Indeed if we have (c) of Proposition \ref{products} then we conclude that the completion of $\prod_{i \in I} M_i$ is also the completion of $\bigoplus_{i \in I} M_i$. Hence in particular $ \bigoplus_{i \in I} M_i$ is dense in $ \prod_{i \in I}M_i$. This gives that (c) implies (a). But we have shown in Proposition \ref{products} that (a) is equivalent to (b), hence (c) implies (b). Conversely it is an immediate verification that (b) implies (c). 
\end{remark}
\subsection{Filtered modules over a complete DVR} \label{DVR}
Now we specialize to the case where $R$ is a complete DVR: we ask completeness because in what follows, we want to apply Propositions \ref{phi surj}, \ref{a lot surjective}, \ref{last ker +}, and moreover it will be handy when taking filtered quotients of finitely generated modules (see \ref{quotients}). We fix a uniformizer of $R$, and we denote it by $\pi_R$. 
\subsubsection{The $\rho$-map} \label{rho} 
Let $M_{\bullet}$ a filtered $R$-module, denote by $w$ its weight map. Define $\rho_{M_{\bullet}}:\mathbb{Z}_{\geq 1} \to \mathbb{Z}_{\geq 1} \cup \{\infty\}$ as follows: $\rho_{M_{\bullet}}(i):=\text{sup}\{j \in \mathbb{Z}_{\geq 1}, \pi_RM_{i} \subseteq M_j\}$. In terms of the weight map we have that $\rho_{M_{\bullet}}(i)=\min_{x \in M_i} \{w(\pi_Rx)\}$.
\begin{remark} \label{property of a linear module}
The condition that $\rho_{M_{\bullet}}$ is a shift map is equivalent to the conjunction of the following two conditions: \\
(a) For all positive integers $i$, one has that $M_i/M_{i+1}$ is an $R/(\pi_R)$-vector space. Moreover $\pi_RM_i \neq 0$. \\
(b) For all positive integers $i$ the $R$-linear map ${\pi_R}_{|M_{i}}:M_i \to M_{\rho_{M_{\bullet}}(i)}$, given by multiplication by $\pi_R$, is a filtered morphism. 
\end{remark}
\begin{definition} \label{definition of linear}
We call a filtered $R$-module \emph{linear} if it satisfies (a) of remark \ref{property of a linear module}. We call a filtered $R$-module \emph{strictly linear} if it satisfies both part (a) and part (b) of remark \ref{property of a linear module}.
\end{definition}
Let $M_{\bullet}$ be a linear filtered $R$-module. Multiplication by $\pi_R$ induces a map $F_i(M_{\bullet}) \to F_{\rho_{M_{\bullet}}(i)}(M_{\bullet})$, which we denote by $[\pi_R]_i$. One has by definition that $[\pi_R]_i=F_1({\pi_R}_{|{M_{i}}})$. Observe that the right hand side is well defined thanks to part (b) of the definition of a linear filtered $R$-module. 
\begin{definition} \label{definition: f, defect, codefect}
Let $M_{\bullet}$ be a linear $R$-filtered module and let $i$ be a positive integer. \\
(a) We denote by $f_i(M_{\bullet})=\text{dim}_{R/(\pi_R)}(F_i(M_{\bullet}))$. \\
(b) We denote by $\text{defect}_{M_{\bullet}}(i):=\text{dim}_{R/(\pi_R)}(\text{ker}([\pi_R]_i))$. \\
(c) We denote by $\text{codefect}_{M_{\bullet}}(i):=\text{dim}_{R/(\pi_R)}(\text{coker}([\pi_R]_i))$. 
\end{definition}

\subsubsection{Free filtered modules} \label{(quasi)-free modules}
Fix $\rho$ a shift map. Here we introduce the class of free filtered $R$-modules with respect to $\rho$. Free filtered modules play a role in the category of filtered modules similar to the one played by free $R$-modules in the category of $R$-modules. We thus recall the role of the latter to clarify the introduction of the former. 

\emph{Free $R$-modules.} \label{free R module}
Recall that if $X$ is a set, then we have a covariant functor $H_{X}:R\text{-mod} \to \text{Set}$, defined on an object $M \in R\text{-mod}$ as $H_{X}(M):=\text{Hom}_{\text{Set}}(X,M)$, and defined on a morphism $\phi:M \to N$ as $H_X(\phi)(f):=\phi \circ f$ for each $f \in \text{Hom}_{\text{Set}}(X,M)$. In other words $H_{X}$ is the restriction of the functor $\text{Hom}_{\text{Set}}(X,-)$ to the image of $R\text{-mod}$ in $\text{Set}$ via the forgetful functor. This functor is representable in $R\text{-mod}$:  up to isomorphism there is a unique $R$ module, $N_{X}$, such that $H_X \simeq_{\text{functor}} \text{Hom}_{R\text{-mod}}(N_{X},-)$. This module is called the free module over $X$, and concretely it is the module of finite formal $R$-linear combinations of elements of $X$. By Yoneda's Lemma the different choices of an isomorphism $\Phi: \text{Hom}_{R\text{-mod}}(N_X,-) \to H_X$, correspond to the different choices of ${\Phi}_{N_X}(\text{id}_{N_X}):X \to N_{X}$, which are the different choices of a basis $\mathcal{B}$ for $N_X$ together with a bijection between $\mathcal{B}$ and $X$. Again, by Yoneda's Lemma, the set $\text{Isom}_{\text{functors}}(H_X,N_X)$ is a torsor under $\text{Aut}_{R\text{-mod}}(N_X)$. 

Free $R$-modules are the easiest $R$-modules, and once we trivialize $\text{Isom}_{\text{functors}}(H_X,N_X)$, by the choice of a basis $\Phi$, then, by construction, for any $R$-module $M$, the set $\text{Hom}_{R\text{-mod}}(N_X,M)$ is in natural bijection with $\text{Hom}_{\text{Set}}(X,M)$, via $\Phi$. Thus we can easily use suitable free $R$-modules to present other modules. The ease in defining presentations $N_X \twoheadrightarrow M$, once a trivialization $\Phi$ is chosen, has the price of obscuring structural information about $M$. Thus one is led to look for properties of the presentation which are invariant under $\text{Aut}_{R\text{-mod}}(N_X)$. This is exactly the path we will follow in attaching jump sets to special filtered modules. So, first, we need to define the analogue of a free filtered module, which we do next.

\emph{Free filtered $R$-modules.} \label{shit}
First we introduce the analogue of the functors $H_X$ of the previous paragraph. Consider pairs $(X,g)$, where $X$ is a set and $g$ is a map $g:X \to \mathbb{Z}_{\geq 1}$. Denote by $\rho\text{-Filt-}R\text{-mod}$ the full sub-category of $\text{Compl-Filt-}R\text{-mod}$, having as objects complete linear $R$-filtered modules $M_{\bullet}$ such that $\rho_{M_{\bullet}} \geq \rho$. Consider the functor $H_{(X,g)}:\rho\text{-Filt-}R\text{-mod} \to \text{Set}$, defined on an object $M_{\bullet} \in \rho\text{-Filt-}R\text{-mod}$ as $H_{(X,g)}(M_{\bullet}):=\{f \in \text{Hom}_{\text{Set}}(X,M_1): \text{for all $x$ in $X$,} \ w(f(x)) \geq g(x) \}$, and defined on morphisms by left composition. The goal of this paragraph is show that this functor is representable. We start with the simplest possible case of a pair $(X,g)$ with $X=\{x\}$ being a point. Put $n:=g(x)$. Clearly the functor depends only on $n$, so, for simplicity, we will denote it by $H_n$. 
\begin{definition} 
The $n$-th standard filtered module, $S_n$, for $\rho$, is given by: $S_n=R$, with weight map defined as $w(x)=\rho^{\text{ord}_R(x)}(n)$, for all $x$ in $R$. 
\end{definition}
Observe that $S_n$ is an object of $\rho\text{-Filt-}R\text{-mod}$ (recall that $R$ is assumed complete). It turns out that it represents $H_n$.
\begin{proposition} \label{small universal property}
The functor $H_n$ is represented by $S_n$. 
\begin{proof}
Observe that by definition $H_n$ is simply the functor sending $M_{\bullet}$ to the set $M_n$, and sending a morphism $ \phi:M_{\bullet} \to N_{\bullet}$ to the restriction $\phi_{|_{M_n}}:M_n \to N_n$. So it suffices to prove that given $M_{\bullet} \in \rho\text{-Filt-}R\text{-mod}$, and given $v \in M_n$, the unique $R$-linear morphism from $R$ to $M_1$ sending $1 \mapsto v$, is a filtered morphism from $S_n$ to $M_{\bullet}$, and that these are all the possible filtered morphism from $S_n$ to $M_{\bullet}$. But this follows directly from the definition of $S_n$ and the fact that $M_{\bullet}$ is an object of $\rho\text{-Filt-}R\text{-mod}$.
\end{proof}
\end{proposition}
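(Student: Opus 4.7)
The strategy is Yoneda-flavored: I want to exhibit a natural bijection $\mathrm{Hom}_{\mathrm{filt}}(S_n, M_{\bullet}) \simeq H_n(M_{\bullet})$, and the natural choice is evaluation at $1 \in S_n = R$. First I would identify $H_n(M_{\bullet})$ with $M_n$ via $f \mapsto f(x)$, where $x$ is the unique point of $X$; the weight condition $w(f(x)) \geq n$ is exactly the condition $f(x) \in M_n$. Next, to any filtered morphism $\phi: S_n \to M_{\bullet}$ I attach $\phi(1) \in M_1$, which in fact lies in $M_n$ because $w_{S_n}(1) = \rho^0(n) = n$ and $\phi$ respects weights.

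For the inverse direction, given $v \in M_n$ I define $\phi_v : S_n \to M_{\bullet}$ by $\phi_v(r) = r \cdot v$; this is the unique $R$-linear extension of $1 \mapsto v$. The core of the proof is to verify that $\phi_v$ is a filtered morphism, i.e.\ that $w(rv) \geq w_{S_n}(r) = \rho^{\mathrm{ord}_R(r)}(n)$ for every $r \in R$. Write $r = \pi_R^k u$ with $k = \mathrm{ord}_R(r)$ and $u \in R^*$. Since $u$ is a unit, $uv$ still lies in $M_n$, so $w(uv) \geq n$. Then by the defining property $\rho_{M_{\bullet}} \geq \rho$, applied $k$ times and using that $\rho$ is increasing, one obtains $w(\pi_R^k(uv)) \geq \rho^k(w(uv)) \geq \rho^k(n)$, as required. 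This iterated inequality is the only technical content; everything else is book-keeping.

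To conclude, the two assignments $\phi \mapsto \phi(1)$ and $v \mapsto \phi_v$ are mutually inverse: by $R$-linearity $\phi_{\phi(1)}(r) = r \phi(1) = \phi(r)$, and trivially $\phi_v(1) = v$. Naturality in $M_{\bullet}$ is immediate from the definition, since post-composing $\phi_v$ with a filtered morphism $\psi : M_{\bullet} \to N_{\bullet}$ yields $\phi_{\psi(v)}$. The main obstacle, if any, is simply making sure one uses the hypothesis $\rho_{M_{\bullet}} \geq \rho$ in the right (iterated) form; once this is set up, representability of $H_n$ by $S_n$ follows formally.
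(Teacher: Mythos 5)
Your proof is correct and follows the same approach as the paper: identify $H_n(M_{\bullet})$ with $M_n$, map $\phi \mapsto \phi(1)$ and $v \mapsto (r \mapsto rv)$, and check that the weight inequality $w(rv) \geq \rho^{\operatorname{ord}_R(r)}(n)$ holds by factoring out the unit part and iterating $\rho_{M_{\bullet}} \geq \rho$. The paper states this step "follows directly from the definitions"; you have simply unfolded it, which is a reasonable level of detail.
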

Now we can prove that $H_{(X,g)}$ is representable for any set $X$ and any map $g:X \to \mathbb{Z}_{\geq 1}$. For a positive integer $i$ denote by $c_{(X,g)}(i):=|g^{-1}(i)|$. Given $c$ a cardinal number and $N_{\bullet}$ a filtered module, denote by $N_{\bullet}^{(c)}$ the direct sum of $c$ copies of $N_{\bullet}$.
\begin{proposition} \label{Universal property of free filtered modules}
The functor $H_{(X,g)}$ is represented by the filtered $R$-module $\prod_{i \in \mathbb{Z}_{\geq 1}}\widehat{S_i^{(c_{(X,g)}(i))}}$.
\begin{proof}
The functor $H_{(X,g)}$ is isomorphic to the direct product of the functors $H_{g(x)}$ as $x$ varies in $X$. So it follows from Proposition \ref{products}, Claim \ref{small universal property} and the universal property of the completion, that $H_{(X,g)}$ is isomorphic to the functor $\text{Hom}_{\text{filt}}(\prod_{i \in \mathbb{Z}_{\geq 1}}\widehat{S_i^{(c_{(X,g)}(i))}},-)$.
\end{proof}
\end{proposition}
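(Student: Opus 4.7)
The plan is to identify $H_{(X,g)}$ with the Hom-functor of the given module by assembling it from pieces that are already known to be representable. The starting observation is that, by its very definition, $H_{(X,g)}(M_{\bullet})$ is naturally the set $\prod_{x \in X} M_{g(x)}$; collecting $x$'s by the value of $g(x)$ yields a natural isomorphism
\[
H_{(X,g)} \;\simeq\; \prod_{x \in X} H_{g(x)} \;\simeq\; \prod_{i \in \mathbb{Z}_{\geq 1}} H_i^{\, c_{(X,g)}(i)}.
\]
By Proposition \ref{small universal property}, each factor $H_i$ is represented by $S_i$.

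The next step is to convert this product of Hom-functors into a single Hom-functor. In any category, $\prod_{\alpha} \text{Hom}(A_{\alpha}, -) = \text{Hom}(\coprod_{\alpha} A_{\alpha}, -)$, and a short direct check shows that in $\rho\text{-Filt-}R\text{-mod}$ the coproduct is the usual direct sum with the coordinatewise filtration: a filtered morphism out of a direct sum is the same thing as a family of filtered morphisms out of the summands. Hence
\[
\prod_{i} H_i^{\, c_{(X,g)}(i)} \;\simeq\; \text{Hom}_{\text{filt}}\!\Bigl(\bigoplus_{i} S_i^{(c_{(X,g)}(i))},\, -\Bigr),
\]
as functors on $\rho\text{-Filt-}R\text{-mod}$.

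The third step is to pass to a complete representing object. Since every $M_{\bullet}$ in $\rho\text{-Filt-}R\text{-mod}$ is complete, the adjunction described in the subsection on the metric structure gives
\[
\text{Hom}_{\text{filt}}\!\Bigl(\bigoplus_i S_i^{(c_{(X,g)}(i))},\, M_{\bullet}\Bigr) \;=\; \text{Hom}_{\text{filt}}\!\Bigl(\widehat{\bigoplus_i S_i^{(c_{(X,g)}(i))}},\, M_{\bullet}\Bigr).
\]
The remaining, and most delicate, task is to identify this completion with $\prod_i \widehat{S_i^{(c_{(X,g)}(i))}}$. Here I would apply Proposition \ref{products} to the collection $\{S_i^{(c_{(X,g)}(i))}\}_i$: the minimum weight of the $i$-th entry is exactly $i$, so condition (b) of that proposition is satisfied, the direct sum is dense in the direct product, and hence the two share a common completion. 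Since completion commutes with direct products (being Cauchy is a coordinatewise condition on the product filtration), one concludes
\[
\widehat{\bigoplus_i S_i^{(c_{(X,g)}(i))}} \;=\; \prod_i \widehat{S_i^{(c_{(X,g)}(i))}}.
\]
Composing the four natural isomorphisms produces the claim. The main pitfall to watch is the interaction between direct sum versus direct product and the completion functor, which is exactly what Proposition \ref{products} is designed to resolve; once that is in hand, the rest is an unwinding of definitions and universal properties.
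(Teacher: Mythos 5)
Your proof is correct and follows essentially the same route as the paper's, which likewise cites Proposition \ref{small universal property}, Proposition \ref{products}, and the universal property of the completion; you have merely unwound the chain $\prod_x H_{g(x)} \simeq \text{Hom}_{\text{filt}}(\bigoplus_i S_i^{(c(i))},-) \simeq \text{Hom}_{\text{filt}}(\widehat{\bigoplus_i S_i^{(c(i))}},-) \simeq \text{Hom}_{\text{filt}}(\prod_i \widehat{S_i^{(c(i))}},-)$ step by step. One small caution on wording: being Cauchy in the product filtration is a uniform condition across coordinates (not merely coordinatewise Cauchy), but your conclusion that the product of complete modules is complete and contains $\prod_i S_i^{(c(i))}$ densely is correct, so the identification $\widehat{\bigoplus_i S_i^{(c(i))}} = \prod_i \widehat{S_i^{(c(i))}}$ stands.
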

\begin{remark}
Let $i$ be a positive integer. If $c_{(X,g)}(i)$ \emph{finite}, then we can omit the completion of the factor $S_i^{(c_{(X,g)}(i))}$, since it is already a complete filtered module. In our application $c_{(X,g)}(i)$ will always be finite. 
\end{remark}
An object $M_{\bullet}$ in $\rho\text{-Filt-}R\text{-mod}$, representing $H_{(X,g)}$ (so by Yoneda's Lemma and by Proposition \ref{Universal property of free filtered modules}, isomorphic to $\prod_{i \in \mathbb{Z}_{\geq 1}}\widehat{S_i^{(c_{(X,g)}(i))}}$), is said to be free on $(X,g)$. Motivated by the discussion in the above paragraph on free modules, we introduce the following notion. 
\begin{definition}\label{definition of a basis}
Let $M_{\bullet}$ be in $\rho\text{-Filt-}R\text{-mod}$ a free module on $(X,g)$. A \emph{filtered} basis for $M_{\bullet}$ is an element of $\text{Isom}_{\text{functor}}(\text{Hom}_{\text{filt}}(M_{\bullet},-),H_{(X,g)})$. 
\end{definition}
Given $\Phi$ a filtered basis for $M_{\bullet}$, one recovers a more concrete version of the notion of a filtered basis, by means of Yoneda's Lemma, taking $\Phi_{M_{\bullet}}(\text{id}_{M_{\bullet}}):X \to M_1$. The image of this map generates a free $R$-module that is dense in $M_1$ (coinciding with $M_1$ if and only if $X$ is finite, observe that for $X$ infinite the resulting module is never free as an $R$-module). 

Clearly, the functor $H_{(X,g)}$ depends only on the map $c_{(X,g)}$. So from now on we will directly speak of the functors $H_{f^{*}}$, where $f^{*}$ is a map from $\mathbb{Z}_{\geq 1}$ to the cardinal numbers.

We next give an internal criterion for a filtered module to be representing the functor $H_{f^{*}}$, under the assumption that $f^{*}$ is supported in $T_{\rho}$, that is, we assume that $f^{*}(\text{Im}(\rho))=\{0\}$.
\begin{proposition} \label{charact. free-filt-mod}
Let $M_{\bullet}$ be an object of $\rho\emph{-Filt-}R\emph{-mod}$, and $f^{*}$ as above. Then the following are equivalent: \\
\emph{(a)} For every positive integer $i$ one has $\emph{defect}_{M_{\bullet}}(i)=\emph{codefect}_{M_{\bullet}}(i)=0$. Moreover if $i$ is in $T_{\rho}$, one has $f_i(M_{\bullet})=f^{*}(i)$. \\
\emph{(b)} One has an isomorphism of functors $H_{f^{*}} \simeq_{\emph{functor}}\emph{Hom}_{\emph{filt}}(M_{\bullet},-)$. \\
\emph{(c)} One has an isomorphism of filtered modules $M_{\bullet} \simeq_{\emph{filt}} \prod_{i \in \mathbb{Z}_{\geq 1}}\widehat{S_i^{f^{*}(i)}}$.
\begin{proof}
The equivalence between (b) and (c) is an immediate consequence of Proposition \ref{Universal property of free filtered modules} and Yoneda's Lemma. It is a straightforward verification that (c) implies (a). We prove that (a) implies (c). 

For every positive integer $i$ in $T_{\rho}$, lift a basis of $M_i/M_{i+1}$ to $M_i$ and denote it by $\mathcal{B}_i$. The inclusion $\bigcup_{i \in T_{\rho}} \mathcal{B}_i \subseteq M_1$ consists of an element of $H_{\tilde{f}}(M_{\bullet})$, which thus gives, thanks to Proposition \ref{Universal property of free filtered modules}, a filtered morphism $\phi: \prod_{i \in \mathbb{Z}_{\geq 1}} \widehat{S_i^{f^{*}(i)}} \to M_{\bullet}$. We claim that $\phi$ is an isomorphism. 

Indeed by construction $F_i(\phi)$ is an isomorphism for every $i$ in $T_{\rho}$. But together with the fact that for every positive integer $i$ one has $\text{defect}_{M_{\bullet}}(i)=\text{codefect}_{M_{\bullet}}(i)=0$, this easily implies that for every positive integer $i$, the map $F_i(\phi)$ is an isomorphism. So, since $M_{\bullet}$ is complete, we conclude by part (d) of Proposition \ref{phi surj}.
\end{proof}
\end{proposition}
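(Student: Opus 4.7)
The strategy for handling the three equivalences is as follows. The equivalence (b) $\Leftrightarrow$ (c) is immediate from Yoneda's lemma together with Proposition \ref{Universal property of free filtered modules}: two objects of $\rho\text{-Filt-}R\text{-mod}$ represent the same functor if and only if they are isomorphic, and both (b) and (c) amount to the statement that $M_{\bullet}$ represents $H_{f^{*}}$, which is already known to be represented by $\prod_{i}\widehat{S_i^{f^{*}(i)}}$. The implication (c) $\Rightarrow$ (a) is a direct computation on each factor $S_i$: by definition of the weight map on $S_i$, the quotient $F_j(S_i)$ is one-dimensional over $R/(\pi_R)$ for $j$ in the $\rho$-orbit of $i$ and zero otherwise, with multiplication by $\pi_R$ giving isomorphisms between consecutive nonzero quotients. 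Since $F_j$ commutes with direct products and completions (Remarks \ref{F_j preserve} and \ref{F_i bar}), both defect and codefect vanish on $\prod_i\widehat{S_i^{f^{*}(i)}}$; and because $f^{*}$ is supported on $T_{\rho}$, each $j\in T_{\rho}$ picks up $f^{*}(j)$ from the single factor $\widehat{S_j^{f^{*}(j)}}$.

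The substantive direction is (a) $\Rightarrow$ (c). For each $i\in T_{\rho}$ one chooses a subset $\mathcal{B}_i\subseteq M_i$ of size $f^{*}(i)$ whose reduction modulo $M_{i+1}$ is an $R/(\pi_R)$-basis of $F_i(M_{\bullet})$, which is possible because $f_i(M_{\bullet})=f^{*}(i)$ by hypothesis. The union $\bigcup_{i\in T_{\rho}}\mathcal{B}_i$ defines an element of $H_{f^{*}}(M_{\bullet})$, and Proposition \ref{Universal property of free filtered modules} produces a filtered morphism
\[
\phi:\prod_{i\in\mathbb{Z}_{\geq 1}}\widehat{S_i^{f^{*}(i)}}\longrightarrow M_{\bullet}.
\]
Since the domain is complete, part (d) of Proposition \ref{phi surj} reduces the problem to showing that $F_j(\phi)$ is an isomorphism for every positive integer $j$. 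For $j\in T_{\rho}$ this holds by construction, since $F_j(\phi)$ sends the standard $R/(\pi_R)$-basis of $F_j$ of the free module to $\mathcal{B}_j$ modulo $M_{j+1}$, which is a basis of $F_j(M_{\bullet})$ by choice. For $j\notin T_{\rho}$, writing $j=\rho(j')$ one exploits that $\phi$ intertwines multiplication by $\pi_R$: this produces a commutative square whose left vertical arrow is the isomorphism $[\pi_R]_{j'}$ on the free module (because the $\rho$-map of the free module is $\rho$ itself), so an induction on $j$ along $\rho$-orbits propagates the isomorphism out of $T_{\rho}$.

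The main obstacle lies precisely in this last step: a priori one only knows $\rho_{M_{\bullet}}\geq \rho$, and if strict inequality holds at some $j'$ with $F_{j'}(M_{\bullet}) \neq 0$, then $[\pi_R]_{j'}$ on $M_{\bullet}$ targets $F_{\rho_{M_{\bullet}}(j')}(M_{\bullet})$ rather than $F_{\rho(j')}(M_{\bullet})$, which makes the induced map $F_{j'}(M_{\bullet})\to F_{\rho(j')}(M_{\bullet})$ zero and breaks the commutative square argument. The key leverage is that codefect is assumed to vanish at \emph{every} positive integer, not only on $T_{\rho}$: if $F_i(M_{\bullet})=0$ then codefect$(i)=0$ forces $F_{\rho_{M_{\bullet}}(i)}(M_{\bullet})=0$, so the set of weights where $M_{\bullet}$ is nonzero is closed under $\rho_{M_{\bullet}}$. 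Together with the assumption that $f^{*}$ is supported on $T_{\rho}$, a careful orbit-by-orbit bookkeeping rules out the strict inequality $\rho_{M_{\bullet}}(j')>\rho(j')$ at weights $j'$ on which $F_{j'}(M_{\bullet})$ is nonzero; once this rigidity is in place, the commutative square argument runs through for all $j$, and Proposition \ref{phi surj}(d) concludes.
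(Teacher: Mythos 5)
Your overall architecture matches the paper's: (b) $\Leftrightarrow$ (c) by Yoneda and Proposition \ref{Universal property of free filtered modules}; (c) $\Rightarrow$ (a) by direct inspection; and (a) $\Rightarrow$ (c) by lifting bases of $F_i(M_\bullet)$ for $i\in T_\rho$, producing via the universal property a filtered morphism $\phi$, and reducing to showing $F_i(\phi)$ is an isomorphism for all $i$ so that Proposition \ref{phi surj}(d) applies. The commutative square intertwining $\phi$ with multiplication by $\pi_R$ is also the right device for propagating the isomorphism from $T_\rho$ outward along $\rho$-orbits.

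The gap is in the final step. You assert that ``a careful orbit-by-orbit bookkeeping rules out the strict inequality $\rho_{M_\bullet}(j')>\rho(j')$ at weights $j'$ on which $F_{j'}(M_\bullet)$ is nonzero,'' but you never supply that bookkeeping, and it cannot be supplied under the reading you are using. If $[\pi_R]_i$ literally targets $F_{\rho_{M_\bullet}(i)}(M_\bullet)$, then vanishing of defect and codefect says nothing that pins $\rho_{M_\bullet}$ to $\rho$: take $\rho(i)=i+2$ and $f^{*}(1)=f^{*}(2)=1$, and let $M_\bullet$ be the product $S'_1\times S'_2\times (S'_3)^{2}$ where $S'_k$ carries the weight $w(x)=k+3\,\mathrm{ord}_R(x)$. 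One checks that $\rho_{M_\bullet}(j)=j+3>\rho(j)$ for all $j$, every $[\pi_R]_j\colon F_j\to F_{j+3}$ is an isomorphism, $f_1=f_2=1$, and $M_\bullet$ is complete, linear, and satisfies $\rho_{M_\bullet}\geq\rho$; yet $f_3(M_\bullet)=2$ while $f_3(S_1\times S_2)=1$, so (a) holds but (c) fails on your reading. The fix is not a missing lemma but a different reading of the hypothesis: the condition $\mathrm{defect}=\mathrm{codefect}=0$ for all $i$ should be taken to mean that the map $F_j(M_\bullet)\to F_{\rho(j)}(M_\bullet)$ induced by $\pi_R$ (targeting the \emph{ambient} $\rho(j)$, which is where $\pi_R M_j$ lands since $\rho_{M_\bullet}\geq\rho$) is an isomorphism for every $j$. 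This is precisely what the free module $\prod_i \widehat{S_i^{f^{*}(i)}}$ satisfies and what the proof of Proposition \ref{presentations} uses when it propagates surjectivity ``by repeatedly using the multiplication by $\pi_R$.'' With that reading, both vertical arrows of your square $F_j\to F_{\rho(j)}$ are isomorphisms on the nose, the concern about $\rho_{M_\bullet}\neq\rho$ never arises, and the induction closes immediately.
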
 
\begin{definition} \label{free modules def}
Let $M_{\bullet}$ be an object of $\rho\text{-Filt-}R\text{-mod}$, and $f$ a positive integer. Then we call $M_{\bullet}$ a $(f,\rho)$-\emph{free filtered module} if it satisfies any of the equivalent conditions of Proposition \ref{charact. free-filt-mod}, with respect to the constant map $T_{\rho} \to \mathbb{Z}_{\geq 1}$, $i \mapsto f$.
\end{definition}
We denote by $M_{\rho}:=\prod_{i \in T_{\rho}}S_i$, i.e. the $(1,\rho)$-free filtered module. So $M_{\rho}^f$ is the $(f,\rho)$-free filtered module.

We next introduce the class of filtered modules that, together with those described in this paragraph, will suffice to classify the possible filtered structures of $U_1$.
\subsubsection{Quasi-free filtered $R$-modules}
Recall that in case $\rho$ is a shift with $\#T_{\rho}< \infty$, then we denote by $e_{\rho}^{*}=\text{max}(T_{\rho})+1$. Moreover we define $e_{\rho}^{'}$ to be the unique positive integer such that $\rho(e_{\rho}')=e_{\rho}^{*}$.
\begin{definition} \label{def of quasi free}
Let $M_{\bullet}$ be an object of $\rho\text{-Filt-}R\text{-mod}$. Then we call it $(f,\rho)$-quasi-free if it satisfies the following three conditions: \\
(a) For every positive integer $i$, we have that $f_i(M_{\bullet})=f$. \newline
(b) If $T_{\rho}$ is finite (resp.\ if $T_{\rho}$ is not finite), for every positive integer $i$ different from $e_{\rho}^{'}$ (resp.\ for every positive integer $i$), one has $\text{defect}_{M_{\bullet}}(i)=\text{codefect}_{M_{\bullet}}(i)=0$. \newline
(c) If $T_{\rho}$ is finite one has that $\text{defect}_{M_{\bullet}}(e'_{\rho}) \leq 1$.
\end{definition}
So we see that if $T_{\rho}$ is not finite the notion of a $(f,\rho)$-quasi-free module coincides with the notion of a $(f,\rho)$-free module. We characterize this distinction with a module-theoretic property: 
\begin{proposition}{\label{finite module cofinite map}}
Let $M_{\bullet}$ be a $(f,\rho)\text{-quasi-free}$ filtered module. Then the following are equivalent: \\
\emph{(a)} $T_{\rho}$ is finite, \\
\emph{(b)} $M_1$ is finitely generated. 
\begin{proof}
$\text{(a)} \to \text{(b)}$ Since all the $F_i(M_{\bullet})$ are finite dimensional, (b) is equivalent to the statement that for some positive integer $n$, the $R$-module $M_n$ is finitely generated. But for $n>e'_{\rho}$, the filtered $R$-module $M_{\bullet+n}$ is a $(f,\tau_{|T_{\rho}|})$-free-module, where for a positive integer $m$, the symbol $\tau_m$ denotes the shift sending any positive integer $x$ to $x+m$. So one concludes by Proposition \ref{charact. free-filt-mod}.

$\text{(b)} \to \text{(a)}$ Since $M_1$ is finitely generated, so is $M_n$. But for $n>e'_{\rho}$, one has that $M_{\bullet+n}$ is a $(f,\rho \circ \tau_{n-1})$-free module. So by Proposition \ref{charact. free-filt-mod} one has that $T_{\rho \circ \tau_{n-1}}$ is finite, which is equivalent to say that $T_{\rho}$ is finite.
\end{proof}
\end{proposition}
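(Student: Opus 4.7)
My strategy is to reduce the question about $M_1$ to one about a tail $M_{\bullet+n}$ of the filtration and then apply the classification of free filtered modules, Proposition \ref{charact. free-filt-mod}. First, since $f_i(M_\bullet) = f$ for every positive integer $i$ by the quasi-free hypothesis, each quotient $M_1/M_{n+1}$ is an $R/\pi_R$-vector space of dimension $nf$, hence a finitely generated $R$-module. Because $R$ is a Noetherian complete DVR, this yields the equivalence: $M_1$ is finitely generated over $R$ if and only if $M_{n+1}$ is, for every positive integer $n$. Hence it suffices to understand the underlying $R$-module of $M_{\bullet+n}$ for a conveniently chosen $n$.

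For the implication (a) $\Rightarrow$ (b), I would choose $n > e'_\rho$. The quasi-free conditions then force every defect and codefect of $M_{\bullet+n}$ to vanish while preserving $f_j(M_{\bullet+n}) = f$, so by Proposition \ref{charact. free-filt-mod} the filtered module $M_{\bullet+n}$ is free with respect to its own shift $\rho'$, given by $\rho'(j) = \rho(n+j) - n$, and is concretely isomorphic to $\prod_{i \in T_{\rho'}} \widehat{S_i^f}$. A short combinatorial observation---any strictly increasing injection $\rho : \mathbb{Z}_{\geq 1} \to \mathbb{Z}_{\geq 1}$ with finite $T_\rho$ satisfies $\rho(k) = k + |T_\rho|$ for every $k \geq e'_\rho$, which rests on the counting identity $e^*_\rho = e'_\rho + |T_\rho|$ obtained by comparing $\rho(\{1,\ldots,e'_\rho\})$ with $\text{Im}(\rho)\cap\{1,\ldots,e^*_\rho\}$---then gives $\rho' = \tau_{|T_\rho|}$. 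Consequently $T_{\rho'} = \{1,\ldots,|T_\rho|\}$ is finite and $M_{\bullet+n} \simeq R^{|T_\rho| f}$ is finitely generated.

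For the converse (b) $\Rightarrow$ (a), I would argue by contrapositive. If $T_\rho$ is infinite, the quasi-free conditions place no exception at any level, so $M_\bullet$ itself is free, isomorphic to $\prod_{i \in T_\rho} \widehat{S_i^f}$. Multiplication by $\pi_R$ acts coordinate-wise on this completed product (each $\widehat{S_i^f}$ is complete and $\pi_R$ is continuous), so $M_1/\pi_R M_1 \simeq \prod_{i \in T_\rho}(R/\pi_R)^f$, an $R/\pi_R$-vector space of infinite dimension; Nakayama's Lemma then prevents $M_1$ from being finitely generated over $R$. The main obstacle I foresee is the combinatorial identification $\rho' = \tau_{|T_\rho|}$ in the finite case: it requires recognizing that $\rho$ eventually becomes translation by $|T_\rho|$, and it is precisely this collapse that converts the free-filtered-module classification into a statement about a finite-rank $R$-module.
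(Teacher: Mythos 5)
Your proof is correct, and the overall structure is the same as the paper's: reduce to a tail $M_{\bullet+n}$ with $n>e'_\rho$ and apply Proposition~\ref{charact. free-filt-mod}. A few observations on the comparison.

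For (a)~$\Rightarrow$~(b) you follow the paper's route but make explicit a combinatorial step the paper states without proof, namely that the $\rho$-map of $M_{\bullet+n}$ is $\tau_{|T_\rho|}$. Your verification --- that a strictly increasing $\rho$ with finite $T_\rho$ satisfies $\rho(k)=k+|T_\rho|$ for $k\geq e'_\rho$, via the count $e^*_\rho=e'_\rho+|T_\rho|$ --- is exactly the missing justification and is correct.

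For (b)~$\Rightarrow$~(a) you argue the contrapositive and this is genuinely different from, and cleaner than, the paper's proof. The paper writes ``for $n>e'_\rho$'' in a step where $T_\rho$ is not yet known to be finite (so $e'_\rho$ need not be defined), and then invokes Proposition~\ref{charact. free-filt-mod} to conclude ``$T_{\rho\circ\tau_{n-1}}$ is finite'' --- a conclusion that proposition does not directly assert (it classifies free modules; it does not say ``free with infinite $T$ is not finitely generated''). Your argument repairs both issues at once: when $T_\rho$ is infinite the quasi-free conditions impose no exception, so $M_\bullet$ is free, and then $M_1/\pi_RM_1\simeq\prod_{i\in T_\rho}(R/\pi_R)^f$ is infinite-dimensional, so $M_1$ cannot be finitely generated (you cite Nakayama, but in this direction you only need that a quotient of a finitely generated module is finitely generated).

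One small inaccuracy that does not affect the argument: $M_1/M_{n+1}$ is in general \emph{not} an $R/\pi_R$-vector space --- $\pi_R$ need not kill it (think $R/\pi_R^n$). What is true, and what you actually use, is that $M_1/M_{n+1}$ has a finite filtration with $n$ graded pieces each of $R/\pi_R$-dimension $f$, hence has finite length $nf$ as an $R$-module. This suffices for the equivalence ``$M_1$ finitely generated $\Leftrightarrow$ $M_{n+1}$ finitely generated.''
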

%\begin{remark}
%The previous proposition is not stated at his natural level of generality. Indeed the general statement is that if $M_{\bullet}$ is in $\rho\text{-Filt-R-mod}$, then $(a)$ together with the assumption that all the $F_i(M_{\bullet})$ are finite dimensional, is equivalent to (b). The proof is basically the same, but involves more notation and a small generalization of Proposition \ref{charact. free-filt-mod}, without being of any use for our application, for this reason we have proved directly the above version.
%\end{remark}
Until the end of the next paragraph, we will restrict to the case that $T_{\rho}$ is finite or equivalently that $M_1$ is finitely generated. We will work again in greater generality only from Section \ref{transitive} onward. 

We now recover the distinction between $(f,\rho)$-quasi-free and $(f,\rho)$-free with a module-theoretic property.
\begin{proposition} \label{quasi free via torsion}
Let $M_{\bullet}$ be a $\rho \text{-}\emph{Filt-}R \text{-}\emph{Mod}$ such that $f_i(M_{\bullet})=f$ for every positive integer $i$, and $\emph{defect}_{M_{\bullet}}(j)=\emph{codefect}_{M_{\bullet}}(j)=0$ for every positive integer $j \neq e'_{\rho}$. Then the following are equivalent: \\
\emph{(a)} $M_{\bullet}$ is $(f,\rho)$-quasi-free. \\
\emph{(b)} $M_{1}[\pi_R] $ is a cyclic $R$-module.
\begin{proof}
Given the hypothesis we have to prove that $\text{defect}_{M_{\bullet}}(e'_{\rho}) \leq 1$ is equivalent to $M[\pi_R]$ cyclic. One has that multiplication by $\pi_R$ is a filtered morphism $\pi_R:M_{\bullet+e'_{\rho}-1} \to M_{\bullet+e_{\rho}^{*}-1}$. Thus the conclusion follows immediately from Corollary \ref{last ker +}.
\end{proof}
\end{proposition}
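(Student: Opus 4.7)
Plan. Since the hypothesis already supplies every condition in the definition of $(f,\rho)$-quasi-free except $\text{defect}_{M_{\bullet}}(e'_{\rho}) \leq 1$, the task is to show that this single inequality is equivalent to $M_1[\pi_R]$ being cyclic. Because $M_1[\pi_R]$ is annihilated by $\pi_R$, it is an $R/(\pi_R)$-vector space, and cyclicity as an $R$-module is the same as having $R/(\pi_R)$-dimension at most one. So the problem reduces to producing a canonical $R$-module isomorphism $M_1[\pi_R] \simeq \ker([\pi_R]_{e'_{\rho}})$, whose $R/(\pi_R)$-dimension is by definition $\text{defect}_{M_{\bullet}}(e'_{\rho})$.

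To construct this isomorphism I will invoke Corollary \ref{last ker +} for the filtered morphism $\phi := \pi_R \colon M_{\bullet + e'_{\rho} - 1} \to M_{\bullet + e^{*}_{\rho} - 1}$. This is a filtered morphism because $\rho$ is strictly increasing with $\rho(e'_{\rho}) = e^{*}_{\rho}$, so $\pi_R M_{e'_{\rho} + k} \subseteq M_{\rho(e'_{\rho}+k)} \subseteq M_{e^{*}_{\rho} + k}$ for every $k \geq 0$. As a preliminary step I would observe that the set-theoretic kernel of $\phi$ coincides with $M_1[\pi_R]$: for any nonzero $x \in M_1$ with $\pi_R x = 0$, if $w(x) = k \neq e'_{\rho}$ the hypothesis makes $[\pi_R]_k$ injective, forcing the nonzero class of $x$ in $F_k(M_{\bullet})$ to be sent to a nonzero class, a contradiction; hence any nonzero element of $M_1[\pi_R]$ has weight exactly $e'_{\rho}$, so $M_1[\pi_R] \subseteq M_{e'_{\rho}}$ and the two kernels agree.

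Taking $i = 1$ in Corollary \ref{last ker +} I need $F_j(\phi)$ to be an isomorphism for every $j \geq 2$. Under the hypothesis, each such map $F_j(\phi) \colon F_{e'_{\rho} + j - 1}(M_{\bullet}) \to F_{e^{*}_{\rho} + j - 1}(M_{\bullet})$ is induced by $\pi_R$ and matches the bijective map $[\pi_R]_{e'_{\rho} + j - 1}$ supplied by the defect/codefect conditions at indices different from $e'_{\rho}$, while $F_1(\phi)$ matches $[\pi_R]_{e'_{\rho}}$. The corollary then delivers the canonical isomorphism
\[
M_1[\pi_R] \;=\; \ker(\phi) \;\simeq\; \ker(F_1(\phi)) \;=\; \ker([\pi_R]_{e'_{\rho}}),
\]
which closes the argument.

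The main obstacle is the indexing identification: one must check that $\rho_{M_{\bullet}}(e'_{\rho} + j - 1) = e^{*}_{\rho} + j - 1$ for $j \geq 1$, so that the intrinsic map $[\pi_R]_{e'_{\rho} + j - 1}$ really lands in $F_{e^{*}_{\rho} + j - 1}(M_{\bullet})$ rather than in some deeper graded piece. The inequality $\geq$ is automatic from $\rho_{M_{\bullet}} \geq \rho$ and the strict monotonicity of $\rho$; for the reverse one uses a short induction in which the bijectivity of $[\pi_R]_k$ for $k > e'_{\rho}$ forces $\pi_R M_{k+1}$ to lie one step deeper in the filtration than $\pi_R M_k$, so $\rho_{M_{\bullet}}$ advances by exactly one at each such step, with the base value pinned down by a dimension count exploiting $f_j = f$ throughout.
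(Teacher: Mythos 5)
Your overall route is the same as the paper's: set $\phi := \pi_R \colon M_{\bullet + e'_{\rho} - 1} \to M_{\bullet + e^{*}_{\rho} - 1}$, apply Corollary \ref{last ker +} with $i = 1$, and identify $\ker \phi$ with $M_1[\pi_R]$ and $\ker F_1(\phi)$ with $\ker [\pi_R]_{e'_{\rho}}$. The preliminary remarks (cyclic equals $R/(\pi_R)$-dimension at most one, $\ker\phi = M_1[\pi_R]$) are correct and harmless elaborations. The indexing point you isolate at the end is indeed the crux: for the corollary to give anything one needs $F_j(\phi)$, for $j \geq 2$, to coincide with the intrinsic isomorphism $[\pi_R]_{e'_{\rho}+j-1}$, which requires $\rho_{M_{\bullet}}(e'_{\rho}+j-1) = e^{*}_{\rho}+j-1$; otherwise $F_j(\phi)$ is the zero map between two $f$-dimensional spaces and the hypotheses of Corollary \ref{last ker +} fail outright. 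The paper simply says the conclusion follows ``immediately'' and never raises this.

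The gap is in your proposed repair. Bijectivity of $[\pi_R]_k$ for $k > e'_{\rho}$ does \emph{not} force $\rho_{M_{\bullet}}(k+1) = \rho_{M_{\bullet}}(k)+1$, and the constancy $f_j = f$ does not pin the base value either. To see this concretely, take any shift $\rho' \geq \rho$ with $T_{\rho'}$ finite and $\rho'(k) > \rho(k)$ for some $k > e'_{\rho}$, and consider the free filtered module $M_{\rho'}^f$ built for the shift $\rho'$. Its graded pieces are $f$-dimensional at every level, and $\text{defect} = \text{codefect} = 0$ at every index, so it satisfies the hypotheses of the proposition as written; yet its $\rho$-map is $\rho'$, which disagrees with $\rho$ beyond $e'_{\rho}$, and $\rho'$ need not advance by one at each step past $e'_{\rho}$ (choose $T_{\rho'}$ so that it does not). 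For such a module conditions $(a)$ and $(b)$ both hold vacuously, so the \emph{statement} is not contradicted, but neither your induction nor the paper's ``immediately'' gives the conclusion, since $F_j(\phi)$ is zero for some $j \geq 2$. The honest repair is to include $\rho_{M_{\bullet}}(k) = \rho(k)$ for $k \neq e'_{\rho}$ in the hypotheses, which is what actually holds in the paper's intended application: Proposition \ref{rho for loc fields} shows explicitly that $\rho_{U_{\bullet}(K)}(i) = \rho_K(i)$ for $i \neq e'_{\rho_K}$.
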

In particular we have the following.
\begin{corollary} \label{where is the torsion}
Let $M_{\bullet}$ be a $(f,\rho)$-quasi-free module which is not $(f,\rho)$-free. Then we have an isomorphism $M[\pi_R] \simeq_{R\emph{-mod}} R/\pi_R$ and $w_{M_{\bullet}}(M[\pi_R])=\{e'_{\rho},\infty \}$. 
\end{corollary}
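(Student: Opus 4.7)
The plan is to strengthen the conclusion of Proposition \ref{quasi free via torsion} by inspecting its proof more closely. First, I would observe that the hypothesis that $M_\bullet$ is $(f,\rho)$-quasi-free but not $(f,\rho)$-free forces $\text{defect}_{M_\bullet}(e'_\rho) = 1$. Indeed, the quasi-free conditions give $\text{defect}_{M_\bullet}(i) = \text{codefect}_{M_\bullet}(i) = 0$ for all $i \neq e'_\rho$ and $\text{defect}_{M_\bullet}(e'_\rho) \leq 1$. Since $f_{e'_\rho}(M_\bullet) = f_{e_\rho^*}(M_\bullet) = f$, a dimension count for the $R/\pi_R$-linear map $[\pi_R]_{e'_\rho}$ shows $\text{codefect}_{M_\bullet}(e'_\rho) = \text{defect}_{M_\bullet}(e'_\rho)$. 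If this common value were $0$, then all defects and codefects would vanish, and Proposition \ref{charact. free-filt-mod} would force $M_\bullet$ to be $(f,\rho)$-free, contradicting the hypothesis.

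Next, I would apply Corollary \ref{last ker +} to the filtered morphism $\phi = \pi_R: M_{\bullet + e'_\rho - 1} \to M_{\bullet + e_\rho^* - 1}$ with $i = 1$, exactly as in the proof of Proposition \ref{quasi free via torsion}. For $j \geq 2$, the functor $F_j$ identifies $\phi$ with $[\pi_R]_{e'_\rho + j - 1} \colon F_{e'_\rho + j - 1}(M_\bullet) \to F_{e_\rho^* + j - 1}(M_\bullet)$, which is an isomorphism by the quasi-free hypothesis (defect and codefect vanish at every positive integer other than $e'_\rho$). Thus $\ker(F_j(\phi)) = \text{coker}(F_j(\phi)) = 0$ for all $j \geq 2$, satisfying the hypotheses of Corollary \ref{last ker +}. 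The corollary then yields $\ker(\phi) \subseteq w_{M_\bullet}^{-1}\{e'_\rho, \infty\}$ together with an $R$-module isomorphism $\ker(\phi) \simeq \ker([\pi_R]_{e'_\rho})$ induced by reduction modulo $M_{e'_\rho + 1}$.

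To finish, I would first note $M[\pi_R] = \ker(\phi)$: any nonzero $v \in M[\pi_R]$ has some weight $w(v) = k$, and $\bar v \in F_k(M_\bullet)$ is a nonzero element of $\ker([\pi_R]_k)$, forcing $k = e'_\rho$ since defects vanish off $e'_\rho$; hence $v \in M_{e'_\rho}$ and $v \in \ker(\phi)$. Combining with the previous step, $M[\pi_R]$ is $R$-isomorphic to $\ker([\pi_R]_{e'_\rho})$, which is one-dimensional over $R/\pi_R$ by the first paragraph, so $M[\pi_R] \simeq_{R\text{-mod}} R/\pi_R$. Moreover, because the isomorphism $M[\pi_R] \to \ker([\pi_R]_{e'_\rho}) \subseteq F_{e'_\rho}(M_\bullet)$ is induced by reduction mod $M_{e'_\rho + 1}$, every nonzero $v \in M[\pi_R]$ has nonzero image in $F_{e'_\rho}(M_\bullet)$, so $v \in M_{e'_\rho} \setminus M_{e'_\rho + 1}$ and $w_{M_\bullet}(v) = e'_\rho$; together with $w_{M_\bullet}(0) = \infty$, this gives $w_{M_\bullet}(M[\pi_R]) = \{e'_\rho, \infty\}$.

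The main obstacle is the clean verification in the second step that the hypotheses of Corollary \ref{last ker +} are met; this reduces to the identification of $F_j(\phi)$ with $[\pi_R]_{e'_\rho + j - 1}$ for $j \geq 2$, which relies on $\rho(e'_\rho + j - 1) = e_\rho^* + j - 1$ for all $j \geq 1$. For the local-field shifts $\rho = \rho_{e,p}$ that motivate the theory, this holds automatically because $\rho(i) = i + e$ on $[e'_\rho, \infty)$, so the paragraph above is unambiguous in that setting.
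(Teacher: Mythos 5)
Your proof is correct and follows precisely the route the paper leaves implicit: Corollary~\ref{where is the torsion} is deduced from the proof of Proposition~\ref{quasi free via torsion} (applying Corollary~\ref{last ker +} to $\pi_R : M_{\bullet + e'_\rho - 1} \to M_{\bullet + e_\rho^* - 1}$), and your steps simply make the resulting isomorphism $M[\pi_R] \simeq \ker([\pi_R]_{e'_\rho})$ and its consequences explicit. The hedge in your final paragraph is unnecessary: the identity $\rho(e'_\rho + k) = e_\rho^* + k$ for all $k \geq 0$ holds for \emph{every} shift with $T_\rho$ finite, not only for $\rho_{e,p}$, because $\max(T_\rho) = e_\rho^* - 1$ forces the strictly increasing injection $\rho$ to restrict to the unique order-preserving bijection $[e'_\rho,\infty) \to [e_\rho^*,\infty)$, so no case distinction on $\rho$ is needed.
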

\subsubsection{Presentations of a quasi-free modules are conjugate} \label{transitive}
We keep assuming that $T_{\rho}$ is finite. Let $f$ be a positive integer. We will proceed classifying $(f,\rho)$-quasi-free modules with the help of the additional free module $M_{\rho}^{*}:=M_{\rho} \oplus S_{e_{\rho}^{*}}$: we will use the module $M_{\rho}^{f-1} \oplus M_{\rho}^{*}$. This module is the free module over the map $f^{*}:\mathbb{Z}_{\geq 1} \to \mathbb{Z}_{\geq 1}$ defined as $f^{*}(i)=f$ for $i \in T_{\rho}$, $f^{*}(e_{\rho}^{*})=1$ and $f^{*}(i)=0$ for all the other $i$. So we fix an isomorphism between $M_{\rho}^{f-1} \oplus M_{\rho}^{*}$ and $H_{f^{*}}$, that is we fix a filtered basis for $M_{\rho}^{f-1} \oplus M_{\rho}^{*}$. Let now $M_{\bullet}$ be a $(f,\rho)$-quasi-free module that is not $(f,\rho)$-free. We call a subset $\mathcal{B} \subseteq M_1$ a quasi-basis if it consists of the union of the lifting of a basis of $M_{i}/M_{i+1}$ for each $i \in T_{\rho}$ together with the lifting of a generator of $\text{coker}[\pi_R]_{{e'_{\rho}}}$ (this cokernel is 1-dimensional because the kernel is 1-dimensional and we assume that $f_i(M_{\bullet})$ is constantly $f$). By the universal property proved in Proposition \ref{Universal property of free filtered modules}, each inclusion of a quasi-basis $\mathcal{B} \subseteq M_1$ gives uniquely (via the above choice of a filtered basis for $M_{\rho}^{f-1} \oplus M_{\rho}^{*}$) a morphism $\phi_{\mathcal{B}}:M_{\rho}^{f-1} \oplus M_{\rho}^{*} \to M_{\bullet}$.
\begin{proposition} \label{presentations}
For each quasi-basis $\mathcal{B}$, one has that $\phi_{\mathcal{B}}$ is a filtered epimorphism. 
\begin{proof}
By construction for each $i \in T_{\rho}^{*}$, one has that $F_i(\phi_{\mathcal{B}})$ is surjective. But since for both modules one has that $[\pi_R]_i$ is surjective for $i$ different from $e'_{\rho}$, and at $e_{\rho}^{*}$ a generator of the co-kernel has been added, one clearly concludes that $F_i(\phi_{\mathcal{B}})$ is surjective for all $i$, by repeatedly using the above conditions and the multiplication by $\pi_R$. Since $M_{\bullet}$ is complete, we conclude with Proposition \ref{phi surj}. 
\end{proof}
\end{proposition}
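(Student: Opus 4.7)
The plan is to reduce the statement to a pointwise verification at each filtration level. Since the free module $M_\rho^{f-1} \oplus M_\rho^*$ is a finite direct sum of standard modules $S_i$, each of which is the complete DVR $R$ equipped with a shifted filtration, it is itself complete; consequently, by Proposition \ref{a lot surjective}, verifying that $\phi_\mathcal{B}$ is a filtered epimorphism amounts to checking that $F_i(\phi_\mathcal{B})$ is surjective for every positive integer $i$.

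I would first handle the levels $i \in T_\rho$ directly: the quasi-basis contributes exactly $f$ elements of $M_i$ whose residues form an $R/(\pi_R)$-basis of $M_i/M_{i+1}$ (since $f_i(M_\bullet)=f$ by assumption), and these correspond precisely to the $f$ weight-$i$ generators of the fixed filtered basis of $M_\rho^{f-1} \oplus M_\rho^*$. Hence $F_i(\phi_\mathcal{B})$ is surjective for every $i \in T_\rho$. To propagate to an arbitrary positive integer, I would exploit the fact that every $i \geq 1$ lies in the $\rho$-orbit of a unique element of $T_\rho$, together with the naturality of $[\pi_R]$: if $F_j(\phi_\mathcal{B})$ is surjective, then the commutative square formed by $[\pi_R]_j$ on source and target shows that the image of $F_{\rho(j)}(\phi_\mathcal{B})$ contains $[\pi_R]_j(F_j(M_\bullet))$. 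For $j \neq e'_\rho$, the quasi-free hypothesis gives $\mathrm{codefect}_{M_\bullet}(j)=0$, so this image is all of $F_{\rho(j)}(M_\bullet)$ and surjectivity propagates along the orbit.

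The one delicate case is $j = e'_\rho$, where $\mathrm{codefect}_{M_\bullet}(e'_\rho)$ may equal $1$; this is precisely why the definition of a quasi-basis includes an extra lift of a generator of $\mathrm{coker}([\pi_R]_{e'_\rho})$, matching the additional generator of the $S_{e^*}$-summand in the free module. The image of $F_{e^*}(\phi_\mathcal{B})$ then contains both the image of $[\pi_R]_{e'_\rho}$ (arising from the orbit propagation step at $j=e'_\rho$) and a lift of a generator of $\mathrm{coker}([\pi_R]_{e'_\rho})$, so surjectivity still holds at $e^*$, and one continues the orbit propagation beyond this point. No genuine obstacle arises: the definition of a quasi-basis was tailored to supply exactly the extra generator needed to compensate for the unique level where $\pi_R$-propagation on $M_\bullet$ can fail to be surjective.
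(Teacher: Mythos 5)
Your proof is correct and follows essentially the same route as the paper: establish surjectivity of $F_i(\phi_{\mathcal{B}})$ at the levels in $T_\rho^*$ using the quasi-basis, propagate to all levels via multiplication by $\pi_R$ (using $\mathrm{codefect}_{M_\bullet}(j)=0$ for $j\neq e'_\rho$ and the extra quasi-basis element at $e_\rho^*$), and conclude by completeness of the free source. The only cosmetic difference is that you invoke Proposition \ref{a lot surjective} where the paper cites Proposition \ref{phi surj} directly; since the former is derived from the latter, either reference works.
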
 
We have found for $M_{\bullet}$ presentations with the easiest possible type of filtered module with the given constraints (namely those on the $\rho$-map) and in a minimal way: $M_{\bullet}$ and $M_{\rho}^{f-1} \oplus M_{\rho}^{*}$ have the same minimal number of generators as $R$-modules. This presentation is obtained via the choice of a quasi-basis.  To read off the intrinsic structure of $M_{\bullet}$ via these presentations we proceed looking at the action of $\text{Aut}_{\text{filt}}(M_{\rho}^{f-1} \oplus M_{\rho}^{*})$ on $\text{Epi}_{\text{filt}}(M_{\rho}^{f-1} \oplus M_{\rho}^{*},M_{\bullet})$, in search of invariants. The next proposition is then quite relevant for us.
\begin{proposition} \label{transitive action}
The action of $\emph{Aut}_{\emph{filt}}(M_{\rho}^{f-1} \oplus M_{\rho}^{*})$ on $\emph{Epi}_{\emph{filt}}(M_{\rho}^{f-1} \oplus M_{\rho}^{*},M_{\bullet})$ is transitive.
\begin{proof}
Recall from definition \ref{functors} that for a positive integer $i$ we denote by $F_{1,i}$ the functor $F_{1,i}:\text{Filt-}R\text{-mod} \to \text{Filt-}R\text{-mod}$, defined as $F_{1,i}(N_{\bullet}):=N_1/N_i$, with the quotient filtration, on the objects, and on a morphism $\phi:M_{\bullet} \to N_{\bullet}$, one has that $F_{1,i}(\phi)$ is defined as the morphism induced by $\phi$, from $F_{1,i}(M_{\bullet})$ to $F_{1,i}(N_{\bullet})$. Fix $\mathcal{B}$ a quasi-basis of $M_{\bullet}$. Take $\phi \in \text{Epi}_{\text{filt}}(M_{\rho}^{f-1} \oplus M_{\rho}^{*},M_{\bullet})$. We claim that we can find a filtered basis $\mathcal{B}'$ of $M_{\rho}^{f-1} \oplus M_{\rho}^{*}$ such that $\phi(\mathcal{B}')=\mathcal{B}$. We prove this in 2 steps.

1) Firstly we observe that $F_{1,{e_{\rho}}^{*}}(\phi)$ is an isomorphism. Indeed by construction $F_i(\phi)=F_i(F_{1,e_{\rho}^{*}})(\phi)$ is an isomorphism for each $i \in T_{\rho}$, and on both sides the $[\pi]_{i}$-maps are isomorphisms for each $i<e'_{\rho}$ since they are $(f,\rho)$-quasi-free. So it follows that they are isomorphisms for all $i<e_{\rho}^{*}$. So the observation is proved by Proposition \ref{phi surj}. This provides us with the piece of the filtered basis corresponding to the elements $x \in \mathcal{B}$ with $w(x) \in T_{\rho}$. 

2) Take the unique $x \in \mathcal{B}$ with $w(x)=e_{\rho}^{*}$. By Proposition \ref{first non iso is epi}, together with Step 1), we can find $y \in \phi^{-1}(x)$ with $w(y)=e_{\rho}^{*}$. Now we claim that $y$ must generate $\text{coker}[\pi]_{e'_{\rho}}$. Since this is a 1-dimensional $R/(\pi_R)$-vector space this is equivalent to claiming that $y$ is not the $0$-class in that cokernel. But if it were the $0$-class, then it there would exist $z$ with $w(z)=e'_{\rho}$, such that $\pi_Rz=y \ \text{mod} \ {(M_{\rho}^{f-1} \oplus M_{\rho}^{*})}_{e_{\rho}^{*}+1}$. But, from Step 1), it follows that $w(\phi(z))=e'_{\rho}$, but then, since $x=\pi_R\phi(z) \ \text{mod} \ {(M_{\rho}^{f-1} \oplus M_{\rho}^{*})}_{e_{\rho}^{*}+1}$, we see that $x$ is in the $0$-class in $\text{coker}[\pi]_{e'_{\rho}}$, which is a contradiction.

So given $\phi_1,\phi_2 \in \text{Epi}_{\text{filt}}(M_{\rho}^{f-1} \oplus M_{\rho}^{*},M_{\bullet})$, there are two filtered basis $\mathcal{B}_1, \mathcal{B}_2$ of $M_{\rho}^{f-1} \oplus M_{\rho}^{*}$ mapping to $\mathcal{B}$ via respectively $\phi_1,\phi_2$ as explained above. It follows that there exists a suitable bijection, $\theta$, between $\mathcal{B}_1$ and $\mathcal{B}_2$ that respects the weights and such that $\phi_2 \circ \theta=\phi_1$ on $\mathcal{B}_1$. But then, by Proposition \ref{Universal property of free filtered modules}, we have that $\theta$ extends to a filtered automorphism of $M_{\rho}^{f-1} \oplus M_{\rho}^{*}$ and $\phi_2 \circ \theta=\phi_1$ holds on all $M_{\rho}^{f-1} \oplus M_{\rho}^{*}$ and we are done. 
\end{proof}
\end{proposition}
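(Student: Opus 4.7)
The plan is to reduce transitivity to producing, for each epimorphism, a filtered basis of the source whose image under that epimorphism agrees with a single fixed quasi-basis of $M_\bullet$. Concretely, fix once and for all a quasi-basis $\mathcal{B}$ of $M_\bullet$: it consists of $f$ weight-$i$ lifts of a basis of each $F_i(M_\bullet)$ for $i \in T_\rho$, together with one distinguished element $x_*$ of weight $e^*_\rho$ representing a generator of $\operatorname{coker}[\pi_R]_{e'_\rho}$ (whose existence is guaranteed by Corollary \ref{where is the torsion}). Given two epimorphisms $\phi_1, \phi_2$, I will construct filtered bases $\mathcal{B}_1, \mathcal{B}_2$ of $M_\rho^{f-1}\oplus M_\rho^*$ with $\phi_j(\mathcal{B}_j) = \mathcal{B}$ via weight-preserving bijections. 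Then the induced bijection $\mathcal{B}_1 \to \mathcal{B}_2$ extends, by the universal property of free filtered modules (Proposition \ref{Universal property of free filtered modules}), to a filtered automorphism $\theta$; the identity $\phi_2 \circ \theta = \phi_1$ holds on $\mathcal{B}_1$ by construction and then on all of $M_\rho^{f-1}\oplus M_\rho^*$ by the same universal property.

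So fix an arbitrary $\phi \in \operatorname{Epi}_{\text{filt}}(M_\rho^{f-1}\oplus M_\rho^*, M_\bullet)$. The first step is to show that $F_{1,e^*_\rho}(\phi)$ is an isomorphism. Both source and target of $\phi$ have constant graded dimension $f$ at every weight $i \in T_\rho$ and satisfy $\operatorname{defect} = \operatorname{codefect} = 0$ at each $i < e'_\rho$ by quasi-freeness, so $[\pi_R]_i$ is bijective on both sides for such $i$. Combining the surjectivity of $F_1(\phi)$ (Remark \ref{phi_1}) with the commutation $[\pi_R]_i$ on the source and on the target, induction on the weight $i < e^*_\rho$ forces $F_i(\phi)$ to be an isomorphism throughout this range; Proposition \ref{phi surj} then yields that $F_{1,e^*_\rho}(\phi)$ is a filtered isomorphism. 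Consequently, every element of $\mathcal{B}$ of weight $i \in T_\rho$ lifts to an element of $M_\rho^{f-1}\oplus M_\rho^*$ of the same weight.

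The delicate step is lifting the distinguished element $x_*$. By Proposition \ref{first non iso is epi} applied with $i = e^*_\rho - 1$, the map $F_{e^*_\rho}(\phi)$ is surjective, so we can choose $y_* \in M_\rho^{f-1}\oplus M_\rho^*$ of weight exactly $e^*_\rho$ with $\phi(y_*) = x_*$. I expect the main obstacle to be verifying that $y_*$ represents a \emph{generator} of $\operatorname{coker}[\pi_R]_{e'_\rho}$ on the source. If it did not, then $y_* \equiv \pi_R z \pmod{\text{weight} > e^*_\rho}$ for some $z$ of weight $e'_\rho$; applying $\phi$ and using that $F_{e'_\rho}(\phi)$ is an isomorphism (so $w(\phi(z)) = e'_\rho$), one would obtain $x_* \equiv \pi_R \phi(z)$ modulo higher weight, contradicting the fact that $x_*$ was chosen to represent a nonzero class in $\operatorname{coker}[\pi_R]_{e'_\rho}$ in $M_\bullet$. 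Hence $y_*$ together with the weight-$T_\rho$ lifts constitute a family of the required weights and cardinalities; by the counting characterization of free filtered modules in Proposition \ref{charact. free-filt-mod}, this family is a filtered basis of $M_\rho^{f-1}\oplus M_\rho^*$, completing the construction and therefore the proof.
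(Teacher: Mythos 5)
Your proof follows the paper's argument essentially step for step: fix a quasi-basis of $M_\bullet$, show that any filtered epimorphism $\phi$ admits a filtered basis of $M_\rho^{f-1}\oplus M_\rho^*$ lifting that quasi-basis (via the same two steps: $F_{1,e^*_\rho}(\phi)$ is an isomorphism; the weight-$e^*_\rho$ lift must generate the cokernel of $[\pi_R]_{e'_\rho}$ by the same contradiction argument), and then transport one such basis to another via a weight-preserving bijection extended to an automorphism by the universal property. One small remark on your last sentence: Proposition~\ref{charact. free-filt-mod} characterizes which filtered modules are free, not which families are filtered bases, and ``correct weights and cardinalities'' alone is not sufficient for a family to be a basis; what you actually need --- and what your Steps 1 and 2 already supply --- is that the induced maps on each $F_i$ are isomorphisms, after which Proposition~\ref{Universal property of free filtered modules} produces a filtered endomorphism $\psi$ carrying the standard basis to your family and Proposition~\ref{phi surj}(d) certifies $\psi$ as an automorphism. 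The paper glosses over this same point, so your proof is neither more nor less complete than the original; it would simply be cleaner to cite the universal property plus the $F_i$-level checks rather than Proposition~\ref{charact. free-filt-mod}.
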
 
Proposition \ref{transitive action} and Proposition \ref{presentations} tell us that to classify-$(f,\rho)$-quasi-free filtered modules we have to accomplish two tasks: \\
(a) Classify the orbits of vectors in $M_{\rho}^{f-1} \oplus M_{\rho}^{*}$ under $\text{Aut}_{\text{filt}}(M_{\rho}^{f-1} \oplus M_{\rho}^{*})$. \\
(b) Recover which orbits of task (a) arise from $(f,\rho)$-quasi-free filtered modules. \\
This is what we do next.

\subsubsection{Jump sets parametrize orbits} \label{orbits}
We keep denoting by $\rho$ a shift map, and by $f$ a positive integer. Whenever a star is added, and we refer to an extended jump set in the following statements, we will be implicitly assuming that, in that case, $T_{\rho}$ is finite. On the other hand, in the parts of the statements where there is no star and we refer to regular jump sets, we only require $\rho$ to be a shift. We begin by attaching to each jump set a vector.
\begin{definition}
Let $(I,\beta)$ be a $\rho$-jump set (resp.\ an extended $\rho$-jump set). We denote by $v_{(I,\beta)}$ the following vector of $\pi_RM_{\rho}^f$ (resp.\ $\pi_R(M_{\rho}^{f-1} \oplus M_{\rho}^{*})$): for each $i \in T_{\rho}$ (resp.\ in $T_{\rho}^{*}$) with $i \not \in I$, the projection of $v_{(I,\beta)}$ on $S_{i}^{f}$ (resp.\ the same and on $S_{e_{\rho}^{*}}$) is $0$. For each $i \in I$, the projection of $v_{(I,\beta)}$ on $S_{i}^{f}$ (resp.\ the same and on $S_{e_{\rho}^{*}}$) is the vector $(\pi_R^{\beta(i)},0,\ldots ,0)$, having $\pi_R^{\beta(i)}$ on the first coordinate and $0$ on all the others (resp.\ $(\pi_{R}^{\beta(e_{\rho}^{*})})$).
\end{definition}
We now prove that with the map $(I,\beta) \mapsto v_{(I,\beta)}$ we catch each orbit at least once. For a vector $v \in \pi_RM_{\rho}^f$ (resp.\ $\pi_RM_{\rho}^{f-1} \oplus \pi_RM_{\rho}^{*}$), denote by $A_v$ the set of elements of $T_{\rho}$ (resp.\ $T_{\rho}^{*}$), such that $\text{proj}_{i}(v) \neq 0$, where $\text{proj}_{i}$ denotes the projection on the factor $S_{i}^{f}$ (resp.\ the same if $i \in T_{\rho}$ and we look at $\text{proj}_{S_{e_{\rho}^{*}}}$ for $i=e_{\rho}^{*}$). For $a \in A_v$ define $b_v(a)=\text{ord}_{R}(\text{proj}_{i}(v))$, the valuation of the $a$-th projection. Recall the definition of $(I_{(A_v,b_v)}^{-},\beta_{(A_v,b_v)}^{-})$ from Proposition \ref{jump set attached to a function}.
\begin{proposition} \label{reduction process}
For each $v \in \pi_RM_{\rho}^f$ (resp.\ $\pi_RM_{\rho}^{f-1} \oplus \pi_RM_{\rho}^{*}$) there exists an automorphism $\theta \in \emph{Aut}_{\emph{filt}}(M_{\rho}^f)$ (resp.\ $\theta \in \emph{Aut}_{\emph{filt}}(M_{\rho}^{f-1} \oplus M_{\rho}^{*})$) such that $\theta(v)=v_{(I,\beta)}$, where $(I,\beta):=(I_{(A_v,b_v)}^{-},\beta_{(A_v,b_v)}^{-})$.
\begin{proof}
Clearly we can find a filtered automorphism $\theta_0$ such that $\theta_0(v)=v_{(A_v,b_v)}$, where $v_{(A_v,b_v)}$ denotes the following vector of $\pi_RM_{\rho}^f$ (resp.\ $\pi_R(M_{\rho}^{f-1} \oplus M_{\rho}^{*})$): for each $i \in T_{\rho}$ (resp.\ $T_{\rho}^{*}$) with $i \not \in A_v$, the projection of $v_{(A_v,b_v)}$ on $S_i^f$ (resp.\ the same and to $S_{e_{\rho}^{*}}$) is $0$, while for each $i \in A_v$, the projection of $v_{(A_v,b_v)}$ on $S_i^f$ is $(\pi_R^{b_v(i)},0, \ldots ,0)$ (resp.\ the same and for $i=e_{\rho}^{*}$ is $(\pi_R^{b_v(e_{\rho}^{*})})$). So without loss of generality, we can assume that $v$ has this special form.

Next, let $(i,b_v(i)) <_{\rho} (j,b_v(j))$. That means that either $i<j$ and $b_v(i)<b_v(j)$, or that $i>j$ and $\rho^{b_v(i)}(i)<\rho^{b_v(j)}(j)$. Observe that in either case we have $b_v(i)<b_v(j)$ and the $R$-linear automorphism $\theta_{i,j}$ on $M_{\rho}$ (resp.\ $M_{\rho}^{*}$), defined as $\theta_{i,j}((x_h)_{h \in T_{\rho}})=(x_h-\pi_R^{b_v(j)-b_v(i)}\delta_{i,h}x_j)_{i \in T_{\rho}}$ (resp.\ as  $\theta_{i,j}((x_h)_{h \in T_{\rho}^{*}})=(x_h-\pi_R^{b_v(j)-b_v(i)}\delta_{i,h}x_j)_{i \in T_{\rho}^{*}}$) is \emph{filtered}, precisely due to the above inequalities. Clearly we can extend $\theta_{i,j}$ to a filtered automorphism of $M_{\rho}^f$ (resp.\ $M_{\rho}^{f-1} \oplus M_{\rho}^{*}$) by simply letting it act as the identity on the complementary factor $M_{\rho}^{f-1}$. The obtained filtered automorphism $\theta_{i,j}$ satisfies the identity
$$A_{\theta_{i,j}(v)}=A_v-\{j\}, b_{\theta_{i,j}(v)}={b_{v}}_{|A_v-\{j\}}.
$$
If $T_{\rho}$ is finite, by repeatedly applying transformations $\theta_{i,j}$ we end up precisely having constructed a $\theta$ as claimed in this Proposition. If $T_{\rho}$ is infinite, one can repeatedly apply such elementary transformations $\theta_{i,j}$ in a a sequence that \emph{converges} to a filtered automorphism $\theta$ as we wanted to prove this Proposition. 
\end{proof}
\end{proposition}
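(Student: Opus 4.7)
The plan is to reduce $v$ in two stages: first act by a filtered automorphism preserving each block $S_i^f$ (and $S_{e_\rho^*}$ in the extended case) to bring $v$ to the ``block-diagonal'' form $v_{(A_v,b_v)}$, then apply a product of inter-block ``shears'' to eliminate every entry $(j,b_v(j))$ that is not minimal in $\mathrm{Graph}(b_v)$ with respect to $\leq_\rho$, ending with exactly $v_{(I,\beta)}$.

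For the first stage, any $R$-linear automorphism of a single block $S_i^f$ is automatically filtered, since the $f$ coordinates share the same weight assignment which depends only on $\pi_R$-adic valuation and is hence preserved by $\mathrm{GL}_f(R)$. Using a suitable $\mathrm{GL}_f(R)$-transformation blockwise, I can send the projection of $v$ on $S_i^f$ to $(\pi_R^{b_v(i)},0,\dots,0)$; the same works for the $S_{e_\rho^*}$ factor in the extended case. Assembling these block transformations gives $\theta_0 \in \mathrm{Aut}_{\mathrm{filt}}$ with $\theta_0(v)=v_{(A_v,b_v)}$, so I may assume $v$ already has this form.

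For the second stage, suppose $(i,b_v(i)) <_\rho (j,b_v(j))$, set $d := b_v(j)-b_v(i) \geq 0$, and define $\theta_{i,j}$ to subtract $\pi_R^{d}$ times the first coordinate of block $i$ from the first coordinate of block $j$ (identity on all other coordinates, and identity on the complementary $M_\rho^{f-1}$). Applied to $v_{(A_v,b_v)}$ this cancels the block-$j$ entry exactly. The essential verification is that $\theta_{i,j}$ is filtered: given $(x_h)$ of weight $\geq k$, setting $m := \mathrm{ord}_R(x_i)$ one has $\rho^m(i) \geq k$, and one needs $\pi_R^{d}x_i$ to have weight $\geq k$ in $S_j$, i.e.\ $\rho^{m+d}(j) \geq k$. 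It therefore suffices to prove $\rho^{m+d}(j) \geq \rho^m(i)$ for all $m \geq 0$. For $m \geq b_v(i)$ this follows from $\rho^{b_v(j)}(j) \geq \rho^{b_v(i)}(i)$ by applying the monotone map $\rho^{m-b_v(i)}$; for $m < b_v(i)$ one rewrites the hypothesis as $\rho^{b_v(i)-m}(\rho^m(i)) \leq \rho^{b_v(i)-m}(\rho^{m+d}(j))$ and invokes injectivity of the strictly increasing map $\rho^{b_v(i)-m}$.

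Iterating the shear step eliminates every non-minimal entry of $\mathrm{Graph}(b_v)$ and leaves exactly $v_{(I,\beta)}$. When $T_\rho$ is finite (or in the extended case where $T_\rho^*$ is finite) only finitely many shears are required and the construction terminates. The main obstacle I anticipate is the general case of infinite $T_\rho$: there $A_v$ may be infinite and one must order the shears so that for each weight level $k$ only finitely many of them alter the image modulo $(M_\rho^f)_k$, making the partial compositions Cauchy; completeness of $M_\rho^f$ then produces the desired limit automorphism $\theta$ as an infinite product of shears.
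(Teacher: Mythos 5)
Your proof is correct and follows essentially the same two-stage strategy as the paper: a block-diagonal normalization $\theta_0$ followed by a sequence of inter-block shears that kill the non-minimal points of the graph of $b_v$, with a convergence argument when $T_\rho$ is infinite. Note that your shear, which subtracts $\pi_R^{b_v(j)-b_v(i)}$ times coordinate $i$ from coordinate $j$, is indeed the one that annihilates the $j$-th entry of $v_{(A_v,b_v)}$; the formula printed in the paper has the roles of $i$ and $j$ transposed (it modifies coordinate $i$ by a multiple of $x_j$, leaving coordinate $j$ untouched), and your case analysis of $\rho^{m+d}(j)\geq\rho^m(i)$ makes the filteredness check more explicit than the paper's brief assertion.
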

For a vector $v \in \pi_RM_{\rho}^f$ (resp.\ in $\pi_RM_{\rho}^{f-1} \oplus \pi_RM_{\rho}^{*}$), denote by $g_v$ the map $g_v: \mathbb{Z}_{\geq 0} \to \mathbb{Z}_{\geq 0} \cup \{\infty\}$ defined as $g_v(i):=w_{M_{\rho}^f/\pi_R^{i}M_{\rho}^f}(v)$ (resp.\ $g_v(i):=w_{M_{\rho}^{f-1} \oplus M_{\rho}^{*}/\pi_R^{i}(M_{\rho}^{f-1} \oplus M_{\rho}^{*})}(v)$). Here $w_{M_{\rho}^f/\pi_R^{i}M_{\rho}^f}(v)$ (resp.\ $w_{M_{\rho}^{f-1} \oplus M_{\rho}^{*}/\pi_R^{i}(M_{\rho}^{f-1} \oplus M_{\rho}^{*})}(v)$) denotes the weight of $v$ in the $R$-module $M_{\rho}^f/\pi_R^{i}M_{\rho}^f$ (resp. $M_{\rho}^{f-1} \oplus M_{\rho}^{*}/\pi_R^{i}(M_{\rho}^{f-1} \oplus M_{\rho}^{*})$) viewed as a filtered $R$-module with the quotient filtration (see Section \ref{quotients}).  Say that $g_v$ \emph{breaks} at $i$ if $g_v(i) \neq g_{v}(i+1)$.  
\begin{proposition} \label{reconstruction process}
Let $(I,\beta)$ be a $\rho$-jump set (resp.\ an extended $\rho$-jump set). Let $v_{(I,\beta)} \in \pi_RM_{\rho}^f$ (resp.\ $v_{(I,\beta)} \in \pi_RM_{\rho}^{f-1} \oplus \pi_RM_{\rho}^{*}$). Then $g_v$ breaks at $i$ if and only if $i \in \beta(I)$. Moreover if $i \in I$, then we have that $g_v(\beta(i)+1)=\rho^{\beta(i)}(i)$.
\begin{proof}
Let $n$ be a positive integer such that there exists an $i \in I$ with $\beta(i)<n$. Denote by $i_0$ the smallest such $i$. Fix the standard basis for $M_{\rho}^f$ (resp.\ for $M_{\rho}^{f-1} \oplus M_{\rho}^{*}$) and denote it by $\{b_{ij}: i \in T_{\rho}, \ j \in \{1, \ldots ,f\}\}$ (resp.\ denote it by $\{b_{ij}: i \in T_{\rho}, \ j \in \{1, \ldots ,f\}\} \cup \{b_{e_{\rho}^{*},1}\}$). In this notation we have that $v_{(I,\beta)}=\sum_{i \in I} \pi_R^{\beta(i)}e_{i,1}$. It is clear that 
$$v_{(I,\beta)} \equiv \sum_{i \in I: \beta(i)<n}\pi_R^{\beta(i)}e_{i,1} \ \text{mod} \ \pi_R^{n} M_{\rho}^f.
$$
(resp.\ $v_{(I,\beta)} \equiv \sum_{i \in I: \beta(i)<n}\pi_R^{\beta(i)}e_{i,1} \ \text{mod} \ \pi_R^{n}\cdot(M_{\rho}^{f-1} \oplus M_{\rho}^{*})
$). Observe that, thanks to the definition of a jump set, we have that
$$w_{M_{\rho}^f}(\sum_{i \in I: \beta(i)<n}\pi_R^{\beta(i)}e_{i,1})=\rho^{\beta(i_0)}(i_0).
$$
(resp.\ $w_{M_{\rho}^{f-1} \oplus M_{\rho}^{*}}(\sum_{i \in I: \beta(i)<n}\pi_R^{\beta(i)}e_{i,1})=\rho^{\beta(i_0)}(i_0)
$). Therefore we conclude that
$$w_{M_{\rho}^f/\pi_R^nM_{\rho}^f}(\sum_{i \in I: \beta(i)<n}\pi_R^{\beta(i)}e_{i,1}) \geq \rho^{\beta(i_0)}(i_0).
$$
(resp.\ $w_{(M_{\rho}^{f-1} \oplus M_{\rho}^{*})/\pi_R^n\cdot(M_{\rho}^{f-1} \oplus M_{\rho}^{*})}(\sum_{i \in I: \beta(i)<n}\pi_R^{\beta(i)}e_{i,1}) \geq \rho^{\beta(i_0)}(i_0)
$). We next prove that this inequality is actually an equality which clearly gives the desired result.

Let $x \in M_{\rho}^f$ (resp.\ in $M_{\rho}^{f-1} \oplus M_{\rho}^{*}$). We claim that
$$w_{M_{\rho}^f}(\pi_R^nx+\sum_{i \in I: \beta(i)<n}\pi_R^{\beta(i)}e_{i,1}) \leq \rho^{\beta(i_0)}(i_0).
$$
(resp.\ $w_{M_{\rho}^{f-1}\oplus M_{\rho}^{*}}(\pi_R^nx+\sum_{i \in I: \beta(i)<n}\pi_R^{\beta(i)}e_{i,1}) \leq \rho^{\beta(i_0)}(i_0)
$). Indeed if this claim would not hold we must conclude that
$$w_{M_{\rho}^f}(\pi_R^nx)=\rho^{\beta(i_0)}(i_0).
$$
(resp.\ $w_{M_{\rho}^{f-1} \oplus M_{\rho}^{*}}(\pi_R^nx)=\rho^{\beta(i_0)}(i_0)
$). But, by construction of the free filtered modules, we have that $w_{M_{\rho}^f}(\pi_R^nx)=\rho^n(w_{M_{\rho}^f}(x))$ (resp.\ $w_{M_{\rho}^{f-1} \oplus M_{\rho}^{*}}(\pi_R^nx)=\rho^n(w_{M_{\rho}^{f-1} \oplus M_{\rho}^{*}}(x))$). This implies that $\rho^{n-\beta(i_0)}(w_{M_{\rho}^f}(x))=i_0$, contradicting that $i_0 \in T_{\rho}$, since $n>\beta(i_0)$ by construction (resp.\ it implies that $\rho^{n-\beta(i_0)}(w_{M_{\rho}^{f-1} \oplus M_{\rho}^{*}}(x))=i_0$. In case $i_0<e_{\rho}^{*}$, it again contradicts that $i_0 \in T_{\rho}$. If $i_0=e_{\rho}^{*}$ we would conclude that $b_{e_{\rho}^{*},1} \in \pi_R \cdot (M_{\rho}^{f-1} \oplus M_{\rho}^{*}$), which is not possible). This ends the proof. 
\end{proof}
\end{proposition}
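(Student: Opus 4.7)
The plan is to compute $g_v(n)$ directly for every $n$, read off where $g_v$ jumps, and verify the formula at $n = \beta(i)+1$. The two steps are straightforward reduction of $v_{(I,\beta)}$ modulo $\pi_R^n$, followed by a careful lower bound on the weight in the quotient.

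First I would fix the standard filtered basis $\{e_{i,1},\ldots,e_{i,f}\}_{i \in T_\rho}$ (together with $e_{e_\rho^*,1}$ in the extended case) and write $v_{(I,\beta)} = \sum_{i \in I} \pi_R^{\beta(i)} e_{i,1}$. Since $w(e_{i,1}) = i$, for any $n \geq 1$ every term $\pi_R^{\beta(i)} e_{i,1}$ with $\beta(i) \geq n$ already lies in $\pi_R^n\cdot M$, so
\[
v_{(I,\beta)} \equiv \sum_{i \in I,\, \beta(i) < n} \pi_R^{\beta(i)} e_{i,1} \pmod{\pi_R^n \cdot M}.
\]
Now let $i_0$ be the smallest element of $I$ satisfying $\beta(i_0) < n$ (if none exists, $v_{(I,\beta)} \in \pi_R^n\cdot M$ and $g_v(n) = \infty$). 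By the jump-set property $(C.3)'$, the values $\rho^{\beta(i)}(i)$ are strictly increasing along $I$, so the weight of the displayed sum in the ambient free module equals $\rho^{\beta(i_0)}(i_0)$ by the ultrametric inequality. This produces the upper bound $g_v(n) \geq \rho^{\beta(i_0)}(i_0)$.

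The heart of the argument is the matching lower bound: I must rule out that adding an element of $\pi_R^n \cdot M$ strictly increases the weight. Suppose for contradiction that some $x \in M$ satisfies
\[
w\!\left(\pi_R^n x + \sum_{i \in I,\, \beta(i)<n}\pi_R^{\beta(i)} e_{i,1}\right) > \rho^{\beta(i_0)}(i_0).
\]
The ultrametric inequality then forces $w(\pi_R^n x) = \rho^{\beta(i_0)}(i_0)$. Since $M$ is a free filtered module with respect to $\rho$, we have $w(\pi_R^n x) = \rho^n(w(x))$, hence $\rho^{n-\beta(i_0)}(w(x)) = i_0$. But $n > \beta(i_0)$, so this would exhibit $i_0$ in $\rho(\mathbb{Z}_{\geq 1})$, contradicting $i_0 \in T_\rho$. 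In the extended case the only new possibility is $i_0 = e_\rho^*$, which would force the generator $e_{e_\rho^*,1}$ to lie in $\pi_R \cdot (M_\rho^{f-1} \oplus M_\rho^*)$, again a contradiction. This gives $g_v(n) = \rho^{\beta(i_0)}(i_0)$.

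With this closed-form expression for $g_v$, both conclusions of the proposition are immediate. The value $\rho^{\beta(i_0)}(i_0)$ depends on $n$ only through the set $\{i \in I : \beta(i) < n\}$, which grows by exactly one element precisely as $n$ crosses a value in $\beta(I) + 1$; hence $g_v$ breaks at $n$ iff $n \in \beta(I)$. Taking $n = \beta(i)+1$ for $i \in I$, the corresponding $i_0$ is $i$ itself, yielding $g_v(\beta(i)+1) = \rho^{\beta(i)}(i)$. I expect the main obstacle to be phrasing the lower-bound step cleanly in both the unextended and extended cases, since it hinges on the precise definition of $T_\rho$ versus $T_\rho^*$ and on distinguishing the extra factor $S_{e_\rho^*}$; everything else is a careful bookkeeping of Newton-polygon-type inequalities supplied by $(C.3)'$.
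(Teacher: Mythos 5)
Your proof is correct and follows the paper's argument essentially verbatim: you reduce $v_{(I,\beta)}$ modulo $\pi_R^n$, identify the smallest $i_0 \in I$ with $\beta(i_0)<n$, show via the ultrametric inequality and condition $(C.3)'$ that $g_v(n)=\rho^{\beta(i_0)}(i_0)$, and obtain the sharpness by the same contradiction (if $w(\pi_R^n x)=\rho^{\beta(i_0)}(i_0)$ then $\rho^{n-\beta(i_0)}(w(x))=i_0$ would put $i_0$ in the image of $\rho$, impossible as $i_0 \in T_\rho$, with the extra case $i_0=e_\rho^*$ handled in the extended setting). The only slip is terminological — you call the inequality $g_v(n)\geq\rho^{\beta(i_0)}(i_0)$ an ``upper bound'' and the sharpness step a ``lower bound,'' which reverses the roles — but the inequalities themselves are written correctly, so the argument stands.
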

This allows us to conclude the following important corollary.
\begin{corollary} \label{at most one}
In each orbit $\mathcal{O}$ of $\pi_RM_{\rho}^f$ under $\emph{Aut}_{filt}(M_{\rho}^f)$ (resp.\ $M_{\rho}^{f-1} \oplus M_{\rho}^{*}$ under $\emph{Aut}_{filt}(M_{\rho}^{f-1} \oplus M_{\rho}^{*})$), there exist at most one $\rho$-jump set (resp.\ extended $\rho$-jump set) such that $v_{(I,\beta)}$ belongs to $\mathcal{O}$.
\begin{proof}
Clearly the function $g_v$ is preserved by applying a filtered automorphism. But by Proposition \ref{reconstruction process} it follows that from the function $g_{v_{(I,\beta)}}$ one can reconstruct $(I,\beta)$. The conclusion follows.
\end{proof} 
\end{corollary}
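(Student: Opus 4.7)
My plan is to promote the map $v \mapsto g_v$ (introduced just before the statement) to an invariant of the orbit of $v$, and then invoke Proposition \ref{reconstruction process} to conclude that $g_{v_{(I,\beta)}}$ determines $(I,\beta)$.

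For the invariance, let $N$ denote the ambient free filtered module (so $N = M_{\rho}^f$ in the first case and $N = M_{\rho}^{f-1} \oplus M_{\rho}^{*}$ in the second). Given $\theta \in \text{Aut}_{\text{filt}}(N)$, the $R$-linearity of $\theta$ preserves the submodule $\pi_R^i N$, so $\theta$ descends to an $R$-module automorphism $\bar{\theta}_i$ of $N/\pi_R^i N$. Applying the same observation to $\theta^{-1}$, and using that both $\theta$ and $\theta^{-1}$ preserve the filtration of $N$, one sees that $\bar{\theta}_i$ is in fact a filtered isomorphism of $N/\pi_R^i N$ equipped with the quotient filtration. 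Filtered isomorphisms preserve weights (Proposition \ref{phi surj}(b) applied in both directions), hence
\[
g_{\theta(v)}(i) \;=\; w_{N/\pi_R^i N}(\theta(v)) \;=\; w_{N/\pi_R^i N}(v) \;=\; g_v(i).
\]
Thus $g_v$ depends only on the orbit of $v$.

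Now suppose $v_{(I_1,\beta_1)}$ and $v_{(I_2,\beta_2)}$ lie in a common orbit, so that $g_{v_{(I_1,\beta_1)}} = g_{v_{(I_2,\beta_2)}}$. Proposition \ref{reconstruction process} identifies the break set of $g_{v_{(I,\beta)}}$ with $\beta(I)$ and, for each $b = \beta(i) \in \beta(I)$, gives $g_{v_{(I,\beta)}}(b+1) = \rho^b(i)$. Thus from the common function we first read off the set $\beta_1(I_1) = \beta_2(I_2)$, and for every break point $b$ in this common set we read off the common value $\rho^b(i_1) = \rho^b(i_2)$; since $\rho$ is strictly increasing and hence injective, this forces the corresponding preimages in $I_1$ and $I_2$ to coincide. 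Because each $\beta_j$ is strictly decreasing, the correspondence between break points and their preimages reconstructs both $I_j$ and $\beta_j$ uniquely, so $(I_1,\beta_1) = (I_2,\beta_2)$.

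The only mildly delicate step is verifying that filtered automorphisms of $N$ descend compatibly to filtered automorphisms of each quotient $N/\pi_R^i N$; once that is in hand, the substantive content is already contained in Proposition \ref{reconstruction process}, so there is no real obstacle left.
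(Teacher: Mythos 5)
Your proposal is correct and takes essentially the same route as the paper: you observe that $g_v$ is an invariant of the $\text{Aut}_{\text{filt}}$-orbit and then use Proposition \ref{reconstruction process} to recover $(I,\beta)$ from $g_{v_{(I,\beta)}}$. The only difference is that you unpack the paper's one-line "Clearly the function $g_v$ is preserved by applying a filtered automorphism" into the explicit argument that a filtered automorphism of $N$ descends to a filtered automorphism of each $N/\pi_R^i N$, which preserves the quotient weight by Proposition \ref{phi surj}(b).
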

So, putting together Proposition \ref{reduction process} and \ref{at most one}, we see that with the map $(I,\beta) \mapsto v_{(I,\beta)}$ we catch each orbit exactly once:
\begin{theorem} \label{bijection orbits jumps}
The map $(I,\beta) \to v_{(I,\beta)}$ induces a bijection between the set of $\rho$-jump sets (resp.\ extended $\rho$-jump sets) and the set of orbits of $\pi_RM_{\rho}^f$ under the action of $\emph{Aut}_{\emph{filt}}(M_{\rho}^f)$ (resp.\ orbits of $\pi_RM_{\rho}^{f-1} \oplus \pi_RM_{\rho}^{*}$ under the action of $\emph{Aut}_{\emph{filt}}(M_{\rho}^{f-1} \oplus M_{\rho}^{*})$).  
\end{theorem}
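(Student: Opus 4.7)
The plan is to combine the two preceding results, which together express exactly the two halves of a bijection. First I would observe that the map $(I,\beta) \mapsto v_{(I,\beta)}$ visibly lands in $\pi_RM_\rho^f$ (resp.\ $\pi_R(M_\rho^{f-1} \oplus M_\rho^*)$), and since $\mathrm{Aut}_{\mathrm{filt}}$ acts $R$-linearly, it preserves these submodules. So the assignment $(I,\beta) \mapsto \mathrm{Aut}_{\mathrm{filt}}\text{-orbit of } v_{(I,\beta)}$ is well defined, and the task reduces to checking surjectivity and injectivity.

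For surjectivity I would invoke Proposition \ref{reduction process} directly. Given any $v$ in $\pi_RM_\rho^f$ (resp.\ $\pi_R(M_\rho^{f-1}\oplus M_\rho^*)$), that proposition produces an automorphism $\theta$ with $\theta(v) = v_{(I,\beta)}$, where $(I,\beta) := (I^-_{(A_v,b_v)},\beta^-_{(A_v,b_v)})$ is the jump set obtained from the set of minimal elements of $\mathrm{Graph}(b_v)$ with respect to $\leq_\rho$ via Proposition \ref{jump set attached to a function}. Hence the orbit of $v$ is in the image of our map.

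For injectivity I would invoke Corollary \ref{at most one}, which says that each orbit contains at most one vector of the form $v_{(I,\beta)}$. The reason, as recalled in the proof of that corollary, is that the function $g_v$ is an orbit invariant (since it is defined intrinsically from the filtered quotients $M/\pi_R^i M$, and filtered automorphisms descend to each such quotient), and Proposition \ref{reconstruction process} shows that from $g_{v_{(I,\beta)}}$ one reads off both $\beta(I)$ (as the set of breakpoints of $g$) and the values $\rho^{\beta(i)}(i)$ for $i \in I$ (as the corresponding values $g(\beta(i)+1)$), which together reconstruct $(I,\beta)$ uniquely via the bijection of Proposition \ref{equivalent data}.

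There is no genuine obstacle left at this stage: the theorem is essentially a bookkeeping statement packaging Propositions \ref{reduction process} and \ref{reconstruction process} together with Corollary \ref{at most one}. The only point requiring a brief sanity check is that in the extended case the role of the factor $S_{e_\rho^*}$ is correctly accounted for in both directions of the bijection, but this is already built into the definitions of $v_{(I,\beta)}$, $A_v$, and $b_v$ used in the cited propositions, and so requires no further argument.
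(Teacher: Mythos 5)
Your proposal is correct and follows exactly the paper's own route: surjectivity from Proposition \ref{reduction process}, injectivity from Corollary \ref{at most one} (which in turn rests on Proposition \ref{reconstruction process}), assembled into the bijection. The paper states this in a single sentence; your write-up is the same argument with the bookkeeping made explicit.
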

Given a vector $v \in \pi_RM_{\rho}^f$ (resp.\ in $\pi_RM_{\rho}^{f-1} \oplus \pi_RM_{\rho}^{*}$) we define $\text{filt-ord}(v)$ to be the jump set corresponding to the orbit of $v$ under the above bijection. As the terminology suggests, the map $\text{filt-ord}$ can be considered as the filtered analogue of the map $\text{ord}$, which gives the valuation of the vector $v$. Indeed in the latter case knowing $\text{ord}(v)$ gives exactly the orbit, under $R$-linear automorphisms, of $v$, likewise in the former case knowing $\text{filt-ord}(v)$ gives exactly the orbit, under filtered $R$-linear automorphisms, of $v$. Moreover as $\text{ord}(v)$ is computed by taking the minimum valuation of the coordinates of $v$, with respect to an $R$-linear basis, so $\text{filt-ord}(v)$ is computed by taking the set of minimal points with respect to $\leq_{\rho}$ for the graph of valuations of the coordinates of $v$, with respect to a filtered basis (see definition \ref{definition of a basis}).
\subsubsection{Jump sets parametrize quasi-free filtered module} \label{jump sets and quasi free}
Now we fix $\rho$ a shift with $T_{\rho}$ finite and $f$ a positive integer. Let $M_{\bullet}$ be a $(f,\rho)$-quasi-free filtered module that is not free. By Proposition \ref{presentations} and \ref{transitive action} we see that $M_{\bullet}$ correspond to a unique orbit of vectors in $M_{\rho}^{f-1} \oplus M_{\rho}^{*}$ under $\text{Aut}_{\text{filt}}(M_{\rho}^{f-1} \oplus M_{\rho}^{*})$. So, together with Theorem \ref{bijection orbits jumps}, we obtain a unique extended $\rho$-jump set $(I_{M_{\bullet}},\beta_{M_{\bullet}})$ that determines $M_{\bullet}$ as a filtered module. Thus the map $M_{\bullet} \mapsto (I_{M_{\bullet}},\beta_{M_{\bullet}})$ gives an injection of the set of isomorphism classes of $(f,\rho)$-quasi-free module that are not $(f,\rho)$-free to the set of extended $\rho$-jump sets. We now want to describe the image. By Proposition \ref{reconstruction process}, together with Corollary \ref{where is the torsion}, we find that $\rho^{\beta(\text{min}(I_{M_{\bullet}}))}(\text{min}(I_{M_{\bullet}}))=e_{\rho}^{*}$. Conversely one checks immediately that for an extended $\rho$-jump set $(I,\beta)$ such that $\rho^{\beta(\text{min}(I))}(\text{min}(I))=e_{\rho}^{*}$, the filtered $R$-module $(M_{\rho}^{f-1} \oplus M_{\rho}^{*})/Rv_{(I,\beta)}$ is a $(f,\rho)$-quasi-free module. We call these jump sets \emph{admissible}. We have thus proved the following theorem. 
\begin{theorem} \label{classification of quasi free}
The map sending an admissible extended $\rho$-jump set $(I,\beta)$ to $(M_{\rho}^{f-1} \oplus M_{\rho}^{*})/Rv_{(I,\beta)}$ induces a bijection from the set of admissible extended $\rho$-jump sets to the set of $(f,\rho)$-quasi-free filtered modules that are not $(f,\rho)$-free.
\end{theorem}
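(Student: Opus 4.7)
The plan is to build an inverse map $M_{\bullet} \mapsto (I_{M_{\bullet}},\beta_{M_{\bullet}})$ and verify that both compositions are the identity, weaving together Proposition~\ref{presentations}, Proposition~\ref{transitive action}, Theorem~\ref{bijection orbits jumps}, Proposition~\ref{reconstruction process} and Corollary~\ref{where is the torsion}.

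First, given a $(f,\rho)$-quasi-free module $M_{\bullet}$ that is not free, I would pick a quasi-basis and use Proposition~\ref{presentations} to produce a filtered epimorphism $\phi\colon M_{\rho}^{f-1}\oplus M_{\rho}^{*}\twoheadrightarrow M_{\bullet}$. The core step is to show that $\ker(\phi)$ is a principal $R$-submodule $Rv$ for some $v\in \pi_R(M_{\rho}^{f-1}\oplus M_{\rho}^{*})$. This can be extracted by comparing the dimensions $f_i$ on both sides: the source has $f_i=f$ for $i<e_{\rho}^{*}$ and $f_i=f+1$ for $i\geq e_{\rho}^{*}$, while by quasi-freeness the target is uniformly $f$-dimensional, so $F_i(\ker \phi)$ is zero for $i<e_{\rho}^{*}$ and one-dimensional beyond. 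A variant of Corollary~\ref{last ker +}, applied after restricting $\phi$ to the filtered submodule $(M_{\rho}^{f-1}\oplus M_{\rho}^{*})_{\bullet+e_{\rho}^{*}-1}$ where on the target $[\pi_R]_j$ is an isomorphism for all $j>e'_{\rho}$, identifies $\ker(\phi)$ with a rank-one free $R$-submodule whose generator $v$ must lie in the maximal ideal-multiple $\pi_R(M_{\rho}^{f-1}\oplus M_{\rho}^{*})$, because otherwise $\phi$ would factor through a strictly smaller filtered module.

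Next, Proposition~\ref{transitive action} guarantees that any two such presentations differ by an element of $\emph{Aut}_{\emph{filt}}(M_{\rho}^{f-1}\oplus M_{\rho}^{*})$, so the $R$-line $Rv$ is well-defined up to the action of this group; composing with multiplication by a unit (itself a filtered automorphism) promotes this to a well-defined orbit of the vector $v$ itself. Theorem~\ref{bijection orbits jumps} then attaches to this orbit a unique extended $\rho$-jump set $(I_{M_{\bullet}},\beta_{M_{\bullet}})$. Admissibility, namely the equality $\rho^{\beta(\min I)}(\min I)=e_{\rho}^{*}$, will drop out of Proposition~\ref{reconstruction process} combined with Corollary~\ref{where is the torsion}: the function $g_v$ first breaks at $\beta(\min I)$ with value $\rho^{\beta(\min I)}(\min I)$, and this weight must coincide with the unique finite weight at which the cyclic torsion of $M_{\bullet}$ lives, which by Corollary~\ref{where is the torsion} is $e_{\rho}^{*}$.

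Injectivity of the theorem's map will follow because a filtered isomorphism between two such quotients lifts, via Proposition~\ref{transitive action}, to $\theta\in \emph{Aut}_{\emph{filt}}(M_{\rho}^{f-1}\oplus M_{\rho}^{*})$ sending one kernel-generator to a unit multiple of the other; Corollary~\ref{at most one} then forces equality of admissible jump sets. Surjectivity is immediate once the inverse assignment is shown to land in admissible jump sets, and the converse construction — that for any admissible $(I,\beta)$ the quotient $(M_{\rho}^{f-1}\oplus M_{\rho}^{*})/Rv_{(I,\beta)}$ is indeed $(f,\rho)$-quasi-free and non-free — is a direct verification of the three defining properties using the explicit shape of $v_{(I,\beta)}$ and the admissibility equation. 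I expect the main obstacle to be the first step, namely a clean proof that $\ker(\phi)$ is cyclic and that its generator belongs to $\pi_R(M_{\rho}^{f-1}\oplus M_{\rho}^{*})$; once this is in hand, the remainder is bookkeeping orchestrated by the propositions already proved.
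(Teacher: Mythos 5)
Your outline tracks the paper's proof exactly — present $M_{\bullet}$ via a quasi-basis (Proposition~\ref{presentations}), show $\ker\phi=Rv$ with $v\in\pi_R(M_{\rho}^{f-1}\oplus M_{\rho}^{*})$, pass to an orbit via Proposition~\ref{transitive action} and Theorem~\ref{bijection orbits jumps}, then use Proposition~\ref{reconstruction process} and Corollary~\ref{where is the torsion} to pin down admissibility — but the two pieces of detail you supply to justify the nontrivial steps are mistaken.

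Your dimension count for the source is wrong. We have $M_{\rho}^{f-1}\oplus M_{\rho}^{*}\cong\prod_{j\in T_\rho}S_j^f\times S_{e_{\rho}^{*}}$, and $F_i(S_j)$ is one-dimensional exactly when $i$ lies in the $\rho$-orbit $\{\rho^m(j)\}_{m\geq 0}$. Every $i$ lies in the orbit of a unique $j\in T_\rho$, so the factors $S_j^f$ always contribute exactly $f$ to the dimension of $F_i$, while $S_{e_{\rho}^{*}}$ adds an extra $1$ only when $i$ lies in the orbit of $e_{\rho}^{*}$. Hence $f_i(M_{\rho}^{f-1}\oplus M_{\rho}^{*})=f+1$ precisely on $\{e_{\rho}^{*},\rho(e_{\rho}^{*}),\rho^2(e_{\rho}^{*}),\ldots\}$ and is $f$ elsewhere, including at many $i$ strictly between $e_{\rho}^{*}$ and $\rho(e_{\rho}^{*})$ for $\rho_{e,p}$ with $e<\infty$. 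This ruins the appeal to a ``variant of Corollary~\ref{last ker +}'': that corollary needs $\ker F_j(\phi)=0$ for all $j\neq i$, whereas here $\ker F_j(\phi)$ is nonzero on the entire (infinite) forward orbit of $e_{\rho}^{*}$, and restricting to $(M_{\rho}^{f-1}\oplus M_{\rho}^{*})_{\bullet+e_{\rho}^{*}-1}$ does not remove the nonzero kernels at the higher orbit points. A correct substitute: for a filtered epimorphism the natural injection $F_i(\ker\phi)\hookrightarrow\ker F_i(\phi)$ is onto (surjectivity uses Proposition~\ref{a lot surjective}); pick any nonzero $v\in\ker\phi$ of weight $e_{\rho}^{*}$; then $Rv\subseteq\ker\phi$ induces an isomorphism on every $F_i$ (both sides are at most one-dimensional and supported on the orbit of $e_{\rho}^{*}$), so Proposition~\ref{phi surj}(d) gives $Rv=\ker\phi$, and $v\in\pi_R(M_{\rho}^{f-1}\oplus M_{\rho}^{*})$ because otherwise the quotient would be torsion-free, contradicting Corollary~\ref{where is the torsion}.

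For admissibility you misquote Corollary~\ref{where is the torsion}: the torsion of $M_{\bullet}$ sits at weight $e'_{\rho}$, not $e_{\rho}^{*}$; the two differ by one application of $\rho$, and that is exactly the step your argument leaves out. You also have $g_v$ backwards: its first break is at $\beta(\max I)$ (the minimum value of $\beta$), whereas the break you want, at $\beta(\min I)$, is the \emph{last} one, where $g_v$ stabilizes at $w(v)=\rho^{\beta(\min I)}(\min I)$. The correct chain is: $\ker\phi$ is supported in weights $\geq e_{\rho}^{*}$, so $w(v)\geq e_{\rho}^{*}$; a generator of $M_{\bullet}[\pi_R]$ at weight $e'_{\rho}$ lifts along the filtered epimorphism $\phi$ to some $u$ of weight $e'_{\rho}$ in $M_{\rho}^{f-1}\oplus M_{\rho}^{*}$, and then $\pi_R u\in\ker\phi=Rv$ has weight $\rho(e'_{\rho})=e_{\rho}^{*}$, giving $w(v)\leq e_{\rho}^{*}$. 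Hence $\rho^{\beta(\min I)}(\min I)=w(v)=e_{\rho}^{*}$. The rest of your outline — injectivity via Proposition~\ref{transitive action} and Corollary~\ref{at most one}, and the converse verification for admissible $(I,\beta)$ — is sound.
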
 

\subsubsection{Reading the jump set inside the module} \label{reading jump inside}
We have classified $(f,\rho)$-quasi-free modules (which are not free) via admissible extended $\rho$-jump sets. We have proceeded by introducing an external module, $M_{\rho}^{f-1} \oplus M_{\rho}^{*}$, presenting each of them, and proving that the invariant of each presentation is an admissible extended $(f,\rho)$-jump set.

We now provide a description of the jump set $(I_{M_{\bullet}},\beta_{M_{\bullet}})$, internally from $M_{\bullet}$, without any further reference to an external module $M_{\rho}^{f-1} \oplus M_{\rho}^{*}$. In other words we face the task of providing the inverse of the bijection in Theorem \ref{classification of quasi free}, without reference to $M_{\rho}^{f-1} \oplus M_{\rho}^{*}$. We will proceed by imitating the way we reconstructed the jump set belonging to each orbit in Proposition \ref{reconstruction process}. For $v \in M_{\bullet}$  denote by $g_{v,M_{\bullet}}$ the map $g_{v,M_{\bullet}}: \mathbb{Z}_{\geq 0} \to \mathbb{Z}_{\geq 0} \cup \{\infty\}$ defined as $g_{v,M_{\bullet}}(i):=w_{M_{\bullet}/{\pi_R}^{i}M_{\bullet}}(v)$. Say that $g_v$ breaks at $i$ if $g_{v,M_{\bullet}}(i) \neq g_{v,M_{\bullet}}(i+1)$. Fix $\tilde{m}$ a generator of $(M_1)_{\text{tors}}$, denote by $N$ the exponent of the torsion, that is $N:=\text{min}\{i \in \mathbb{Z}_{\geq 1}:\pi_{R}^{i} \tilde{m}=0 \}$. The following proposition can be proved by a straightforward imitation of the proof of Proposition \ref{reconstruction process}.
\begin{proposition} \label{break points of g}
The function $g_{\tilde{m},M_{\bullet}}$ breaks exactly at the elements of $\beta_{M_{\bullet}}(I_{M_{\bullet}})-N$, moreover if $i \in I_{M_{\bullet}}$ then $g_{\tilde{m},M_{\bullet}}(i+1)=\rho^{\beta_{M_{\bullet}}(i)-N}(i)$.  
\end{proposition}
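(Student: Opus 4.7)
The plan is to imitate the proof of Proposition \ref{reconstruction process} after first pinning down $\tilde m$ explicitly. By Theorem \ref{classification of quasi free} we may realize $M_\bullet \simeq M/Rv_{(I,\beta)}$, where $M := M_{\rho}^{f-1} \oplus M_{\rho}^{*}$ and $(I,\beta) := (I_{M_\bullet},\beta_{M_\bullet})$ is admissible. Writing $I = \{i_1 < \cdots < i_k\}$ (so $\beta(i_1) > \cdots > \beta(i_k)$), the largest $\pi_R$-divisor of $v_{(I,\beta)} = \sum_{j=1}^{k}\pi_R^{\beta(i_j)} e_{i_j,1}$ in the torsion-free module $M$ is $\pi_R^{\beta(i_k)}$, so we can factor $v_{(I,\beta)} = \pi_R^{N_0} v_0$ with $N_0 := \beta(i_k)$ and $v_0 := \sum_{j=1}^{k}\pi_R^{\beta(i_j)-N_0} e_{i_j,1}$.

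First I will show that $\tilde m = \bar v_0$ up to a unit (which does not affect the function $g$) and that $N = N_0$. Indeed, an element $\bar x \in M_\bullet$ is torsion iff a representative $x\in M$ satisfies $\pi_R^m x = r v_{(I,\beta)} = r\pi_R^{N_0}v_0$, and by $R$-torsion-freeness of $M$ such an $x$ must be an $R$-multiple of $v_0$. Thus $(M_1)_{\text{tors}} = R\bar v_0$ is cyclic of exponent exactly $N_0$; a direct check $w_{M_\bullet}(\pi_R^{N_0-1}\bar v_0) = \rho^{\beta(\min I)-1}(\min I) = e'_{\rho}$, using admissibility, confirms consistency with Corollary \ref{where is the torsion}.

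Next I will reduce the computation of $g_{\tilde m, M_\bullet}(n)$ to one inside the free module $M$. Unwinding the quotient filtration on $M_\bullet / \pi_R^n M_\bullet = M/(Rv_{(I,\beta)} + \pi_R^n M)$ and lifting through the quotient map gives
\[
g_{\tilde m, M_\bullet}(n) = \sup_{r\in R,\, z \in M} w_M\bigl((1 - r\pi_R^{N}) v_0 + \pi_R^n z\bigr).
\]
Since $N \geq 1$, the scalar $(1 - r\pi_R^{N})$ is always a unit in $R$, and absorbing its inverse into $z$ collapses this supremum to $w_{M/\pi_R^n M}(v_0)$. The coordinate argument of Proposition \ref{reconstruction process} now applies directly: a coefficient with $\pi_R$-valuation $\geq n$ is killable by $\pi_R^n z$ while one with valuation $< n$ persists, yielding
\[
g_{\tilde m, M_\bullet}(n) = \min\bigl\{\rho^{\beta(i)-N}(i) : i \in I,\ \beta(i)-N < n\bigr\},
\]
understood as $\infty$ if the set is empty.

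Finally, by condition $(C.3)'$ the function $i \mapsto \rho^{\beta(i)-N}(i)$ is strictly increasing on $I$, so the minimum above is attained at the smallest $i \in I$ satisfying $\beta(i)-N < n$. The set of such $i$ gains exactly one element each time $n$ crosses a value in $\beta(I)-N$, producing a break of $g_{\tilde m, M_\bullet}$ precisely at these thresholds, with the value just after the break at $\beta(i)-N$ equal to the new minimum $\rho^{\beta(i)-N}(i)$. The main obstacle is step 1, the explicit description of $\tilde m$ and $N$; once that is in place everything else is a direct transposition to the quotient setting of the computation already carried out in Proposition \ref{reconstruction process}, using only that the relation $v_{(I,\beta)} = \pi_R^N v_0$ introduces a unit ambiguity.
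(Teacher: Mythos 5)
Your proof is correct and fills in exactly the ``straightforward imitation'' of Proposition \ref{reconstruction process} that the paper invokes without spelling out: you use Theorem \ref{classification of quasi free} to realize $M_\bullet$ as $(M_\rho^{f-1}\oplus M_\rho^*)/Rv_{(I,\beta)}$, identify $\tilde m$ as $\bar v_0$ and $N$ as $\beta(\max I)$ via the factorization $v_{(I,\beta)}=\pi_R^N v_0$ with $v_0$ primitive, reduce $g_{\tilde m, M_\bullet}(n)$ to $w_{M/\pi_R^n M}(v_0)$ by absorbing the unit $1-r\pi_R^N$, and then re-run the coordinate-wise computation from the earlier proof. This is the same approach the paper has in mind; the only remark worth adding is that the equality as stated in the proposition should read $g_{\tilde m, M_\bullet}(\beta_{M_\bullet}(i)-N+1)=\rho^{\beta_{M_\bullet}(i)-N}(i)$ (as your argument in fact proves), matching the form of Proposition \ref{reconstruction process}.
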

So we deduce the following corollary.
\begin{corollary} \label{classifying quasi free modules internally}
Let $M_{\bullet}$ be a $(f,\rho)$-quasi-free filtered module that is not free. Let $\tilde{m} \in M_1$ be a generator of $(M_1)_{\emph{tors}}$, then the map $g_{\tilde{m},M_{\bullet}}$ determines $M_{\bullet}$ as a filtered module. 
\end{corollary}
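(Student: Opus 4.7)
The plan is to combine Proposition~\ref{break points of g} with the classification Theorem~\ref{classification of quasi free}. By the latter, a $(f,\rho)$-quasi-free filtered module that is not $(f,\rho)$-free is determined up to filtered isomorphism by its admissible extended $\rho$-jump set $(I_{M_\bullet},\beta_{M_\bullet})$. It therefore suffices to show that this pair can be reconstructed from the function $g_{\tilde m,M_\bullet}$. Observe first that $g_{\tilde m, M_\bullet}$ is itself an intrinsic invariant of $M_\bullet$: any two generators of $(M_1)_{\mathrm{tors}}$ differ by a unit of $R$, and multiplication by a unit preserves the weight in each quotient $M_\bullet/\pi_R^{i}M_\bullet$.

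By Proposition~\ref{break points of g}, the set $B$ of break-points of $g_{\tilde m, M_\bullet}$ equals $\beta_{M_\bullet}(I_{M_\bullet})-N$, and the height attached to the break coming from $i \in I_{M_\bullet}$ equals $\rho^{\beta_{M_\bullet}(i)-N}(i)$. Since $\rho$ is strictly increasing and hence injective, applying its inverse $\beta_{M_\bullet}(i)-N$ times to the recorded height unambiguously recovers $i$ from the pair (break position, height). In this way one extracts both the set $I_{M_\bullet}$ and the function $i \mapsto \beta_{M_\bullet}(i)-N$ on $I_{M_\bullet}$.

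It remains to pin down the offset $N$, which requires more than the break data up to translation. For this I would invoke the admissibility condition on $(I_{M_\bullet},\beta_{M_\bullet})$, namely $\rho^{\beta_{M_\bullet}(\min I_{M_\bullet})}(\min I_{M_\bullet}) = e_\rho^{*}$: since $\rho$ is strictly increasing, this equation uniquely determines the integer $\beta_{M_\bullet}(\min I_{M_\bullet})$ once $\min I_{M_\bullet}$ is known. Subtracting the already-recovered value $\beta_{M_\bullet}(\min I_{M_\bullet})-N$ then fixes $N$, and hence all the values $\beta_{M_\bullet}(i)$. With $(I_{M_\bullet},\beta_{M_\bullet})$ reconstructed from $g_{\tilde m, M_\bullet}$, Theorem~\ref{classification of quasi free} closes the proof. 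The substantive technical work is already carried out in Proposition~\ref{break points of g}; the only delicate point in the present corollary is the combinatorial extraction just described, where admissibility is crucial to fix the otherwise-floating offset $N$.
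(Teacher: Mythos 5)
Your proof is correct and follows the deduction the paper has in mind: Proposition~\ref{break points of g} lets you recover $I_{M_\bullet}$ and the function $\beta_{M_\bullet}-N$ from $g_{\tilde m,M_\bullet}$ by inverting $\rho$ at the recorded heights, and Theorem~\ref{classification of quasi free} converts the resulting jump set back into the filtered isomorphism class of $M_\bullet$. Your use of admissibility to normalize the offset $N$ is exactly the right fix for the one point the paper leaves implicit: the break data alone only determines $\beta_{M_\bullet}$ up to the uniform shift by $N$, and the admissibility equation $\rho^{\beta_{M_\bullet}(\min I_{M_\bullet})}(\min I_{M_\bullet})=e_\rho^*$ (established just before Theorem~\ref{classification of quasi free}), together with the fact that $\min I_{M_\bullet}$ is recovered from the largest break of $g$, is what makes the reconstruction unambiguous. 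Your preliminary remark that $g_{\tilde m,M_\bullet}$ is independent of the choice of generator (the torsion submodule is cyclic, and multiplication by a unit of $R$ is a filtered automorphism) is a sound cleanliness check, though not strictly needed for the statement.
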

The following simple corollary of Theorem \ref{classification of quasi free} will be often useful. Recall the notation $(I_{(A,b)}^{-},\beta_{(A,b)}^{-})$ introduced in Proposition \ref{jump set attached to a function}. 
\begin{corollary} \label{The relation between the units}
Let $M_{\bullet}$ be a $(f,\rho)$-quasi-free filtered module that is not free. Let $I$ be a subset of $T_{\rho}^{*}$ and $b$ a map from $I$ to $\mathbb{Z}_{\geq 1}$. Suppose that for each $i \in I$ we have $m_i \in M_i$ satisfying the following three conditions. \\
\emph{(1)} For each $i \in I$ we have that $w_{M_{\bullet}}(m_i)=i$. \\
\emph{(2)} We have that
$$\sum_{i \in I}\pi_R^{b(i)}m_i=0.
$$
\emph{(3)} If $e_{\rho}^{*} \in I$ then $m_{e_{\rho}^{*}} \not \in \pi_R M_1$. 

Then it must be that 
$$(I_{(A,b)}^{-},\beta_{(A,b)}^{-})=(I_{M_{\bullet}},\beta_{M_{\bullet}}).
$$
\end{corollary}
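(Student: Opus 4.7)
The plan is to realize $\{m_i\}_{i\in I}$ as part of a quasi-basis of $M_\bullet$ and then combine Theorem~\ref{classification of quasi free} with Proposition~\ref{reduction process}. For each $i\in I\cap T_\rho$, condition~(1) makes $m_i$ a non-zero class of the $f$-dimensional $R/\pi_R$-vector space $M_i/M_{i+1}$, which I extend to a basis; for $i\in T_\rho\setminus I$ I pick any basis of $M_i/M_{i+1}$; at $i=e_\rho^{*}$, if $e_\rho^{*}\in I$ then a weight inspection shows that $\pi_R M_1\cap M_{e_\rho^{*}}=\pi_R M_{e'_\rho}$, so condition~(3) forces the class of $m_{e_\rho^{*}}$ to be a generator of the one-dimensional cokernel $\operatorname{coker}([\pi_R]_{e'_\rho})$; otherwise I pick any generator. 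By Proposition~\ref{presentations} the resulting quasi-basis $\mathcal{B}$ induces a filtered epimorphism $\phi\colon M:=M_\rho^{f-1}\oplus M_\rho^{*}\twoheadrightarrow M_\bullet$ with $\phi(e_{i,1})=m_i$ for every $i\in I$, where $\{e_{i,1}\}$ is the standard filtered basis of $M$.

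Next, set $v:=\sum_{i\in I}\pi_R^{b(i)}e_{i,1}\in M$; condition~(2) gives $\phi(v)=0$, so $v\in\ker(\phi)$. By Proposition~\ref{transitive action}, $\ker(\phi)=R\cdot w$ for some $w$ in the $\operatorname{Aut}_{\text{filt}}(M)$-orbit of $v_{(I_{M_\bullet},\beta_{M_\bullet})}$, hence $\operatorname{filt-ord}(w)=(I_{M_\bullet},\beta_{M_\bullet})$. Proposition~\ref{reduction process} applied directly to $v$ identifies $\operatorname{filt-ord}(v)=(I_{(A,b)}^{-},\beta_{(A,b)}^{-})$. The conclusion follows as soon as the unique $\alpha\in R$ with $v=\alpha w$ is a unit: then $v$ and $w$ share an $\operatorname{Aut}_{\text{filt}}$-orbit and comparing the two filt-ord computations yields $(I_{(A,b)}^{-},\beta_{(A,b)}^{-})=(I_{M_\bullet},\beta_{M_\bullet})$.

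The main obstacle is verifying that $\alpha$ is a unit. Writing $\alpha=\pi_R^{k}u$ with $u\in R^{\times}$ and $k\geq 0$, a coordinate calculation together with the $\operatorname{Aut}_{\text{filt}}$-invariance of $\operatorname{filt-ord}$ shows that $\operatorname{filt-ord}(v)=\operatorname{filt-ord}(\pi_R^{k}v_{(I_{M_\bullet},\beta_{M_\bullet})})$, and the fact that shifting every $\beta$-value by the same constant preserves minimality with respect to $\leq_\rho$ rewrites this as $(I_{M_\bullet},\beta_{M_\bullet}+k)$. Equating this with $(I_{(A,b)}^{-},\beta_{(A,b)}^{-})$ and applying the admissibility relation $\rho^{\beta_{M_\bullet}(\min I_{M_\bullet})}(\min I_{M_\bullet})=e_\rho^{*}$ yields $\rho^{\beta_{(A,b)}^{-}(\min I_{(A,b)}^{-})}(\min I_{(A,b)}^{-})=\rho^{k}(e_\rho^{*})$. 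Condition~(3), combined with a weight-by-weight bookkeeping analysis of the relation~(2) that rules out the sum $\sum_{i\in I}\pi_R^{b(i)-1}m_i$ vanishing in $M_\bullet$, forces the left-hand side to equal $e_\rho^{*}$ itself, so $k=0$ and the corollary is established.
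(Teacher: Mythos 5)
Your first two paragraphs are sound: conditions~(1) and~(3) do produce a quasi-basis with $\phi(e_{i,1})=m_i$ (the identity $\pi_R M_1\cap M_{e_{\rho}^{*}}=\pi_R M_{e'_{\rho}}$ makes condition~(3) equivalent to $m_{e_{\rho}^{*}}$ having nonzero image in $\text{coker}([\pi_R]_{e'_{\rho}})$), and combining $\text{filt-ord}(v)=(I_{(I,b)}^{-},\beta_{(I,b)}^{-})$ from Proposition~\ref{reduction process} with $\text{filt-ord}(w)=(I_{M_\bullet},\beta_{M_\bullet})$ from Theorem~\ref{classification of quasi free} and Proposition~\ref{transitive action} correctly reduces everything to showing that $v:=\sum_{i\in I}\pi_R^{b(i)}e_{i,1}$ actually generates $\ker\phi=Rw$, equivalently that $\sum_{i\in I}\pi_R^{b(i)-1}m_i\neq 0$ in $M_\bullet$.

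The final step is where the proof breaks: the assertion that condition~(3) plus a bookkeeping of~(2) rules out $\sum_{i\in I}\pi_R^{b(i)-1}m_i=0$ is false, because nothing in~(1)--(3) bounds the integers $b(i)$ from above. For a concrete counterexample take $\rho=\rho_{4,2}$, so $T_\rho=\{1,3,5,7\}$ and $e_{\rho}^{*}=8$, take $f=1$, and set $M_\bullet=M_{\rho}^{*}/R(\pi_R^{3}e_{1}+\pi_R e_{8})$, which is $(1,\rho)$-quasi-free and not free with $(I_{M_\bullet},\beta_{M_\bullet})=(\{1,8\},\,1\mapsto 3,\,8\mapsto 1)$. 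With $I=\{1,8\}$, $m_1=\overline{e_1}$, $m_8=\overline{e_8}$, $b(1)=4$, $b(8)=2$, conditions~(1)--(3) all hold (one checks directly that $\overline{e_8}\notin\pi_R M_1$), yet $\sum_i\pi_R^{b(i)-1}m_i=\overline{\pi_R^{3}e_1+\pi_R e_8}=0$ and $(I_{(I,b)}^{-},\beta_{(I,b)}^{-})=(\{1,8\},\,1\mapsto 4,\,8\mapsto 2)\neq(I_{M_\bullet},\beta_{M_\bullet})$. What the argument actually needs, and what is present in every application the paper makes of this corollary (Theorem~\ref{admissible j.s. occur}, Proposition~\ref{The tame case}, Theorem~\ref{constraining j.s. under ext.}), is the further hypothesis that $(I_{(I,b)}^{-},\beta_{(I,b)}^{-})$ is \emph{admissible}, i.e.\ $\min_{i\in I}\rho^{b(i)}(i)=e_{\rho}^{*}$. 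With that in hand your reduction closes immediately and with no bookkeeping: $w(v)=\min_i\rho^{b(i)}(i)=e_{\rho}^{*}=w(w)$, so the scalar $\alpha$ with $v=\alpha w$ satisfies $\rho^{\text{ord}(\alpha)}(e_{\rho}^{*})=e_{\rho}^{*}$ and must be a unit. Condition~(3) by itself only enables the quasi-basis construction and places no constraint whatsoever on the exponents $b(i)$.
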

The following proposition shall be often used to recover the structure of the $R$-module $M_1[\pi_R^{\infty}]:=(M_1)_{\text{tors}}$ from $(I_{M_{\bullet}},\beta_{M_{\bullet}})$. This goes as follows.
\begin{proposition} \label{how much torsion}
Let $M_{\bullet}$ be a $(f,\rho)$-quasi-free filtered $R$-module. Then we have that
$$M_1[\pi_R^{\infty}] \simeq R/\pi_R^{\beta(\emph{max}(I_{M_{\bullet}}))}R,
$$
as $R$-modules. 
\begin{proof}
Using Theorem \ref{classification of quasi free} we deduce that 
$$M_1[\pi_R^{\infty}] \simeq R/\pi_R^{\text{min}(\beta(I_{M_{\bullet}}))}R.
$$
Since $(I_{M_{\bullet}},\beta_{M_{\bullet}})$ is a jump set, the map $\beta_{M_{\bullet}}$ is in particular decreasing. Hence $\text{min}(\beta(I_{M_{\bullet}}))=\beta(\text{max}(I_{M_{\bullet}}))$, which gives precisely the desired conclusion.
\end{proof}
\end{proposition}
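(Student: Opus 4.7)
The plan is to invoke Theorem \ref{classification of quasi free} to realize $M_\bullet$ concretely as a quotient of a free filtered module by a single $R$-line, and then to extract the $\pi_R$-power torsion by a direct saturation computation. The final combinatorial step is that $\beta_{M_\bullet}$ is strictly decreasing on its domain.

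Assuming $M_\bullet$ is not already $(f,\rho)$-free (the free case is immediate: by Proposition \ref{charact. free-filt-mod} each factor $S_i$ equals $R$ and hence $M_1$ is $\pi_R$-torsion-free, matching the convention $\beta(\max \emptyset) = 0$), Theorem \ref{classification of quasi free} produces, with $N := M_\rho^{f-1} \oplus M_\rho^*$ and $(I,\beta) := (I_{M_\bullet}, \beta_{M_\bullet})$, a filtered isomorphism $M_\bullet \simeq N/R v_{(I,\beta)}$. Since $T_\rho$ is finite (Proposition \ref{finite module cofinite map}), the ambient $R$-module $N_1$ is free of finite rank with the standard basis; in particular it is $\pi_R$-torsion-free, so $M_1[\pi_R^\infty]$ identifies with the saturation $\{x \in N_1 : \exists\, k \geq 0,\ \pi_R^k x \in R v_{(I,\beta)}\}$ modulo $R v_{(I,\beta)}$.

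The main step is the saturation computation. Writing $v_{(I,\beta)} = \sum_{i \in I} \pi_R^{\beta(i)} e_i$ in the standard basis of $N_1$, any $x \in N_1$ with $\pi_R^k x = r \cdot v_{(I,\beta)}$ must be supported on the coordinates indexed by $I$, with $i$-th coordinate $r \pi_R^{\beta(i)-k}$; requiring this to lie in $R$ for every $i \in I$ forces $\ord_R(r) \geq k - \min_{i \in I} \beta(i)$. Hence the saturation is the cyclic $R$-module generated by
$$v' := \sum_{i \in I} \pi_R^{\beta(i) - \min_{i \in I} \beta(i)}\, e_i,$$
which lies in $N_1 \setminus \pi_R N_1$ (its coefficient at $e_{\max(I)}$ equals $1$) and satisfies $\pi_R^{\min_{i \in I} \beta(i)} v' = v_{(I,\beta)}$. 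Consequently $M_1[\pi_R^\infty] \simeq R v'/R v_{(I,\beta)} \simeq R/\pi_R^{\min_{i \in I}\beta(i)} R$. Since the jump set axioms force $\beta$ to be strictly decreasing on $I$, we have $\min_{i \in I}\beta(i) = \beta(\max(I))$, yielding the stated isomorphism. The only nontrivial point is the coordinate-level saturation calculation; everything else is bookkeeping over the explicit presentation.
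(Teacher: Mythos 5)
Your proof is correct and follows essentially the same route as the paper: invoke Theorem \ref{classification of quasi free} to present $M_\bullet$ as $(M_\rho^{f-1}\oplus M_\rho^*)/Rv_{(I,\beta)}$, extract that the torsion is $R/\pi_R^{\min\beta}R$, and then use strict monotonicity of $\beta$ to rewrite $\min(\beta(I_{M_\bullet}))$ as $\beta(\max(I_{M_\bullet}))$. The only difference is one of exposition: the paper treats the passage from the presentation to $R/\pi_R^{\min\beta}R$ as immediate, whereas you spell out the saturation computation (and also flag the free case as a degenerate instance), but the underlying argument is the same.
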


\section{Jumps of characters of a quasi-free module} \label{characters}
\subsection{Motivation and main results}
In section \ref{U1 as filtered module} we will see that $U_1$ as a filtered module is quasi-free. So, as we will see in detail in \ref{wild extension}, via the local reciprocity map the question of determining the possible upper jumps of a cyclic $p$-power totally ramified extension of a given local field is a special case of the question of determining the jumps of a cyclic character of a given $(f,\rho)$-quasi-free filtered module, which is the goal of the present section.

Let $R$ be a complete DVR. We denote by $Q(R)$ the fraction field of $R$. We equip $Q(R)/R$ with the discrete topology. 
\begin{definition}
(a) Let $M_{\bullet}$ be a filtered $R$-module. A \emph{character} of $M_{\bullet}$ is a continuous $R$-linear homomorphism $\chi:M_1 \to Q(R)/R$, where the implicit topology on $M_1$ is the one coming from the filtration, see \ref{metric structure}. 

(b) Let $\chi$ be a character of $M_{\bullet}$. A positive integer $i$ is said to be a \emph{jump} of $\chi$, if $\chi(M_i) \neq \chi(M_{i+1})$. We denote the collection of jumps of $\chi$ by $J_{\chi}$. Finally we denote by $\mathcal{J}_{M_{\bullet}}$ the collection of all $J_{\chi}$ as $\chi$ varies among characters of $M_{\bullet}$. 
\end{definition}
One can easily show that if $M_{\bullet}$ is linear (see definition \ref{definition of linear}), then for each character $\chi$ of $M_{\bullet}$ the set $J_{\chi}$ is finite. We fix a shift map $\rho$, and a positive integer $f$. Recall that $(f,\rho)$-quasi-free modules are in particular linear. The goal of this section is to understand exactly which are the possible sets of jumps:
\begin{goal}
Let $M_{\bullet}$ be a $(f,\rho)$-quasi-free module. Characterize the sets $A \subseteq \mathbb{Z}_{\geq 1}$ such that $A=J_{\chi}$ for some character $\chi$ of $M_{\bullet}$.
\end{goal}
We will proceed as follows: in \ref{jump set are set jump} we prove that $\mathcal{J}_{M_{\rho}^f}=\text{Jump}_{\rho}$ and $\mathcal{J}_{M_{\rho}^{f-1} \oplus M_{\rho}^{*}}=\text{Jump}_{\rho}^{*}$. Next in \ref{jump set are set jump +} we examine the case of $(f,\rho)$-quasi-free modules that are not free. Given such a module $M_{\bullet}$, we know from Theorem \ref{classification of quasi free} that all we need to know to understand $M_{\bullet}$ as a filtered module is the extended jump set $(I_{M_{\bullet}},\beta_{M_{\bullet}})$. So it must be possible to predict $\mathcal{J}_{M_{\bullet}}$ from $(I_{M_{\bullet}},\beta_{M_{\bullet}})$. We achieve this in Theorem \ref{classification of characters}, where it is shown that $\mathcal{J}_{M_{\bullet}} \subseteq \text{Jump}_{\rho}^{*}$, and the missing jump sets are characterized by a combinatorial criterion involving $(I_{M_{\bullet}},\beta_{M_{\bullet}})$. 
\subsection{Set of jumps are jump set for a free module} \label{jump set are set jump}
We proceed in the same way as we did for orbits of vectors in \ref{orbits}. Clearly the set of jumps of a character does not change if we apply to it a filtered automorphism of $M_{\bullet}$. Therefore we shall take advantage of this symmetry. It turns out that, for free filtered modules, knowing the set of jumps of a character $\chi$ is \emph{equivalent} to knowing to which orbit $\chi$ belongs (under the action of the group of filtered automorphisms). 
\begin{definition}
(a) Let $\chi$ be a character of $M_{\rho}^f$ (resp.\ of $M_{\rho}^{f-1}\oplus M_{\rho}^{*}$). Denote by $A_{\chi}$ the set of $i$ in $T_{\rho}$ (resp.\ $T_{\rho}^{*}$), such that $\chi(\text{proj}_{i}) \neq \{0\}$, where $\text{proj}_{i}$ denotes the projection on $S_i^{f}$ (resp.\ the same if $i \in T_{\rho}^{*}$, where the projection is on $S_{e_{\rho}^{*}}$ for the last coordinate). 

(b) For $a$ in $A_{\chi}$, denote by $b_{\chi}(a)=\text{min} \{r \in \mathbb{Z}_{\geq 1}: \pi_R^{r}\chi(\text{proj}_{a})= \{0\}\}$.
\end{definition}
We next show that, after applying a suitable filtered automorphism, one can make the pair $(A_{\chi},b_{\chi})$ a jump set (resp.\ an extended jump set). Recall the notation $(A_{\chi}^{+},b_{\chi}^{+})$ introduced in Proposition \ref{jump set attached to a function}.
\begin{proposition} \label{reduction for characters}
Let $\chi$ be a character of $M_{\rho}^f$ (resp.\ of $M_{\rho}^{f-1} \oplus M_{\rho}^{*}$). Then there exists $\theta \in \emph{Aut}_{\emph{filt}}(M_{\rho}^f)$ (resp.\ $\emph{Aut}_{\emph{filt}}(M_{\rho}^{f-1}\oplus M_{\rho}^{*})$) such that $(A_{\chi \circ \theta},b_{\chi \circ \theta})=(A_{\chi}^{+},b_{\chi}^{+})$. In particular $(A_{\chi \circ\theta},b_{\chi \circ \theta})$ is a $\rho$-jump set (resp.\ an extended $\rho$-jump set). 
\begin{proof}
The structure of the proof is the same as the one given for Proposition \ref{reduction process}, we just mention some differences. Just as in that proof, as a first step we can assume $\chi$ is a character vanishing on the factor $M_{\rho}^{f-1} \oplus 0$ and, as a character of the factor $M_{\rho}$ (resp.\ $M_{\rho}^{*}$), it is defined as follows. If $i \not \in A_{\chi}$, then we have  $\chi_{|S_i}=0$. If $i \in A_{\chi}$, we have $\chi_{|S_i}(1)=\pi_{R}^{-b_{\chi}(i)}$. Next if for two points $(i,b_{\chi}(i),(j,b_{\chi}(j))$ in $\text{Graph}(b_{\chi})$ we have $(i,b_{\chi}(i)<_{\rho}(j,b_{\chi}(j))$, it follows that the transformation $\theta_{i,j}$, introduced in the proof of Proposition \ref{reduction process}, is filtered. Now, the only difference with that proof, is that the effect of applying $\theta_{i,j}$ is to erase the smaller point, namely $(i,b_{\chi}(i))$. Indeed the character $\chi \circ \theta_{i,j}$  will send to $0$ all the factors $S_a$ with $a \not \in A_{\chi}$, and it will be $0$, additionally also on $S_i$. On the other hand, on all the other factors $S_a$, with $a \in A_{\chi}-\{i\}$ it coincides with $\chi$. Thus by repeatedly applying this type of transformation the sequence of filtered automorphism so produced converges to a filtered automorphism $\theta$ with $(A_{\chi \circ \theta},b_{\chi \circ \theta})=(A_{\chi}^{+},b_{\chi}^{+})$, concluding the proof.
\end{proof}
\end{proposition}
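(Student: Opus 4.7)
The plan is to adapt the proof of Proposition \ref{reduction process} to the dual setting of characters: instead of eliminating non-$\leq_\rho$-minimal points of a graph, the elementary moves will eliminate non-$\leq_\rho$-maximal points. I will write the argument for $M_\rho^f$; the case of $M_\rho^{f-1} \oplus M_\rho^*$ is notationally identical once the extra factor $S_{e_\rho^*}$ is treated just like the other $S_i$'s. As a preliminary normalization, I would exploit that within each block $S_i^f$ all $f$ summands carry the same weight function, so every $R$-linear automorphism of $S_i^f \simeq R^f$ is automatically filtered, and block-diagonal combinations yield filtered automorphisms of $M_\rho^f$. Since the image of $\chi|_{S_i^f}\colon R^f \to Q(R)/R$ is a cyclic $R$-module of order $\pi_R^{b_\chi(i)}$, a standard change of basis puts $\chi|_{S_i^f}$ into the form $(x_1,\dots,x_f) \mapsto x_1 \pi_R^{-b_\chi(i)} \bmod R$. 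Neither $A_\chi$ nor $b_\chi$ is affected by this, so after composing the block automorphisms into a single $\theta_0$ we may assume $\chi$ vanishes on $M_\rho^{f-1} \oplus 0$ and that on the remaining $M_\rho$-factor it sends $1 \in S_i$ to $\pi_R^{-b_\chi(i)} \bmod R$ for every $i \in A_\chi$.

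The heart of the argument is the iteration of the elementary transformations $\theta_{i,j}$ from Proposition \ref{reduction process}, each of which fixes every coordinate other than the $j$-th and sends $x_j$ to $x_j - \pi_R^{b_\chi(j)-b_\chi(i)} x_i$. The hypothesis $(i, b_\chi(i)) <_\rho (j, b_\chi(j))$ unpacks to $b_\chi(j) \geq b_\chi(i)$ together with $\rho^{b_\chi(j)}(j) \geq \rho^{b_\chi(i)}(i)$, which is exactly what is needed for multiplication by $\pi_R^{b_\chi(j)-b_\chi(i)}$ to be a filtered map $S_i \to S_j$; hence $\theta_{i,j}$ is a filtered automorphism. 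A direct calculation, using that $\chi|_{S_j}(y) = y \pi_R^{-b_\chi(j)} \bmod R$, shows that the extra term introduced in the $j$-th slot contributes $-x_i \pi_R^{-b_\chi(i)} \bmod R$, which exactly cancels $\chi|_{S_i}(x_i)$. Thus $\chi \circ \theta_{i,j}$ agrees with $\chi$ on every $S_h$ with $h \neq i$ and is zero on $S_i$, so $A_{\chi \circ \theta_{i,j}} = A_\chi \setminus \{i\}$ and $b_{\chi \circ \theta_{i,j}} = b_\chi|_{A_\chi \setminus \{i\}}$; the elementary move erases the $<_\rho$-smaller point of the chosen pair.

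Finally, continuity of $\chi$ into the discrete group $Q(R)/R$ forces its kernel to be open and hence to contain some $M_k$, so $A_\chi$ is finite and only finitely many elementary moves are needed. The iteration terminates exactly when no $<_\rho$-comparable pair remains in the graph, i.e.\ when the data is precisely $\mathrm{Max}(A_\chi, b_\chi) = \mathrm{Graph}(\beta^+_{(A_\chi, b_\chi)})$; by Proposition \ref{jump set attached to a function} this is a $\rho$-jump set (resp.\ extended $\rho$-jump set), and the composition of the $\theta_{i,j}$'s with the preliminary $\theta_0$ is the desired $\theta$. The main delicate point is twofold: checking that $\theta_{i,j}$ is filtered precisely when $(i, b_\chi(i)) <_\rho (j, b_\chi(j))$, and verifying that applying it has the claimed effect on $\chi$. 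Both are straightforward manipulations of the weight function $w_{S_h}(x) = \rho^{\mathrm{ord}_R(x)}(h)$ in the two factors $S_i$ and $S_j$.
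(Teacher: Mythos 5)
Your proof is correct and follows essentially the same route as the paper's: a preliminary block-diagonal normalization reducing to a character supported on the first coordinate of each $S_i^f$-block, followed by iterated elementary filtered transvections $\theta_{i,j}$ that erase the $<_\rho$-smaller point of a comparable pair until only $\mathrm{Max}(A_\chi,b_\chi)$ survives. One small thing you do more carefully than the paper: you observe that continuity of $\chi$ into the discrete module $Q(R)/R$ forces $A_\chi$ to be finite, so only finitely many elementary moves are needed and there is no convergence issue to address (the paper instead invokes convergence of an a priori infinite sequence of automorphisms, which here is unnecessary precaution); this is a genuine simplification over the written argument, though of course not over its substance.
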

We now show that if $(A_{\chi},b_{\chi})$ is a $\rho$-jump set (resp.\ an extended $\rho$-jump set), then, if viewed as a subset of $\mathbb{Z}_{\geq 1}$, it is the set of jumps of $\chi$. 
\begin{proposition} \label{when characters are reduced}
Let $\chi$ be a character of $M_{\rho}^f$ (resp.\ of $M_{\rho}^{f-1} \oplus M_{\rho}^{*}$), such that $(A_{\chi},b_{\chi})$ is a jump set (resp. an extended jump set). Then $J_{\chi}=J_{(A_{\chi},b_{\chi})}$.
\begin{proof}
For a general $\chi$ we have the following formula
$$ \text{ord}(\chi(M_{\rho}^f)_{i})=\text{max}(\{b_{\chi}(j)-v_{\rho}(j,i)\}_{j \in T_{\rho}}),
$$
where for $i \in \mathbb{Z}_{\geq 1}$ and $j \in T_{\rho}$ (resp.\ $T_{\rho}^{*}$) we have that $v_{\rho}(j,i)=\text{min}(\{s \in \mathbb{Z}_{\geq 0}: \rho^{s}(j) \geq i\})$ (respectively we have the formula
$$\text{ord}(\chi(M_{\rho}^f)_{i})=\text{max}(\{b_{\chi}(j)-v_{\rho}(j,i)\}_{j \in T_{\rho}^{*}})).
$$

Since $(A_{\chi},b_{\chi})$ is a jump set (resp.\ an extended jump set), it is visible from the definition that the right hand side, as a function of $i$, changes value precisely in the set $J_{(A_{\chi},b_{\chi})}$, which is precisely giving the desired identity $J_{(A_{\chi},b_{\chi})}=J_{\chi}$.
\end{proof}
\end{proposition}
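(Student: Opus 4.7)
The plan is to compute the order of $\chi(M_i)$ as a cyclic subgroup of $Q(R)/R$ explicitly, and then verify combinatorially, using the jump-set axioms, that this order function drops precisely at the positions in $J_{(A_\chi, b_\chi)}$.

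First, I would exploit the product decomposition $M_\rho^f \simeq \prod_{j \in T_\rho} S_j^f$ (with an additional factor $S_{e_\rho^*}$ in the extended case). Setting $v_\rho(j,i) := \min\{s \geq 0 : \rho^s(j) \geq i\}$, one has $(S_j)_i = \pi_R^{v_\rho(j,i)} R$, so $\chi(M_i)$ is the $R$-submodule of $Q(R)/R$ generated by the images $\pi_R^{v_\rho(j,i)} \chi(S_j^f)$ as $j$ ranges over $A_\chi$. Since every finitely generated submodule of $Q(R)/R$ is cyclic, the order of a sum equals the maximum of the orders of its generators, so
\[
h(i) := \operatorname{ord}(\chi(M_i)) \;=\; \max_{j \in A_\chi}\, \max\bigl(0,\, b_\chi(j) - v_\rho(j,i)\bigr),
\]
and a positive integer $i$ is a jump of $\chi$ precisely when $h(i) > h(i+1)$.

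For each fixed $j \in A_\chi$, the step function $i \mapsto \max(0, b_\chi(j) - v_\rho(j,i))$ decreases by exactly one as $i$ crosses each of $j, \rho(j), \ldots, \rho^{b_\chi(j)-1}(j)$, so $j$ contributes a potential staircase of $b_\chi(j)$ drops. Writing $I = A_\chi = \{i_1 < i_2 < \cdots < i_k\}$ and $\beta = b_\chi$, I would argue by induction on $\ell$ that on the $\ell$-th block the maximum in the formula for $h$ is attained by the single term $j = i_\ell$, and that this block contributes precisely the jump points $\{\rho^n(i_\ell) : 0 \le n < \beta(i_\ell) - \beta(i_{\ell+1})\}$, with the convention $\beta(i_{k+1}) := 0$. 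Their disjoint union is $A_{(I,\beta)} = J_{(I,\beta)}$ by the very definition of the set associated with a jump set, and the total drop count $\sum_\ell \bigl(\beta(i_\ell) - \beta(i_{\ell+1})\bigr) = \beta(i_1) = h(1)$ accounts for every drop of $h$, so no jumps are missed.

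The main obstacle is verifying the non-overlap between consecutive blocks: after the $i_\ell$-staircase has exhausted its drops, the $i_{\ell+1}$-staircase must not yet have begun to contribute. Concretely this requires $\rho^{\beta(i_\ell) - \beta(i_{\ell+1})}(i_\ell) < i_{\ell+1}$, which, by applying $\rho^{\beta(i_{\ell+1})}$ to both sides, is equivalent to $\rho^{\beta(i_\ell)}(i_\ell) < \rho^{\beta(i_{\ell+1})}(i_{\ell+1})$. This is exactly the third jump-set axiom (strict monotonicity of $i \mapsto \rho^{\beta(i)}(i)$ on $I$), while the second axiom (strict monotonicity of $\beta$) ensures each block length $\beta(i_\ell) - \beta(i_{\ell+1})$ is positive so every element of $I$ does contribute. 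Everything else reduces to routine bookkeeping once this combinatorial step is in place.
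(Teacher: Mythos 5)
Your proof is correct and follows essentially the same route as the paper's: you derive the identity $\operatorname{ord}(\chi(M_i)) = \max_{j}\bigl(b_\chi(j) - v_\rho(j,i)\bigr)$ from the product decomposition of the free filtered module, and then read off the drop set of this step function using the jump-set axioms. The paper states this last combinatorial step tersely (``it is visible from the definition''); your block-by-block verification, including the non-overlap inequality $\rho^{\beta(i_\ell)-\beta(i_{\ell+1})}(i_\ell) < i_{\ell+1}$ obtained from strict monotonicity of $i \mapsto \rho^{\beta(i)}(i)$, is a correct and welcome unpacking of exactly that step, not a different argument.
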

So for two characters of $M_{\rho}^f$ or $M_{\rho}^{f-1} \oplus M_{\rho}^{*}$ the equivalence relation ``having the same set of jumps" and ``being in the same filtered orbit" are precisely the same relation and one obtains the following fact. 
\begin{theorem} \label{character for free modules}
Let $\rho$ be a shift map, and let $f$ be a positive integer. We have that $\mathcal{J}_{M_{\rho}^f}=\emph{Jump}_{\rho}$, and if $T_{\rho}$ is finite, then $\mathcal{J}_{M_{\rho}^{f-1} \oplus M_{\rho}^{*}}=\emph{Jump}_{\rho}^{*}$.
\end{theorem}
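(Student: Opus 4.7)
The plan is to chain Propositions \ref{reduction for characters} and \ref{when characters are reduced} together to obtain both equalities. The first proposition tells us that every filtered automorphism orbit of characters contains a representative $\chi$ such that $(A_\chi, b_\chi)$ is already a $\rho$-jump set (resp.\ an extended $\rho$-jump set); the second tells us that for such a normalized representative the set of jumps $J_\chi$ agrees, under the bijection of Proposition \ref{equivalent data}, with the underlying subset of $\mathbb{Z}_{\geq 1}$ of the jump set $(A_\chi, b_\chi)$. Together these two facts make both set-theoretic containments nearly automatic.

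For the inclusion $\mathcal{J}_{M_\rho^f} \subseteq \mathrm{Jump}_\rho$ I would start with an arbitrary character $\chi$ of $M_\rho^f$ and apply Proposition \ref{reduction for characters} to produce $\theta \in \mathrm{Aut}_{\mathrm{filt}}(M_\rho^f)$ with $(A_{\chi \circ \theta}, b_{\chi \circ \theta})$ a $\rho$-jump set. Since $\theta$ is a filtered automorphism it stabilizes each $(M_\rho^f)_j$ setwise, so the equality $J_\chi = J_{\chi \circ \theta}$ is immediate; Proposition \ref{when characters are reduced} then identifies the right-hand side with the underlying subset of $\mathbb{Z}_{\geq 1}$ of the jump set $(A_{\chi \circ \theta}, b_{\chi \circ \theta})$, placing $J_\chi$ in $\mathrm{Jump}_\rho$.

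For the reverse inclusion $\mathrm{Jump}_\rho \subseteq \mathcal{J}_{M_\rho^f}$ I would exhibit, for each $(I,\beta)$, an explicit realizing character. On each factor $S_i^f$ with $i \in T_\rho$ I set $\chi_i = 0$ if $i \notin I$ and, if $i \in I$, let $\chi_i(x_1,\ldots,x_f) = x_1 \pi_R^{-\beta(i)} \bmod R$; then I put $\chi = \sum_{i \in I} \chi_i \circ \mathrm{proj}_i$, a finite sum since $I$ is finite. Continuity amounts to checking that $\ker \chi$ contains some $(M_\rho^f)_j$, and one verifies directly that $j = \max_{i \in I} \rho^{\beta(i)}(i) + 1$ works: if $\rho^{\mathrm{ord}_R(x_i)}(i) \geq j$ then strict monotonicity of $\rho$ forces $\mathrm{ord}_R(x_i^{(1)}) \geq \beta(i)$, hence $\chi_i(x_i) = 0$. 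By construction $(A_\chi, b_\chi) = (I,\beta)$, which is a jump set by hypothesis, so Proposition \ref{when characters are reduced} delivers $J_\chi = (I,\beta)$.

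The second equality, concerning $M_\rho^{f-1} \oplus M_\rho^*$ and $\mathrm{Jump}_\rho^*$, is obtained by exactly the same argument, the sole modification being that $e_\rho^*$ may appear in $I$, and one then places the character $\chi_{e_\rho^*}(x) = x \pi_R^{-\beta(e_\rho^*)} \bmod R$ on the distinguished factor $S_{e_\rho^*}$. The substantive work has already taken place in Propositions \ref{reduction for characters} and \ref{when characters are reduced}; at the level of the theorem itself the only point requiring care is the continuity check in the reverse inclusion when $T_\rho$ is infinite, and this is handled immediately by the finiteness of the support $I$ of any jump set.
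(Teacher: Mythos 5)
Your proof is correct and follows the same route the paper takes: the paper deduces the theorem directly from Propositions \ref{reduction for characters} and \ref{when characters are reduced} (with the realizing character for the reverse inclusion left implicit), and you have simply written out that realizing character and its continuity check explicitly. No gap.
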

The similarity with Theorem \ref{j.s.param.orbit} is noteworthy: in both cases jump sets parametrize orbits.

\subsection{Sets of jumps for a quasi-free module} \label{jump set are set jump +}
Let $\rho$ be a shift map with $T_{\rho}$ finite. Let $f$ be a positive integer. Let $M_{\bullet}$ be a $(f,\rho)$-quasi-free module that is not free. Then from Theorem \ref{classification of quasi free} we know that the knowledge of $M_{\bullet}$ as a filtered module is equivalent to the knowledge of the extended $\rho$-jump set $(I_{M_{\bullet}},\beta_{M_{\bullet}})$. So the invariant $\mathcal{J}_{M_{\bullet}}$ is completely determined once we know $(I_{M_{\bullet}},\beta_{M_{\bullet}})$. Here we explain how. 
We know from Proposition \ref{presentations} that $M_{\bullet}$ admits a presentation $\phi:M_{\rho}^{f-1} \oplus M_{\rho}^{*} \to M_{\bullet}$, with $\text{coker}(F_i(\phi))=0$ for every positive integer $i$, so from Proposition \ref{a lot surjective} we know that $\phi_{|(M_{\rho}^{f-1} \oplus M_{\rho}^{*})_i}$ is a map onto $M_i$ for each positive integer $i$. It follows that given a character $\chi$ of $M_{\bullet}$, the induced character on $M_{\rho}^{f-1} \oplus M_{\rho}^{*}$ obtained by post-composition to $\phi$, has the same set of jumps of $\chi$. So together with Theorem \ref{character for free modules} we obtain:
\begin{proposition} \label{set of jump for quasi free are jump set}
Let $M_{\bullet}$ be a $(f,\rho)$-quasi-free module. Then $\mathcal{J}_{M_{\bullet}} \subseteq \emph{Jump}_{\rho}^{*}$.
\end{proposition}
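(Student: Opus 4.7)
The plan is essentially to pull back characters to the free cover and invoke the already-established free case, namely Theorem \ref{character for free modules}. Concretely, let $\chi$ be a character of $M_{\bullet}$. By Proposition \ref{presentations} there exists a filtered epimorphism
$$\phi \colon M_{\rho}^{f-1} \oplus M_{\rho}^{*} \twoheadrightarrow M_{\bullet}$$
such that $F_i(\phi)$ is surjective for every positive integer $i$ (in the free case, i.e.\ when $T_\rho$ is infinite, one uses a free presentation $M_\rho^f \twoheadrightarrow M_\bullet$ in an analogous way, and $\emph{Jump}_\rho^{*}$ is read as $\emph{Jump}_\rho$). Form the pulled-back character $\widetilde{\chi} := \chi \circ \phi$, which is a character of $M_{\rho}^{f-1} \oplus M_{\rho}^{*}$ (note it is continuous as the composition of continuous maps, and the source is complete, so this is well defined).

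Next I would show that $J_{\widetilde{\chi}} = J_{\chi}$. The decisive input is Proposition \ref{a lot surjective}: since $F_i(\phi)$ is surjective for every $i$ and the source is complete, the restriction $\phi_{|(M_{\rho}^{f-1} \oplus M_{\rho}^{*})_i}$ lands onto $M_i$ for every positive integer $i$. Consequently, for each $i \geq 1$,
$$\widetilde{\chi}\bigl((M_{\rho}^{f-1} \oplus M_{\rho}^{*})_i\bigr) = \chi\bigl(\phi((M_{\rho}^{f-1} \oplus M_{\rho}^{*})_i)\bigr) = \chi(M_i),$$
so the sequence of subgroups $\chi(M_i) \subseteq Q(R)/R$ coincides with the sequence $\widetilde{\chi}((M_{\rho}^{f-1} \oplus M_{\rho}^{*})_i)$. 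In particular the two characters record exactly the same set of indices at which the image strictly drops, giving $J_{\widetilde{\chi}} = J_{\chi}$.

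Finally, $\widetilde{\chi}$ is a character of a free filtered module, so by Theorem \ref{character for free modules} we have $J_{\widetilde{\chi}} \in \emph{Jump}_{\rho}^{*}$. Combining with $J_{\chi} = J_{\widetilde{\chi}}$ yields $J_{\chi} \in \emph{Jump}_{\rho}^{*}$, as required.

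The only step requiring care is the verification that $\widetilde{\chi}$ and $\chi$ have the same jump set, because this uses the full strength of Proposition \ref{a lot surjective} rather than mere surjectivity of $\phi$; one really needs surjectivity at each level of the filtration. Everything else is bookkeeping: the existence of the filtered cover is provided by Proposition \ref{presentations}, and the classification of jump sets on the cover is Theorem \ref{character for free modules}. The free case ($T_{\rho}$ infinite) runs identically, replacing $M_{\rho}^{f-1} \oplus M_{\rho}^{*}$ by $M_{\rho}^{f}$ and $\emph{Jump}_{\rho}^{*}$ by $\emph{Jump}_{\rho}$.
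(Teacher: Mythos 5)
Your proof is correct and follows essentially the same route as the paper: pull back the character along the filtered presentation $\phi\colon M_{\rho}^{f-1}\oplus M_{\rho}^{*}\twoheadrightarrow M_{\bullet}$ from Proposition \ref{presentations}, invoke Proposition \ref{a lot surjective} to get level-by-level surjectivity and hence equality of jump sets, and then apply Theorem \ref{character for free modules} to the free module. The only minor caveat is that the proposition is stated in the context where $T_{\rho}$ is finite (so $\mathrm{Jump}_\rho^*$ makes sense), and the parenthetical remark about $T_\rho$ infinite is not needed here; also, the free case when $T_\rho$ is finite is handled directly by Theorem \ref{character for free modules} together with the inclusion $\mathrm{Jump}_\rho\subseteq\mathrm{Jump}_\rho^*$, since Proposition \ref{presentations} is formulated for quasi-free modules that are not free.
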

Thus we see that to characterize which elements of $\text{Jump}_{\rho}^{*}$ belongs to $\mathcal{J}_{M_{\bullet}}$ we need to see which jump sets are ruled out when on a character $\chi$ of $M_{\rho}^{f-1} \oplus M_{\rho}^{*}$ we impose the condition $\chi(v_{(I_{M_{\bullet}},\beta_{M_{\bullet}})})=0$. The following simple lemma will be relevant to this end. For $x \in Q(R)/R$ we denote by $\text{ord}(x)$ the smallest non-negative integer $n$ such that $\pi_R^nx=0$. Equivalently we can say that $\text{ord}(x)$ is the unique non-negative integer such that $Rx$ is isomorphic to $R/\pi_R^nR$ as $R$-modules. 
\begin{lemma}{\label{units in a killer vector}}
Let $n$ be a positive integer and $(v_1,\ldots ,v_n) \in (Q(R)/R)^n$. Write $Y:=\{i \in \{1,\ldots ,n\}:0<\emph{ord}(v_i), \emph{ord}(v_i)=\emph{max}\{\emph{ord}(v_j), \ j \in \{1,\ldots ,n\}\}\}$. Then the following hold: 

\emph{(a)} Assume $|R/m_R| \neq 2$. Then there exists a vector $(a_1,\ldots ,a_n) \in (R^{*})^n$ such that $\sum_{i=1}^{n}a_iv_i=0$ if and only if $|Y|\neq 1$. 

\emph{(b)} Assume $|R/m_R|=2$. Then there exists a vector $(a_1,\ldots ,a_n) \in (R^{*})^n$ such that $\sum_{i=1}^{n}a_iv_i=0$ if and only if $|Y| \equiv 0 \ mod \ 2$.  
\begin{proof}
(a) Assume $|Y| \neq 1$. We can assume $|Y| \neq 0$ because otherwise $(v_1,\ldots ,v_n)$ is the zero vector and any $(a_1,\ldots ,a_n) \in (R^{*})^n$ would prove the conclusion. Since $|R/m_R| \neq 2$ we can find $\lambda \in R^{*}$ such that $\lambda \not \equiv 1 \ \text{mod} \ m_R$. Now pick $i,j$ distinct elements of $Y$, and observe that at least one of the following two hold: \\
1) $\text{ord}(v_i)=\text{ord}(v_j+ \sum_{h \not \in \{i,j\}}v_h)$. \\
2) $\text{ord}(v_i)=\text{ord}(\lambda v_j+ \sum_{h \not \in \{i,j\}}v_h)$. \\
In each case, 1) and 2), we can find $\mu \in R^{*}$ such that, respectively $\mu v_i=v_j+ \sum_{h \not \in \{i,j\}}v_h$, or $\mu v_i=\lambda v_j+ \sum_{h \not \in \{i,j\}}v_h$. In each of the two cases we obtain the desired conclusion. Conversely assume that there exists a vector $(a_1,\ldots ,a_n) \in (R^{*})^n$ such that $\sum_{i=1}^{n}a_iv_i=0$. Suppose that $|Y|=1$, call $k$ its unique element: then we have $\text{ord}(v_k)=\text{ord}(\sum_{i=1}^{n}a_iv_i)=0$, contradicting the definition of $Y$. 

(b) Assume $|Y| \equiv 0 \ \text{mod} \ 2$. We can assume $|Y| \neq 0$ because otherwise $(v_1,\ldots ,v_n)$ is the zero vector and any $(a_1,\ldots ,a_n) \in (R^{*})^n$ would prove the conclusion. So pick $i \in Y$. Then observe that, since $|Y - \{i\}| \equiv 1 \ \text{mod} \ 2$ and $|R/m_R|=2$, we have that $\text{ord}(v_i)=\text{ord}(\sum_{h \neq i}v_h)$. Thus, it follows that there exists $\mu \in R^{*}$ such that $\mu v_i=\sum_{h \neq i}v_h$, which is the desired conclusion. Conversely assume there exists a vector $(a_1,\ldots ,a_n) \in (R^{*})^n$ such that $\sum_{i=1}^{n}a_iv_i=0$. Suppose that $|Y| \equiv 1 \ \text{mod} \ 2$. Then pick $k \in Y$ and observe that, since $|R/m_R|=2$, we have $\text{ord}(v_k)=\text{ord}(\sum_{i=1}^{n}a_iv_i)=0$ contradicting the definition of $Y$.
\end{proof}
\end{lemma}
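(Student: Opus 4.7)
The plan is to reduce the existence of such a unit relation to a linear condition in the residue field $k := R/m_R$. Let $N := \max_j \mathrm{ord}(v_j)$; if $N=0$ all $v_i$ vanish and the conclusion is trivial, so assume $N>0$. Multiplying a putative relation $\sum_i a_i v_i = 0$ through by $\pi_R^{N-1}$ annihilates every $v_i$ with $i \notin Y$ and, via the identification $\pi_R^{-1}R/R \cong k$, sends each $v_i$ with $i \in Y$ to the nonzero class $\bar{u_i} \in k^{*}$, where $v_i = \pi_R^{-N} u_i + R$ with $u_i \in R^{*}$. The relation therefore collapses to $\sum_{i \in Y} \bar{a_i}\,\bar{u_i} = 0$ in $k$. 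This immediately proves the ``only if'' directions in both parts: in (a), $|Y|=1$ would force the vanishing in $k$ of a single product of two units; in (b), the only unit of $\mathbb{F}_2$ is $1$, so the sum equals $|Y| \pmod 2$.

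For the converse of (a), the case $|Y|=0$ is trivial, so assume $|Y|\geq 2$ and fix distinct $i,j \in Y$. Using $|k|>2$, choose $\lambda \in R^{*}$ with $\bar\lambda \neq 1$ and form
\[
w_1 := v_j + \sum_{h \notin \{i,j\}} v_h, \qquad w_2 := \lambda v_j + \sum_{h \notin \{i,j\}} v_h.
\]
Since $w_2 - w_1 = (\lambda-1)v_j$ has order exactly $N$, at least one $w_\epsilon$ satisfies $\mathrm{ord}(w_\epsilon) = N = \mathrm{ord}(v_i)$. Now $R^{*}$ acts transitively on the set of elements of $Q(R)/R$ of any fixed positive order (each such element being of the form $\pi_R^{-N}\cdot\text{unit}+R$), so there exists $\mu \in R^{*}$ with $v_i = \mu w_\epsilon$, which after rearrangement is precisely the desired relation with every coefficient a unit.

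For the converse of (b), where the $\lambda$-trick is unavailable since every unit of $R$ reduces to $1$, the parity hypothesis itself provides the cancellation. Assuming $|Y|$ positive and even, pick any $i \in Y$ and compute the image of $\sum_{h \neq i} v_h$ under $\pi_R^{N-1}$ inside $\pi_R^{-1}R/R \cong \mathbb{F}_2$: only indices $h \in Y\setminus\{i\}$ contribute, each giving $\bar{u_h}=1$, so the image equals $|Y|-1 \equiv 1 \pmod 2$, hence is nonzero. Thus $\sum_{h\neq i} v_h$ has order exactly $N$, and the same transitivity argument supplies the required $\mu \in R^{*}$ with $v_i = \mu \sum_{h\neq i} v_h$. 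The main obstacle throughout is the converse of (a) with $|Y|\geq 2$: a single naive combination $v_j+\sum_{h\notin\{i,j\}}v_h$ might collapse to strictly smaller order in ways one cannot control a priori, and it is precisely the auxiliary parameter $\lambda$ with $\bar\lambda \neq 1$, available only when $|k|>2$, that forces at least one of two parallel candidates to retain the maximal order $N$.
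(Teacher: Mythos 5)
Your proof is correct and takes essentially the same approach as the paper's: the converse directions rely on the identical two-candidate trick with an auxiliary unit $\lambda$ of nontrivial residue (part (a)) and the parity count (part (b)), together with the transitivity of $R^{*}$ on elements of $Q(R)/R$ of fixed positive order, and the forward directions rest on the same observation that the top-order contributions from indices in $Y$ must cancel. Your packaging of the forward direction as a computation in the residue field after multiplying through by $\pi_R^{N-1}$ is a slightly more explicit form of the paper's direct order estimate, but the underlying argument is identical.
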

We can now give a criterion to decide if an extended jump set $(I,\beta)$ is realizable as a set of jumps of a character of $M_{\bullet}$. Such a criterion consists in a combinatorial comparison between $(I,\beta)$ and $(I_{M_{\bullet}},\beta_{M_{\bullet}})$. The precise condition for $(I,\beta)$ to be ruled out are conditions 2.1) and 2.2) of the following theorem (in case (a) and (b) respectively). 
\begin{theorem}\label{classification of characters}
Let $f$ be a positive integer and let $\rho$ be a shift. Let $M_{\bullet}$ be a $(f,\rho)$-quasi-free filtered $R$-module that is not free. Let $(I,\beta) \in Jump_{\rho}^{*}$. Define $\emph{Max}((I,\beta),(I_{M_{\bullet}},\beta_{M_{\bullet}})):=\{i \in I \cap I_{M_{\bullet}}:\beta(i)-\beta_{M_{\bullet}}(i)>0 \wedge \forall j \in I \cap I_{M_{\bullet}}, \ \beta(i)-\beta_{M_{\bullet}}(i) \geq \beta(j)-\beta_{M_{\bullet}}(j) \}$. In what follows we denote by $\emph{Max}:=\emph{Max}((I,\beta),(I_{M_{\bullet}},\beta_{M_{\bullet}})$.

\emph{(a)} Suppose $|R/m_R| \neq 2$. Then one has that $(I,\beta) \not \in \mathcal{J}_{M_{\bullet}}$ if and only if the following two conditions are satisfied:\\
\emph{(a.1)} $|\emph{Max}|=1$ and if $f>1$ then $\emph{Max}=\{e_{\rho}^{*}\}$.  \\
\emph{(a.2)} Let $j$  be the unique element of $\emph{Max}_{M_{\bullet}}((I,\beta))$. For every $i \in I_{M_{\bullet}}-I$, the point $(i,\beta(j)-\beta_{M_{\bullet}}(j)+\beta_{M_{\bullet}}(i))$ is maximal in $\emph{Graph}(\beta) \cup \{(i,\beta(j)-\beta_{M_{\bullet}}(j)+\beta_{M_{\bullet}}(i))\}$, with respect to the ordering $\leq_{\rho}$. 

\emph{(b)} Suppose $|R/m_R|=2$. Then $(I,\beta) \not \in \mathcal{J}_{M_{\bullet}}$ if and only if the following two conditions are satisfied: \\
\emph{(b.1)} $|\emph{Max}| \equiv 1 \ \emph{mod} \ 2$ and if $f>1$ then $\emph{Max}=\{e_{\rho}^{*}\}$.  \\
\emph{(b.2)} Let $j$ be any element of $\emph{Max}$. For every $i \in I_{M_{\bullet}}-I$, the point $(i,\beta(j)-\beta_{M_{\bullet}}(j)+\beta(i))$ is maximal in $\emph{Graph}(\beta) \cup \{(i,\beta(j)-\beta_{M_{\bullet}}(j)+\beta_{M_{\bullet}}(i))\}$, with respect to the ordering $\leq_{\rho}$. 
\begin{proof}
(a) Denote by $\{b_{i,j}: i \in T_{\rho}, \ j \in \{1, \ldots , f\} \} \cup \{b_{e_{\rho}^{*},1} \}$ the standard filtered basis for $M_{\rho}^{f-1} \oplus M_{\rho}^{*}$). With this notation we have that
$$v_{(I_{M_{\bullet}},\beta_{M_{\bullet}})}=\sum_{i \in I} \pi_R^{\beta_{M_{\bullet}}(i)}b_{i,1}.
$$  
We divide the proof in 9 elementary steps.

1) We fix a presentation $\phi:M_{\rho}^{f-1} \oplus {M}_{\rho}^{*} \to M_{\bullet}$ as in Proposition \ref{presentations}, with $\text{ker}(\phi)=Rv_{(I_{M_{\bullet}},\beta_{M_{\bullet}})}$ as in Theorem \ref{classification of quasi free}. 

2) The task of realizing $(I,\beta)$ from a character is equivalent to the task of finding a $\chi:M_{\rho}^{f-1} \oplus {M}_{\rho}^{*} \to Q(R)/R$ such that $(I_{\chi},\beta_{\chi})=(I,\beta)$, and $\sum_{i \in I_{M_{\bullet}}}\pi_R^{\beta_{M_{\bullet}}(i)}\chi(b_{i,1})=0$. 

3) Suppose that $I \cap I_{M_{\bullet}}$ is either empty or that $\beta-\beta_{M_{\bullet}}$ does not assume a strictly positive maximum on $I \cap I_{M_{\bullet}}$. We claim that then task 2) is realizable. Indeed thanks to Lemma \ref{units in a killer vector} part (a), we can find for each $i \in I \cap I_{M_{\bullet}}$ a unit $\epsilon_i \in R^{*}$ with the property that 
$$\sum_{i \in I \cap I_{M_{\bullet}}}\frac{\epsilon_i}{\pi_R^{\beta(i)-\beta_{M_{\bullet}}(i)}}=0.
$$
Therefore we can realize task 2) with the following character $\chi$. For $i \in I \cap I_{M_{\bullet}}$ we put $\chi(b_{1,i}):=\frac{\epsilon_i}{\pi_R^{\beta(i)}}$. For $i \in I-I_{M_{\bullet}}$ we put $\chi(b_{1,i}):=\frac{1}{\pi_R^{\beta(i)}}$. For any $(i,h) \in T_{\rho} \times \{1, \ldots ,f \} \cup \{e_{\rho}^{*}\} \times \{1\}$ with $i \not \in I$ or $h>1$ we put $\chi(b_{i,h})=0$. With Proposition \ref{when characters are reduced} we conclude immediately that $J_{\chi}=(I,\beta)$ and we are done. So we can assume that $I \cap I_{M_{\bullet}}$ is non-empty and that $\beta-\beta_{M_{\bullet}}$ assumes a positive maximum at a unique point of $I \cap I_{M_{\bullet}}$, which we shall call $j$. 

4) Assume that $j \neq e_{\rho}^{*}$ and $f>1$. Then we proceed by distinguishing two cases. \\
4.1) There is no other $k \in I \cap I_{M_{\bullet}}$ different from $j$ such that $\beta(k)-\beta_{M_{\bullet}}(k)>0 $. Then we consider the following character $\chi$. For each $i$ in $I-\{e_{\rho}^{*}\}$ we put $\chi(b_{i,2})=\frac{1}{\pi_R^{\beta(i)}}$. If $e_{\rho}^{*} \in I$ we put $\chi(b_{e_{\rho}^{*},1})=\frac{1}{\pi_R^{\beta(e_{\rho}^{*})}}$. For all the other $(i',h)$ in $T_{\rho}^{*} \times \{1, \ldots , f\}$ we put $\chi(b_{i',h})=0$. We see that since $f>1$ and $j \neq e_{\rho}^{*}$ we trivially obtain
$$\sum_{i \in I_{M_{\bullet}}}\pi_R^{\beta_{M_{\bullet}}(i)}\chi(b_{i,1})=0.
$$ 
Hence task 2) is accomplished thanks to Proposition \ref{when characters are reduced}. So we can assume that such a $k$ exists. \\
4.2) Suppose that there exists $k' \in I \cap I_{M_{\bullet}}$ different from $j$ such that $\beta(k')-\beta_{M_{\bullet}}(k')>0$. Choose a $k$ such that $\beta(k)-\beta_{M_{\bullet}}(k) \geq \beta(k')-\beta_{M_{\bullet}}(k')$ for each $k' \in I \cap I_{M_{\bullet}}$ with $k'$ different from $j$. Next observe that thanks to Proposition \ref{units in a killer vector} part (a), we can find for each $i \in I \cap I_{M_{\bullet}}$ a unit $\epsilon_i \in R^{*}$ in such a way that
$$\sum_{i \in (I \cap I_{M_{\bullet}})-\{j\}}\frac{\epsilon_i}{\pi_R^{\beta(i)-\beta_{M_{\bullet}}(i)}}+\frac{\epsilon_j}{\pi_{R}^{\beta(k)-\beta_{M_{\bullet}}(k)}}=0.
$$
Now we proceed constructing a character $\chi$. We put $\chi(b_{j,1})=\frac{\epsilon_j}{\pi_R^{\beta(k)-\beta_{M_{\bullet}}(k)+\beta_{M_{\bullet}}(j)}}$ and $ \chi(b_{j,2})=\frac{1}{\pi_R^{\beta(j)}}$. For all $i \in (I \cap I_{M_{\bullet}})-\{j\}$ we put $\chi(b_{i,1})=\frac{\epsilon_i}{\pi_R^{\beta(i)}}$. For all $i \in I-I_{M_{\bullet}}$ we put $\chi(b_{i,1})=\frac{1}{\pi_R^{\beta(i)}}$. For all remaining vectors $b$ of the basis we put $\chi(b)=0$. Since $\beta(k)-\beta_{M_{\bullet}}(k)+\beta_{M_{\bullet}}(j) < \beta(j) $ we conclude by Proposition \ref{reduction for characters} and Proposition \ref{when characters are reduced} that $J_{\chi}=(I,\beta)$. Moreover, by construction,
$$\sum_{i \in I_{M_{\bullet}}} \pi_R^{\beta_{M_{\bullet}}(i)}\chi(b_{i,1})=0.
$$ 
Hence we have realized task 2) in this case as well.

5) Thanks to Step 1)--4) we can assume that $|\text{Max}|=1$, and that either $f=1$ or $\text{Max}=\{e_{\rho}^{*}\}$. Otherwise we have shown, in the previous steps, that we can accomplish task 2). Keep denoting by $j$ the unique point of $\text{Max}$. 

6) Assume there is $i' \in I_{M_{\bullet}}-I$ such that the point $(i',\beta(j)-\beta_{M_{\bullet}}(j)+\beta_{M_{\bullet}}(i'))$ is not maximal in $\text{Graph}(\beta) \cup \{(i',\beta(j)-\beta_{M_{\bullet}}(j)+\beta_{M_{\bullet}}(i')\}$, with respect to the ordering $\leq_{\rho}$. Then we can accomplish task 2) constructing a character $\chi$ in the following manner. Observe that, thanks to Proposition \ref{units in a killer vector} part (a), we can attach to each $i \in (I \cap I_{M_{\bullet}}) \cup \{i'\}$ a unit $\epsilon_i \in R^{*}$ in such a way that 
$$\sum_{i \in I \cap I_{M_{\bullet}}} \frac{\epsilon_i}{\pi_R^{\beta(i)-\beta_{M_{\bullet}}}} +\frac{\epsilon_{i'}}{\pi_R^{\beta(j)-\beta_{M_{\bullet}}(j)}}=0.
$$
For each $i \in I \cap I_{M_{\bullet}}$ put $\chi(b_{i,1})=\frac{\epsilon_i}{\pi_R^{\beta(i)}}$. Moreover put $\chi(b_{i',1})=\frac{1}{\pi_R^{\beta(j)-\beta_{M_{\bullet}}(j)+\beta_{M_{\bullet}}(i')}}$ and $\chi(b_{i,1})=\frac{1}{\pi_R^{\beta(i)}}$ for each $i \in I-I_{M_{\bullet}}$. By construction we obtain
$$\sum_{i \in I_{M_{\bullet}}}\pi_R^{\beta_{M_{\bullet}}(i)}\chi(b_{i,1})=0.
$$
Finally the hypothesis that the point $(i',\beta(j)-\beta_{M_{\bullet}}(j)+\beta_{M_{\bullet}}(i'))$ is not larger, with respect to $\leq_{\rho}$, than some point in $\text{Graph}(\beta)$, tells us, through Proposition \ref{reduction for characters} and Proposition \ref{when characters are reduced}, that $J_{\chi}=(I,\beta)$.

7) Steps 1)--6) prove that if (a.1) and (a.2) are not both satisfied then $(I,\beta) \in \mathcal{J}_{M_{\bullet}}$. We next proceed proving the converse implication. 

8) Observe that if a set $A\subseteq {\mathbb{Z}}^{2}$ is given, together with a point $(x,y) \in A$ that is maximal in $A$ with respect to $\leq_{\rho}$, then any point of the form $(x,\tilde{y})$ with $\tilde{y} \geq y$ is maximal in $A$, with respect to $\leq_{\rho}$. 

9) Suppose (a.1) and (a.2) both hold. Denote by $j$ the unique element of $\text{Max}$. Let $\chi$ be a character of $M_{\rho}^{f-1} \oplus M_{\rho}^{*}$ with $J_{\chi}=(I,\beta)$. We shall prove that
$$ \sum_{i \in I}\pi_R^{\beta_{M_{\bullet}}(i)}\chi(b_{i,1}) \neq 0.
$$ 
We proceed by contradiction. Suppose that $ \sum_{i \in I}\pi_R^{\beta_{M_{\bullet}}(i)}\chi(b_{i,1})=0
$. By Proposition \ref{reduction for characters} and Proposition \ref{when characters are reduced} we have that $\text{ord}(\chi(b_{j,1}))=\beta(j)$: this is clear for $f=1$ and if $f \geq 2$ we are using that in this case $j$ must be $e_{\rho}^{*}$. 
Next using Lemma \ref{units in a killer vector} part (a), we see that at least one $i \in I_{M_{\bullet}}-\{j\}$ must satisfy $\text{ord}(\chi(b_{i,1})) \geq \beta_{M_{\bullet}}(i)+\beta(j)- \beta_{M_{\bullet}}(j)$. Such an $i$ cannot be in $I$. Indeed in that case we would conclude by Proposition \ref{reduction for characters} and Proposition \ref{when characters are reduced} that $J_{\chi} \neq (I,\beta)$ since we would have $\beta(i) \geq \beta_{M_{\bullet}}(i)+\beta(j)- \beta_{M_{\bullet}}(j)$, which would contradict the defining property of $\text{Max}$. Hence it must be that $i \in I_{M_{\bullet}}-I$.  But then Step 8) together with assumption (a.2) and Proposition \ref{reduction for characters} and Proposition \ref{when characters are reduced} imply again that $\chi$ does not belong to the orbit of characters $\chi'$ having $J_{\chi'}=(I,\beta)$. This ends the proof. 

Statement (b)  can be proved by the same 9 steps of part (a) of this proof, replacing each time, part (a) of Lemma \ref{units in a killer vector} with part (b) of Lemma \ref{units in a killer vector}. 
\end{proof}
\end{theorem}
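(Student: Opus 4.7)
The plan is to reduce the question to a problem about characters of the free filtered module $M_{\rho}^{f-1} \oplus M_{\rho}^{*}$ satisfying a single linear constraint, and then to either realize or obstruct the desired set of jumps using the standard filtered basis $\{b_{i,j}\}$ together with Lemma \ref{units in a killer vector}.

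First I would set up the reduction. By Proposition \ref{presentations} together with Theorem \ref{classification of quasi free}, pick a filtered epimorphism $\phi \colon M_{\rho}^{f-1} \oplus M_{\rho}^{*} \to M_{\bullet}$ with $\ker(\phi)=R v_{(I_{M_{\bullet}},\beta_{M_{\bullet}})}$. Proposition \ref{a lot surjective} says that $\phi$ is surjective on every filtration piece, so precomposition by $\phi$ identifies characters of $M_{\bullet}$ with characters $\chi$ of the free module that vanish on $v_{(I_{M_{\bullet}},\beta_{M_{\bullet}})}$, and it preserves the set of jumps. Thus $(I,\beta)\in \mathcal{J}_{M_{\bullet}}$ if and only if there exists $\chi$ on $M_{\rho}^{f-1}\oplus M_{\rho}^{*}$ with $J_\chi=(I,\beta)$ and $\sum_{i\in I_{M_{\bullet}}} \pi_R^{\beta_{M_{\bullet}}(i)}\chi(b_{i,1})=0$.

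Next I would handle the construction direction (the ``if not both conditions, then $(I,\beta)\in \mathcal{J}_{M_{\bullet}}$'' implication). The naive candidate is $\chi(b_{i,1})=\epsilon_i/\pi_R^{\beta(i)}$ for $i\in I$, extended by $0$ elsewhere; Proposition \ref{when characters are reduced} gives $J_\chi=(I,\beta)$ and the constraint reduces to finding units $\epsilon_i$ realising a killing relation among the terms indexed by $I\cap I_{M_\bullet}$. By Lemma \ref{units in a killer vector}, such units exist exactly when the maximum of $\beta-\beta_{M_\bullet}$ on $I\cap I_{M_\bullet}$ either is not strictly positive or is attained at a set of indices of cardinality $\ne 1$ (resp.\ of even cardinality when $|R/\mathfrak{m}_R|=2$); this already disposes of everything if condition (a.1)/(b.1) fails. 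If (a.1)/(b.1) holds but the unique max-point $j$ is not $e_\rho^{*}$ while $f>1$, I would use the auxiliary coordinate $b_{j,2}$ (which does \emph{not} appear in the constraint) to carry the jump at $j$, freeing the coordinate $b_{j,1}$ to be tuned so that the killing relation holds via Lemma \ref{units in a killer vector}. Finally, if (a.2)/(b.2) fails, the witnessing $i'\in I_{M_\bullet}-I$ lets me give $\chi(b_{i',1})$ order $\beta(j)-\beta_{M_\bullet}(j)+\beta_{M_\bullet}(i')$; by the hypothesis this point is not $\le_\rho$-dominated by $\mathrm{Graph}(\beta)$, so it creates no new jump, while it provides the extra term needed to satisfy the constraint.

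For the obstruction direction, assume (a.1)/(b.1) and (a.2)/(b.2), fix $\chi$ with $J_\chi=(I,\beta)$ satisfying the constraint, and seek a contradiction. After Proposition \ref{reduction for characters} one has $\mathrm{ord}(\chi(b_{j,1}))=\beta(j)$ at the (unique, when $|R/\mathfrak{m}_R|>2$) element $j\in\mathrm{Max}$; under condition (a.1)/(b.1), when $f>1$ this forces $j=e_\rho^*$ so that $b_{j,1}$ is the only basis element of weight $e_\rho^*$, pinning down $\mathrm{ord}(\chi(b_{j,1}))$. Applying Lemma \ref{units in a killer vector} to the vector $(\pi_R^{\beta_{M_\bullet}(i)}\chi(b_{i,1}))_{i\in I_{M_\bullet}}$ then forces some $i\in I_{M_\bullet}-\{j\}$ with $\mathrm{ord}(\chi(b_{i,1}))\ge \beta_{M_\bullet}(i)+\beta(j)-\beta_{M_\bullet}(j)$. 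The maximality definition of $\mathrm{Max}$ rules out $i\in I$, so $i\in I_{M_\bullet}-I$; but then (a.2)/(b.2) says the point $(i,\beta(j)-\beta_{M_\bullet}(j)+\beta_{M_\bullet}(i))$ is $\le_\rho$-maximal above $\mathrm{Graph}(\beta)$, which via Proposition \ref{when characters are reduced} creates a jump of $\chi$ outside $(I,\beta)$, a contradiction.

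The main obstacle I foresee is bookkeeping rather than structural: the interplay between the $f=1$ vs.\ $f>1$ split (which governs whether the ``free'' coordinate $b_{j,2}$ is available) and the residue-field dichotomy in Lemma \ref{units in a killer vector} forces a careful case analysis, and condition (a.2)/(b.2) must be used to rule out \emph{every} auxiliary index $i'\in I_{M_\bullet}-I$ that might silently satisfy the killing relation while leaving $(I,\beta)$ intact.
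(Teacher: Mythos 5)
Your proposal is correct and follows essentially the same route as the paper's proof: the same reduction via a presentation with kernel $Rv_{(I_{M_\bullet},\beta_{M_\bullet})}$ to constrained characters on the free module, the same use of Lemma \ref{units in a killer vector} to construct realizing characters when (a.1)/(a.2) (resp.\ (b.1)/(b.2)) fail (including the auxiliary coordinate $b_{j,2}$ trick when $f>1$ and the extra index $i'\in I_{M_\bullet}-I$ when the maximality condition fails), and the same obstruction argument pinning down $\mathrm{ord}(\chi(b_{j,1}))=\beta(j)$ and deriving a contradiction. The only gap is one of precision rather than substance: in the subcase where $j$ is the sole index with $\beta(j)-\beta_{M_\bullet}(j)>0$, ``tuning $b_{j,1}$ via the lemma'' should be replaced by simply setting $\chi(b_{j,1})=0$, since the lemma would not apply to a single positive-order entry.
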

%We now give examples of how one can use this theorem in situations where conditions 2.1) and 2.2) above collapse to a simpler condition:
%\subsection{Equivalence with Miki criterion}

\section{$U_1$ as a filtered module}\label{U1 as filtered module}
In this section we apply the results of Section \ref{filtered modules} to classify the possible structures of $U_{\bullet}$ as a filtered $\mathbb{Z}_p$-module. Let $p$ be a prime number and let $e$ be in $(p-1)\mathbb{Z}_{\geq 1} \cup \{\infty\}$. Recall the definition of $\rho_{e,p}$ from Example \ref{main example shift}.

Let $K$ be a local field with residue characteristic $p$. Denote by $f_K$ the residue degree, $f_K=[O_K/m_K:\mathbb{F}_p]$. Denote by $\rho_K:=\rho_{e_K,p}$. Recall that $e_{\rho_K}^{*}=\frac{pe_K}{p-1}$ and $e_{\rho_K}^{'}=\frac{e_K}{p-1}$.
\begin{proposition}\label{rho for loc fields}
One has that $U_{\bullet}(K)$ is a $(f_K,\rho_K)$-quasi-free filtered $\mathbb{Z}_p$-module.
\begin{proof}
Firstly one has that $U_i/U_{i+1} \simeq_{\text{ab.gr.}} O/m$, which gives for every positive integer $i$ that $f_i(U_{\bullet}(K))=f_K$ (for a definition of $f_i(U_{\bullet}(K))$ see \ref{definition: f, defect, codefect}). Observe that the formula $(1+x)^p=1+px+\ldots +x^p$ implies that given $u \in U_i(K)$ then $u^p \in U_{\rho_K(i)}$. Moreover if $u \in U_i(K)-U_{i+1}(K)$ and $u^p \in U_{\rho_K(i)+1}$, then $pi=i+e_K$, which implies that $i=e'_{\rho_K}$. So we have firstly that $\rho_{U_{\bullet}(K)} \geq \rho_K$ (for a definition of $\rho_{U_{\bullet}(K)}$ see subsection \ref{rho}), which means that $U_{\bullet}(K)$ is a $\rho_K$-filtered-$\mathbb{Z}_p$-module (see subsection \ref{shit}), and secondly that $\text{defect}_{U_{\bullet}(K)}(i)=\text{codefect}_{U_{\bullet}(K)}(i)=0$ for every positive integer $i \neq e'_{\rho_K}$ (for a definition of $\text{defect}_{U_{\bullet}(K)}(i)$ and $\text{codefect}_{U_{\bullet}(K)}(i)$, see \ref{definition: f, defect, codefect}). On the other hand we know that $\mu_p(U_1(K))$ is a cyclic group. Thus we conclude by Proposition \ref{quasi free via torsion}.
\end{proof}
\end{proposition}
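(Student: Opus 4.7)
The plan is to verify the three defining conditions of Definition \ref{def of quasi free} for $(f_K,\rho_K)$-quasi-freeness, using the binomial expansion of $(1+x)^p$ as the main analytic input and the finiteness (indeed cyclicity) of $\mu_p(K)$ as the torsion input. First I will check that $U_{\bullet}(K)$ is linear in the sense of Definition \ref{definition of linear}: the standard isomorphism $U_i(K)/U_{i+1}(K)\simeq m^i/m^{i+1}\simeq O/m$ delivers $f_i(U_{\bullet}(K))=f_K$ for every positive integer $i$, which simultaneously confirms condition (a) of Definition \ref{def of quasi free} and verifies that each $F_i(U_{\bullet}(K))$ is an $\mathbb{F}_p$-vector space.

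Next I will show that $U_{\bullet}(K)$ belongs to $\rho_K\text{-Filt-}\mathbb{Z}_p\text{-mod}$. Starting from $u=1+x\in U_i(K)$ with $v_K(x)\geq i$, the expansion $u^p=1+\sum_{k=1}^{p-1}\binom{p}{k}x^k+x^p$ has all middle terms of valuation $\geq e_K+i$ (since $v_K(\binom{p}{k})\geq e_K$ and $k\geq 1$), while the last term has valuation $pi$. Hence $v_K(u^p-1)\geq\min(i+e_K,pi)=\rho_K(i)$, which says $\rho_{U_{\bullet}(K)}\geq\rho_K$, so that the defect/codefect of the multiplication-by-$p$ map $[p]_i$ is defined.

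The heart of the argument is the case analysis for $i$ relative to $e'_{\rho_K}=e_K/(p-1)$, which determines whether the linear term $px$ or the Frobenius term $x^p$ dominates in $u^p-1$. For $i<e'_{\rho_K}$ the strict inequality $i+e_K<pi$ forces $u^p\equiv 1+px\pmod{U_{i+e_K+1}}$; the induced map on $F_i$ is then multiplication by $p$ from $m^i/m^{i+1}$ to $m^{i+e_K}/m^{i+e_K+1}$, which is an $\mathbb{F}_p$-isomorphism. For $i>e'_{\rho_K}$ we instead have $u^p\equiv 1+x^p\pmod{U_{pi+1}}$ and the induced map is the Frobenius on the finite field $O/m$, again bijective. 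These two sub-cases together verify condition (b) of Definition \ref{def of quasi free}.

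It remains to handle $i=e'_{\rho_K}$, where both $px$ and $x^p$ contribute and the map $[p]_{e'_{\rho_K}}$ becomes an additive Artin--Schreier-type operator on $O/m$, so cannot in general be expected to be injective. I would invoke Proposition \ref{quasi free via torsion}: the remaining condition $\mathrm{defect}_{U_{\bullet}(K)}(e'_{\rho_K})\leq 1$ is equivalent to $U_1(K)[p]=\mu_p(K)$ being cyclic as a $\mathbb{Z}_p$-module, and this is automatic since $\mu_p(K)$ has order $1$ or $p$. The only step demanding a little care is verifying that it is precisely at $i=e'_{\rho_K}$ where a non-trivial kernel can arise, i.e.\ that defect and codefect vanish on both sides of this critical index; this is what the case distinction above is designed to pin down, and once it is in place the appeal to Proposition \ref{quasi free via torsion} closes the argument.
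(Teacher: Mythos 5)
Your proof follows the same strategy as the paper's: verify linearity and $f_i = f_K$ via $U_i/U_{i+1}\simeq O/m$, use the binomial expansion to get $\rho_{U_\bullet(K)}\geq\rho_K$ and to pin down the defect at $e'_{\rho_K}$, then invoke Proposition \ref{quasi free via torsion} via cyclicity of $\mu_p(K)$. However, your two subcases are swapped. Since $\rho_{e_K,p}(i)=\min(i+e_K,pi)$, for $i<e'_{\rho_K}=e_K/(p-1)$ one has $pi<i+e_K$, so the Frobenius term $x^p$ dominates and $u^p\equiv 1+x^p\pmod{U_{pi+1}}$, giving $[p]_i$ as the $p$-power Frobenius on $O/m$; for $i>e'_{\rho_K}$ one has $i+e_K<pi$, so the linear term $px$ dominates and $u^p\equiv 1+px\pmod{U_{i+e_K+1}}$, giving $[p]_i$ as multiplication by $p$ from $m^i/m^{i+1}$ to $m^{i+e_K}/m^{i+e_K+1}$. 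Since both of these induced maps are $\mathbb{F}_p$-linear bijections, the conclusion you draw is unaffected, but you should correct the attribution. Also note that where the paper only needs to show $\ker[p]_i=0$ and then obtains surjectivity for free by equality of dimensions $f_i=f_{\rho_K(i)}=f_K$, your explicit identification of the maps gives injectivity and surjectivity simultaneously --- a slightly more informative route to the same endpoint.
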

Therefore we deduce the following. 
\begin{theorem} \label{no torsion}
One has that $U_{\bullet}(K)$ is a free $(f_K,\rho_K)$-filtered module if and only if $\mu_{p}(K)=\{1\}$. In other words, $U_{\bullet}(K) \simeq_{\emph{filt}} M_{\rho_K}^{f_K}$ if and only if $\mu_{p}(K)=\{1\}$.
\begin{proof}
This follows immediately from Proposition \ref{charact. free-filt-mod}, Proposition \ref{quasi free via torsion} and Corollary \ref{where is the torsion} combined.
\end{proof}
\end{theorem}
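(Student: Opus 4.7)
The plan is to combine the structural dichotomy for quasi-free modules from Section \ref{filtered modules} with the explicit computations carried out in the proof of Proposition \ref{rho for loc fields}. From that proposition we already know that $U_{\bullet}(K)$ is $(f_K,\rho_K)$-quasi-free, and moreover, inspecting the binomial expansion $(1+x)^p = 1+px+\ldots+x^p$, that $\mathrm{defect}_{U_{\bullet}(K)}(i) = \mathrm{codefect}_{U_{\bullet}(K)}(i) = 0$ for every positive integer $i \neq e'_{\rho_K}$. By Proposition \ref{charact. free-filt-mod}, freeness is equivalent to the vanishing of defect and codefect at \emph{every} positive integer $i$. So the question reduces to a single index: is $\mathrm{defect}_{U_{\bullet}(K)}(e'_{\rho_K}) = \mathrm{codefect}_{U_{\bullet}(K)}(e'_{\rho_K}) = 0$?

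Next I would apply Corollary \ref{where is the torsion}: a $(f_K,\rho_K)$-quasi-free module that fails to be free necessarily has cyclic nontrivial $p$-torsion, isomorphic to $\mathbb{Z}_p/p\mathbb{Z}_p$. Conversely, every free $(f_K,\rho_K)$-filtered module is a direct product of standard modules $S_i \cong \mathbb{Z}_p$ and is therefore torsion-free. Hence $U_{\bullet}(K)$ is $(f_K,\rho_K)$-free if and only if $U_1(K)[p] = \{1\}$.

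To finish, I would observe that $U_1(K)[p] = \mu_p(K)$: every $p$-th root of unity in $K$ reduces to $1$ modulo $\mathfrak{m}$ (since the residue field has characteristic $p$), so lies in $U_1(K)$, and conversely any $u \in U_1(K)$ with $u^p = 1$ is by definition a $p$-th root of unity in $K$. Combining the two equivalences yields the theorem, and by Proposition \ref{charact. free-filt-mod} the resulting free module is isomorphic to $M_{\rho_K}^{f_K}$.

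There is no real obstacle here: the theorem is a direct packaging of the cited results, the only essentially new ingredient being the elementary identification $U_1(K)[p] = \mu_p(K)$. The substantive work has all been done upstream, in Proposition \ref{rho for loc fields} (which pins down where the defect/codefect can possibly fail to vanish) and in the general classification results \ref{charact. free-filt-mod}, \ref{quasi free via torsion}, \ref{where is the torsion} for filtered modules over a complete DVR.
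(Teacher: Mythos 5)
Your proof is correct and follows essentially the same route as the paper's own three-line citation of Propositions \ref{charact. free-filt-mod}, \ref{quasi free via torsion}, and Corollary \ref{where is the torsion}: you replace the explicit appeal to Proposition \ref{quasi free via torsion} with the equivalent observation that free filtered modules are torsion-free (which is part (c) of Proposition \ref{charact. free-filt-mod}), and you supply the elementary identification $U_1(K)[p]=\mu_p(K)$ that the paper leaves implicit. Nothing is missing.
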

If instead $\mu_p(K) \neq \{1\}$ the following holds.
\begin{theorem}\label{mup neq 1}
Let $K$ be a local field with $\mu_p(K) \neq \{1\}$. Then there is a \emph{unique} $(I_K,\beta_K) \in \emph{Jump}_{\rho_K}^{*}$ such that
 $$U_1 \simeq M_{\rho_K}^{f_K-1} \oplus (M_{\rho_K}^{*}/\mathbb{Z}_pv_{(I_K,\beta_K)})$$ 
 as filtered $\mathbb{Z}_p$-module.
\begin{proof}
This follows immediately from Proposition \ref{rho for loc fields} and Theorem \ref{classification of quasi free} combined. 
\end{proof}
\end{theorem}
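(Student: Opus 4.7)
The plan is to obtain this as a direct consequence of the classification machinery already assembled in Section \ref{filtered modules}, with only a small preliminary step to check that $U_\bullet(K)$ is a \emph{non-free} $(f_K,\rho_K)$-quasi-free filtered $\mathbb{Z}_p$-module.

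First, I invoke Proposition \ref{rho for loc fields} to guarantee that $U_\bullet(K)$ is $(f_K,\rho_K)$-quasi-free, so the classification of Theorem \ref{classification of quasi free} applies to it. Second, I must rule out that $U_\bullet(K)$ is $(f_K,\rho_K)$-\emph{free}: this is exactly the content of Theorem \ref{no torsion}, applied in the contrapositive direction, using the hypothesis $\mu_p(K)\neq\{1\}$. (Concretely, the characterization of the torsion in a quasi-free module given by Corollary \ref{where is the torsion} forces any free $U_\bullet(K)$ to be $\mathbb{Z}_p$-torsion free, and $\zeta_p$ is a nontrivial $p$-torsion element of $U_1(K)$.)

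Once these two points are in place, Theorem \ref{classification of quasi free} provides a bijection between the set of admissible extended $\rho_K$-jump sets and the set of isomorphism classes of $(f_K,\rho_K)$-quasi-free filtered $\mathbb{Z}_p$-modules that are not free, sending $(I,\beta)$ to $(M_{\rho_K}^{f_K-1}\oplus M_{\rho_K}^{*})/\mathbb{Z}_p v_{(I,\beta)}$. Applying this bijection to the class of $U_\bullet(K)$ yields the existence of a unique extended $\rho_K$-jump set $(I_K,\beta_K)$ with the required isomorphism
$$U_\bullet(K)\;\simeq_{\text{filt}}\;M_{\rho_K}^{f_K-1}\oplus\bigl(M_{\rho_K}^{*}/\mathbb{Z}_p v_{(I_K,\beta_K)}\bigr).$$

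There is essentially no obstacle here beyond bookkeeping: the real work was already done in proving Proposition \ref{transitive action} (transitivity of $\mathrm{Aut}_{\text{filt}}(M_{\rho_K}^{f_K-1}\oplus M_{\rho_K}^{*})$ on presentations) and Theorem \ref{bijection orbits jumps} (jump sets parametrize orbits via $v\mapsto v_{(I,\beta)}$), which together produce the bijection in Theorem \ref{classification of quasi free}. The only point requiring a moment's attention is to observe that the extended jump set produced this way is automatically \emph{admissible}, i.e.\ satisfies $\rho_K^{\beta_K(\min I_K)}(\min I_K)=e^{*}_{\rho_K}=\tfrac{pe_K}{p-1}$; this follows because the kernel of the presentation $\phi:M_{\rho_K}^{f_K-1}\oplus M_{\rho_K}^{*}\twoheadrightarrow U_\bullet(K)$ is generated by a single element of weight exactly $e^{*}_{\rho_K}$, as dictated by Proposition \ref{reconstruction process} combined with the location of the $\mathbb{Z}_p$-torsion in $U_1(K)$ (Corollary \ref{where is the torsion}).
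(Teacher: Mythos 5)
Your proof is correct and follows essentially the same route as the paper's own (very terse) proof: invoke Proposition \ref{rho for loc fields} to get that $U_\bullet(K)$ is $(f_K,\rho_K)$-quasi-free, observe via $\mu_p(K)\neq\{1\}$ that it is not free, and then apply the bijection of Theorem \ref{classification of quasi free}. The additional bookkeeping you supply (ruling out freeness via Theorem \ref{no torsion}/Corollary \ref{where is the torsion}, and the remark on admissibility) is a reasonable unpacking of what the paper compresses into ``follows immediately.''
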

We now fix $e_K=e$, and therefore we have $\rho_{K}=\rho_{e,p}$. Fix as well $f_K=f$. Our next goal is to show that every $(\rho_{e,p},f)$-quasi-free filtered module can be realized as $U_{\bullet}(K)$ for some $K$, a totally ramified degree $\frac{e}{p-1}$ extension of $\mathbb{Q}_{p^f}(\zeta_p)$. In view of Theorem \ref{classification of quasi free}, this is tantamount to prove that every jump set realizable from a filtered module can be realized by a local field. Recall from Theorem \ref{classification of quasi free} that the latter are precisely the admissible extended $\rho_{e,p}$-jump sets. For a definition of these jump sets see the discussion immediately before Theorem \ref{classification of quasi free}.
\begin{theorem}\label{admissible j.s. occur}
Let $(I,\beta)$ be an extended admissible $\rho_{e,p}$-jump set. 
Then there is a totally ramified extension $K/\mathbb{Q}_{p^f}(\zeta_p)$ with $e_K=e$ and with
$$ (I_K,\beta_K)=(I,\beta).$$

\end{theorem}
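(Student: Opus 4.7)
Write $I=\{i_1<\cdots<i_n\}$, $\beta_j:=\beta(i_j)$, $F:=\mathbb{Q}_{p^f}(\zeta_p)$, and $d:=e/(p-1)=e'$. The plan is to reduce the statement to the existence of a multiplicative identity in a suitable $K/F$, and then to construct $K$ via an explicit Eisenstein polynomial of degree $d$ over $F$. By Corollary \ref{The relation between the units}, it is enough to produce a totally ramified extension $K/F$ of degree $d$ together with units $u_j\in U_{i_j}(K)\setminus U_{i_j+1}(K)$ satisfying the Miki-type identity
\[
\zeta_p \;=\; \prod_{j=1}^{n} u_j^{\,p^{\beta_j-1}}\quad\text{in }K^\times,
\]
together with $u_n\notin(K^\times)^p$ when $i_n=e^*$; since $(I,\beta)$ is itself a jump set, its graph consists of $\leq_{\rho}$-minimal points, so the corollary identifies $(I_K,\beta_K)$ with $(I,\beta)$. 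A brief analysis of admissibility, using that $i_1\in T_{\rho_{e,p}}$ and $\rho^{\beta_1}(i_1)=e^*$, pins down $(i_1,\beta_1)=(e'/p^{v_p(e')},v_p(e')+1)$; for $j\geq 2$ with $i_j<e^*$, the jump-set inequalities yield $i_j\,p^{\beta_j-1}>d$.

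Using these constraints, for each $j\geq 2$ with $i_j<e^*$ I choose an integer $m_j\in[1,d-1]$ with $v_p(m_j)=\beta_j-1$ and $m_j\equiv i_j\,p^{\beta_j-1}\pmod{d}$, and set $r_j:=(i_j\,p^{\beta_j-1}-m_j)/d\in\mathbb{Z}_{\geq 1}$. Fixing a uniformizer $\pi_F$ of $F$ and units $\alpha_0,\alpha_j\in O_F^\times$, I form the Eisenstein polynomial
\[
g(x) \;:=\; x^d\;+\;\pi_F\alpha_0\;+\;\sum_{\substack{2\leq j\leq n \\ i_j<e^*}}\pi_F^{r_j}\alpha_j\,x^{m_j}.
\]
A direct evaluation of the formula defining $S_{g(x)}$ shows that the leading coefficient contributes $(i_1,\beta_1)$, while each $\pi_F^{r_j}\alpha_j\,x^{m_j}$ contributes exactly $(i_j,\beta_j)$; no two of these pairs are $\leq_{\rho}$-comparable, so $S_{g(x)}^{-}=\mathrm{Graph}(\beta|_{I\cap[1,e^*-1]})$. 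In the favourable case where $e^*\notin I$ and $g(x)$ is strongly separable—which holds whenever $p\geq 3$ and either $\beta_n=1$ or $p\nmid d$—Theorem \ref{valuation coefficients} then concludes $(I_K,\beta_K)=(I,\beta)$ for $K:=F[x]/g(x)$.

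The main obstacle is to cover the remaining cases: (i) when $e^*\in I$, where the extra non-$p$-th-power generator at weight $e^*$ in the presentation of $U_\bullet(K)$ cannot be read off a strongly separable polynomial (as the $(I_g,\beta_g)$ produced by such polynomials is an ordinary jump set, never an extended one), and (ii) when strong separability fails, notably in the case $p=2$ or when $n=1$ and $p\mid d$. For (i), I would adjust the residue class of the unit $\alpha_0$ modulo an appropriate subgroup of $O_F^\times$ so that $\zeta_p$ is forced into a nontrivial coset of $(K^\times)^p$, thereby supplying the extra generator $u_{e^*}$. For (ii), I would replace the single-polynomial construction by a composite $K:=F(\zeta_{p^{\beta_1}})\cdot F'$, with $F'/F$ totally tamely ramified of degree $i_1=d/p^{\beta_1-1}$: the cyclotomic factor provides $u_1=\zeta_{p^{\beta_1}}$ satisfying $u_1^{p^{\beta_1-1}}=\zeta_p$, and the remaining $u_j$ arise by twisting the defining element of $F'$ according to the combinatorial data for $j\geq 2$. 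Verifying that $(I_K,\beta_K)$ equals $(I,\beta)$, rather than a proper refinement of it, uses Proposition \ref{break points of g} applied to the generator $\zeta_p$ of $\mu_p(K)$, together with a degree count showing that $\zeta_{p^{\beta_1+1}}\notin K$.
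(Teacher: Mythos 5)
Your opening move---reducing everything to Corollary \ref{The relation between the units}---is precisely the paper's strategy, and your identification of what must be produced (units $u_j$ with $\prod u_j^{p^{\beta_j-1}}=\zeta_p$ and, when $i_n=e^*$, the extra non-$p$-th-power condition on $u_n$) is correct. But your realization of the strategy is genuinely different from the paper's, and it has gaps that the paper's route avoids.

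The paper does not attempt to prescribe the valuations of the coefficients of an Eisenstein polynomial and then read off the jump set via Theorem \ref{valuation coefficients}. Instead it starts from the \emph{target identity}: it sets
$G(x)=\prod_{i\in I}(1+x^i)^{p^{\beta(i)-1}}-\zeta_p$, shows via admissibility that the Newton polygon of $G$ contains a segment from $(0,1)$ to $\bigl(\tfrac{e}{p-1},0\bigr)$, peels off an Eisenstein factor $g$ of degree $\tfrac{e}{p-1}$, and then in $K=\mathbb{Q}_{p^f}(\zeta_p)[x]/g$ the relation $\prod_{i\in I}(1+\pi^i)^{p^{\beta(i)}}=1$ holds by construction, so Corollary \ref{The relation between the units} applies directly. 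This makes no appeal to strong separability and works uniformly for all $p$. For $e^*\in I$ the paper descends to the twisted cyclotomic subfield $\mathbb{Q}_{p^f}(\zeta_{p^j})(1)$, where Corollary \ref{I,beta for twisted cyclotomic} supplies units $u_1,u_2$ with $u_1^{p^{j+1}}u_2^{p^j}=1$, and repeats the Newton-polygon argument on $G^*(x)=\prod_{i\in I,i<e^*}(1+x^i)^{p^{\beta(i)-j-1}}-u_1$.

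Concrete problems with your version. First, the assertion that strong separability holds whenever $p\geq 3$ and $\beta_n=1$ is false. With $\beta_n=1$ you have $r_n=\lfloor i_n/d\rfloor$, and $i_n$ may lie anywhere in $(d,pd)\cap T_\rho$, so $r_n=p-1$ is attainable, giving $v_F(a_{m_n})=p-1=v_F(p)$ and destroying strong separability. An explicit instance: $p=3$, $e=18$, $d=9$, $I=\{1,22\}$, $\beta(1)=3$, $\beta(22)=1$. This is an admissible extended $\rho_{18,3}$-jump set, yet your construction yields $m_2=4$, $r_2=2$, hence $g(x)=x^9+\pi_F\alpha_0+\pi_F^2\alpha_2x^4$, which is not strongly separable; Theorem \ref{valuation coefficients} does not apply, and the claim $(I_K,\beta_K)=(I,\beta)$ is unjustified for this $g$. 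Second, your treatment of the two remaining cases is a sketch, not a proof. For $e^*\in I$, tweaking $\alpha_0$ does not in itself produce the structural condition that, by Proposition \ref{when the star is in I}, characterizes $e^*\in I_K$---namely that $K(\sqrt[p]{\mu_{p^\infty}(K)})/K$ be unramified---and you give no argument that such a tweak achieves it. For $n\geq 2$ with strong separability failing you only gesture at ``twisting the defining element of $F'$.'' (The composite $F(\zeta_{p^{\beta_1}})\cdot F'$ does handle $n=1$, though the clean justification is Proposition \ref{The tame case} applied to $K/\mathbb{Q}_{p^f}(\zeta_{p^{\beta_1}})$ rather than a degree count plus Proposition \ref{break points of g}.) Finally note that Theorem \ref{valuation coefficients} is established in Section \ref{finding jump sets inside}, well after the paper proves the present theorem; even absent a literal circularity, building its proof on a much later and heavier result runs against the grain of the argument.
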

During the proof we will make use of the two propositions that follow below. Recall that if $\zeta_p \in K$, then the extension $L/K:=K(\sqrt[p]{U_{\frac{pe_K}{p-1}}/U_{\frac{pe_K}{p-1}+1}})/K$ is the unique unramified extension of degree $p$ of $K$. Indeed $[L:K]=p$, so if $e_{L/K}>1$ then $e_{L/K}=p$. Observe that the inclusion $U_{pe_K}(K) \subseteq U_{pe_L}(L)$ would, in case that $e_{L/K}=p$, induce an isomorphism $U_{\frac{pe_K}{p-1}}(K)/U_{\frac{pe_K}{p-1}+1}(K) \to U_{\frac{pe_L}{p-1}}(L)/U_{\frac{pe_L}{p-1}+1}(L)$, which, by construction would imply that $\text{codefect}_{U_{\bullet}(L)}(e_{L}^{'})=0$, which is impossible since $\zeta_p \in L$. So it must be that $e_{L/K}=1$ and $f_{L/K}=[L:K]$. 
\begin{proposition}\label{when the star is in I}
Let $K$ be a finite extension of $\mathbb{Q}_p(\zeta_p)$. Then $e_{K}^{*} \in I_K$ if and only if $K(\sqrt[p]{\mu_{p^{\infty}}(K)})/K$ is unramified. 
\begin{proof}
Let $\zeta_{p^j}$ be a generator of $U_1(K)_{\text{tors}}$. Thanks to Proposition \ref{break points of g}, we have that $e_{K}^{*} \in I_K$ if and only if $w_{U_1(K)/U_1(K)^p}(\zeta_{p^j})=\frac{pe_K}{p-1}$. On the other hand this is equivalent to $K(\zeta_{p^{j+1}})=K(\sqrt[p]{U_{\frac{pe_K}{p-1}}/U_{\frac{pe_K}{p-1}+1}})$, which, as explained just above this proposition, is the unique unramified degree $p$ extension of $K$. 
\end{proof}
\end{proposition}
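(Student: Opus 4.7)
The plan is to combine the structural characterization of $I_K$ coming from Proposition \ref{break points of g} with the description, recalled just before the statement, of the unique unramified degree $p$ extension of $K$. Let $j \geq 1$ be the integer with $U_1(K)_{\mathrm{tors}} = \langle \zeta_{p^j} \rangle$, so that $L := K(\sqrt[p]{\mu_{p^\infty}(K)}) = K(\zeta_{p^{j+1}})$ is a nontrivial degree $p$ extension. Taking $\tilde{m} = \zeta_{p^j}$ (torsion exponent $N = j$) and applying Proposition \ref{break points of g}: setting $i_0 := \max(I_K)$, Proposition \ref{how much torsion} applied to $M_\bullet = U_\bullet(K)$ gives $\beta_K(i_0) = j = N$, so the formula there yields $g_{\zeta_{p^j},U_\bullet(K)}(1) = \rho_K^{\,0}(i_0) = \max(I_K)$. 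Since $I_K \subseteq T_{\rho_K}^{*}$ with $e_K^{*}$ as its largest possible element, $e_K^{*} \in I_K$ if and only if $\max(I_K) = e_K^{*}$, and the task reduces to showing
\[
w_{U_1(K)/U_1(K)^p}(\zeta_{p^j}) = e_K^{*} \iff L/K \text{ is unramified}.
\]

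Next I will verify that $U_{e_K^{*}+1}(K) \subseteq U_1(K)^p$, so that the finite weights occurring in $U_1/U_1^p$ are bounded above by $e_K^{*}$. For $i > e_K^{*}$ one has $i - e_K > e_K'$, so the multiplication-by-$p$ map $[\pi]_{i-e_K}: U_{i-e_K}/U_{i-e_K+1} \to U_i/U_{i+1}$ is an isomorphism (Definition \ref{def of quasi free}); iteratively writing $u \in U_i$ as $v^p u'$ with $u' \in U_{i+1}$ and taking the limit by completeness shows $u \in U_1(K)^p$. Since $\zeta_{p^j} \notin U_1(K)^p$ (otherwise $\zeta_{p^{j+1}} \in K$, contradicting the choice of $j$), its image in $U_1/U_1^p$ is nonzero and has finite weight at most $e_K^{*}$.

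Finally, by the paragraph preceding the statement, the unique degree $p$ unramified extension of $K$ is $K(\sqrt[p]{v_0})$ for any $v_0 \in U_{e_K^{*}}(K) \setminus U_{e_K^{*}+1}(K)$. Since $\zeta_p \in K$, Kummer theory together with this uniqueness shows that for $u \in U_1(K) \setminus U_1(K)^p$, the extension $K(\sqrt[p]{u})/K$ is unramified precisely when $u \in U_{e_K^{*}}(K) \cdot U_1(K)^p$, i.e.\ when $w_{U_1/U_1^p}(u) = e_K^{*}$ by the previous paragraph. Applying this to $u = \zeta_{p^j}$ finishes the reduction from the first paragraph. The main care lies in the bookkeeping in Proposition \ref{break points of g}: one must correctly track how the torsion exponent $N$ shifts the indexing of the function $g_{\tilde{m}, M_\bullet}$, and Proposition \ref{how much torsion} is essential to pin down the equality $\beta_K(\max(I_K)) = j$ that makes the evaluation at $g(1)$ recover exactly $\max(I_K)$.
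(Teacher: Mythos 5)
Your proof is correct and follows essentially the same route as the paper's: reduce via Proposition \ref{break points of g} (together with Proposition \ref{how much torsion}) to the identity $g_{\zeta_{p^j},U_\bullet(K)}(1)=\max(I_K)$, then compare with the Kummer-theoretic description of the unique unramified degree $p$ extension recalled just before the statement; you merely fill in the details (the check $U_{e_K^*+1}\subseteq U_1(K)^p$ and the Kummer argument) that the paper leaves implicit. One small imprecision: when $f_K>1$, some $v_0\in U_{e_K^*}(K)\setminus U_{e_K^*+1}(K)$ lie in $K^{*p}$, so the phrase ``$K(\sqrt[p]{v_0})$ for any $v_0$'' should be restricted to $v_0\notin K^{*p}$ --- but this does not affect your argument, since the Kummer step only needs one such $v_0$.
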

Let $j$ be a positive integer. The following notation will be helpful. Consider the compositum extension $\mathbb{Q}_{p^{pf}}(\zeta_{p^j})\cdot \mathbb{Q}_{p^f}(\zeta_{p^{j+1}})/\mathbb{Q}_{p^f}(\zeta_{p^j})$, which is a Galois extension with Galois group $C_p \times C_p$. So one is provided with $p+1$ degree $p$ sub-extensions. We denote the unique unramified one as $\mathbb{Q}_{p^f}(\zeta_{p^j})(0)$ (which of course is just $\mathbb{Q}_{p^{pf}}(\zeta_{p^j})$). Further we list the $p-1$ totally ramified ones without an element of order $p^{j+1}$ as $\mathbb{Q}_{p^f}(\zeta_{p^j})(i)$ with $i$ running through $\{1,\ldots ,p-1\}$. And we will sometimes make use of an extended notation for $i=p$, by letting $\mathbb{Q}_{p^f}(\zeta_{p^j})(p):=\mathbb{Q}_{p^f}(\zeta_{p^{j+1}})$. 
\begin{proposition} \label{when star is in  for tot ram}
Let $j$ be a positive integer. Let $K$ be a totally ramified extension of $\mathbb{Q}_{p^f}(\zeta_p)$ with $e_K=:e$. Then the following are equivalent: \\
\emph{(1)} $e^{*} \in I_K$ and $\beta_K(e^{*})=j$. \\
\emph{(2)} There is exactly one $i \in \{1,\ldots ,p-1\}$ such that $K$ contains $\mathbb{Q}_{p^f}(\zeta_{p^j})(i)$.
\begin{proof}
$(1) \to (2)$ Thanks to Proposition \ref{when the star is in I}, we have that $(1)$ implies that $K(\zeta_{p^{j+1}})/K$ is unramified, thus we have that $K(\zeta_{p^{j+1}})/\mathbb{Q}_{p^f}(\zeta_{p^j})$ contains $\mathbb{Q}_{p^f}(\zeta_{p^{j+1}}) \cdot \mathbb{Q}_{p^{pf}}(\zeta_{p^j})$. But this last one must then intersect $K$ non-trivially, otherwise one would have $[K(\zeta_{p^{j+1}}):K]=p^2$, which is impossible. At the same time the intersection cannot be $\mathbb{Q}_{p^{pf}}(\zeta_{p^j})$ because $f_K=f$, and it cannot be $\mathbb{Q}_{p^f}(\zeta_{p^{j+1}})$. Indeed we have $\beta_K(e^{*})=j$ and Proposition \ref{how much torsion} implies that $p^j=\#\mu_{p^{\infty}}(K)$. So there must be an $i \in \{1,\ldots ,p-1\}$ such that $K$ contains $\mathbb{Q}_{p^f}(\zeta_{p^j})(i)$. But there must be exactly one since otherwise the whole extension $\mathbb{Q}_{p^f}(\zeta_{p^{j+1}}) \cdot \mathbb{Q}_{p^{pf}}(\zeta_{p^j})$ would be in $K$, which has been already explained to be not possible.

$(2) \to (1)$ We have that $K(\sqrt[p]{U_1(K)_{\text{tors}}})/K$ contains $\mathbb{Q}_{p^f}(\zeta_{p^j})(i)(\zeta_{p^{j+1}}) \supset \mathbb{Q}_{p^{pf}}$, thus one concludes that $K(\sqrt[p]{U_1(K)_{\text{tors}}})/K$ is unramified and by Proposition \ref{when the star is in I} one concludes that $e^{*} \in I_K$. Moreover we must have that $\beta_K(e^{*})=j$. Indeed $K$ contains in particular $\zeta_{p^j}$, which, by Proposition \ref{how much torsion}, implies that $\beta_K(e^{*}) \geq j$. If we would have $\beta_K(e^{*})>j$ then, still by Proposition \ref{how much torsion}, the field $K$ would contain also $\mathbb{Q}_{p^f}(\zeta_{p^{j+1}})$. Hence $K$ would contain the compositum of $\mathbb{Q}_{p^f}(\zeta_{p^{j+1}})$ and $\mathbb{Q}_{p^f}(\zeta_{p^j})(i)$. Therefore $K$ would contain the field  $\mathbb{Q}_{p^{pf}}$ providing a contradiction with $f_K=f$. Hence, since $\beta_K(e^{*}) \geq j$ and $\beta_K(e^{*})<j+1$, it must be that $\beta_K(e^{*})=j$.
\end{proof}
\end{proposition}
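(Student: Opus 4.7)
The plan is to reduce the jump-set statement (1) to a purely Galois-theoretic statement about $K$, and then argue by dimension count inside a suitable $C_p \times C_p$-extension.

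First, I would reformulate (1). Since $e^{*}$ is the maximum of $T_{\rho_K}^{*}$, and since $U_\bullet(K)$ is $(f,\rho_K)$-quasi-free by Proposition \ref{rho for loc fields}, Proposition \ref{how much torsion} says that $\#\mu_{p^\infty}(K)=p^{\beta_K(\max(I_K))}$. Hence $e^{*}\in I_K$ together with $\beta_K(e^{*})=j$ is equivalent to $\zeta_{p^j}\in K$, $\zeta_{p^{j+1}}\notin K$, and (by Proposition \ref{when the star is in I}) $K(\zeta_{p^{j+1}})/K$ unramified. Call this combined condition (1$'$). Note that because $K/\mathbb{Q}_{p^f}(\zeta_p)$ is totally ramified we automatically have $f_K=f$, so under (1$'$) we get $K\supseteq\mathbb{Q}_{p^f}(\zeta_{p^j})$ with $K(\zeta_{p^{j+1}})/K$ unramified of degree exactly $p$.

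Next, I would look at $L:=K\cap E$ where $E:=\mathbb{Q}_{p^{pf}}(\zeta_{p^j})\cdot\mathbb{Q}_{p^f}(\zeta_{p^{j+1}})$, viewed as an extension of $F:=\mathbb{Q}_{p^f}(\zeta_{p^j})$. Since $\mathrm{Gal}(E/F)\cong C_p\times C_p$, the intermediate fields of $E/F$ are exactly $F$, $E$, the unramified one $\mathbb{Q}_{p^f}(\zeta_{p^j})(0)$, the distinguished ramified one $\mathbb{Q}_{p^f}(\zeta_{p^{j+1}})$, and the $\mathbb{Q}_{p^f}(\zeta_{p^j})(i)$ for $i\in\{1,\dots,p-1\}$.

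Assume (1$'$). Then $K(\zeta_{p^{j+1}})\supseteq\mathbb{Q}_{p^f}(\zeta_{p^{j+1}})$, and since $K(\zeta_{p^{j+1}})/K$ is unramified of residue degree $p$ while $F$ has residue field $\mathbb{F}_{p^f}$, also $K(\zeta_{p^{j+1}})\supseteq\mathbb{Q}_{p^{pf}}(\zeta_{p^j})$, so $K(\zeta_{p^{j+1}})\supseteq E$. If $L=F$, then $K$ and $E$ would be linearly disjoint over $F$, forcing $[K(\zeta_{p^{j+1}}):K]\geq p^2$, contradicting (1$'$). If $L=E$ or $L=\mathbb{Q}_{p^f}(\zeta_{p^{j+1}})$, then $\zeta_{p^{j+1}}\in K$, contradicting (1$'$). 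If $L=\mathbb{Q}_{p^f}(\zeta_{p^j})(0)$, then $K$ contains an unramified degree $p$ extension of $F$, contradicting $f_K=f$. So $L=\mathbb{Q}_{p^f}(\zeta_{p^j})(i)$ for exactly one $i\in\{1,\dots,p-1\}$, which is (2). Conversely, assume (2) holds with index $i$. Then $\zeta_{p^j}\in K$, and $K(\zeta_{p^{j+1}})$ contains the compositum of the two distinct index-$p$ subfields $\mathbb{Q}_{p^f}(\zeta_{p^j})(i)$ and $\mathbb{Q}_{p^f}(\zeta_{p^{j+1}})$, hence contains all of $E$, in particular $\mathbb{Q}_{p^{pf}}(\zeta_{p^j})$. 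Since $[K(\zeta_{p^{j+1}}):K]\leq p$, the extension $K(\zeta_{p^{j+1}})/K$ is unramified of degree $p$. Finally, $\zeta_{p^{j+1}}\notin K$ because otherwise $K\supseteq\mathbb{Q}_{p^f}(\zeta_{p^{j+1}})$, giving a second index inside $L$ and violating the uniqueness in (2). This yields (1$'$), and the proof closes.

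The main obstacle is really the bookkeeping in the third paragraph: one must verify that exactly the configuration $L=\mathbb{Q}_{p^f}(\zeta_{p^j})(i)$ with $i\in\{1,\dots,p-1\}$ is compatible with the simultaneous constraints ``$f_K=f$'', ``$\zeta_{p^{j+1}}\notin K$'', and ``$[K(\zeta_{p^{j+1}}):K]=p$''. Once the $C_p\times C_p$-picture is set up, each of the forbidden choices of $L$ falls to a direct one-line ramification or degree argument.
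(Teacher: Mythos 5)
Your proof is correct and follows essentially the same route as the paper's: you translate condition $(1)$, via Proposition~\ref{when the star is in I} and Proposition~\ref{how much torsion}, into the statement that $\#\mu_{p^\infty}(K)=p^j$ and that $K(\zeta_{p^{j+1}})/K$ is unramified, and then you run a ramification and degree count inside the $C_p\times C_p$-extension $E:=\mathbb{Q}_{p^{pf}}(\zeta_{p^j})\cdot\mathbb{Q}_{p^f}(\zeta_{p^{j+1}})$ of $F:=\mathbb{Q}_{p^f}(\zeta_{p^j})$, exactly as the paper does but with the intersection $L=K\cap E$ made explicit, which is a mild clarification of the bookkeeping.

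One cosmetic caveat on the last sentence of $(2)\Rightarrow(1')$: having already shown $K(\zeta_{p^{j+1}})/K$ has residue degree $\geq p$ and degree $\leq p$, hence degree exactly $p$, you have already established $\zeta_{p^{j+1}}\notin K$; the closing sentence is redundant. Moreover, as phrased, the ``second index inside $L$'' contradiction does not literally apply when $p=2$, where $\{1,\dots,p-1\}$ is a singleton. The robust form of that argument (and the one the paper uses) is that $\zeta_{p^{j+1}}\in K$ together with $\mathbb{Q}_{p^f}(\zeta_{p^j})(i)\subseteq K$ would force $K\supseteq E\supseteq\mathbb{Q}_{p^{pf}}$, contradicting $f_K=f$.
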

In particular we derive the following:
\begin{corollary} \label{I,beta for twisted cyclotomic}
Let $j,f$ be positive integers, and $i \in \{1,\ldots ,p-1\}$. Then $$I_{\mathbb{Q}_{p^f}(\zeta_{p^j})(i)}=\{1,p^{j+1}\}, \ \beta_{\mathbb{Q}_{p^f}(\zeta_{p^j})(i)}(1)=j+1 \ \text{and} \ \beta_{\mathbb{Q}_{p^f}(\zeta_{p^j})(i)}(p^{j+1})=j.$$
\begin{proof}
Since the jump set must be admissible, we know that $1 \in I_{\mathbb{Q}_{p^f}(\zeta_{p^j})(i)}$ with $\beta_{\mathbb{Q}_{p^f}(\zeta_{p^j})(i)}(1)=j+1$. By Proposition \ref{when the star is in I}, we know that $p^{j+1} \in I_{\mathbb{Q}_{p^f}(\zeta_{p^j})(i)}$ with $\beta_{\mathbb{Q}_{p^f}(\zeta_{p^j})(i)}(p^{j+1})=j$. Moreover we certainly have that $1=\text{min}(I_{\mathbb{Q}_{p^f}(\zeta_{p^j})(i)})$ and $p^{j+1}= \text{max}(I_{\mathbb{Q}_{p^f}(\zeta_{p^j})(i)})$ since $1$ and $p^{j+1}$ are respectively the smallest and the largest elements of $T_{\rho}^{*}$, for $\rho:=\rho_{e,p}$ with $e:=p^j(p-1)$. Moreover the very beginning of this proof gives us in particular that $\beta_{\mathbb{Q}_{p^f}(\zeta_{p^j})(i)}(1)-\beta_{\mathbb{Q}_{p^f}(\zeta_{p^j})(i)}(p^{j+1})=1$. Therefore, recalling that the map $\beta$ is strictly decreasing (by definition of a jump set), we must conclude that between $1$ and $p^{j+1}$ no other value of $I_{\mathbb{Q}_{p^f}(\zeta_{p^j})(i)}$ can be found.  This gives us the desired conclusion.
\end{proof}
\end{corollary}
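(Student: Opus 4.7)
The plan is to pin down $(I_K,\beta_K)$ for $K := \mathbb{Q}_{p^f}(\zeta_{p^j})(i)$ with $i \in \{1,\ldots,p-1\}$ by combining the admissibility of the jump set with two explicit pieces of information about $K$: the top of $I_K$ will come from Proposition~\ref{when star is in  for tot ram}, and the bottom will be forced by the combinatorics of the shift $\rho := \rho_{e_K,p}$ near $e_K^{*}$.

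First I would record the numerics. Since $K$ is totally ramified of degree $p$ over $\mathbb{Q}_{p^f}(\zeta_{p^j})$, one has $e_K = p^j(p-1)$, $e_K^{*} = p^{j+1}$ and $e_K^{'} = p^j$. As $\zeta_p \in K$, Theorem~\ref{mup neq 1} provides a unique extended jump set $(I_K,\beta_K)$, which is admissible; that is, $\rho^{\beta_K(\min I_K)}(\min I_K) = p^{j+1}$.

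Next I would determine the top of $I_K$. The field $K$ trivially contains itself, and no other $\mathbb{Q}_{p^f}(\zeta_{p^j})(i')$ with $i' \in \{1,\ldots,p-1\}$, by a degree count inside the biquadratic-like extension $\mathbb{Q}_{p^{pf}}(\zeta_{p^j})\cdot\mathbb{Q}_{p^f}(\zeta_{p^{j+1}})/\mathbb{Q}_{p^f}(\zeta_{p^j})$. So condition (2) of Proposition~\ref{when star is in  for tot ram} holds, and the implication $(2)\Rightarrow(1)$ yields $p^{j+1} \in I_K$ with $\beta_K(p^{j+1}) = j$. On the other hand, by construction $K$ contains $\zeta_{p^j}$ but not $\zeta_{p^{j+1}}$, so $|U_1(K)[p^\infty]| = p^j$, and Proposition~\ref{how much torsion} gives $\beta_K(\max I_K) = j$. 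Strict monotonicity of $\beta_K$ then forces $\max(I_K) = p^{j+1}$.

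Third I would determine the bottom of $I_K$ by tracing $\rho$ backwards from $p^{j+1}$. For $a \leq p^j$ one has $\rho(a) = pa$, while for $a > p^j$ one has $\rho(a) = a + e_K > p^{j+1}$; hence the unique preimage of $p^{j+1}$ is $p^j$, and iterating gives $\rho^{-k}(\{p^{j+1}\}) = \{p^{j+1-k}\}$ for $0 \leq k \leq j+1$. Among the candidates $\{1,p,\ldots,p^{j+1}\}$ only $1$ lies in $T_\rho$, since the others are multiples of $p$ and therefore in the image of $\rho$. Admissibility (combined with the fact that $\min(I_K) < \max(I_K) = p^{j+1}$, so $\min(I_K) \in T_\rho$) therefore forces $\min(I_K) = 1$ and $\beta_K(1) = j+1$. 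Finally, since $\beta_K(1) - \beta_K(p^{j+1}) = 1$ and $\beta_K$ is strictly decreasing with integer values, no element of $I_K$ can fit strictly between $1$ and $p^{j+1}$, whence $I_K = \{1,p^{j+1}\}$. No real obstacle arises: the argument is essentially bookkeeping once Propositions~\ref{when star is in  for tot ram} and~\ref{how much torsion} are in hand.
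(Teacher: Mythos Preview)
Your argument is correct and follows the same overall route as the paper's proof: use Proposition~\ref{when star is in  for tot ram} to place $p^{j+1}$ in $I_K$ with $\beta_K(p^{j+1})=j$, use admissibility to pin down $\min(I_K)=1$ with $\beta_K(1)=j+1$, and then observe that the gap $\beta_K(1)-\beta_K(p^{j+1})=1$ together with strict monotonicity of $\beta_K$ excludes any intermediate element. Two small differences worth noting: the paper establishes $\max(I_K)=p^{j+1}$ simply by remarking that $p^{j+1}$ is the largest element of $T_{\rho}^{*}$ (so nothing in $I_K\subseteq T_\rho^*$ can exceed it), whereas you route this through Proposition~\ref{how much torsion}; and where the paper tersely asserts that admissibility gives $1\in I_K$, you spell out the backward orbit $\{1,p,\ldots,p^{j+1}\}$ of $e^*$ under $\rho$ and observe that $1$ is its only member in $T_\rho$, which is a welcome clarification.
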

Now we can proceed proving Theorem \ref{admissible j.s. occur}. Take $(I,\beta)$ an extended admissible $\rho_{e,p}$-jump set. We distinguish two cases, depending on whether $e^{*} \in I$. First assume that $e^{*} \not \in I$. Next define the polynomial
$$G(x):=\prod_{i \in I}(1+x^i)^{p^{\beta(i)-1}}-\zeta_p \in \mathbb{Q}_{p^f}(\zeta_p)[x].
$$ 
Using the fact that $(I,\beta)$ is admissible (for a definition see immediately before the statement of Theorem \ref{classification of quasi free}) one finds that the Newton polygon of $G(x)$ consists of the segment connecting $(0,1)$ and $(\frac{e}{p-1},0)$ continued with a horizontal segment starting from $(\frac{e}{p-1},0)$. Therefore there exists a degree $\frac{e}{p-1}$ Eisenstein polynomial $g(x) \in \mathbb{Q}_{p^f}(\zeta_p)$, such that $g(x)$ divides $G(x)$. Define
$$K:=\mathbb{Q}_{p^f}(\zeta_p)[x]/g(x).
$$
Clearly $\pi:=x$ is a uniformizer in $K$. Moreover we have that
$$\prod_{i \in I} (1+\pi^i)^{p^{\beta(i)}}=1
$$
with $\text{v}_{K}((1+\pi^i)-1)=i$, thus giving
$$(I_{K},\beta_K)=(I,\beta),
$$
thanks to Corollary \ref{The relation between the units}.
Now suppose that $e^{*} \in I$ and write $j:=\beta(e^{*})$. We prove that $p^j(p-1)|e$. Indeed we have that $\text{min}(I)<e^{*}$ giving that $\beta(\text{min}(I))\geq j+1$. Thus, since we have that $\rho^{\beta(\text{min}(I))}(\text{min}(I))=p^{\beta(\text{min}(I))}(\text{min}(I))=\frac{pe}{p-1}$, we obtain that $p^j(p-1)|e$. Next, pick $u_1,u_2 \in \mathbb{Q}_{p^f}(\zeta_{p^j})(1)$ such that
$$\text{v}_{\mathbb{Q}_{p^f}(\zeta_{p^j})(1)}(u_1-1)=1, \ \text{v}_{\mathbb{Q}_{p^f}(\zeta_{p^j})(1)}(u_2-1)=p^{j+1}, \ u_2 \not \in (\mathbb{Q}_{p^f}(\zeta_{p^j})(1))^{*p}
$$
and 
$$u_1^{p^{j+1}}u_2^{p^j}=1
$$
as guaranteed by Corollary \ref{I,beta for twisted cyclotomic}. Now define
$$G^{*}(x):=\prod_{i \in I, i<e^{*}}(1+x^i)^{p^{\beta(i)-j-1}}-u_1 \in \mathbb{Q}_{p^f}(\zeta_{p^j})(1)[x].
$$
Using the fact that $(I,\beta)$ is admissible one finds that the Newton polygon of $G^{*}(x)$ consists of the segment connecting $(0,1)$ and $(\frac{e}{p^j(p-1)},0)$ continued with a horizontal segment starting from $(\frac{e}{p^j(p-1)},0)$. Therefore there exists a degree $\frac{e}{p^j(p-1)}$ Eisenstein polynomial $g^{*}(x) \in \mathbb{Q}_{p^f}(\zeta_{p^j})(1)[x]$ such that $g^{*}(x)$ divides $G^{*}(x)$. Define
$$\tilde{K}:=\mathbb{Q}_{p^f}(\zeta_{p^j})(1)[x]/g^{*}(x).
$$
Clearly $\pi:=x$ is a uniformizer in $\tilde{K}$. Moreover we have that 
$$(\prod_{i \in I, i<e^{*}}(1+\pi^i)^{p^{\beta(i)}})u_2^{p^j}=1
$$
with $\text{v}_{\tilde{K}}((1+\pi^i)-1)=i$ for each $i \in I$, with $i<e^{*}$ and with $\text{v}_{\tilde{K}}(u_2-1)=e^{*}$. Thus, in order to apply Corollary \ref{The relation between the units}, we are only left with checking that $u_2 \not \in \tilde{K}^{*p}$. But this follows at once from the fact that $\mathbb{Q}_{p^{pf}} \subseteq \mathbb{Q}_{p^f}(\zeta_{p^j})(1)(\sqrt[p]{u_2})$ and the fact that $g^{*}(x)$ is an Eisenstein polynomial and thus $\tilde{K}/\mathbb{Q}_{p^f}(\zeta_{p^j})(1)$ is totally ramified. This ends the proof of Theorem \ref{admissible j.s. occur} and therefore of Theorem \ref{realizable j.s. are realized} in the Introduction.
\section{Upper jumps of cyclic extensions}\label{wild extension}
In this section we use Theorem \ref{classification of characters}, together with Theorem \ref{rho for loc fields}, to establish Theorem \ref{classification wild characters}, a classification in terms of jump sets for the possible sets of upper jumps of a cyclic wild extension of a local field $K$. We next prove combinatorially that the classification obtained is equivalent to that obtained by Miki \cite{Miki}, Maus \cite{Maus} and Sueyoshi \cite{Sueyoshi}: in this way those results are \emph{deduced} from Theorem \ref{classification wild characters}. Finally we give a sense of how in practice the classification of Theorem \ref{classification wild characters} may look, by examining it for several possible values of the triple $((I,\beta),f,p)$, and in particular we do so for the most typical occurrences of $(I,\beta)$ in the sense of Theorem \ref{counting}. We also show that for $K/\mathbb{Q}_p(\zeta_p)$ totally ramified, the knowledge of the filtered $\mathbb{Z}_p$-module $U_{\bullet}(K)$ is equivalent to the knowledge of all possible sets of upper jumps of cyclic wild totally ramified extensions of $K$ (see Corollary \ref{reconstructing}).

\subsection{Classification of possible sets of jumps} 
In the rest of the section $K$ will denote as usual a local field of residue characteristic a prime number denoted by $p$. We fix $K^{\text{sep}}$ a separable closure of $K$ and we denote by $G_K:=\text{Gal}(K^{\text{sep}}/K)$ the absolute Galois group of $K$. Let $H$ be a normal closed subgroup of $G_K$. Recall that for every $\alpha \in \mathbb{R}_{\geq 0}$ the Galois group $G_K/H$ is provided with a subgroup $(G_K/H)^{\alpha}$ via the so-called upper ramification filtration (see \cite{Local fields}).  Let $L/K$ be a finite cyclic totally ramified extension of $K$, with degree a power of $p$. Denote by $G$ the Galois group $\text{Gal}(L/K)$. A number $\alpha \in \mathbb{R}_{\geq 0}$ is said to be an \emph{upper jump} for $L/K$ if $G^{\alpha} \supsetneq G^{\alpha+\epsilon}$ for each $\epsilon>0$. We denote by $J(L/K)$ the set of upper jumps for $L/K$. From the Hasse--Arf Theorem (see \cite{Local fields}) we have that $J(L/K) \subset \mathbb{Z}_{\geq 1}$. We denote by $\mathcal{J}_K$ the collection of all such subsets of $\mathbb{Z}_{\geq 1}$ as $L$ varies among all cyclic, $p$-power, totally ramified extensions of $K$. The set $\mathcal{J}_{K}$ can be also described as follows. We consider all totally ramified continuous homomorphisms 
$$\chi:G_{K} \to \mathbb{Q}_p/\mathbb{Z}_p,
$$
where $\chi$ is said to be totally ramified if the corresponding field extension is totally ramified. The set of upper jumps for $\chi$ are the $\alpha \in \mathbb{R}_{\geq 0}$ such that $\chi(G_K^{\alpha}) \neq \chi(G_K^{\alpha+\epsilon})$ for all $\epsilon>0$. This set is denoted by $J_{\chi}$. One has that $J_{\chi}=J((K^{\text{sep}})^{\text{ker}(\chi)}/K)$, so that $\mathcal{J}_K$ consists of the collection of all $J_{\chi}$ as $\chi$ varies among continuous totally ramified characters $\chi:G_K \to \mathbb{Q}_p/\mathbb{Z}_p$. Of course the set of such continuous totally characters $\chi:G_K \to \mathbb{Q}_p/\mathbb{Z}_p$ can be equivalently described as the set of all $\chi:G_K^{\text{ab}} \to \mathbb{Q}_p/\mathbb{Z}_p$ continuous totally ramified. Finally it is not difficult to see that $\mathcal{J}_K$ is also the collection of all $J_{\chi}$ for all continuous homomorphisms $\chi:(G_{K}^{\text{ab}})^{1} \to \mathbb{Q}_p/\mathbb{Z}_p$. On the other hand $(G_{K}^{\text{ab}})^{1} \simeq_{\mathbb{Z}_p\text{-filt}}U_{\bullet}(K)$ via the Artin local reciprocity law. Therefore we see that the definition of $\mathcal{J}_K$ given in this section is equivalent to the one given in the introduction: we have $\mathcal{J}_K=\mathcal{J}_{U_{\bullet}(K)}$, where the right hand side is defined at the beginning of Section \ref{characters}. Therefore we are in a position to apply the results of Section \ref{characters}, notably Theorem \ref{classification of characters}. To make the statements simpler we first make a definition. Let $\rho$ denote a general shift with $T_{\rho}$ finite, $f$ a positive integer, and $p$ a prime number. Let moreover $(I,\beta)$, $(I',\beta')$ be in $\text{Jump}_{\rho}^{*}$.
\begin{definition} \label{incompatibility}
We say that $(I,\beta)$ is $((I',\beta'),f,p)$-incompatible if the following conditions hold. 

$(1)$ The set $I \cap I'$ is non-empty. Moreover the subset $\text{Max}((I,\beta),(I',\beta'))$ of $I \cap I'$ consisting of those $i$ in $I \cap I'$ where $\beta(i)-\beta'(i)$ is strictly positive and assumes the maximal possible value, which we denote by $c$, has precisely one element if $p>2$ and an odd number of elements if $p=2$.  

$(2)$ If $f>1$ then $\text{Max}((I,\beta),(I',\beta'))=\{e_{\rho}^{*}\}$. 

$(3)$ Given any $i \in I'-I$, there is no $j \in I$ such that $(j,\beta'(j)) \geq_{\rho} (i,c+\beta'(i))$.

We say that $(I,\beta)$ is $((I',\beta'),f,p)$-compatible if it is not $((I',\beta'),f,p)$-incompatible. 
\end{definition}
Combining Theorem \ref{character for free modules} and Theorem \ref{classification of characters} together with Theorem \ref{rho for loc fields} we obtain the following. 
\begin{theorem} \label{classification wild characters}
Suppose that $\mu_p(K)=\{1\}$. Then $\mathcal{J}_K=\emph{Jump}_{\rho_K}$. Suppose that $\mu_p(K) \neq \{1\}$. Then $\mathcal{J}_K$ consists precisely of the elements of $\emph{Jump}_{\rho_K}^{*}$ that are $((I_K,\beta_K),f_K,p)$-compatible. 
\end{theorem}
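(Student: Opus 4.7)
The plan is to reduce the statement to the classification of sets of jumps of characters of a $(f,\rho)$-quasi-free filtered module, established in Section \ref{characters}, by combining it with the identification of $U_{\bullet}(K)$ as such a module from Section \ref{U1 as filtered module}.

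First I would identify $\mathcal{J}_K$ with $\mathcal{J}_{U_{\bullet}(K)}$, the latter as defined at the beginning of Section \ref{characters}. This is a consequence of local class field theory: the Artin reciprocity map yields a filtered isomorphism between $U_{\bullet}(K)$ and $(G_K^{\text{ab}})^{\bullet}$, where the right-hand side carries the upper numbering filtration. Combined with the Hasse--Arf Theorem, which guarantees that upper jumps of a totally ramified character of $G_K^{\text{ab}}$ are positive integers, this identifies the set of upper jumps of any such character $\chi$ with the set of jumps of its associated character of $U_{\bullet}(K)$ in the sense of Section \ref{characters}, as already sketched just before the statement.

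With this reduction in place, I would split along the dichotomy $\mu_p(K)=\{1\}$ versus $\mu_p(K)\neq\{1\}$, paralleling Section \ref{U1 as filtered module}. In the former case, Theorem \ref{no torsion} gives $U_{\bullet}(K) \simeq_{\text{filt}} M_{\rho_K}^{f_K}$, so Theorem \ref{character for free modules} immediately yields $\mathcal{J}_K = \text{Jump}_{\rho_K}$. In the latter case, Proposition \ref{rho for loc fields} combined with Theorem \ref{mup neq 1} presents $U_{\bullet}(K)$ as a $(f_K,\rho_K)$-quasi-free filtered $\mathbb{Z}_p$-module that is not free, whose attached invariant is precisely $(I_K,\beta_K)$. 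Proposition \ref{set of jump for quasi free are jump set} then forces $\mathcal{J}_K \subseteq \text{Jump}_{\rho_K}^{*}$, while Theorem \ref{classification of characters}, applied with $R = \mathbb{Z}_p$ and $M_{\bullet} = U_{\bullet}(K)$, pinpoints exactly which elements of $\text{Jump}_{\rho_K}^{*}$ fail to arise as $J_{\chi}$ for some character.

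The remaining step is purely combinatorial bookkeeping: checking that the conjunction of conditions (a.1)--(a.2) (when $p\neq 2$, so $|R/m_R|\neq 2$) or (b.1)--(b.2) (when $p=2$) of Theorem \ref{classification of characters} is equivalent to conditions (1)--(3) of Definition \ref{incompatibility}, under the identification $(I',\beta') = (I_K,\beta_K)$. Condition (1) of the definition repackages the cardinality requirement in (a.1)/(b.1), splitting on whether $p=2$; condition (2) of the definition is the residue clause involving $e_{\rho}^{*}$ in (a.1)/(b.1); and condition (3) reformulates the maximality clause (a.2)/(b.2), noting that $\beta(j)-\beta_K(j)=c$ is by construction constant as $j$ ranges over $\text{Max}$, so the point whose maximality is asserted in (a.2)/(b.2) is $(i,c+\beta_K(i))$ for $i \in I_K - I$. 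This alignment of conditions is the main obstacle to watch out for, since the structural content has already been supplied by the cited theorems.
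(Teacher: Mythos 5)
Your proof is correct and follows essentially the same route as the paper, which states the result as a combination of Theorem \ref{character for free modules}, Theorem \ref{classification of characters}, and Proposition \ref{rho for loc fields} (via the identification $\mathcal{J}_K=\mathcal{J}_{U_{\bullet}(K)}$ sketched in the text preceding the theorem). You correctly fill in the dependence on Theorem \ref{no torsion}, Theorem \ref{mup neq 1}, Proposition \ref{set of jump for quasi free are jump set}, and the alignment of conditions (a.1)--(a.2)/(b.1)--(b.2) with Definition \ref{incompatibility}, none of which changes the argument in substance.
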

We conclude this subsection by proving that the notion of $((I',\beta'),f,p)$-incompatibility is equivalent to a slightly simpler criterion. This is given by the next proposition, which will be repeatedly applied in the next subsection. We make first the following definition. 
\begin{definition} \label{floors in I}
Let $a$ be a positive integer and let $(I,\beta)$ be an extended $\rho$-jump set with $I \neq \emptyset$. Suppose that $a \geq \text{min}(I)$, then we denote by $ \lfloor a \rfloor_{I}$ the largest element $i$ of $I$ such that $i \leq a$. Suppose that $a \leq \text{max}(I)$, then we denote by $\lceil a \rceil_{I}$ the smallest element $i$ of $I$ such that $a \leq i$. 

\end{definition} 
Let now $(I,\beta)$ and $(I',\beta')$ be two extended $\rho$-jump sets.
\begin{proposition}\label{a cheaper criterion for incompatibility}
The jump set $(I,\beta)$ is $((I',\beta'),f,p)$-incompatible if and only if the following holds.

\emph{(a)} Conditions $(1)$ and $(2)$ from definition \ref{incompatibility} hold. If that is the case, let $c(I,I'):=\beta(i_0)-\beta'(i_0)$ for any $i_0 \in \emph{Max}((I,\beta),(I',\beta'))$. 

\emph{(b)} For every point $i \in I'-\emph{Max}((I,\beta),(I',\beta'))$, we have that
$$\beta'(i)+c(I,I')>\beta(\lceil i \rceil_{I}),
$$
whenever $i \geq \emph{min}(I)$ and
$$\rho^{\beta'(i)+c(I,I')}(i)>\rho^{\beta(\lfloor i \rfloor_{I})}(\lfloor i \rfloor_{I}),
$$
whenever $i \leq \emph{max}(I)$.
\begin{proof}
This follows immediately by noticing that condition $(3)$ of definition \ref{incompatibility}, requires only the comparisons with $\lfloor i \rfloor_{I}$ and $\lceil i \rceil_{I}$,  as soon as they are defined, since the two inequalities in part (b) of the present statement must certainly hold, but they trivially imply all the others since $j \mapsto \beta(j)$ is strictly decreasing and $j \mapsto \rho^{\beta(j)}(j)$ is strictly increasing, by definition of a jump set.
\end{proof}

\end{proposition}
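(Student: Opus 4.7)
The plan is to verify that condition~(3) of Definition~\ref{incompatibility} reduces to the comparisons in~(b) involving only the nearest neighbors $\lfloor i\rfloor_{I}$ and $\lceil i\rceil_{I}$ of $i$, using only the monotonicity built into a jump set. Recall that, on $I$, the map $j\mapsto\beta(j)$ is strictly decreasing while $j\mapsto\rho^{\beta(j)}(j)$ is strictly increasing. Throughout, write $c=c(I,I')$.

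First I would observe that part~(b) is automatic for $i\in(I\cap I')-\mathrm{Max}((I,\beta),(I',\beta'))$: for such $i$ one has $\lfloor i\rfloor_{I}=\lceil i\rceil_{I}=i$, so each of the two inequalities in~(b) collapses to $\beta'(i)+c>\beta(i)$, which holds because $i\notin\mathrm{Max}$ forces $\beta(i)-\beta'(i)<c$ by the definition of $\mathrm{Max}$ and $c$. Consequently~(b) is equivalent to the same statement restricted to $i\in I'-I$, which is precisely the range quantified over in condition~(3).

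Now fix $i\in I'-I$. Condition~(3) asks that no $j\in I$ satisfy $(j,\beta(j))\geq_{\rho}(i,c+\beta'(i))$, i.e.\ that for every $j\in I$, at least one of $\beta(j)<c+\beta'(i)$ or $\rho^{\beta(j)}(j)<\rho^{c+\beta'(i)}(i)$ holds. Since $i\notin I$, every $j\in I$ satisfies either $j\geq\lceil i\rceil_{I}$ or $j\leq\lfloor i\rfloor_{I}$ (whichever neighbor exists). Assuming both inequalities in~(b), for $j\geq\lceil i\rceil_{I}$ the strict decrease of $\beta$ gives $\beta(j)\leq\beta(\lceil i\rceil_{I})<c+\beta'(i)$, and for $j\leq\lfloor i\rfloor_{I}$ the strict increase of $\rho^{\beta(\cdot)}(\cdot)$ gives $\rho^{\beta(j)}(j)\leq\rho^{\beta(\lfloor i\rfloor_{I})}(\lfloor i\rfloor_{I})<\rho^{c+\beta'(i)}(i)$, so~(3) holds. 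Conversely, if the first inequality in~(b) fails, i.e.\ $\beta(\lceil i\rceil_{I})\geq c+\beta'(i)$, then using $\lceil i\rceil_{I}\geq i$ and strict monotonicity of $\rho$ one obtains $\rho^{\beta(\lceil i\rceil_{I})}(\lceil i\rceil_{I})\geq\rho^{c+\beta'(i)}(\lceil i\rceil_{I})\geq\rho^{c+\beta'(i)}(i)$, whence $(\lceil i\rceil_{I},\beta(\lceil i\rceil_{I}))\geq_{\rho}(i,c+\beta'(i))$, contradicting~(3); the failure of the second inequality in~(b) is handled symmetrically at $\lfloor i\rfloor_{I}$.

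There is no deep obstacle; the only care needed is in the boundary cases $i>\max(I)$ or $i<\min(I)$, in which one of $\lfloor i\rfloor_{I}$, $\lceil i\rceil_{I}$ does not exist and every $j\in I$ lies on the same side of $i$, so only one of the two inequalities in~(b) is available and needed. This is exactly why each inequality in~(b) is phrased under its own existence hypothesis. The substance of the proposition is just that the universal quantification over $j\in I$ in~(3) collapses to the two nearest-neighbor comparisons, which is a direct consequence of the strict monotonicity defining a jump set.
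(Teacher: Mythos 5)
Your proof is correct and follows essentially the same route as the paper: reduce condition~(3) of Definition~\ref{incompatibility} to the two nearest-neighbor comparisons, using that $\beta$ is strictly decreasing and $j\mapsto\rho^{\beta(j)}(j)$ is strictly increasing on a jump set. The one thing you spell out that the paper's terse proof leaves implicit is the preliminary reduction from $I'-\mathrm{Max}((I,\beta),(I',\beta'))$ (the range quantified over in~(b)) to $I'-I$ (the range in condition~(3)), by noting that for $i\in(I\cap I')-\mathrm{Max}$ one has $\lfloor i\rfloor_I=\lceil i\rceil_I=i$ and both inequalities collapse to $c>\beta(i)-\beta'(i)$, which holds precisely because $i\notin\mathrm{Max}$. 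One small caveat: the phrase ``handled symmetrically'' in the converse direction hides the fact that, when the second inequality fails at $j=\lfloor i\rfloor_I$, one must still argue separately that $\beta(j)\geq c+\beta'(i)$ (it follows from $\rho^{\beta(j)}(j)\geq\rho^{c+\beta'(i)}(i)$ and $j<i$ by the strict monotonicity of $\rho$, but it is not the mirror image of the first case); it would be cleaner to write out this short step.
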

\subsection{Comparison with Miki-Maus-Sueyoshi}
In this subsection we give a direct combinatorial verification that Theorem \ref{classification wild characters} and the main Theorem of \cite{Maus} are indeed classifying precisely the same sets. Of course this follows also from applying both theorems, but both criteria being of a purely combinatorial form, it is natural to provide a combinatorial proof of their equivalence, not relying on local fields. As an upshot we can \emph{deduce} Miki's classification from Theorem \ref{classification wild characters} and the bit extra of combinatorial work of this subsection. Moreover the combinatorial nature of the equivalence between the two classification is highlighted from the fact that it follows from a statement about a general shift, see Proposition \ref{inadequacy equivalent to incompatibility}. Recall indeed that the case discussed in the present section is only a very special case of the classification we provide Theorem \ref{classification of characters}, which is about a general $(f,\rho)$-quasi-free $R$-module (see Definition \ref{def of quasi free}), where $R$ is any complete DVR, $f$ is any positive integer and $\rho$ is a general shift. In the case $\mu_p(K)=\{1\}$ the two descriptions are literally equal. So we pass to examine the case $\mu_p(K) \neq \{1\}$, where both Theorems say that $\mathcal{J}_K \subseteq \text{Jump}_{\rho_K}^{*}$ and they both provide a criterion for an element of $\text{Jump}_{\rho_K}^{*}$ to be realizable as the set of jumps of a character. In the case of Theorem \ref{classification wild characters}, this is precisely the notion of being $((I_K,\beta_K),f_K,p)$-compatible. For the convenience of the reader we recap the formulation of Miki's criterion as stated in \cite{Sueyoshi} in terms of a general definition, valid for any shift $\rho$ with $T_{\rho}$ finite. As usual, let $p$ denote a prime number and $f$ a positive integer. Let moreover $(I,\beta)$ and $(I',\beta')$ be in $\text{Jump}_{\rho}^{*}$, with both $I,I'$ being non-empty.
\begin{definition} \label{inadequate}
We say that $(I,\beta)$ is $((I',\beta'),f,p)$-inadequate if the following holds. Write $J_{(I,\beta)}=\{t_1, \ldots ,t_m\}$ and $J_{(I',\beta')}=\{\lambda_1, \ldots ,\lambda_l\}$ (see immediately above Proposition \ref{equivalent data} for the notation $J_{(I',\beta')}$) with $\{t_i\}_{1 \leq i \leq m}$ and  $\{\lambda_i\}_{1 \leq i \leq l}$ written in increasing order. Write $s=\beta'(\text{max}(I'))$. Then there is a positive integer $L$ with $L<m-(s-1)$ such that the sequences $\{x_i\}_{0 \leq i < l-(s-1)}, \{y_i\}_{0 \leq i < l-(s-1)}$ defined as $x_i:=t_{L-i}, y_i:=\lambda_{l-i-(s-1)}$, with $x_i=0$ when $L \leq i$, satisfy the following condition. Whenever $y_i \in I'$, then $x_i \leq y_i$, with equality occurring, among these inequalities, precisely once if $p>2$, and an odd number of times if $p=2$. Moreover, in case $f>2$, equality occurs precisely once, for all $p$, and it occurs for $i=0$ with $x_0=y_0=e_{\rho}^{*} \in I \cap I'$ (i.e. $x_1,y_1<e_{\rho}^{'}$). Recall that $e_{\rho}^{*}=\text{max}(T_{\rho})+1$ and that $e_{\rho}^{'}$ is the unique positive integer such that $\rho(e_{\rho}^{'})=e_{\rho}^{*}$.

We say that $(I,\beta)$ is $((I',\beta'),f,p)$-adequate if it is not $((I',\beta'),f,p)$-inadequate.
\end{definition}
\begin{remark}
In \cite{Sueyoshi}, the final condition requires only that $x_1<e_{\rho}^{'}$ (i.e. that $e_{\rho}^{*} \in I$) because in that case $(I',\beta')$ is admissible (since it is the jump set of a local field, see definition right after Theorem \ref{realizable j.s. are realized}), so the condition $y_0=e_{\rho}^{*}$, which is equivalent to $e_{\rho}^{*} \in J_{(I',\beta')}$, is actually equivalent to $e_{\rho}^{*} \in I'$. 
\end{remark}
We next furnish a direct combinatorial proof that incompatibility and inadequacy are the same notion. 
\begin{proposition} \label{inadequacy equivalent to incompatibility}
Let $\rho$ be a shift with finite $T_{\rho}$, let $p$ be a prime number and $f$ a positive integer. Let $(I,\beta)$ and $(I',\beta')$ be in $\emph{Jump}_{\rho}^{*}$. Then $(I,\beta)$ is $((I',\beta'),f,p)$-inadequate if and only if it is $((I',\beta'),f,p)$-incompatible. 
\begin{proof}
Suppose that $(I,\beta)$ is $((I',\beta'),f,p)$-inadequate. Let $L<m-(s-1)$ and the two sequences $\{x_i\}_{0 \leq i \leq l-(s-1)}, \{y_i \}_{0 \leq i \leq l-(s-1)}$ be as in definition \ref{inadequate}. Let $M$ be the set of non-negative integers $i_0$, with $i_0 \leq l-(s-1)$, $y_{i_0} \in I'$ and $x_{i_0}=y_{i_0}$: the size of $M$ must be, by definition, equal to $1$ if $p>2$, and odd if $p=2$. We claim that $\{y_i \}_{i \in M}=\text{Max}((I,\beta),(I',\beta'))$ (for a definition of $\text{Max}((I,\beta),(I',\beta'))$ see Theorem \ref{classification of characters}). We know that, in either case, $M$ is non-empty. Let $i_0$ be one of its elements. Firstly, from the fact that $L<m-(s-1)$ we deduce precisely that the set $J_{(I,\beta)} \cap [y_{i_0},\infty)$ has strictly more elements than $J_{(I',\beta')} \cap [y_{i_0},\infty)$. In other words $y_{i_0} \in I \cap I'$ with $\beta(y_{i_0})-\beta'(y_{i_0})>0$. Next let $0 \leq i \leq l-(s-1)$ be any other index such that $y_i \in I \cap I'$. Assume $y_i>y_{i_0}$, i.e. that $i<i_0$. From the fact that $x_i \leq y_i$, we conclude that in the interval $[y_{i_0},y_i]$ there are at least as many points of $J_{(I,\beta)}$ as there are points of $J_{(I',\beta')}$, which amounts to $\beta(y_{i_0})-\beta(y_i) \geq \beta'(y_{i_0})-\beta'(y_i)$, which can be rewritten as $\beta(y_{i_0})-\beta'(y_{i_0}) \geq \beta(y_{i})-\beta'(y_i)$, with equality iff $i \in M$. A completely analogous reasoning in the case $i>i_0$ brings us to the same conclusion. In other words we have just shown that $y_{i_0} \in \text{Max}((I,\beta),(I',\beta'))$ and all other $i \in M$ are precisely the $i$ such that $y_i \in \text{Max}((I,\beta),(I',\beta'))$. Therefore we conclude by the very definition of inadequacy that conditions $(1)-(2)$ of definition \ref{incompatibility} hold. We are left with proving condition $(3)$. Let $i_1$ be an index such that $y_{i_1} \in I'-I$. Take $i_0 \in M$, and suppose $i_1<i_0$. Since $|J_{(I,\beta)} \cap [y_{i_0},x_{i_1}]|=|J_{(I',\beta')} \cap [y_{i_0},y_{i_1}]|$, we have that $\beta(y_{i_0})-\beta(\lceil y_{i_1} \rceil_{I})>\beta'(y_{i_0})-\beta'(y_{i_1})$, which can be rewritten as $c+\beta'(y_{i_1})>\beta(\lceil y_{i_1} \rceil_{I})$. This last inequality is precisely the first of the two inequalities in Proposition \ref{a cheaper criterion for incompatibility}. Next, always assuming $i_1<i_0$, consider the two possible cases: $x_{i_1}<\lfloor y_{i_1} \rfloor_{I}$ or $\lfloor y_{i_1} \rfloor_{I} \leq  x_{i_1}<\lceil y_{i_1} \rceil_{I}$. In the first case observe that $\beta(y_{i_0})-\beta(\lfloor y_{i_1} \rfloor_{I})>\beta'(y_{i_0})-\beta'(y_{i_1})$, which can be recast as $c+\beta'(y_{i_1})>\beta(\lfloor y_{i_1} \rfloor_{I})$. This last inequality trivially implies that $\rho^{c+\beta'(y_{i_1})}(y_{i_1})>\rho^{\beta(\lfloor y_{i_1} \rfloor_{I})}(\lfloor y_{i_1} \rfloor_{I})$, since $\rho$ is strictly increasing. So in the first case one, trivially, obtains the second inequality of Proposition \ref{a cheaper criterion for incompatibility}. In the second case observe that $(\beta'(y_{i_0})-\beta'(y_{i_1}))-( \beta(x_{i_0})-\beta(\lfloor y_{i_1} \rfloor_{I}))=v_{\rho}(x_{i_0})$, i.e. $\rho^{\beta'(y_{i_0})-\beta'(y_{i_1}))-( \beta(y_{i_0})-\beta(\lfloor y_{i_1} \rfloor_{I})}(\lfloor y_{i_1} \rfloor_{I})=x_{i_1}<y_{i_1}$, which can be rewritten as $\rho^{\beta(\lfloor y_{i_1} \rfloor_{I})}(\lfloor y_{i_1} \rfloor_{I})<\rho^{c+\beta'(y_{i_1})}(y_{i_1})$. This last inequality is precisely the second inequality in Proposition \ref{a cheaper criterion for incompatibility}. The case $i_1>i_0$ can be treated in the same way. %and again one two inequalities of Proposition \ref{a cheaper criterion for incompatibility}. 
Altogether this proves that $(I,\beta)$ is $((I',\beta'),f,p)$-incompatible.

The proof of the converse implication proceeds analogously, and basically it can be obtained by inverting the above arguments. 
\end{proof}
\end{proposition}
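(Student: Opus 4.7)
The strategy is to exhibit an explicit dictionary between the two combinatorial criteria: the set $\text{Max}((I,\beta),(I',\beta'))$ of the incompatibility definition should correspond bijectively to the set $M$ of indices $i$ in the inadequacy definition for which $y_i \in I'$ and $x_i = y_i$. Once this correspondence is established, the counts on the two sides (one element if $p>2$, odd if $p=2$, and the forced equality $e_\rho^* \in \text{Max}$ if $f > 1$) line up verbatim, and the remaining work is to match the inequalities.

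First I would write the sequences explicitly: set $J_{(I,\beta)} = \{t_1 < \cdots < t_m\}$ and $J_{(I',\beta')} = \{\lambda_1 < \cdots < \lambda_l\}$, so that $x_i = t_{L-i}$ and $y_i = \lambda_{l-i-(s-1)}$ walk down the two jump chains simultaneously. An index $y_i$ lies in $I'$ exactly when it is a genuine ``new start'' of a $\rho$-orbit block of $A_{(I',\beta')}$, and the shift $s = \beta'(\max I')$ exists to skip the topmost $s-1$ entries coming from the orbit of $\max I'$. Under this setup, for $y_i \in I'$ the equality $x_i = y_i$ is equivalent to $y_i \in I \cap I'$ together with the statement that the number of jumps of $(I,\beta)$ lying weakly above $y_i$ exceeds the number of jumps of $(I',\beta')$ lying weakly above $y_i$ by the same constant $c$ for every such index. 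This constant $c$ equals $\beta(y_i) - \beta'(y_i)$, and the constancy of $c$ on the equality set is precisely the defining property of $\text{Max}((I,\beta),(I',\beta'))$; conversely, $L$ is recovered from an incompatibility witness by $L = m - (\beta(\max(\text{Max})) - c) + 1$ (modulo trivial adjustments).

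The heart of the proof is then to match condition (3) of incompatibility, reformulated via Proposition \ref{a cheaper criterion for incompatibility} through the operators $\lceil \cdot \rceil_I$ and $\lfloor \cdot \rfloor_I$, with the strict inequalities $x_i < y_i$ for those $i$ with $y_i \in I' - I$. Concretely, if $y_i \in I' - I$ then $x_i$ must fall into an open interval determined by two consecutive elements of $A_{(I,\beta)}$, and each of the two defining inequalities of Proposition \ref{a cheaper criterion for incompatibility} translates into the statement that $x_i$ lies strictly below $y_i$ on either of two possible subintervals bounded by $\lfloor y_i \rfloor_I$ and $\lceil y_i \rceil_I$. The main obstacle I anticipate is the boundary bookkeeping: handling the convention $x_i = 0$ when $L \leq i$, confirming that $\lceil y_i \rceil_I$ and $\lfloor y_i \rfloor_I$ are defined in the relevant ranges, and verifying that the parity/cardinality condition on equalities (with the special case $p=2$) is genuinely symmetric under the correspondence above. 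Once this bookkeeping is dispatched, both implications follow by reading the dictionary in opposite directions.
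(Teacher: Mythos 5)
Your approach matches the paper's: both proofs establish the bijection between the equality set $M$ (indices $i$ with $y_i \in I'$ and $x_i = y_i$) and $\text{Max}((I,\beta),(I',\beta'))$, recover $c = \beta - \beta'$ as the constant offset $m - L - s + 1$ along the aligned chains, and then translate condition $(3)$ of incompatibility into the strict inequalities $x_i < y_i$ for $y_i \in I' - I$ via the floor/ceiling reduction of Proposition \ref{a cheaper criterion for incompatibility}. Your explicit formula for recovering $L$ should read $L = m + 1 - s - c$ rather than what you wrote, but this is precisely the kind of minor adjustment you flagged, and it does not affect the structure of the argument.
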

From Proposition \ref{inadequacy equivalent to incompatibility} we can infer the main Theorem in \cite{Sueyoshi}.
\begin{theorem} (Miki's Theorem) Suppose that $\mu_p(K) \neq \{1\}$. Then $\mathcal{J}_K$ consists precisely of the elements of $\emph{Jump}_{\rho_K}^{*}$ that are $((I_K,\beta_K),f_K,p)$-adequate. 
\begin{proof}
This follows immediately from Theorem \ref{classification wild characters} and Proposition \ref{inadequacy equivalent to incompatibility} together. 
\end{proof}

\end{theorem}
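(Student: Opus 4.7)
The plan is to deduce this theorem as an immediate corollary of two results already established in the excerpt, combining them in a completely formal way.

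First, I would invoke Theorem \ref{classification wild characters}. Under the standing hypothesis $\mu_p(K) \neq \{1\}$, that theorem identifies $\mathcal{J}_K$ with the subset of $\mathrm{Jump}_{\rho_K}^{*}$ consisting of elements that are $((I_K,\beta_K),f_K,p)$-compatible in the sense of Definition \ref{incompatibility}. This step itself is not really a step of the present proof, but I would quote it verbatim so the reader sees precisely which characterization we are starting from.

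Next I would apply Proposition \ref{inadequacy equivalent to incompatibility}, which is a purely combinatorial statement about a shift $\rho$ with $T_\rho$ finite, a prime $p$, and a positive integer $f$: a pair $(I,\beta) \in \mathrm{Jump}_\rho^{*}$ is $((I',\beta'),f,p)$-incompatible if and only if it is $((I',\beta'),f,p)$-inadequate. Applying this with $\rho=\rho_K$, $(I',\beta')=(I_K,\beta_K)$ and $f=f_K$, the two negated notions coincide: $((I_K,\beta_K),f_K,p)$-compatible is the same property as $((I_K,\beta_K),f_K,p)$-adequate.

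Chaining these equalities gives
\[
\mathcal{J}_K \;=\; \{(I,\beta) \in \mathrm{Jump}_{\rho_K}^{*}:(I,\beta)\text{ is }((I_K,\beta_K),f_K,p)\text{-compatible}\}
\;=\; \{(I,\beta) \in \mathrm{Jump}_{\rho_K}^{*}:(I,\beta)\text{ is }((I_K,\beta_K),f_K,p)\text{-adequate}\},
\]
which is exactly the statement of Miki's Theorem. Since both Theorem \ref{classification wild characters} and Proposition \ref{inadequacy equivalent to incompatibility} are available by assumption, there is genuinely no additional work to do here; the only conceivable obstacle is a notational one, namely to check that the shift $\rho_K$ and the parameters $(I_K,\beta_K,f_K,p)$ attached to the local field $K$ are literally the inputs permitted by Proposition \ref{inadequacy equivalent to incompatibility}, and this is immediate since $T_{\rho_K}$ is finite whenever $\mathrm{char}(K)=0$, which is forced by the hypothesis $\mu_p(K) \neq \{1\}$.
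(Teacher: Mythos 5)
Your argument is exactly the paper's own proof: quote Theorem \ref{classification wild characters} to describe $\mathcal{J}_K$ in terms of compatibility, then replace ``compatible'' by ``adequate'' via Proposition \ref{inadequacy equivalent to incompatibility}. The closing remark that $T_{\rho_K}$ is finite (since $\mu_p(K)\neq\{1\}$ forces $\mathrm{char}(K)=0$) is a correct and welcome sanity check, but otherwise the reasoning matches the paper verbatim.
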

\subsection{Examples and special cases}
We begin by providing several cases where Theorem \ref{classification wild characters} specializes to something much simpler, the interesting case being clearly that $\mu_p(K) \neq \{1\}$, which we will assume in the rest of this subsection.
\begin{corollary} \label{cor1}
Let $(I,\beta) \in \emph{Jump}_{\rho}^{*}$, with $I \cap I_K= \emptyset$. Then $(I,\beta) \in \mathcal{J}_K$.
\begin{proof}
Indeed, in this case condition $(1)$ of definition \ref{incompatibility} cannot possibly hold if $(I',\beta'):=(I_K,\beta_K), f:=f_K, p:=\text{char}(O_K/m_K)$. Therefore $(I,\beta)$ is $((I_K,\beta_K),f,p)$-compatible and the conclusion follows from Theorem \ref{classification wild characters}.
\end{proof}
\end{corollary}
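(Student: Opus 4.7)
The plan is to apply Theorem \ref{classification wild characters} directly, after observing that the hypothesis $I \cap I_K = \emptyset$ instantly rules out incompatibility. Concretely, I would unpack Definition \ref{incompatibility}: its very first clause (condition $(1)$) demands that the set $I \cap I'$ be non-empty, where $I'$ plays the role of $I_K$ in our setting. So the first (and essentially only) step is to note that, taking $(I',\beta') := (I_K,\beta_K)$, the intersection $I \cap I_K$ is empty by assumption, whence condition $(1)$ of Definition \ref{incompatibility} fails for the triple $((I,\beta),(I_K,\beta_K),f_K,p)$.

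Once condition $(1)$ fails, $(I,\beta)$ cannot be $((I_K,\beta_K),f_K,p)$-incompatible (all three conditions would need to hold simultaneously), so by definition it is $((I_K,\beta_K),f_K,p)$-compatible. Then I would invoke Theorem \ref{classification wild characters} in the case $\mu_p(K) \neq \{1\}$ to conclude $(I,\beta) \in \mathcal{J}_K$. In the case $\mu_p(K) = \{1\}$ the conclusion is even more immediate, since in that regime Theorem \ref{classification wild characters} gives $\mathcal{J}_K = \mathrm{Jump}_{\rho_K}$, but extended jump sets arise only when $T_{\rho_K}$ is finite (the characteristic $0$ case), so a uniform way to handle both regimes is simply to point out that the compatibility check is vacuously satisfied whenever $I \cap I_K = \emptyset$.

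There is no real obstacle here: the whole content of the corollary is the observation that the incompatibility criterion is a conjunction beginning with ``$I \cap I_K \neq \emptyset$,'' so disjointness of $I$ and $I_K$ trivially secures membership in $\mathcal{J}_K$. The corollary is best viewed as a sanity check on Definition \ref{incompatibility} and as a stepping stone to the more substantive applications in the subsection that follows.
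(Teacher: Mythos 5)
Your proof is correct and takes essentially the same route as the paper: note that condition~$(1)$ of Definition~\ref{incompatibility} requires $I \cap I_K \neq \emptyset$, so disjointness forces compatibility, and then apply Theorem~\ref{classification wild characters}. The aside about $\mu_p(K)=\{1\}$ is harmless but unnecessary, since the subsection explicitly assumes $\mu_p(K) \neq \{1\}$.
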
 
As soon as $f_K \geq 2$ we can say the following.
\begin{corollary} \label{corollary2}
Suppose that $f_K \geq 2$. Then the following facts holds. 

\emph{(a)} Suppose that $e_{\rho_K}^{*} \not \in I_K$. Then $\mathcal{J}_K=\emph{Jump}_{\rho_K}^{*}$.

\emph{(b)} Suppose that $e_{\rho_K}^{*} \in I_K$. Then $\emph{Jump}_{\rho_K} \subseteq \mathcal{J}_K \subsetneq \emph{Jump}_{\rho_K}^{*}$. 

\emph{(c)} Suppose that $e_{\rho_K}^{*} \in I_K$. Then each $(I,\beta) \in \emph{Jump}_{\rho_K}^{*}$ with $e_{\rho_K}^{*} \in I$ and $\beta(e_{\rho_K}^{*}) \leq \beta_K(e_{\rho_K}^{*})$ is in $\mathcal{J}_{K}$. 
\begin{proof}
Let $(I,\beta)$ be in $\text{Jump}_{\rho_K}^{*}$. Then condition $(2)$ of definition \ref{incompatibility} cannot possibly hold if $(I',\beta'):=(I_K,\beta_K), f:=f_K, p:=\text{char}(O_K/m_K)$, therefore by Theorem \ref{classification wild characters}, we obtain that $(I,\beta) \in \mathcal{J}_K$, thus giving (a). Similarly if $e_{\rho_K}^{*} \not \in I$, which amounts to saying that $(I,\beta) \in \text{Jump}_{\rho_K}$, then condition $(2)$ from definition \ref{incompatibility} cannot possibly hold, giving $\text{Jump}_{\rho_K} \subseteq \mathcal{J}_K$ from (b). The inclusion $\mathcal{J}_K \subseteq \text{Jump}_{\rho_K}^{*}$ always holds, thanks to Theorem \ref{classification wild characters}, so, to conclude the proof of (b), we only need to prove the strict inclusion, i.e. to provide, under the conditions of (b), an element of $\text{Jump}_{\rho_K}^{*}$ that is not in $\mathcal{J}_K$. Consider $(\{e_{\rho_K}^{*}\},(e_{\rho_K}^{*} \mapsto n))$ with $n>\beta_K(e_{\rho_K}^{*})$: it trivially satisfies condition (a) from Proposition \ref{a cheaper criterion for incompatibility}. Condition (b) amounts to saying that for any $i \in I_K -\{e_{\rho_K}^{*}\}$ we need to have $n-\beta_K(e_{\rho_K}^{*})+\beta_K(i)>n$. This last inequality is equivalent to the inequality $\beta_K(i)-\beta_K(e_{\rho_K}^{*})>0$ and this inequality holds by definition of jump set. Hence we conclude by Theorem \ref{classification wild characters} and Proposition \ref{a cheaper criterion for incompatibility} that $(\{e_{\rho_K}^{*}\},(e_{\rho_K}^{*} \mapsto n)) \not \in \mathcal{J}_K$. This concludes the proof of (b). 

For (c), notice that the assumption $\beta(e_{\rho_K}^{*})<\beta_K(e_{\rho_K}^{*})$ together with $f_K \geq 2$ makes condition $(2)$ of \ref{incompatibility} impossible to hold for $(I,\beta)$, giving by Theorem \ref{classification wild characters} that $(I,\beta) \in \mathcal{J}_K$. 
\end{proof}
\end{corollary}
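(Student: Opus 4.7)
The strategy for all three parts is to apply Theorem \ref{classification wild characters} and analyze when an extended jump set $(I,\beta) \in \text{Jump}_{\rho_K}^*$ can fail to be $((I_K,\beta_K),f_K,p)$-compatible. The crucial observation is that because $f_K \geq 2$, condition $(2)$ of Definition \ref{incompatibility} forces $\text{Max}((I,\beta),(I_K,\beta_K)) = \{e_{\rho_K}^*\}$ whenever incompatibility occurs. In particular, any incompatible $(I,\beta)$ must satisfy $e_{\rho_K}^* \in I \cap I_K$ with $\beta(e_{\rho_K}^*) > \beta_K(e_{\rho_K}^*)$, and this one element must account for all of $\text{Max}$.

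For part (a), the assumption $e_{\rho_K}^* \notin I_K$ makes it impossible to have $\text{Max} = \{e_{\rho_K}^*\}$, so no element of $\text{Jump}_{\rho_K}^*$ can be incompatible; every extended jump set lies in $\mathcal{J}_K$. For the first inclusion in part (b), if $(I,\beta) \in \text{Jump}_{\rho_K}$, i.e. $e_{\rho_K}^* \notin I$, again the forced form of $\text{Max}$ cannot be achieved, so $(I,\beta)$ is automatically compatible. Similarly, in part (c), the hypothesis $\beta(e_{\rho_K}^*) \leq \beta_K(e_{\rho_K}^*)$ means $e_{\rho_K}^*$ does not lie in $\text{Max}$ (where the difference $\beta - \beta_K$ must be strictly positive), so again the only admissible form of $\text{Max}$ forced by $f_K \geq 2$ is impossible. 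All three of these arguments reduce to the same one-line obstruction to condition $(2)$.

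The only substantive point is the strict inclusion in part (b), where we must exhibit an explicit extended jump set that does lie in $\text{Jump}_{\rho_K}^* \setminus \mathcal{J}_K$. The natural candidate, given the discussion above, is $(I,\beta) = (\{e_{\rho_K}^*\}, \, e_{\rho_K}^* \mapsto n)$ for any integer $n > \beta_K(e_{\rho_K}^*)$; this is trivially an extended $\rho_K$-jump set. Conditions $(1)$ and $(2)$ of Definition \ref{incompatibility} are immediate for this choice. To check condition $(3)$, I would invoke Proposition \ref{a cheaper criterion for incompatibility}: since $I = \{e_{\rho_K}^*\}$ consists of the single maximal element of $T_{\rho_K}^*$, for every $i \in I_K - \{e_{\rho_K}^*\}$ the symbol $\lfloor i \rfloor_I$ is undefined (as $i < e_{\rho_K}^* = \min(I)$), and $\lceil i \rceil_I = e_{\rho_K}^*$. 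The single inequality to verify becomes $\beta_K(i) + (n - \beta_K(e_{\rho_K}^*)) > n$, i.e. $\beta_K(i) > \beta_K(e_{\rho_K}^*)$, which holds because $(I_K,\beta_K)$ is a jump set and $\beta_K$ is strictly decreasing.

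I do not anticipate any genuine obstacle here: the whole argument is a bookkeeping exercise on Definition \ref{incompatibility} given $f_K \geq 2$. The only place where care is needed is in the witness construction for strict inclusion, where one must double-check that $\{(e_{\rho_K}^*,n)\}$ really is an admissible extended jump set (trivially yes, as $I = \{e_{\rho_K}^*\} \subseteq T_{\rho_K}^*$ and the singleton case imposes no further constraint) and that the verification of condition $(3)$ via Proposition \ref{a cheaper criterion for incompatibility} correctly reduces to the monotonicity of $\beta_K$.
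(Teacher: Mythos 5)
Your proposal is correct and follows essentially the same argument as the paper: parts (a), the first inclusion in (b), and (c) all reduce to the observation that $f_K \geq 2$ forces $\text{Max} = \{e_{\rho_K}^*\}$ in condition (2) of Definition \ref{incompatibility}, which is impossible under the stated hypotheses, and the strict inclusion in (b) uses the same witness $(\{e_{\rho_K}^*\}, e_{\rho_K}^* \mapsto n)$ with $n > \beta_K(e_{\rho_K}^*)$ and the same reduction of condition (b) of Proposition \ref{a cheaper criterion for incompatibility} to the strict decrease of $\beta_K$. Your careful bookkeeping about $\lfloor i\rfloor_I$ being undefined and only the $\lceil i\rceil_I$ inequality applying is a slightly more explicit rendering of the same computation the paper performs.
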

If instead $f_K=1$, then there are always exceptions.
\begin{corollary} \label{acazz}
If $f_K=1$, then $\mathcal{J}_K \subsetneq \emph{Jump}_{\rho_K}^{*}$. 
\begin{proof}
We proceed as in (b) of the previous corollary. For any $i \in I_{K}$ we consider the jump set $(\{i\},(i \mapsto n))$ with $n>\beta_K(i)$. We proceed to show that this jump set is $((I_K,\beta_K),1,p)$-incompatible. Condition (a) of Proposition \ref{a cheaper criterion for incompatibility} is clearly satisfied, so we proceed to verify condition (b) of that Proposition. Taking $j \in I_K$ with $j<i$, we need to check that $\beta_K(j)+n-\beta_K(i)>n$, or equivalently that $\beta_K(j)>\beta_K(i)$. This last inequality follows from the definition of a jump set. Take now $j \in I_K$ with $j>i$, we need to check that $\rho_{K}^{\beta_K(j)+n-\beta_K(i)}(j)>\rho_K^{n}(i)$, which, $\rho_K$ being strictly increasing, reduces to $\rho_{K}^{\beta_K(i)-\beta_K(j)}(i)<j$, which follows from the definition of a jump set. Therefore we conclude from Theorem \ref{classification wild characters} and Proposition \ref{a cheaper criterion for incompatibility} that $(\{i\},(i \mapsto n)) \not \in \mathcal{J}_K$.
\end{proof} 
\end{corollary}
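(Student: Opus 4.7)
The plan is to exhibit an explicit element of $\text{Jump}_{\rho_K}^{*}$ that is not in $\mathcal{J}_K$; that will suffice for the strict inclusion. Since we are in the subsection where $\mu_p(K) \neq \{1\}$, the set $I_K$ is non-empty. I would fix an arbitrary element $i_0 \in I_K$ and an integer $n > \beta_K(i_0)$, and propose the singleton extended jump set $(I,\beta) := (\{i_0\},\, i_0 \mapsto n)$. This trivially lies in $\text{Jump}_{\rho_K}^{*}$ since the conditions of Proposition \ref{equivalent data} are vacuous for a singleton and $i_0 \in I_K \subseteq T_{\rho_K}^{*}$.

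By Theorem \ref{classification wild characters}, to show $(\{i_0\}, i_0 \mapsto n) \notin \mathcal{J}_K$ I need to prove that it is $((I_K,\beta_K),1,p)$-incompatible. I would appeal to the simpler reformulation given by Proposition \ref{a cheaper criterion for incompatibility}. For part (a), note $I \cap I_K = \{i_0\}$, and by the choice $n > \beta_K(i_0)$ we have $\text{Max}((I,\beta),(I_K,\beta_K)) = \{i_0\}$. This is a singleton, hence contains one element (covering $p > 2$) and an odd number of elements (covering $p = 2$); the second clause of (a), which only activates when $f_K > 1$, is vacuous because $f_K = 1$. The constant $c := c(I,I_K) = n - \beta_K(i_0)$.

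For part (b), I would loop over each $j \in I_K \setminus \{i_0\}$ and treat the two sub-cases separately. When $j < i_0$, the ceiling $\lceil j \rceil_I$ equals $i_0$, and the required inequality $\beta_K(j) + c > \beta(i_0) = n$ collapses to $\beta_K(j) > \beta_K(i_0)$, which holds because $\beta_K$ is strictly decreasing. When $j > i_0$, the floor $\lfloor j \rfloor_I$ equals $i_0$, and the required inequality $\rho_K^{\beta_K(j)+c}(j) > \rho_K^n(i_0)$ reduces, using strict monotonicity of $\rho_K$, to $\rho_K^{\beta_K(i_0) - \beta_K(j)}(i_0) < j$; this is exactly the content of the strict increase of $k \mapsto \rho_K^{\beta_K(k)}(k)$ on $I_K$, applied to $i_0 < j$ after peeling off $\rho_K^{\beta_K(j)}$.

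There is no substantive obstacle here: once the classification (Theorem \ref{classification wild characters}) and its combinatorial reformulation (Proposition \ref{a cheaper criterion for incompatibility}) are in hand, the only care needed is to correctly read off the floor/ceiling values in the singleton case $I = \{i_0\}$ and to recognize both inequalities as immediate consequences of the two monotonicity properties built into the definition of a jump set. The proof is essentially a one-parameter family of counterexamples generated by varying $i_0 \in I_K$ and $n > \beta_K(i_0)$.
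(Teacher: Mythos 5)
Your proposal is correct and follows the paper's own proof essentially line for line: fix an arbitrary $i \in I_K$ (non-empty since we are in the case $\mu_p(K)\neq\{1\}$), take $n>\beta_K(i)$, and verify via Proposition \ref{a cheaper criterion for incompatibility} that the singleton extended jump set $(\{i\},\,i\mapsto n)$ is $((I_K,\beta_K),1,p)$-incompatible, using the two monotonicity properties of a jump set for the $j<i$ and $j>i$ cases respectively. The only difference is presentational (you spell out the floor/ceiling values and the singleton-is-odd observation for $p=2$ a bit more explicitly than the paper does).
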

We remark that if we would have put $n \leq \beta_K(i)$ during the proof of Corollary \ref{acazz} we would have found, thanks to Theorem \ref{classification wild characters}, that $(\{i\},(i \mapsto n)) \in \mathcal{J}_K$, since condition $(1)$ of \ref{incompatibility} is not satisfied. This will be helpful in the next corollary. It turns out that in the case $f_K=1$ there are even enough exceptions to reconstruct the full structure of the filtered $\mathbb{Z}_p$-module $U_{\bullet}(K)$ out of $\mathcal{J}_K$. Namely we have the following. 
\begin{corollary} \label{reconstructing}
Let $K_1,K_2$ be two totally ramified extensions of $\mathbb{Q}_p(\zeta_p)$. Then $\mathcal{J}_{K_1}=\mathcal{J}_{K_2}$ if and only if $U_{\bullet}(K_1) \simeq_{\mathbb{Z}_p\emph{-filt}} U_{\bullet}(K_2)$. 
\begin{proof}
Firstly observe that $T_{\rho_K}^{*}$ consists precisely of the positive integers $i \in \mathbb{Z}_{\geq 1}$ such that $(\{i\}, (i \mapsto n)) \in \mathcal{J}_K$ for some positive integer $n$. Indeed if $i \not \in I_K$, then by Corollary \ref{cor1} any $n \in \mathbb{Z}_{\geq 1}$ is allowed. If instead $i \in I_K$ then any $n \leq \beta(i)$ will be allowed, since in that way $\text{Max}((\{i\},(i \mapsto n)),(I_K,\beta_K))=\emptyset$ (for the definition of $\text{Max}((\{i\},(i \mapsto n)),(I_K,\beta_K))=\emptyset$ see Theorem \ref{classification of characters}). Conversely, by definition of a jump set, it is clear that for any $i \in \mathbb{Z}_{\geq 1}$ such that $(\{i\}, (i \mapsto n)) \in \mathcal{J}_K$ for some positive integer $n$, one has $i \in T_{\rho_K}^{*}$. Hence $T_{\rho_K}^{*}$ can be reconstructed from $\mathcal{J}_K$, and, since $e_K=|T_{\rho_K}^{*}|-1$, we can reconstruct $e_K$ from $\mathcal{J}_K$.

Next, from the proof of the previous corollary, it is clear that under the assumption $f_K=1$, the set $I_K$ can be reconstructed from $\mathcal{J}_K$ as the set of $i \in T_{\rho_K}^{*}$ for which there exists a positive integer $n$ such that the extended $\rho_K$-jump set $(\{i\},(i \mapsto n))$ is not in $\mathcal{J}_K$. Moreover in that proof we saw that, for $i \in I_K$, the set of such integers consists precisely of the left interval $[\beta_K(i)+1,\infty) \cap \mathbb{Z}_{\geq 1}$, hence also $\beta_K$ can be reconstructed from $\mathcal{J}_K$. Hence we can reconstruct $(I_K,\beta_K)$. 

So given $K_1$ and $K_2$ as in the statement we have that $1=f_{K_1}=f_{K_2}$, and we have shown above that we have $e_{K_1}=e_{K_2}$ and so $\rho_{K_1}=\rho_{K_2}$. Moreover by the reasoning just made, from $\mathcal{J}_{K_1}=\mathcal{J}_{K_2}$ we conclude that $(I_{K_1},\beta_{K_1})=(I_{K_2},\beta_{K_2})$. Hence we conclude by Theorem \ref{mup neq 1} that $U_{\bullet}(K_1) \simeq_{\mathbb{Z}_p\text{-filt}} U_{\bullet}(K_2)$. The converse is a triviality. 
\end{proof}
\end{corollary}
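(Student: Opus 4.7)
The reverse implication is immediate from the definition of $\mathcal{J}_K$ as an invariant of $U_{\bullet}(K)$ in the category of filtered $\mathbb{Z}_p$-modules. The content is therefore the forward direction: reconstruct the filtered module $U_{\bullet}(K)$ from the combinatorial set $\mathcal{J}_K$. Both $K_1$ and $K_2$ are totally ramified extensions of $\mathbb{Q}_p(\zeta_p)$, so $f_{K_i}=1$ and $\mu_p(K_i)\ne\{1\}$; by Theorem \ref{mup neq 1} the filtered module $U_{\bullet}(K_i)$ is completely determined by $e_{K_i}$ (which determines $\rho_{K_i}$) together with the admissible extended $\rho_{K_i}$-jump set $(I_{K_i},\beta_{K_i})$. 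So my plan is to read off $e_K$ and $(I_K,\beta_K)$ from $\mathcal{J}_K$ alone.

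The strategy is to probe $\mathcal{J}_K$ with the simplest possible extended jump sets, namely the singletons $(\{i\},(i\mapsto n))$. First I would recover $T_{\rho_K}^{*}$, and hence $e_K=|T_{\rho_K}^{*}|-1$, by showing that
\[
T_{\rho_K}^{*}=\{\,i\in\mathbb{Z}_{\geq 1}:\exists\,n\in\mathbb{Z}_{\geq 1}\text{ with }(\{i\},(i\mapsto n))\in\mathcal{J}_K\,\}.
\]
The inclusion $\subseteq$ follows from Corollary \ref{cor1} if $i\notin I_K$ (every $n$ works), and for $i\in I_K$ one checks via Theorem \ref{classification wild characters} that any $n\le\beta_K(i)$ produces $\mathrm{Max}((\{i\},(i\mapsto n)),(I_K,\beta_K))=\emptyset$, so condition $(1)$ of Definition \ref{incompatibility} fails and the jump set is compatible. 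The reverse inclusion $\supseteq$ is automatic since $\mathcal{J}_K\subseteq\mathrm{Jump}_{\rho_K}^{*}$ and singleton jump sets only exist at positions in $T_{\rho_K}^{*}$.

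Next I would recover $I_K$ as
\[
I_K=\{\,i\in T_{\rho_K}^{*}:\exists\,n\in\mathbb{Z}_{\geq 1}\text{ with }(\{i\},(i\mapsto n))\notin\mathcal{J}_K\,\}.
\]
This is exactly the content of the proof of Corollary \ref{acazz}, which under the hypothesis $f_K=1$ exhibits for each $i\in I_K$ a value $n>\beta_K(i)$ forcing $((I_K,\beta_K),1,p)$-incompatibility through Proposition \ref{a cheaper criterion for incompatibility}; conversely, if $i\in T_{\rho_K}^{*}\setminus I_K$, Corollary \ref{cor1} shows every $n$ is allowed. Finally, for each $i\in I_K$ the value $\beta_K(i)$ is recovered as
\[
\beta_K(i)=\max\{\,n\in\mathbb{Z}_{\geq 1}:(\{i\},(i\mapsto n))\in\mathcal{J}_K\,\},
\]
since, as just noted, the admissible $n$ are exactly those with $n\le\beta_K(i)$. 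Putting the three steps together, $\mathcal{J}_K$ determines $e_K$ and $(I_K,\beta_K)$, hence determines $U_{\bullet}(K)$ by Theorem \ref{mup neq 1}, finishing the proof.

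The only delicate point is the combinatorial verification in the second step: one must show that when $i\in I_K$ and $n>\beta_K(i)$, conditions (a) and (b) of Proposition \ref{a cheaper criterion for incompatibility} are indeed satisfied with $\mathrm{Max}=\{i\}$ for every other $j\in I_K$. This is essentially book-keeping with the two basic inequalities that define a jump set ($\beta_K$ strictly decreasing and $j\mapsto\rho_K^{\beta_K(j)}(j)$ strictly increasing), so I expect no genuine obstacle, only care in handling the case $j<i$ versus $j>i$ in $I_K$. The whole argument uses the hypothesis $f_K=1$ in an essential way, exactly because for $f_K\ge 2$ Corollary \ref{corollary2} shows that singleton probes outside $\{e_{\rho_K}^{*}\}$ can never fail, and the reconstruction would break down.
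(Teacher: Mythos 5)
Your proposal is correct and follows essentially the same route as the paper: reconstruct $T_{\rho_K}^{*}$ (hence $e_K$) from which singletons appear in $\mathcal{J}_K$ for some $n$, reconstruct $I_K$ and $\beta_K$ from which singletons fail for large $n$, and conclude via Theorem \ref{mup neq 1}. The delicate point you flag is exactly what Corollary \ref{acazz} supplies, and your remark that $f_K\geq 2$ would break the reconstruction matches Corollary \ref{corollary2}.
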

In other words, for $K_1,K_2$, totally ramified extension of $\mathbb{Q}_p(\zeta_p)$, one has $\mathcal{J}_{K_1}=\mathcal{J}_{K_2}$ if and only if $e_{K_1}=e_{K_2}$ and $(I_{K_1},\beta_{K_1})=(I_{K_2},\beta_{K_2})$.

We conclude this subsection providing a more explicit description of $\mathcal{J}_K$ in a family of simple cases, namely when $|I_K| \leq 2$. Observe that thanks to Theorem \ref{counting}, the equality $|I_K|=2$ is the most typical phenomenon. If $\text{v}_{\mathbb{Q}_p}(e) \geq 2$, the probability that $|I_K|>2$ is at most $(\frac{1}{q})^{p-1} \cdot \frac{q-1}{q}$ and at least $(\frac{1}{q})^{p-1} \cdot (\frac{q-1}{q})^{2}$, while if $\text{v}_{\mathbb{Q}_p}(e) \leq 1$, then $|I_K| \leq 2$ always. Observe also that $|I_K|=1$ if and only if $K$ is a tame extension of $\mathbb{Q}_{p^{f_K}}(\zeta_{p^n})$ where $n$ is the unique element of $\beta_K(I_K)$. The classification for $|I_K|=1$ takes a very simple form. Denoting by $e_0(K)$ the part of $\frac{e_K}{p-1}$ coprime to $p$, recall that, from the definition of admissible jump sets, one has that $|I_K|=1$ if and only if $I_K=\{e_0(K)\}$. Recall by admissibility that $\beta_K(e_0(K))=\text{v}_{\mathbb{Q}_p}(e_K)+1$.
\begin{corollary}
\emph{(a)} Suppose $|I_K|=1, f_K=1$. An extended $\rho_K$-jump set $(I,\beta)$ belongs to $\mathcal{J}_K$ if and only if either $e_0(K) \not \in I$ or both $e_0(K) \in I$ and $\beta(e_0(K)) \leq \emph{v}_{\mathbb{Q}_p}(e_K)+1$. \\
\emph{(b)} Suppose $|I_K|=1, f_K \geq 2$. Then $\mathcal{J}_K=\emph{Jump}_{\rho_K}^{*}$.
\begin{proof}
(a) If $e_0(K) \not \in I$ we conclude by Corollary \ref{cor1}. If $e_0(K) \in I$ and $\beta(e_0(K)) \leq \beta_K(e_0(K))$, then condition $(1)$ of definition \ref{incompatibility} cannot possibly hold, hence we conclude by Theorem \ref{classification wild characters}. Suppose instead that $\beta(e_0(K)) > \beta_K(e_0(K))$. Then all three conditions of definition \ref{incompatibility} are trivially satisfied and we conclude by Theorem \ref{classification wild characters}, finishing the proof.

(b) This follows immediately from Corollary \ref{corollary2}, given the fact that $e_{\rho_K}^{*} \neq e_0(K) \in I_K$.
\end{proof}
\end{corollary}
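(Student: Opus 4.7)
The plan is to reduce everything to a combinatorial check via Theorem \ref{classification wild characters}, which says that $\mathcal{J}_K$ consists of those $(I,\beta) \in \text{Jump}_{\rho_K}^{*}$ that are $((I_K,\beta_K),f_K,p)$-compatible in the sense of Definition \ref{incompatibility}. I will use that $|I_K|=1$ together with the admissibility of $(I_K,\beta_K)$ force $I_K = \{e_0(K)\}$ with $\beta_K(e_0(K)) = \text{v}_{\mathbb{Q}_p}(e_K) + 1$.

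For part (a), I will split on the position of $e_0(K)$ relative to $I$. First, if $e_0(K) \notin I$ then $I \cap I_K = \emptyset$, and Corollary \ref{cor1} immediately gives $(I,\beta) \in \mathcal{J}_K$. Next, if $e_0(K) \in I$ with $\beta(e_0(K)) \leq \beta_K(e_0(K))$, the set $\text{Max}((I,\beta),(I_K,\beta_K))$ is empty, since its only possible member $e_0(K)$ has non-positive $\beta$-difference; condition (1) of Definition \ref{incompatibility} then fails, making $(I,\beta)$ compatible and hence in $\mathcal{J}_K$. Finally, if $e_0(K) \in I$ with $\beta(e_0(K)) > \beta_K(e_0(K))$, then $\text{Max} = \{e_0(K)\}$, a singleton of odd cardinality, which handles condition (1) regardless of the parity of $p$; condition (2) holds vacuously because $f_K = 1$; and condition (3) holds vacuously because $I_K - I = \emptyset$. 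Thus $(I,\beta)$ is incompatible and excluded from $\mathcal{J}_K$.

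For part (b), the approach is to verify $e_{\rho_K}^{*} \notin I_K$ and then invoke Corollary \ref{corollary2}(a). Since $\mu_p(K) \neq \{1\}$ forces $(p-1) \mid e_K$, we have $e_{\rho_K}^{*} = pe_K/(p-1)$, whereas $e_0(K)$, being the $p$-coprime part of $e_K/(p-1)$, satisfies $e_0(K) \leq e_K/(p-1) < pe_K/(p-1) = e_{\rho_K}^{*}$. Therefore $I_K = \{e_0(K)\} \not\ni e_{\rho_K}^{*}$, and Corollary \ref{corollary2}(a) yields $\mathcal{J}_K = \text{Jump}_{\rho_K}^{*}$.

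The proof presents no real obstacle: once Theorem \ref{classification wild characters} is in hand, each branch of the case analysis reduces to direct inspection of Definition \ref{incompatibility} against the single-element jump set $I_K$, and conditions (2) and (3) collapse to vacuous checks. The only mild point worth flagging is the numerical separation $e_0(K) < e_{\rho_K}^{*}$ used in part (b), which is immediate from the definitions but must be noted to apply Corollary \ref{corollary2}(a).
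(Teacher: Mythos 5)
Your proof is correct and follows essentially the same route as the paper: part (a) is a three-way case split reducing to direct inspection of Definition \ref{incompatibility} via Theorem \ref{classification wild characters} (with Corollary \ref{cor1} handling the disjoint case), and part (b) observes $e_0(K) \neq e_{\rho_K}^{*}$ and invokes Corollary \ref{corollary2}. The only difference is that you spell out the numerical inequality $e_0(K) \leq e_K/(p-1) < pe_K/(p-1) = e_{\rho_K}^{*}$, which the paper treats as evident.
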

We next proceed providing an explicit classification in the case $|I_K|=2, f_K=1$. 
\begin{corollary}
Suppose $|I_K|=2, f_K=1$. Write $I_K=\{e_0(K),i\}$. Let $(I,\beta) \in \emph{Jump}_{\rho_K}^{*}$. Then $(I,\beta) \in \mathcal{J}_K$ if and only if one of the following two conditions holds. \\
$(1)$ One has that $I_K \cap I=\emptyset$. \\
$(2)$ One has that $I_K \subseteq I$, with $\emph{Max}((I,\beta),(I_K,\beta_K))=I_K$ or $\emph{Max}((I,\beta),(I_K,\beta_K))= \emptyset$. 
\begin{proof}
From Corollary \ref{cor1} we see that condition $(1)$ indeed implies that $(I,\beta) \in \mathcal{J}_K$. On the other hand condition $(2)$ implies that $|\text{Max}((I,\beta),(I_K,\beta_K))|$ is even, which makes the condition $(1)$ of definition \ref{incompatibility} impossible to hold. Hence we see that condition $(2)$ also implies that $(I,\beta) \in \mathcal{J}_K$. Conversely, assume that $|\text{Max}((I,\beta),(I_K,\beta_K))|=1$ but $I_K \subseteq I$. Then conditions $(1)-(2)-(3)$ of definition \ref{incompatibility} are clearly satisfied, since $I_K-I=\emptyset$. So are left with the case $\text{Max}((I,\beta),(I_K,\beta_K))=I \cap I_K$. Suppose $I \cap I_K=\{e_0(K)\}$. Then we have to check that $\rho_K^{\beta_K(i)+\beta(e_0(K))-\beta_K(e_0(K))}(i)>\rho_K^{\beta(e_0(K))}(e_0(K))$ which is equivalent to $\rho_K^{\beta_K(e_0(K))-\beta_K(i)}(e_0(K))<i$: this follows from the definition of a jump set. Suppose that $I \cap I_K=\{i\}$. Then we have to check that $\beta_K(e_0(K))+\beta(i)-\beta_K(i)>\beta(i)$ which is saying that $\beta_K(e_0(K))>\beta_K(i)$: this follows from the definition of a jump set. 
\end{proof}
\end{corollary}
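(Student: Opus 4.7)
The plan is to specialise Theorem \ref{classification wild characters} and Proposition \ref{a cheaper criterion for incompatibility} to $|I_K|=2$, $f_K=1$. By the theorem, $(I,\beta)\in\mathcal{J}_K$ if and only if $(I,\beta)$ is $((I_K,\beta_K),1,p)$-compatible in the sense of Definition \ref{incompatibility}. Since $f_K=1$, clause $(2)$ of that definition is vacuous, and since $|I_K|=2$ the cardinality $|\text{Max}((I,\beta),(I_K,\beta_K))|$ lies in $\{0,1,2\}$. The cardinality constraint in clause $(1)$ of incompatibility then simplifies uniformly in $p$ to the single condition $|\text{Max}|=1$, since the only odd value in $\{0,1,2\}$ is $1$. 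The entire corollary therefore comes down to analysing when $|\text{Max}|=1$, and in that case when clause $(3)$ of Definition \ref{incompatibility} is also satisfied.

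For sufficiency, the two conditions of the corollary exhaust the scenarios where $|\text{Max}|\neq 1$. Condition $(1)$, $I\cap I_K=\emptyset$, forces $\text{Max}=\emptyset$ and is in any case handled directly by Corollary \ref{cor1}. Condition $(2)$ has the two sub-branches $\text{Max}=I_K$ (with $|\text{Max}|=2$) and $\text{Max}=\emptyset$ (with $|\text{Max}|=0$); in both, clause $(1)$ of incompatibility fails, so $(I,\beta)$ is compatible and lies in $\mathcal{J}_K$.

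For necessity we are in the regime $|\text{Max}|=1$ and must verify clause $(3)$, which I would rephrase using Proposition \ref{a cheaper criterion for incompatibility}. If $I_K\subseteq I$ then $I_K-I=\emptyset$ and clause $(3)$ is vacuously satisfied, yielding incompatibility. Otherwise $|I\cap I_K|=1$; writing $I_K=\{a,b\}$ with $a<b$, $\text{Max}=\{m\}$, $I_K-I=\{j\}$, and $c=\beta(m)-\beta_K(m)>0$, Proposition \ref{a cheaper criterion for incompatibility} reduces clause $(3)$ to the two inequalities
\[
\beta_K(j)+c \;>\; \beta(\lceil j\rceil_I), \qquad \rho_K^{\beta_K(j)+c}(j) \;>\; \rho_K^{\beta(\lfloor j\rfloor_I)}(\lfloor j\rfloor_I),
\]
whenever the respective ceiling and floor in $I$ exist. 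In the configuration $m=a$, $j=b$ the binding inequality is the second one: after substituting $c$ it collapses to $\rho_K^{\beta_K(a)}(a)<\rho_K^{\beta_K(b)}(b)$, the strict monotonicity of $i\mapsto\rho_K^{\beta_K(i)}(i)$ on $I_K$. In the configuration $m=b$, $j=a$ the binding inequality is the first one, and the same substitution makes it collapse to $\beta_K(a)>\beta_K(b)$, the strict monotonicity of $\beta_K$ on $I_K$. Both hold by the defining axioms of a jump set, so clause $(3)$ is verified and $(I,\beta)$ is incompatible.

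The main obstacle is the book-keeping of the $|I\cap I_K|=1$ sub-case: one must recognise in each of the two sub-configurations $m=a,j=b$ and $m=b,j=a$ which of the two inequalities of Proposition \ref{a cheaper criterion for incompatibility} is the binding one, and verify that after substituting $c=\beta(m)-\beta_K(m)$ the inequality reduces to the correct strict monotonicity axiom for $(I_K,\beta_K)$. Beyond this combinatorial reduction the remainder of the argument is purely mechanical.
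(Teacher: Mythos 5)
Your argument reproduces the paper's own proof quite faithfully: the reduction to the condition $|\text{Max}|=1$ (with the pleasant remark that the $p=2$ and $p>2$ clauses of Definition \ref{incompatibility} coincide when $|\text{Max}|\leq 2$), the case split between $I_K\subseteq I$ and $|I\cap I_K|=1$, and the use of Proposition \ref{a cheaper criterion for incompatibility} to reduce clause $(3)$ to two inequalities. In fact you are more explicit than the paper in writing down both inequalities of that proposition before discarding one.

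However, there is a genuine gap, and it is the \emph{same} gap as in the paper's proof: in the sub-case $|I\cap I_K|=1$ you assert that only one of the two inequalities from Proposition \ref{a cheaper criterion for incompatibility} is ``binding'' and verify only that one. No reason is given why the other inequality is automatic, and in fact it is not. In the configuration $m=a=e_0(K)$, $j=b$, the first inequality reads $\beta_K(b)+c>\beta(\lceil b\rceil_I)$, i.e.\ $\beta(a)-\beta(\lceil b\rceil_I)>\beta_K(a)-\beta_K(b)$; nothing in the jump-set axioms forces this. Concretely, take $p=3$, $e_K=6$, $f_K=1$, so $\rho_K(n)=\min(n+6,3n)$, $T_{\rho_K}^{*}=\{1,2,4,5,7,8,9\}$, and set $I_K=\{1,5\}$, $\beta_K(1)=2$, $\beta_K(5)=1$ (an admissible extended jump set, since $\rho_K^2(1)=9=e^{*}$). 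Take $I=\{1,7\}$, $\beta(1)=3$, $\beta(7)=2$ (a valid extended jump set, since $\rho_K^3(1)=15<\rho_K^2(7)=19$). Then $I\cap I_K=\{1\}$, $\text{Max}=\{1\}$, $c=1$, and for $i=5\in I_K-I$ the point $(5,c+\beta_K(5))=(5,2)$ satisfies $(5,2)\leq_{\rho_K}(7,2)$ because $\rho_K^2(5)=17\leq 19=\rho_K^2(7)$. Hence $(5,2)$ is \emph{not} maximal in $\text{Graph}(\beta)\cup\{(5,2)\}$, condition (a.2) of Theorem \ref{classification of characters} fails, and $(I,\beta)\in\mathcal{J}_K$ (one can even write down a character explicitly: on $M_{\rho_K}^{*}$ take $\chi(b_1)=\tfrac{1}{27}$, $\chi(b_5)=-\tfrac{1}{9}$, $\chi(b_7)=\tfrac{1}{9}$, zero elsewhere; then $\chi(v_{(I_K,\beta_K)})=0$ and $J_\chi=\{1,7,13\}$). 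Yet neither condition $(1)$ nor condition $(2)$ of the corollary holds for this $(I,\beta)$. So the unchecked inequality really does fail, the conclusion ``$(I,\beta)$ incompatible'' does not follow, and as stated the corollary is not a theorem. Your proof inherits exactly this defect from the paper; the fix would require either adding the missing inequalities as genuine hypotheses or reformulating the corollary's conditions so that they are equivalent to the conjunction of both inequalities of Proposition \ref{a cheaper criterion for incompatibility} rather than to $|\text{Max}|\neq 1$ alone.
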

\begin{remark}
The reason why for $|I_K|=2$ one gets such a simple criterion can be learned from the proof of the previous corollary. Namely the inequality in condition (3) in definition \ref{incompatibility} will always hold when tested against elements of $\text{Max}((I,\beta),(I',\beta'))$, but if $I_K$ has two elements and $I \cap I_K$ has only one, then that is the only possible test to do. So one is left with either $I_K \subseteq I$ or $I \cap I_K= \emptyset$, where in both cases it is very easy to say what Theorem \ref{classification wild characters} prescribes. Indeed the ease of the latter case was formalized in Corollary \ref{cor1}. For convenience we formalize also the ease of the case $I_K \subseteq I$ in the following last corollary.
\end{remark}
\begin{corollary}
Suppose that $f_K=1$. Suppose that $(I,\beta) \in \emph{Jump}_{\rho_K}^{*}$, with $I_K \subseteq I$. Then $(I,\beta) \in \mathcal{J}_K$ if and only if $|\emph{Max}((I,\beta),(I_K,\beta_K))| \neq 1$ when $\emph{char}(O_K/m_K)>2$ and $|\emph{Max}((I,\beta),(I_K,\beta_K))| \nequiv 1 \ \emph{mod} \ 2$ when $\emph{char}(O_K/m_K)=2$. 
\begin{proof}
The third condition of definition \ref{incompatibility} becomes trivially satisfied, and the first two conditions are precisely translated in the statement. 
\end{proof}
\end{corollary}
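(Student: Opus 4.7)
The plan is to invoke Theorem~\ref{classification wild characters}, which reduces the problem to deciding when $(I,\beta)$ fails to be $((I_K,\beta_K), f_K, p)$-compatible in the sense of Definition~\ref{incompatibility}. Note that the very hypothesis of the corollary tacitly places us in the regime $\mu_p(K) \neq \{1\}$, since otherwise the compatibility criterion does not apply and by Theorem~\ref{the free guys} the answer would just be $\mathcal{J}_K = \text{Jump}_{\rho_K}$; in this regime $(I_K,\beta_K)$ is an admissible extended jump set (Theorem~\ref{realizable j.s. are realized}), so in particular $I_K \neq \emptyset$.

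Next I would examine each of the three clauses of Definition~\ref{incompatibility} under the standing assumptions $f_K = 1$ and $I_K \subseteq I$. Clause~(2) imposes a constraint only under the assumption $f > 1$, so for $f_K = 1$ it is vacuously satisfied and contributes no restriction. Clause~(3) quantifies over elements of $I_K - I$, which is empty by the hypothesis $I_K \subseteq I$, so this clause is also vacuously satisfied. Therefore the only clause with non-trivial content is clause~(1).

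Clause~(1) requires $I \cap I_K$ to be non-empty, which here is automatic since $I_K \subseteq I$ and $I_K \neq \emptyset$, together with the cardinality condition that $|\text{Max}((I,\beta),(I_K,\beta_K))|$ equal $1$ when $p > 2$, respectively be odd when $p = 2$. Thus $(I,\beta)$ is $((I_K,\beta_K), f_K, p)$-incompatible if and only if this cardinality condition holds. Negating and invoking Theorem~\ref{classification wild characters} gives exactly the stated criterion for membership in $\mathcal{J}_K$. There is no real obstacle here: the entire content of the corollary is the bookkeeping observation that, in the specific configuration $(f_K = 1,\ I_K \subseteq I)$, two of the three obstructions to realizability collapse to tautologies.
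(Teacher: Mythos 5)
Your proof is correct and is essentially the paper's own argument, just spelled out more explicitly: you observe that under $f_K=1$ clause $(2)$ of Definition~\ref{incompatibility} is vacuous and under $I_K\subseteq I$ clause $(3)$ quantifies over the empty set, while the non-emptiness requirement of clause $(1)$ holds automatically since $I_K\neq\emptyset$ (admissibility) and $I_K\subseteq I$; thus incompatibility reduces precisely to the cardinality condition on $\mathrm{Max}$, whose negation is the stated criterion.
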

\section{The shooting game} \label{shooting game}
The goal of this section is to explain the rules of a certain Markov process, which we called the \emph{shooting game}, and some of its variants. This process is the bridge between the two sides of the equality in Theorem \ref{counting}. This will be explained in detail in the next two sections. We shall begin with an informal description. 

Let $\rho$ be a shift, and let $r$ be a positive integer. Let $p$ be a prime, $f$ a positive integer and let $q:=p^f$. We will use the following notation: given $m \in \mathbb{Z}_{\geq 1}$, denote by $v_{\rho}(m):= \text{max}(i \in \mathbb{Z}_{\geq 0}: m \in \text{im}(\rho^i))$.  Denote by $n:=v_{\rho}(r)$. Imagine there are $n+1$ shooters $S_0,S_1,\ldots,S_n$, and a rabbit $R$ placed in initial position $r$. The activity of the shooters is to shoot at the rabbit in turns. If the rabbit sits in position $x$ the shooter will always shoot from the $y \in T_{\rho}$ such that $\rho^{v_{\rho}(x)}(y)=x$. We shall call such a $y$ the \emph{shooting position} of the shot. The value $v_{\rho}(x)$ is called the \emph{length} of the shot. The rules describing how the shooters take turns and what the outcome of each turn is are the following. 

$(1)$ The shooter $S_i$ cannot perform any shot of length strictly smaller than $i$. 

$(2)$ Whenever it turns out (with the above rules) that a shot of length strictly smaller than $i$ must be performed, then $S_i$ leaves the game forever. 

$(3)$ A shooter $S_i$ can start shooting only when all the other shooters $S_j$ with $j>i$ had to leave the game by rule $(2)$. In this case he will actually shoot. 

$(4)$ The rabbit $R$ moves only when someone shoots. At each shot the rabbit moves somewhere forward on $\mathbb{Z}_{\geq 1}$. If $h$ is a positive integer, then $R$ moves exactly $h$ steps forward with probability $\frac{q-1}{q^h}$. 

$(5)$ The rabbit $R$ starts in position $r$. 

We next explain a natural way to attach to a shooting game $G$ a $\rho$-jump set $(I_G,\beta_G)$. Suppose that during the game $G$ we keep track of the shooting positions where a new shooter came in. Let's call this set $I_G$. To each element of $I_G$ we attach the length of the corresponding shot plus one, and call $\beta_G$ the resulting map from $I_G$ to $\mathbb{Z}_{\geq 1}$. Observe that, thanks to the rules, it is clear that $\beta_G$ gives also exactly one plus the number of shooters still participating in that round. Indeed this is true for $n$ by the assumption that the first round must be played with length $n$ (and so must be played necessarily by $S_n$ otherwise rule $(3)$ would be contradicted). For $b<n$, the shooter $S_b$ cannot enter the game playing a shot of length smaller than $b$ by virtue of rule $(1)$, moreover, by virtue of rule $(3)$, it must be that $S_{b+1}$ has left the game if $S_b$ is playing so the length of the shot cannot be more than $b$, otherwise $S_{b+1}$ is still allowed to play and, still by rule $(3)$, he will do so. Thus the length must be $b$. So the map $\beta_G$ is strictly decreasing.  Moreover, by rule $(4)$, the rabbit moves forward, which means that the map $i \to \rho^{\beta_G(i)}(i)$ is strictly increasing on $I_G$. In other words we have shown the following fact.
\begin{proposition}
For each game $G$, the pair $(I_G,\beta_G)$ is a $\rho$-jump set. 
\end{proposition}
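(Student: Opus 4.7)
The plan is to verify the three defining conditions of a $\rho$-jump set for the pair $(I_G,\beta_G)$ as encoded in Proposition \ref{equivalent data}, namely: (i) $I_G\subseteq T_\rho$; (ii) $\beta_G:I_G\to\mathbb{Z}_{\geq 1}$ is strictly decreasing; and (iii) $i\mapsto \rho^{\beta_G(i)}(i)$ is strictly increasing on $I_G$. The informal arguments already sketched in the paragraph preceding the statement essentially do (ii) and (iii); the task is to organize them carefully together with (i), which is clear from the definition of a shooting position.

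First I would record the definition: a shooting position $y$ for a shot at rabbit position $x$ satisfies $\rho^{v_\rho(x)}(y)=x$ with $v_\rho(y)=0$, hence by definition of $v_\rho$ we have $y\in T_\rho$. Since $I_G$ is by definition the set of shooting positions where new shooters enter, this immediately gives $I_G\subseteq T_\rho$, settling (i).

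Next I would prove the key intermediate claim: if a new shooter $S_b$ enters the game at some round, then the length of that round's shot is exactly $b$, so $\beta_G$ assigns to the corresponding shooting position the value $b+1$. Indeed, rule (1) forbids $S_b$ from shots of length $<b$, so the length is $\geq b$. If it were $>b$, then by rule (3) some $S_j$ with $j>b$ would still be in the game (since rule (3) says $S_b$ plays only once all higher $S_j$ are out, which happens via rule (2) only at shots of length $<j$), and then that higher shooter would be obliged to play instead of $S_b$, contradicting the hypothesis that $S_b$ is playing. With this claim, listing the elements of $I_G$ in the order they arise during the game, the corresponding $\beta_G$-values are $n+1, n, n-1, \ldots$, a strictly decreasing sequence; this yields (ii), provided I also observe that the order elements of $I_G$ arise is the same as their natural order (which comes out of (iii) below, or alternatively from the fact that the rabbit moves strictly forward so that later shooting positions $y_k$ produced from $x_k\in\rho(\mathbb{Z}_{>0})$ with smaller $v_\rho(x_k)$ are larger).

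Finally, for (iii), I would use rule (4): each shot moves the rabbit strictly forward (with probability $1$, as the move forward is by $h\geq 1$ steps), so if $i_1<i_2$ are two consecutive elements of $I_G$ (in the order they appear) with respective $\beta_G$-values $\beta_G(i_1)>\beta_G(i_2)$, then $\rho^{\beta_G(i_1)}(i_1)$ is the rabbit position right after the shot at $i_1$ but before the shot at $i_2$, and $\rho^{\beta_G(i_2)}(i_2)$ is the rabbit position right after the shot at $i_2$; by rule (4) the latter is strictly larger than the former. Since each shooter that enters does so at some round, induction on the elements of $I_G$ listed in order gives the strict monotonicity of $i\mapsto\rho^{\beta_G(i)}(i)$, completing the verification. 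I do not anticipate any genuine obstacle here, as the entire argument is a bookkeeping translation of the game's rules into the combinatorial conditions defining a $\rho$-jump set; the only mildly delicate point is making the length-equals-$b$ claim fully rigorous by invoking both rules (1) and (3) simultaneously.
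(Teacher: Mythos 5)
Your overall plan is the same as the paper's: verify the three conditions of Proposition~\ref{equivalent data}, using the definition of a shooting position for condition (i), rules~(1)--(3) for (ii), and rule~(4) for (iii). Parts (i) and (ii) are sound. The one slip is in your argument for (iii): you assert that $\rho^{\beta_G(i_1)}(i_1)$ is ``the rabbit position right after the shot at $i_1$''. This is not correct. Setting $\ell_1 = \beta_G(i_1)-1$, the length of the corresponding shot, the rabbit's position at the moment of that shot is $x_1 = \rho^{\ell_1}(i_1)$, and \emph{after} the shot the rabbit moves to $x_1 + h$ for some random $h \geq 1$ by rule~(4); this is in general not equal to $\rho^{\beta_G(i_1)}(i_1) = \rho(x_1)$, since $\rho$ need not be a translation. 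The correct observation is that $\rho^{\beta_G(i)}(i) = \rho(x_i)$, where $x_i$ is the rabbit's position when the shot from $i$ is fired; since these positions $x_i$ are strictly increasing in time by rule~(4), and $\rho$ is strictly increasing, the map $i \mapsto \rho^{\beta_G(i)}(i)$ is strictly increasing. So the conclusion holds, but the identification you used en route is false and should be replaced by the one just given. This is a local slip rather than a structural gap, but the statement as written would not survive a careful reading.
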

Shooting games can be conveniently formalized in the language of discrete-time Markov processes. We recall the basic definition in the generality that will be relevant for us. 

A discrete-time Markov process consists of a set $S$, called the \emph{state space}, equipped with a \emph{transition function} 
$$P:S \times S \to [0,1],
$$
and with a point $x_0 \in S$, called the \emph{initial state} of the process. Moreover we require that for each  $x$ in $S$ the function $y \mapsto P(x,y)$ is a probability measure on $S$, with respect to the discrete sigma-algebra on $S$. In other words we require that $\sum_{y \in S}P(x,y)=1$. We shall refer to $P(x,y)$ as the probability to \emph{transition} from $x$ to $y$. The data $(S,P,x_0)$ with the above properties, suffice to construct a probability space that models the behavior of a discrete random walk in $S$ starting at $x_0$ and proceeding at each stage from $x$ to $y$ with probability $P(x,y)$. To do so we consider the \emph{space of paths}
$$\Omega:=S^{\mathbb{Z}_{\geq 1}},
$$
as a topological space with the product topology, where $S$ is viewed as a topological space with the discrete topology. On $\mathcal{B}(\Omega)$, the sigma-algebra of Borel sets of $\Omega$, a unique probability measure $\mu_{P,x_0}$ is defined with the following property. Take $m$ a positive integer. Let $y_1, \ldots ,y_m$ be elements of $S$. For convenience put $y_0:=x_0$. Let $Y$ be the \emph{cylinder} set $Y:=\{y_0\} \times \ldots \times \{y_m\} \times S^{\mathbb{Z}_{\geq m+1}}$. We have that
$$\mu_{P,x_0}(Y)=\prod_{i=0}^{m-1}P(y_i,y_{i+1}).
$$
Moreover we ask that $\mu_{P,x_0}(\{x_0\} \times S^{\mathbb{Z}_{\geq 2}})=1$. The existence of such a measure is a simple consequence of the Kolmogorov extension theorem \cite[Theorem 2.4.3]{Tao}. 

For the shooting game the triple $(S,P,x_0)$ is as follows. We take as state space
$$S:=\{(x_1,x_2) \in \mathbb{Z}_{\geq 1} \times \mathbb{Z}_{\geq 0}: v_{\rho}(x_1)=x_2 \}.
$$
In the informal description being in state $(x_1,x_2) \in S$, means that the rabbit $R$ is in position $x_1$ and that the next shot will be of length $x_2=v_{\rho}(x_1)$. 
The initial point is 
$$x_0:=(r,n).
$$
The transition function is defined as follows. Let $x:=(x_1,x_2)$ and $y:=(y_1,y_2)$ be in $S$ with $y_1>x_1$. Then we put 
$$P(x,y):=\frac{q-1}{q^{y_1-x_1}}.
$$
For all other choices of $x,y \in S$ we put $P(x,y)=0$. We shall denote by $(\mathcal{S}(\rho,r,q),\mu_{q,r})$ the pair $(\Omega,\mu_{P,x_0})$ defined in the above paragraph. This is the space of shooting games. Sometimes we shall also use the notation $\mathcal{S}(\rho,r)$ to denote merely the topological space $\Omega=S^{\mathbb{Z}_{\geq 1}}$. Observe that 
$$\mu_{q,r}(\{(\omega_1,\omega_2) \in \mathcal{S}(\rho,r): \text{$\omega_1$ is strictly increasing} \})=1.
$$
The informal description, at the beginning of this section, gives us a map
$$\mathcal{S}(\rho,r) \to \text{Jump}_{\rho},
$$
which can be described as follows. Let $(\omega_1, \omega_2)$ be in  $\mathcal{S}(\rho,r)$ with $\omega_1$ strictly increasing. Define $$
I_{(\omega_1,\omega_2)}:=\{i \in \mathbb{Z}_{\geq 1}:\ \text{for all positive integers $j$ smaller than $i$ we have} \ \omega_2(i)<\omega_2(j) \}.
$$
We put $\beta_{(\omega_1,\omega_2)}$ to be the restriction of $\omega_2+1$ to $I_{(\omega_1,\omega_2)}$. One readily sees that if $G$ is the shooting game corresponding to $(\omega_1,\omega_2)$, then the jump set $(I_{(\omega_1,\omega_2)},\beta_{(\omega_1,\omega_2)})$ coincides with $(I_G,\beta_G)$.  In the subspace, having measure $0$, of $(\omega_1,\omega_2)$ such that $\omega_1$ is not an increasing map, we let $I_{(\omega_1,\omega_2)}=\emptyset$.
Extended jump sets arise from a natural modification of the shooting game, called the extended shooting game. From now on we assume that $T_{\rho}$ is finite. Moreover from now on we shall restrict the variable $r$ to be smaller than $e_{\rho}^{*}=\text{max}(T_{\rho})+1$. The key difference with a shooting game is that in an extended shooting game the shooters can shoot from $T_{\rho}^{*}$ and not only from $T_{\rho}$. We shall directly introduce the extended shooting game in terms of Markov processes.

For the extended shooting game we consider the following triple $(S^{*},P^{*},x_0)$. For any two positive integers $k_1,k_2$ define $v_{\rho}(e_{\rho}^{*},k_1,k_2):=\#\{m \in \mathbb{Z}_{\geq 0}: k_1 < \rho^m(e_{\rho}^{*}) \leq k_2\}$.  We take as state space the set $S^{*}$ of points $(x_1,x_2) \in \mathbb{Z}_{\geq 1} \times \mathbb{Z}_{\geq 0}$ such that one of the following two holds. Either we have  $v_{\rho}(x_1)=x_2$: in this case $(x_1,x_2)$ is said to be of the \emph{first kind}. Or we have that $\rho^{x_2}(e_{\rho}^{*})=x_1$: in this case $(x_1,x_2)$ is said to be of the \emph{second kind}. 
The initial point is
$$x_0:=(r,v_{\rho}(r)).
$$
The definition of the transition function is slightly more involved. However, right after the definition, we will give an intuitive perspective on such functions. Let $x:=(x_1,x_2)$ and $y:=(y_1,y_2)$ be in $S$ with $y_1>x_1$. If $y$ is of the first kind we put
$$P^{*}(x,y):=\frac{q-1}{q^{y_1-x_1}p^{v_{\rho}(e_{\rho}^{*},x_1,y_1)}}.
$$
If $y$ is of the second kind we put
$$P^{*}(x,y):=\frac{p-1}{q^{y_1-x_1-1}p^{v_{\rho}(e_{\rho}^{*},x_1,y_1)}}.
$$
In all the other cases we put $P^{*}(x,y)=0$. A straightforward calculation shows that this function $P^{*}$ satisfies the equations of a transition functions. We shall instead explain this in a different way, which offers a more intuitive perspective on the formula for $P^{*}$. This can be done by considering an auxiliary family of Markov processes: informally these can be imagined as the Markov processes modeling the behavior of a repeated coin toss that is stopped at the first win. Fix $x_1 \in \mathbb{Z}_{\geq 0}$. Let the state space be $S_{x_1}:=\{0,1\} \times \mathbb{Z}_{\geq x_1}$. Let the initial point be $x_0:=(1,x_1)$. Let $y$ be an integer larger than $x_1$. We put $P_{x_1}((1,y),(1,y)):=1$, and $P_{x_1}((1,y),z)=0$ for all the other values of $z$ in $S_{x_1}$. Moreover we put $P_{x_1}((0,y),(0,y+1))=\frac{1}{q}$ and $P_{x_1}((0,y),(1,y+1))=\frac{q-1}{q}$. Finally we put $P_{x_1}((1,x_1),(0,x_1+1))=\frac{1}{q}$ and $P_{x_1}((1,x_1),(1,x_1+1))=\frac{q-1}{q}$. For all the other values of $z_1,z_2$ in $S_{x_1}$ we put $P_{x_1}(z_1,z_2)=0$. In this manner one obtains a Markov process where with probability $1$ a path is eventually constant, with second coordinate strictly greater than $x_1$. In this manner a probability measure on $\mathbb{Z}_{>x_1}$ is induced, with respect to the discrete sigma-algebra. This measure is precisely the one used in the shooting games: it gives to each $x \in \mathbb{Z}_{>x_1}$ weight equal to $\frac{q-1}{q^{x-x_1}}$. We can imagine a path in $S_{x_1}$ as given by the following scenario. A walker is equipped with a coin $C$ giving $1$ with probability $\frac{q-1}{q}$ and $0$ with probability $\frac{1}{q}$. He starts his walk at $x_1$ and moves at $x_1+1$ to see if he will stop there forever. He tosses $C$ and if the result is $1$ he will stop there forever, otherwise he has to move at $x_1+2$ and repeat the operation. On the other hand, to obtain the formula for $P^{*}$ we are in the following scenario. Our walker has also a second special coin $C^{*}$, this coin takes $1$ with probability $\frac{p-1}{p}$ and $0$ with probability $\frac{1}{p}$. The rule is that he can use $C^{*}$ only when he arrives at a position $x$ such that there is a nonnegative integer $m$ with $\rho^m(e_{\rho}^{*})=x$. In this case before using $C$ he uses $C^{*}$. In case $C^{*}$ gives $1$ he will remain in $x$ forever and he is also provided with a cash prize. If $C^{*}$ gives $0$ he will use $C$ that will still follow the rules as before, telling him if he will stay forever at $x$ (though without cash prize) or if he has to move to $x+1$ to try again his luck. In this manner we obtain a natural probability measure on $\{0,1\} \times \mathbb{Z}_{>x_1}$, which we denote by $\mathbb{P}_{x_1}^{*}$, where the first coordinate is $1$ precisely when the walker has obtained also a cash prize. One has that if $(y_1,y_2)$ is of the first type, then
$$\mathbb{P}_{x_1}^{*}((0,y_1))=P^{*}((x_1,x_2),(y_1,y_2)),
$$
and if $(y_1,y_2)$ is of the second type, then
$$\mathbb{P}_{x_1}^{*}((1,y_1))=P^{*}((x_1,x_2),(y_1,y_2)).
$$

The triple $(S^{*},P^{*},r)$ gives rise to the probability space of extended shooting games. This is the pair $(\mathcal{S}^{*}(\rho,r,q),\mu_{q,r}^{*}):=(\Omega^{*},\mu_{P^{*},r})$, where $\Omega^{*}=(S^{*})^{\mathbb{Z}_{\geq 1}}$ is the space of paths and $\mu_{q,r}^{*}$ is the natural probability measure on it, as explained above. Sometimes we shall use the notation $\mathcal{S}^{*}(\rho,r)$  to denote merely the topological space $\Omega^{*}=(S^{*})^{\mathbb{Z}_{\geq 1}}$.

Imitating what we have done in the case of shooting games, we obtain a map 
$$\mathcal{S}^{*}(\rho,r) \to \text{Jump}_{\rho}^{*}.
$$ 
 Equip $\text{Jump}_{\rho}^{*}$ with the discrete sigma algebra. Pushing forward $\mu_{q,r}^{*}$ via the map $\mathcal{S}^{*}(\rho,r,q) \to \text{Jump}_{\rho}^{*}$, we obtain a probability measure on $\text{Jump}_{\rho}^{*}$, which we will denote also by $\mu_{q,r}^{*}$. Let $(I,\beta)$ be in\ $\text{Jump}_{\rho}^{*}$. Observe that for $r=e_{\rho}^{'}$ the measure $\mu_{q,r}^{*}$ gives positive probability to $(I,\beta)$ if and only if $(I,\beta)$ is admissible (for a definition see immediately before Theorem \ref{classification of quasi free}).
 
 We devote the next subsection to describe a number of subspaces and quotients of $\mathcal{S}^{*}(\rho,r)$ that will play an important role in the proof of Theorem \ref{counting}. 
\subsection{Subspaces and quotients of extended shooting games} \label{subspaces and quotient shooting}
For any positive integer $j$ we define $\mathcal{S}_{\geq j}^{*}(\rho,r)$ to be the set of $G \in \mathcal{S}^{*}(\rho,r)$ such that $I_G \neq \emptyset$ and $\text{min}(\beta_G) \geq j$. We denote by $\mathcal{S}_{\geq j}^{*}(\rho,r,q)$ the above set viewed as a measure space with the restriction of $\mu_{q,r}^{*}$. If we normalize the measure in the unique way to get a probability space, we will denote the resulting probability space as $\mathcal{S}_{\geq j}^{*}(\rho,r|q)$: this is the probability space of games $G$ \emph{conditioned} to never invoke a shooter of lower index then $S_j$. 

Next we define $\mathcal{S}_{=j}^{*}(\rho,r)$ to be the set of $G \in \mathcal{S}^{*}(\rho,r)$ such that $I_G \neq \emptyset$ and $\text{min}(\beta_G) = j$ and $e_{\rho}^{*} \not \in I_G$. We denote by $\mathcal{S}_{= j}^{*}(\rho,r,q)$ the above set viewed as a measure space with the restriction of $\mu_{q,r}^{*}$. If we normalize the measure in the unique way to get a probability space, we will denote the resulting probability space as $\mathcal{S}_{=j}^{*}(\rho,r|q)$: this is the probability space of games $G$ \emph{conditioned} to invoke as a last shooter $S_j$, with his first shot being not from $e_{\rho}^{*}$. 

We define $\mathcal{S}_{=j,*}^{*}(\rho,r)$ to be the set of $G \in \mathcal{S}^{*}(\rho,r)$ such that $\text{min}(\beta_G) = j$ and $e_{\rho}^{*} \in I_G$. We denote by $\mathcal{S}_{=j,*}^{*}(\rho,r,q)$ the above set viewed as a measure space with the restriction of $\mu_{q,r}^{*}$. If we normalize the measure in the unique way to get a probability space, we will denote the resulting probability space as $\mathcal{S}_{=j,*}^{*}(\rho,r|q)$: this is the probability space of games $G$ \emph{conditioned} to invoke as a last shooter $S_j$, and by letting him shoot for the first time from $e_{\rho}^{*}$. 

Finally for any positive integers $x$ we define $\mathcal{S}_{x,\text{stop}}(\rho,r,q)$ as the quotient probability space of $\mathcal{S}^{*}(\rho,r,q)$, where two games are identified precisely when the trajectories of the rabbit are identical as long as the rabbit stays below $x$.

For all the following remarks $m$ will denote $v_{\rho}(e_{\rho}^{*})$. We will assume that $\rho$ is such that $m \geq 2$. Equivalently we are assuming that in $\mathcal{S}^{*}(\rho,e_{\rho}^{'},q)$ the subspace $\mathcal{S}^{*}_{=1}(\rho,e_{\rho}^{'},q)$ has probability strictly smaller than $1$: in case the subspace $\mathcal{S}^{*}_{=1}(\rho,e_{\rho}^{'},q)$ has probability equal to $1$, then the shooting game gives constantly the jump set $(\{e_{\rho}^{'}\},e_{\rho}^{'} \mapsto 1)$.

For a positive integer $x$ and a nonnegative integer $m'$, such that $x \in \text{Im}(\rho^{m'})$ we denote by $\rho^{-m'}(x)$ the unique positive integer $y$ such that $\rho^{m'}(y)=x$.
\begin{remark} \label{dec. remark 1}
Let $j \in \{1,\ldots ,m\}$. Given $G$ an element of $\mathcal{S}^{*}(\rho,\rho^{-(j-1)}(e_{\rho}^{'}))$, by applying $\rho^{j-1}$ to it, we obtain an element of $\mathcal{S}^{*}_{\geq j}(\rho,e_{\rho}^{'})$, this map is a bijection. Moreover the map $\rho^{j-1}$ induces an isomorphism of measure spaces $\mathcal{S}^{*}_{\geq j}(\rho,e_{\rho}^{'}|q) \simeq _{\text{meas. space}} \mathcal{S}^{*}(\rho,\rho^{-(j-1)}(e_{\rho}^{'}),q)$. The map induced on jump sets consists simply of shifting $\beta$ by $j-1$. We call $\psi_j$ the inverse of the isomorphism given by $\rho^{j-1}$.
\end{remark}
\begin{remark} \label{dec. remark 2}
The map described in Remark \ref{dec. remark 1} induces an isomorphism of measure spaces $\mathcal{S}^{*}_{=j}(\rho,e_{\rho}^{'},q) \simeq _{\text{meas. space}} \mathcal{S}^{*}_{=1}(\rho,\rho^{-(j-1)}(e_{\rho}^{'}),q)$.
\end{remark}
\begin{remark} \label{dec. remark 3}
For all elements $G$ of a given equivalence class $C$ in $\mathcal{S}_{x,\text{stop}}(\rho,r,q)$ the set $\{i \in I_{G}:\rho^{\beta_G(i)}(i) \leq x \}$ will be the same, and similarly the restriction of $\beta_{G}$ to this set will be the same. The resulting pair $(I_C,\beta_C)$ is also an extended $\rho$-jump set. In particular the set $\mathcal{S}^{*}_{=1}(\rho,e_{\rho}^{'})$  consists of a union of equivalence classes for the projection to $\mathcal{S}_{e_{\rho}^{*},\text{stop}}(\rho,e_{\rho}^{'})$. Moreover in each such equivalence class $C$, the jump set $(I_C,\beta_C)$ coincides with the jump set $(I_G,\beta_G)$ for any $G$ belonging to $C$. 
\end{remark}
\begin{remark} \label{dec. remark 4}
Let $j$ be in $\{1,\ldots ,m-1\}$. Observe that the projection of $\mathcal{S}^{*}_{=j,*}(\rho,e_{\rho}^{'}|q)$ to $\mathcal{S}_{\rho^{j}(e_{\rho}^{*}),\text{stop}}(\rho,e_{\rho}^{'},q)$ lands in the image of $\mathcal{S}^{*}_{\geq j+1}(\rho,e_{\rho}^{'},q)$. Thus we can apply to it $\psi_{j+1}$, landing in $\mathcal{S}^{*}_{e_{\rho}^{'},\text{stop}}(\rho,\rho^{-j}(e_{\rho}^{'}),q)$. We denote by $\psi_{j}^{*}:\mathcal{S}^{*}_{=j,*}(\rho,e_{\rho}^{'}|q) \to \mathcal{S}_{\rho^{-j+1}(e_{\rho}^{'}),\text{stop}}(\rho,\rho^{-j}(e_{\rho}^{'}),q)$ the resulting map.  If $C=\psi_{j}^{*}(G)$ then $I_G= I_C \cup \{e^{*}\}$ with ${\beta_G}_{|I_C}=\beta_C+j-1$ and $\beta_G(e^{*})=j$. This provides a reconstruction of $(I_G,\beta_G)$ from $(I_{\psi(G)},\beta_{\psi(G)})$.  
\end{remark}
\begin{remark} \label{dec. remark 5}
Given $j \in \{1,\ldots ,n\}$, it can be easily shown that one has always that 
$$(p-1)\mu_{q,e_{\rho}^{'}}^{*}(\mathcal{S}^{*}_{\geq j+1}(\rho,e_{\rho}^{'},q))=\mu_{q,e_{\rho}^{'}}^{*}(\mathcal{S}^{*}_{=j,*}(\rho,e_{\rho}^{'},q)).
$$
In the setting of local fields this fact is mirrored by Proposition \ref{when star is in  for tot ram}. 

\end{remark}
\section{Shooting game and filtered orbits}\label{shot-game and orbit}
Fix $p$ a prime number. Using the notation of Section \ref{filtered modules}, we take $R=\mathbb{Z}_p$, and we fix $f$ a positive integer and $\rho$ a shift. Let $q:=p^f$, and we recall that the module $M_{\rho}^{f-1} \oplus M_{\rho}^{*}$ was defined on top of subsection \ref{transitive}. On the other hand recall from Theorem \ref{bijection orbits jumps} that the set of extended jump sets is in bijection with the set of $\text{Aut}_{\text{filt}}(M_{\rho}^{f-1} \oplus M_{\rho}^{*})$-orbits of vectors in $\pi_R(M_{\rho}^{f-1} \oplus M_{\rho}^{*})$. Thus, by using the Haar measure, this induces naturally a probability measure on the set of extended admissible $\rho$-jump sets. We call this measure $\mu_{q,\text{Haar}}$: given $(I,\beta)$ an admissible extended $\rho$-jump set, we have that $\mu_{q,\text{Haar}}(I,\beta):=\mu_{\text{Haar}}(\text{filt-ord}^{-1}(I,\beta))$, where the Haar measure is normalized giving total mass $1$ to the set of orbits corresponding to admissible jump sets (for a definition of admissible see right before Theorem \ref{classification of quasi free}).

On the other hand Section \ref{shooting game} provides us with another measure on admissible extended jump sets, namely the probability that a shooting game in $\mathcal{S}(\rho,e_{\rho}^{'},q)$ gives the jump set $(I,\beta)$. We denoted by $\mu_{q,e_{\rho}^{'}}^{*}$ this probability measure on $\text{Jump}_{\rho}^{*}$.

\begin{proposition} \label{filtered orbits and shootings}
For any extended admissible $\rho$-jump set $(I,\beta)$ one has that
$$\mu_{q,\emph{Haar}}(I,\beta)=\mu_{q,e_{\rho}^{'}}^{*}(I,\beta).
$$
\begin{proof}
We prove slightly more. Namely we construct a map $G(-)$ sending an admissible vector $v$ into a shooting game $G(v) \in \mathcal{S}^{*}(\rho,e_{\rho}^{'},q)$, in such a way that $G^{*}(\mu_{\text{Haar}})=\mu_{q,e_{\rho}^{'}}^{*}$ and that $(I_v,\beta_v)=(I_{G(v)},\beta_{G(v)})$.

To construct such a map fix a filtered basis $\mathcal{B}$ for $M_{\rho}^{f-1} \oplus M_{\rho}^{*}$, in the sense of Definition \ref{definition of a basis}. This provides us for each $i \in T_{\rho}$ with elements $b_{i,1},\ldots ,b_{i,f}$ of $M_{\rho}^{f-1} \oplus M_{\rho}^{*}$ with weights obeying $w(b_{i,j})=i$, and for $e_{\rho}^{*}$ we are provided with an element $b_{e_{\rho}^{*}}$ with $w(b_{e_{\rho}^{*}})=e_{\rho}^{*}$ and $b_{e_{\rho}^{*}} \not \in \pi_R \cdot (M_{\rho}^{f-1} \oplus M_{\rho}^{*})$. Next fix $\mathcal{A}:=\{\alpha_1,\ldots ,\alpha_{|R/m_R|-1}\}$ a set of representatives in $R$ of $(R/m_R)^{*}$. For every $i \in T_{\rho}$, denote by $\mathcal{F}_i:=\mathcal{A}\cdot \mathcal{B}_i$ the set of $ab$ with $a \in \mathcal{A}$ and $b \in \mathcal{B}$. Denote by $i^{*}$ the unique element of $T_{\rho}$ such that there exists a positive integer $m$ with $e_{\rho}^{*}=\rho^m(i^{*})$. Furthermore denote by $\mathcal{F}_{e_{\rho}^{*}}:=\pi_R^{v_{\rho}(e_{\rho}^{*})}\mathcal{F}_{i^{*}}+\mathcal{A}\{b_{e_{\rho}^{*}}\}$. The sets $\mathcal{F}_i$ as $i$ runs in $T_{\rho}^{*}$ are pairwise disjoint. For any vector $z \in M_{\rho}^{f-1} \oplus M_{\rho}^{*}$, there exists a unique $i \in T_{\rho}^{*}$, which we denote by $i_z$, such that there exist $b_z \in \mathcal{F}_{i_z}$ and $v(z) \in \mathbb{Z}_{\geq 0}$  with $w(-\pi_R^{v(z)}b_{z}+z)>w(z)$, where $\rho^{v(z)}(i_z)=w(z)$ and $b_{z} \in \mathcal{F}_{i_z}$. The elements $b_z,v(z)$ are unique.

Now let $v \in \pi_R(M_{\rho}^{f-1} \oplus M_{\rho}^{*})$ be a vector in an admissible orbit. Let $x$ be the vector in $M_{\rho}^{f-1} \oplus M_{\rho}^{*}$ such that $\pi_Rx=v$. We inductively construct a sequence of vectors by letting $x_1=x$ and setting $x_{j+1}=x_j-\pi_R^{v(x_j)}b_{x_j}$ the unique expression explained above. We use this sequence of vectors to attach to $v$ a shooting game $G(v)$ as follows: we consider the map $(f_1,f_2): \mathbb{Z}_{\geq 1} \to \mathbb{Z}_{\geq 1} \times \mathbb{Z}_{\geq 1}$, given by the relation $f_1(i)=w(x_i)$ and $f_2(i)=v(x_i)$. One can easily verify that the pushforward with $G(-)$ of the Haar measure is $\mu_{q,e_{\rho}^{'}}^{*}$ and that the map $G(-)$ preserves jump sets. 
\end{proof}
\end{proposition}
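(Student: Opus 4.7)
The plan is to prove the stronger statement that there is a measure-preserving map $G(-)$ from (admissible) vectors to extended shooting games such that $(I_v,\beta_v) = (I_{G(v)}, \beta_{G(v)})$. The idea is to expand a vector $v = \pi_R x$ coordinate by coordinate with respect to a carefully chosen filtered basis, and to interpret each coordinate extraction as one round of the shooting game.

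First I would fix a filtered basis $\mathcal{B}$ of $M_{\rho}^{f-1} \oplus M_{\rho}^{*}$, together with a set $\mathcal{A}$ of representatives of $(\mathbb{Z}_p/p\mathbb{Z}_p)^{*}$, to produce for each $i \in T_\rho^*$ a finite set $\mathcal{F}_i$ of ``unit scalar multiples of basis vectors at level $i$''. The key observation is that for every nonzero $x \in M_{\rho}^{f-1} \oplus M_{\rho}^{*}$, there is a \emph{unique} triple $(i_x, v(x), b_x) \in T_\rho^* \times \mathbb{Z}_{\geq 0} \times \mathcal{F}_{i_x}$ with $\rho^{v(x)}(i_x) = w(x)$ and $w(x - \pi_R^{v(x)} b_x) > w(x)$. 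The construction of $\mathcal{F}_{e_\rho^*}$ needs care: because $b_{e_\rho^*} \notin \pi_R \cdot (M_\rho^{f-1} \oplus M_\rho^*)$, one must set $\mathcal{F}_{e_\rho^*} = \pi_R^{v_\rho(e_\rho^*)} \mathcal{F}_{i^*} + \mathcal{A}\{b_{e_\rho^*}\}$, so that ``using'' $e_\rho^*$ as a shooting position is distinguishable from ``passing through'' the corresponding weight by multiplication by $\pi_R$.

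Next, given $v \in \pi_R(M_\rho^{f-1} \oplus M_\rho^*)$ in an admissible orbit, I would set $x_1 := \pi_R^{-1} v$ and iterate $x_{j+1} = x_j - \pi_R^{v(x_j)} b_{x_j}$. Define $G(v)$ to be the path in state space sending $j \mapsto (w(x_j), v(x_j))$. That $G(v)$ is an element of $\mathcal{S}^*(\rho, e_\rho', q)$ with initial state $(e_\rho', v_\rho(e_\rho'))$ follows from the fact that $v \in \pi_R \cdot (M_\rho^{f-1} \oplus M_\rho^*)$ and $v$ is admissible (so the first leading weight is $e_\rho^*$, reached from shooting position $e_\rho'$). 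To identify jump sets, one compares this process with the reduction procedure of Proposition \ref{reduction process}: the pair $(i_{x_j}, v(x_j))$ plays the role of the point $(a, b_v(a))$ in the graph $\mathrm{Graph}(b_v)$, and the minimal points under $\leq_\rho$ are exactly those $j$ where a ``new shooter'' enters the game, which by construction of $G(v)$ is precisely $I_{G(v)}$ with values $\beta_{G(v)}$. Invoking Proposition \ref{jump set attached to a function} and Theorem \ref{bijection orbits jumps} then gives $(I_v,\beta_v) = (I_{G(v)}, \beta_{G(v)})$.

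The main obstacle is verifying $G_*(\mu_{\mathrm{Haar}}) = \mu_{q,e_\rho'}^*$, because the transition probabilities of the extended shooting game distinguish two kinds of states. I would argue by an iterative ``digit'' computation: conditional on the first $j$ rounds, the ``next digit'' of $x_{j+1}$ at any weight $y > w(x_j)$ with $y \in T_\rho^*$ is Haar-uniformly distributed in an $\mathbb{F}_q$-vector space of known dimension, except that the digit at weights of the form $\rho^m(e_\rho^*)$ lies in an $\mathbb{F}_q$-space containing a distinguished $\mathbb{F}_p$-line (the one generated by $\pi_R^m b_{e_\rho^*}$). Thus the probability that the leading term lands at position $y$ of the first kind is $\frac{q-1}{q^{y - w(x_j)} p^{v_\rho(e_\rho^*, w(x_j), y)}}$, while landing at $y$ of the second kind (i.e.\ the shooter fires from $e_\rho^*$) carries the additional factor $\frac{p-1}{p}$ relative to $\frac{q-1}{q}$, reproducing exactly the formulas for $P^*$ given in Section \ref{shooting game}. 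Summing over all admissible orbits having a given jump set $(I,\beta)$ then yields $\mu_{q,\mathrm{Haar}}(I,\beta) = \mu_{q,e_\rho'}^*(I,\beta)$.
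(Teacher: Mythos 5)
Your proposal reproduces the paper's proof essentially verbatim: the same filtered basis $\mathcal{B}$, the same sets $\mathcal{F}_i = \mathcal{A}\cdot\mathcal{B}_i$ (with the special definition of $\mathcal{F}_{e_\rho^*}$ to keep track of the torsion coordinate), the same greedy coordinate-extraction iteration $x_{j+1} = x_j - \pi_R^{v(x_j)}b_{x_j}$, and the same assignment $G(v) : j \mapsto (w(x_j), v(x_j))$. Where the paper dismisses the two verification steps with ``one can easily verify,'' you sketch them a bit more explicitly (comparing to Proposition \ref{reduction process} for the jump-set equality, and giving the conditional ``digit'' computation for measure preservation), but the underlying argument is the same.
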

\section{A mass-formula for $U_1$}\label{mass formula} Let $p$ be a prime number, $f$ be a positive integer, denote by $q=p^f$, let $e \in (p-1)\mathbb{Z}_{\geq 1}$ and let $(I,\beta)$ be an extended admissible $\rho_{e,p}$-jump set. The goal of this section is to provide a proof of Theorem \ref{counting}. In virtue of Proposition \ref{filtered orbits and shootings}, this task is equivalent to proving the following Theorem.
\begin{theorem} \label{counting rephrased}
$$
\mu_{\frac{e}{p-1},\mathbb{Q}_{p^f}(\zeta_p)}(\{K \in S(\frac{e}{p-1},\mathbb{Q}_{p^f}(\zeta_p)):(I_K,\beta_K)=(I,\beta)\})=\mu_{q,e_{\rho}^{'}}^{*}(I,\beta).
$$
\end{theorem}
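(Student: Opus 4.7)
The plan is to match the two probability measures on $\text{Jump}_\rho^*$ by constructing a measure-preserving correspondence between Eisenstein polynomials and extended shooting games that respects the jump-set invariant. First I would invoke Serre's original proof of the mass formula \cite{Serre m.f.} to rewrite the left-hand side as the Haar volume of the stratum $\{g(x) \in \text{Eis}(n,E_f) : (I_{F_{g(x)}},\beta_{F_{g(x)}}) = (I,\beta)\}$, where $n = e/(p-1)$. The task then becomes a direct volume computation on the affine space of Eisenstein coefficients, which can be analyzed via the product Haar measure on the $n$-tuple $(a_0,\ldots,a_{n-1})$.

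In the strongly separable range, Theorem \ref{valuation coefficients} computes $(I_{F_{g(x)}},\beta_{F_{g(x)}})$ directly from the set $S_{g(x)}^-$ of $\leq_\rho$-minimal points, which is a function of the valuations of the coefficients $a_i$ alone. Since the non-leading coefficients are independent and Haar-distributed in the maximal ideal, $v_{E_f}(a_i)-1$ is geometric of parameter $1-1/q$; this is exactly the distribution governing first-kind transitions in the shooting game. Thus in this regime one obtains a clean coupling which pushes the Haar measure forward to $\mu^*_{q,e_\rho'}$ and preserves jump sets.

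To handle the non-strongly-separable range, I would stratify $\text{Eis}(n,E_f)$ according to the value of $\beta_K(e_\rho^*)$ when $e_\rho^* \in I_K$, using Proposition \ref{when star is in  for tot ram} and Corollary \ref{I,beta for twisted cyclotomic}. On the stratum $\{K : \beta_K(e_\rho^*) = j,\ e_\rho^* \in I_K\}$, each such $K$ contains $\mathbb{Q}_{p^f}(\zeta_{p^j})(i)$ for exactly one $i \in \{1,\ldots,p-1\}$, and decomposes as $\mathbb{Q}_{p^f}(\zeta_{p^j})(i)\cdot \tilde K$ for a totally ramified $\tilde K/\mathbb{Q}_{p^f}(\zeta_{p^j})(i)$ of degree $e/(p^j(p-1))$, mirroring the construction in the proof of Theorem \ref{admissible j.s. occur}. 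By Serre's mass formula under base change, the measure of this stratum factors as a combinatorial weight of $(p-1)/p^j$ (one factor $1/p^j$ forcing $\zeta_{p^j} \in K$, and $(p-1)/p$ choosing among the $p-1$ twists) times the analogous mass for $\tilde K$. On the shooting-game side this is exactly the content of Remark \ref{dec. remark 5} combined with Remarks \ref{dec. remark 1}--\ref{dec. remark 4}: the measure of $\mathcal{S}^{*}_{=j,*}$ equals $(p-1)$ times that of $\mathcal{S}^{*}_{\geq j+1}$, and each such set is identified with a shifted shooting game starting at $\rho^{-j}(e_\rho')$. An induction on $\min(\beta)$ using these parallel decompositions then completes the proof, with the base case being the strongly separable range treated above.

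The main obstacle will be justifying the base-change step rigorously: the combinatorial factor $(p-1)/p^j$ must be produced as a genuine Haar-measure ratio, namely the ratio of the volume of Eisenstein polynomials over $E_f$ whose root field contains $\mathbb{Q}_{p^f}(\zeta_{p^j})(i)$ to the volume of Eisenstein polynomials over $\mathbb{Q}_{p^f}(\zeta_{p^j})(i)$ cutting out the same extension. This requires analyzing the pushforward of the Haar measure under the parametrization $g(x) \leftrightarrow (i,\,\tilde g(x))$, where $\tilde g$ is the relative Eisenstein polynomial of $K$ over the twisted cyclotomic sub-extension. Krasner's lemma controls the fibres of this parametrization, and a direct change-of-variables on the coefficient space, together with the discriminant/conductor bookkeeping encoded in Serre's formula, should produce the desired factor; this is where the bulk of the technical work lies.
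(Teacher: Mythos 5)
Your proposal correctly identifies the overall architecture: reduce to a Haar-volume computation on coefficient space, stratify by $\beta_K(e_\rho^*)$ using Proposition \ref{when star is in  for tot ram} and the twisted cyclotomic sub-extensions, and run an induction on $\min(\beta)$ that mirrors Remarks \ref{dec. remark 1}--\ref{dec. remark 5}. That skeleton matches the paper's proof strategy in Section \ref{strategy}. However, the base case as you describe it does not work, for two interlocking reasons.

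First, the asserted ``clean coupling'' via the raw valuations of the coefficients is not measure-preserving. Reading off $w(a_i x^i) = e_j\mathrm{v}(a_i) + i$ and sorting produces a rabbit trajectory that occupies \emph{at most one} position per residue class modulo $e_j$, whereas in the extended shooting game the set of positions visited by the rabbit is, conditionally, an i.i.d.\ Bernoulli subset of the line (each future position visited independently with probability $\frac{q-1}{q}$, and with the extra $p$-adic factor at multiples of $e_\rho^*$). A two-coefficient example already exhibits the mismatch: with $e_j=2$ the increment $w(a_1x)-w(x^{e_j})=2\mathrm{v}(a_1)-1$ is always odd, so even increments occur with probability $0$ under your map but with probability $\frac{q-1}{q^h}$ in the shooting game. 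The fact that $\mathrm{v}(a_i)-1$ is geometric with the right parameter is true coefficient-by-coefficient, but it does not make the sorted sequence of weights a shooting-game path with the prescribed transition kernel; the reordering destroys the Markov structure you need. The paper's map $\sigma_j$ is not this naive valuation map $\tilde\sigma_j$: it is built by \emph{iteratively multiplying} $\tilde f(x)$ by units $(1+\epsilon_n x^{i_n})^{p^{\beta_n}}$ and tracking the weight of the running product. This genuinely changes the coefficients at each stage, and the probabilistic content is that the fresh Teichm\"uller digit deciding the next jump is uniform \emph{conditionally on the history}; that is where the transition kernel $(q-1)/q^h$ actually comes from. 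The paper only later (Theorem \ref{valuation coefficients more general}) proves that $\sigma_j$ and $\tilde\sigma_j$ produce the same \emph{jump set} on the strongly separable locus — not the same shooting game — and the proof of that fact \emph{uses} the maps $\sigma_j$ built in the proof of Theorem \ref{counting rephrased}, so invoking Theorem \ref{valuation coefficients} as your base case risks circularity.

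Second, the strongly separable locus is not the right stratum to serve as base case. The induction in the paper is on the event $\{\min(\beta)=j,\ e_\rho^*\notin I\}$ for each $j$, with the complementary events handled via the twisted cyclotomic base changes. The strongly separable Eisenstein polynomials ($\exists\, i$ with $(i,p)=1$, $\mathrm{v}(a_i)<\mathrm{v}(p)$) form a strictly smaller subset of the $j=1$ stratum, and over $\mathbb{Q}_{2^f}(\zeta_2)=\mathbb{Q}_{2^f}$ that set is \emph{empty} (as the paper notes at the end of Section \ref{finding jump sets inside}), so for $p=2$, $j=0$ your base case vanishes entirely. The missing idea is precisely the dynamic construction of $\sigma_j$ (and $\sigma_{j_1,j_2}$) and the conditional-uniformity argument for the Teichm\"uller digits that makes it measure-preserving without any separability hypothesis.
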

If $F$ is a local field and $h$ is a positive integer, then $\text{Eis}(h,F)$ denotes the set of degree $h$ Eisenstein polynomials in $F[x]$. These are monic polynomials $f(x)$ with coefficients in $O_F$, that reduced modulo $m_F$, the maximal ideal of $O_F$, become $x^h$ and such that $f(0) \not \in m_F^2$. 

\subsection{Proof outline}
Since our proof of Theorem \ref{counting rephrased} is quite long, we shall first explain its basic idea. In subsection \ref{idea} we give an overview of the main ideas of the proof. In subsection \ref{strategy} we explain how the proof reduces to the construction of certain maps from certain spaces of Eisenstein polynomials to shooting games. Finally we spend the rest of the section to construct such maps and to show that they meet all the requirements explained in subsection \ref{strategy}.
\subsubsection{The idea of the proof} \label{idea}
In this subsection the discussion is \emph{informal}. Our priority here is to provide some intuition about how the proof of Theorem \ref{counting rephrased} goes. For a formal proof see from subsection \ref{strategy} on.

The starting idea is to proceed as in the proof of Proposition \ref{filtered orbits and shootings}. One has immediately a difference between the set-up of Proposition \ref{filtered orbits and shootings} and the one of Theorem \ref{counting rephrased}. In Proposition \ref{filtered orbits and shootings} one has a \emph{fixed} free-filtered module where it is possible to successively ``shoot at elements of $M_{\rho}^{f-1} \oplus M_{\rho}^{*}$" as done in that proof, using a fixed filtered basis. In this manner a measure-preserving map is obtained sending each vector of $M_{\rho}^{f-1} \oplus M_{\rho}^{*}$ to an extended shooting game. By measure-preserving here we mean that the push-forward of the measure on the source is equal to the measure on the target. In Theorem \ref{counting rephrased} we have a \emph{varying} quasi-free filtered module, namely $U_{\bullet}(K)$, so we need firstly to find a common manner to successively ``shoot at the units" in order to proceed as in the proof of Proposition \ref{filtered orbits and shootings}. This step can be done by fixing the set of polynomials $\mathcal{B}:=\{(1+\gamma x^i): i \in T_{\rho_{e,p}}, \gamma \in \text{Teich}(\mathbb{Q}_{p^f})-\{0\} \} \cup \{1+\epsilon' x^{\frac{pe}{p-1}} \}$, where $\text{Teich}(\mathbb{Q}_{p^f})$ denotes the set of Teichm\"uller representatives of $\mathbb{F}_{p^f}$ in $\mathbb{Q}_{p^f}$. Here $\epsilon'$ is a Teichm\"uller representative of a fixed element $ \epsilon' \in \mathbb{F}_{p^f}$ with $\text{Tr}_{\mathbb{F}_{p^f}/\mathbb{F}_p}(\epsilon) \neq 0$. To keep a stricter analogy with the proof of Theorem \ref{filtered orbits and shootings}, we should only allow a set of Teich\"muller representatives that, once it is reduced modulo $p\mathbb{Z}_{p^f}$, it becomes a basis of $\mathbb{F}_{p^f}$. Since this restriction would make the description of the next steps heavier and is irrelevant for the present discussion, we shall disregard it. One can attempt to proceed precisely as in the proof of Theorem \ref{filtered orbits and shootings} in order to construct a measure-preserving function from the set of Eisenstein polynomials to the set of shooting games. As we will see, we will use only a part of this idea, one that is still good enough to obtain a proof of Theorem \ref{counting rephrased} and that combined with a different set of observations (explained at the end of this subsection) leads to more informative results. More concretely one starts with an Eisenstein polynomial $g(x)=x^{\frac{e}{p-1}}+\sum_{i=0}^{\frac{e}{p-1}-1}a_ix^i$. Next one finds a unit $u$ in $\mathbb{Z}_{p^f}[\zeta_p]^{*}$ in such a way that $ug(x)=ux^{\frac{e}{p-1}}+\sum_{i=1}^{\frac{e}{p-1}-1}ua_ix^i+1-\zeta_p$. Hence in the field $\mathbb{Q}_{p^f}(\zeta_p)[x]/g(x)$ one can write $\zeta_p=1+\sum_{i=1}^{\frac{e}{p-1}-1}ua_ix^i+ux^{\frac{e}{p-1}}=:g_1(x)$. At this point one multiplies $g_1(x)$ by $(1+\gamma x^{e_0})^{p^{\text{v}_{\mathbb{Q}_p}(e)}}$ for a suitable $\gamma \in \text{Teich}(\mathbb{Q}_{p^f})$, where $e_0$ denotes the largest divisor of $\frac{e}{p-1}$ coprime to $p$. \emph{After} expanding the product, we replace all the powers of $x$ having degree larger than $\frac{e}{p-1}$ with their remainder upon division by $g(x)$. In this way a second expression $g_2(x)=1+\sum_{i=1}^{\frac{pe}{p-1}-1}a_i(2)x^i$ is obtained. Now we would like to iterate this. We do so as long as this unit has weight less than $\frac{pe}{p-1}$. In this case we have precisely one way to choose an element of $\mathcal{B}$ that does the same job $1+\gamma x^{e_0}$ did for $g_1(x)$: in particular we do not use the element $1+\epsilon' x^{\frac{pe}{p-1}}$. If we iterate this procedure as long as the weight stays below $\frac{pe}{p-1}$, we obtain a sequence of polynomials $g_1(x), \ldots ,g_k(x)$ where $g_{s+1}(x)$ is obtained by ``shooting" $g_s(x)$ with an element of $\mathcal{B}$ in the way hinted above. Moreover it is relatively easy to determine that the change of weight from $g_s(x)$ to $g_{s+1}(x)$ obeys the same rule as the change of positions of the rabbit during the shooting game. Indeed, as we shall see in the proof, although the expressions for $g_{s+1}(x)$ can become increasingly complicated, there is a simple way to get the \emph{probability} that the weight of $g_{s+1}(x)$ will be larger than a given $y$, with $y<\frac{pe}{p-1}$. The reason for this is that we can divide in two pieces the expressions that decide whether the weight of $g_{s+1}(x)$ will be larger than $y$. One piece comes from ``lower order terms" and it behaves in the proof, from the probabilistic point of view, as a \emph{constant}. The other piece comes in a very simple manner from the Eisenstein polynomial $g(x)$ and one sees, directly from the definition of Haar measure on Eisenstein polynomials, that it is a uniform random variable in $\text{Teich}(\mathbb{Q}_{p^f})$.  In this way we can prove Theorem \ref{counting rephrased} for all $(I,\beta)$ with $\text{min}(\beta)=1$ and $\frac{pe}{p-1} \not \in I$. To proceed further we need to deal with the case that, in the above ``shooting process", the unit has reached a weight at least $\frac{pe}{p-1}$ and we have not yet used a shot of length $0$. That means that either $\frac{pe}{p-1} \in I_K$ with $\beta(\frac{pe}{p-1})=1$ or $\zeta_{p^2} \in K$. The last remark in Section \ref{subspaces and quotient shooting} tells us that the former possibility should occur precisely $p-1$ times as often as the latter. On the other hand Proposition \ref{when star is in  for tot ram} tells us that the same happens for local fields. Indeed the fields $\{\mathbb{Q}_{p^f}(\zeta_p)(s): s \in \{1, \ldots , p\}$ have all the same mass, therefore by Proposition \ref{when star is in  for tot ram} we conclude that they partition the set of local fields $K$, having either $\frac{pe}{p-1} \in K$ or $\zeta_{p^2} \in K$, into $p$ disjoint sets $X_1, \ldots ,X_p$ having all the same mass, with $K \in X_s$ if and only if $\mathbb{Q}_{p^f}(\zeta_p)(s) \subseteq K$. For $s \in \{1, \ldots , p-1 \}$ we have that the total mass of $X_s$ equals $\frac{1}{p-1}$ of the total mass of the fields $K$ with $\frac{pe}{p-1} \in I_K$ and $\beta_K(\frac{pe}{p-1})=1$. On the other hand $X_p$ consists of those fields $K$ with $\zeta_{p^2} \in K$. So we deal with the set $X_1, \ldots ,X_{p-1}$ working with Eisenstein polynomials over $\mathbb{Q}_{p^f}(\zeta_p)(1), \ldots ,\mathbb{Q}_{p^f}(\zeta_p)(p-1)$ and we deal with $X_p$ using $\mathbb{Q}_{p^f}(\zeta_{p^2})$. Thanks to Proposition \ref{when star is in  for tot ram}, by repeating the above ``shooting argument" for the sets $X_1, \ldots , X_{p-1}$, Theorem \ref{counting rephrased} is proved also in the case that $\frac{pe}{p-1} \in I$ with $\beta(\frac{pe}{p-1})=1$. The idea is to repeat this whole proof structure over $\mathbb{Q}_{p^f}(\zeta_{p^2})$. 

\subsubsection{Proof strategy} \label{strategy}
The plan of the proof is the following. Let $n:=\text{v}_{\mathbb{Q}_p}(e)$. We will use the notation from Section \ref{shooting game} and in particular from Section \ref{subspaces and quotient shooting}. For each $j \in \{0,1,\ldots ,n\}$ we construct maps
$$\sigma_j:\text{Eis}(\frac{e}{p^j(p-1)}, \mathbb{Q}_q(\zeta_{p^{j+1}})) \to \mathcal{S}_{\frac{pe}{p-1}, \text{stop}}(\rho_{e,p},\frac{e}{p^j(p-1)},q)
$$
and for each $j_1 \in \{0,\ldots ,n-1\}$ and $j_2 \in \{1,..,p-1\}$ we construct maps
$$\sigma_{j_1,j_2}:\text{Eis}(\frac{e}{p^{j_1+1}(p-1)}, \mathbb{Q}_q(\zeta_{p^{j_1+1}})(j_2)) \to \mathcal{S}_{\frac{e}{p-1}, \text{stop}}(\rho_{e,p},\frac{e}{p^{j_1+1}(p-1)},q),
$$
having the following two properties. 

$(P.1)$ For any $j \in \{0,1,\ldots ,n\}$ and $f(x) \in \text{Eis}(\frac{e}{p^j(p-1)}, \mathbb{Q}_q(\zeta_{p^{j+1}}))$, denoting by $K_{f(x)}:=\mathbb{Q}_q(\zeta_{p^{j+1}})[x]/f(x)$, we have that
$$\{i \in I_{K_{f(x)}}: \rho_{e,p}^{\beta_{K_{f(x)}}-(j+1)}(i)<\frac{pe}{p-1} \}=I_{\sigma_j(f(x))}
$$
and for each $i \in I_{\sigma_j(f(x))}$ we have that
$$\beta_{K_{f(x)}}(i)=\beta_{\sigma_j(f(x))}(i)+j.
$$

For any $j_1 \in \{0,\ldots ,n-1\}$, $j_2 \in \{1,\ldots ,p-1\}$ and $f(x) \in \text{Eis}(\frac{e}{p^{j+1}(p-1)}, \mathbb{Q}_q(\zeta_{p^{j_1+1}})(j_2))$ , we have that
$$I_{K_{f(x)}}=I_{\sigma_{j_1,j_2}(f(x))} \cup \{\frac{pe}{p-1}\}.
$$
For $i \in I_{\sigma_{j_1,j_2}(f(x))}$ we have
$$\beta_{K_{f(x)}}(i)=\beta_{\sigma_{j_1,j_2}(f(x))}(i)+j_1+1.
$$
Finally we have
$$\beta_{K_{f(x)}}(\frac{pe}{p-1})=j_1+1.
$$ 

$(P.2)$ For any $j \in \{0,1,\ldots ,n\}$  pushing forward $\mu_{\text{Haar}}$, the natural probability measure on $\text{Eis}(\frac{e}{p^j(p-1)},\mathbb{Q}_q(\zeta_{p^{j+1}}))$ coming from the Haar measure on the coefficients, with $\sigma_j$ one obtains $\mu_{q,\frac{e}{p^j(p-1)},\rho_{e,p}}^{*}$, the probability measure on $\mathcal{S}_{\frac{pe}{p-1}, \text{stop}}(\rho_{e,p},\frac{e}{p^j(p-1)},q)$ introduced in Section \ref{shooting game}.

For any $j_1 \in \{0,1,\ldots ,n-1\}$ and $j_2 \in \{1,\ldots ,p-1\}$, pushing forward $\mu_{\text{Haar}}$ with $\sigma_{j_1,j_2}$ from $\text{Eis}(\mathbb{Q}_{q}(\zeta_{p^{j_1+1}})(j_2))$ to $\mathcal{S}_{\frac{e}{p^{j_1}(p-1)}}(\rho_{e,p},\frac{e}{p^{j_1+1}(p-1)},q)$, one obtains $\mu_{q,\frac{e}{p^{j_1+1}(p-1)},\rho_{e,p}}^{*}$. 

The construction of such maps $\sigma_{j}$ and $\sigma_{j_1,j_2}$ satisfying $(P.1)$ and $(P.2)$ as above, is sufficient to prove Theorem \ref{counting rephrased} and thus Theorem \ref{counting}. Indeed, thanks to Remark \ref{dec. remark 3}, we can conclude with $\sigma_0$ that Theorem \ref{counting rephrased} holds for all $(I,\beta)$ with $\text{min}(\beta)=1$ and $\frac{pe}{p-1} \not \in I$. At that point we know that the probability of the event $\{\text{min}(\beta)>1 \ \text{or} \ \frac{pe}{p-1} \in I \}$ has equal probability on both sides (Eisenstein polynomials and shooting games). We remark that this conclusion can be reached alternatively also by a direct computation. By Remark \ref{dec. remark 5} we know that, at the level of shooting games, the probability of the event $\{\frac{pe}{p-1} \in I, \beta(\frac{pe}{p-1})=1 \}$ is $p-1$ times as large as the event $\{\text{min}(\beta)>1 \}$. On the other hand this is clearly true also at the level of Eisenstein polynomials: the fields $\mathbb{Q}_{q}(\zeta_p)(j)$ have the same mass as $j$ runs through $\{1,\ldots ,p\}$, and by Proposition \ref{when star is in  for tot ram}, precisely the first $p-1$ of them give the event $\{\frac{pe}{p-1} \in I, \beta(\frac{pe}{p-1})=1 \}$, while the last (which is $\mathbb{Q}_{q}(\zeta_{p^2})$) gives the event $\{\text{min}(\beta)>1 \}$. Thus we can go on in the proof of Theorem \ref{counting rephrased} by conditioning on both sides with either the event $\{\frac{pe}{p-1} \in I, \beta(\frac{pe}{p-1})=1\}$ or the event $\{ \text{min}(\beta)>1 \}$. Thus by Remark \ref{dec. remark 4}, and with the $\sigma_{0,j_2}$ we conclude the validity of Theorem \ref{counting rephrased} for $(I,\beta)$ with $\frac{pe}{p-1} \in I$ and $\beta(\frac{pe}{p-1})=1$. Here we are using that if $F/K$ is a totally ramified Galois extension of local fields, then, for an extension $F/E$ and a positive integer $d \in [F:E]\mathbb{Z}_{\geq 1}$, the conditional probability measure $\mu_{d,E}(-|\{F \text{\ is a subfield}\})$ equals the probability measure $\mu_{\frac{d}{[F:E]},F}$\footnote{Here we are using the following standard notation. If $(X,\mu)$ is a probability space and $A \subseteq X$ is a measurable subset with $\mu(A)>0$, then $\mu(-|A)$ denotes the probability measure on $A$, defined by the formula $\mu(-|A)(B):=\frac{\mu(B)}{\mu(A)}$ for each $B \subseteq A$ measurable.}. That justifies the passage to Eisenstein polynomials over the extensions $\mathbb{Q}_{q}(\zeta_{p}(j))$.

Now we continue working over $\mathbb{Q}_{q}(\zeta_{p^2})$ and we proceed precisely as above. Namely we first use the map $\sigma_{1}$ to show that Theorem \ref{counting rephrased} holds for $(I,\beta)$ with $\text{min}(\beta)=2$ and $\frac{pe}{p-1} \not \in I$. If $n=1$ we are done. Otherwise we again obtain that the measure of the event $\{\text{min}(\beta)>2 \ \text{or} \ \frac{pe}{p-1} \in I\}$ coincides on both sides of Theorem \ref{counting rephrased}. Finally Remark \ref{dec. remark 5} gives that, at the level of shooting games, the event $\{\frac{pe}{p-1} \in I, \beta(\frac{pe}{p-1})=2 \}$ is $p-1$ times as frequent as the event $\{\text{min}(\beta)>2\}$. This holds also for Eisenstein polynomials thanks to the fact that the extensions $\mathbb{Q}_{q}(\zeta_{p^2})(j)$ of $\mathbb{Q}_{q}(\zeta_{p^2})$ for $j \in \{1,\ldots ,p\}$ have all the same mass, and by Proposition \ref{when star is in  for tot ram} we have that the first $p-1$ give the event $\{\frac{pe}{p-1} \in I, \beta(\frac{pe}{p-1})=2 \}$ while the last (which is $\mathbb{Q}_{q}(\zeta_{p^3})$) gives the event $\{\text{min}(\beta)>2\}$. Thus we use the maps $\sigma_{1,j}$ to prove Theorem \ref{counting rephrased}, with the same considerations made above, and we go on working over $\mathbb{Q}_{q}(\zeta_{p^3})$. Iterating this argument we prove Theorem \ref{counting rephrased} for every $(I,\beta)$, an extended admissible jump set. Therefore to finish the proof, we are left with constructing the maps $\sigma_{j}, \sigma_{j_1,j_2}$ and showing that they have properties $(P.1)$ and $(P.2)$. This done in the next two subsections.
\subsection{Construction of the maps $\sigma_j, \sigma_{j_1,j_2}$} \label{Construction of the maps}
Let $j \in \{0,\ldots ,n\}$, we begin with the construction of $\sigma_j$. To lighten the notation, denote $e_j:=\frac{e}{p^j(p-1)}$. An element 
$$f(x):=x^{e_j}+\sum_{i=0}^{e_j-1}a_ix^i
$$
in $\text{Eis}(e_j,\mathbb{Q}_q(\zeta_{p^{j+1}}))$ can be equivalently represented 
as
$$
\tilde{f}(x):=1+\sum_{i=1}^{e_j}{\tilde{a}}_ix^i
$$
where $\tilde{f}(x):=\frac{1-\zeta_{p^{j+1}}}{a_0}f(x)+\zeta_{p^{j+1}}$. This gives us an embedding of $\text{Eis}(e_j, \mathbb{Q}_q(\zeta_{p^{j+1}}))$ into $H_{e_j}(\mathbb{Q}_q(\zeta_{p^{j+1}})):=\{g \in \mathbb{Z}_q[\zeta_{p^{j+1}}]:\text{deg}(g) \leq e_j, g(0)=1, g(x) \equiv 1 \ \text{or} \ g(x) \equiv 1+ax^{e_j} \ \text{mod} \ (1-\zeta_{p^{j+1}}) \ \text{for some $a \in \mathbb{Z}_q[\zeta_{p^{j+1}}]^{*}$} \}$. Starting with $f_0(x):=\tilde{f}(x)$, we define inductively a sequence $\{f_n(x)\}_{n \in \mathbb{Z}_{\geq 0}}$ with $f_n(x) \in H_{e_j}(\mathbb{Q}_q(\zeta_{p^{j+1}}))$ for every $n \in \mathbb{Z}_{\geq 0}$. To do so we first define a weight map on $H_{e_j}(\mathbb{Q}_q(\zeta_{p^{j+1}}))$ by
$$w(1+\sum_{i=1}^{e_j}b_ix^i)=\text{min}_{1 \leq i \leq e_j: b_i \neq 0}(e_j\text{v}_{\mathbb{Q}_q(\zeta_{p^{j+1}})}(b_i)+i).
$$
Now, suppose that $w(f_n(x)) \geq \frac{pe}{p-1}$, then declare 
$$f_{n+1}(x)=f_n(x).
$$
So, suppose that 
$w(f_n(x))<\frac{pe}{p-1}$. Observe that $$f_n(x) \in U_{w(f_n(x))}(\mathbb{Q}_q(\zeta_{p^{j+1}})[x]/f(x))-U_{w(f_n(x))+1}(\mathbb{Q}_q(\zeta_{p^{j+1}})[x]/f(x)),$$ 
thus there exist unique $i_n \in T_{\rho_{e,p}}, \beta_n \in \mathbb{Z}_{\geq 0}$ and unique $\epsilon_n$, a Teichm\"uller representative in $\mathbb{Q}_q$, such that 
$$(1+\epsilon_nx^{i_n})^{p^{\beta_n}}f_n(x) \in U_{w(f_n(x))+1}(\mathbb{Q}_q(\zeta_{p^{j+1}})[x]/f(x)).
$$
It is not difficult to show that there exists a unique element of $H_{e_j}(\mathbb{Q}_q(\zeta_{p^{j+1}}))$ congruent to $(1+\epsilon_nx^{i_n})^{p^{\beta_n}}f_n(x)$ modulo $f(x)$ and of degree at most $e_j$. We put $f_{n+1}(x)$ to be this element. It follows by construction that $w(f_{n+1}(x)) \geq w(f_n(x))$ with equality occurring iff $w(f_{n}(x)) \geq \frac{pe}{p-1}$. Moreover in the case of equality we have $f_{n+1}(x)=f_n(x)$. Thus we define
$$\sigma_j(f(x)):=\{n \mapsto (w(f_{n-1}(x)),\beta_{n-1})\}_{n \in \mathbb{Z}_{\geq 1}} \in \mathcal{S}_{\frac{pe}{p-1},\text{stop}}(\rho_{e,p},\frac{e}{p^j(p-1)},q).
$$

Let now $j_1 \in \{0,\ldots ,n-1\}$ and $j_2 \in \{1,\ldots ,p-1\}$. The map $\sigma_{j_1,j_2}$ is defined similarly to how the maps $\sigma_j$ were defined. We briefly explain the modifications. Fix units $u_1,u_2 \in \mathbb{Q}_q(\zeta_{p^{j_1+1}})(j_2)$ with $$
u_1^{p}u_2=\zeta_{p^{j_1+1}}, \ \text{v}_{\mathbb{Q}_q(\zeta_{p^{j_1+1}})(j_2)}(u_1-1)=1, \ \text{v}_{\mathbb{Q}_q(\zeta_{p^{j_1+1}})(j_2)}(u_2-1)=\frac{pe}{p-1}
$$
and $u_2 \not \in (\mathbb{Q}_q(\zeta_{p^{j_1+1}})(j_2))^{*p}
$, as guaranteed by Proposition \ref{I,beta for twisted cyclotomic} and Corollary \ref{The relation between the units} together. Next, given $f(x):=x^{e_{j_1+1}}+\sum_{i=0}^{e_{j_1}}a_ix^i \in \text{Eis}(e_{j_1+1}, \mathbb{Q}_q(\zeta_{p^{j_1+1}}))$, define this time
$$\tilde{f}(x):=\frac{1-u_1}{a_0}f(x)+u_1.
$$
Also change $H_{e_{j_1+1}}(\mathbb{Q}_q(\zeta_{p^{j_1+1}})(j_2))$ to be the set
$$\{g \in \mathbb{Z}_q[\zeta_{p^{j_1+1}},u_1]:\text{deg}(g) \leq e_{j_1+1}, g(0)=1, g(x) \equiv 1 \ \text{or} \ g(x) \equiv 1+x^{e_j} \ \text{mod} \ (1-u_1) \},$$
and set the cut-off for concluding $f_{n+1}(x)=f_n(x)$ to be $w(f_n(x)) \geq e_j$. Following the above procedure, with these modifications, we get the construction of 
$$\sigma_{j_1,j_2}(f(x)) \in \mathcal{S}_{\frac{e}{p-1},\text{stop}}(\rho_{e,p},\frac{e}{p^{j_1+1}(p-1)},q).
$$
\subsection{The maps $\sigma_j,\sigma_{j_1,j_2}$
 satisfy properties $(P.1), (P.2)$}
Let us begin showing that, for $j \in \{0,\ldots ,n\}$, the map $\sigma_j$  obeys the property $(P.1)$. By construction, we know that for $f(x) \in \text{Eis}(e_j, \mathbb{Q}_q(\zeta_{p^{j+1}}))$, we have that
$$\zeta_{p^{j+1}} \cdot \prod_{n:w(f_n(x))<\frac{pe}{p-1}}(1+\alpha_nx^{i_n})^{p^{\beta_n}} \in U_{\frac{pe}{p-1}}(\mathbb{Q}_q(\zeta_{p^{j+1}})[x]/f(x)),
$$
with $p^{\beta_n}i_n=w(f_n(x))$, so the sequence $n \mapsto p^{\beta_n}i_n$ is strictly increasing as $n$ runs with the constraint $w(f_n(x))<\frac{pe}{p-1}$. Of course, the weight of $1+\alpha_nx^{i_n}$ in $U_{\bullet}(\mathbb{Q}_q(\zeta_{p^{j+1}})[x]/f(x))$ is precisely $i_n$. Therefore one sees that the values of $n$ such that $i_n \in I_{\mathbb{Q}_q(\zeta_{p^{j+1}})[x]/f(x)}$ are precisely those where $\beta_n$ reaches a new minimum. This is precisely the same rule that implies $i_n \in I_{\sigma_j(f(x))}$. For such an $i_n$ it easily follows from Corollary \ref{The relation between the units} that
$$\beta_n+j+1=\beta_{\mathbb{Q}_q(\zeta_{p^{j+1}})[x]/f(x)}.$$
This shows that $\sigma_j$ enjoys the property $(P.1)$ for each $j \in \{0,\ldots ,n\}$. 

We next show that for $j_1 \in \{0,\ldots ,n-1\}$ and $j_2 \in \{1,\ldots ,p-1\}$, the map $\sigma_{j_1,j_2}$ satisfies $(P.1)$. Recall the definition of the units $u_1,u_2$ introduced during the construction of the map $\sigma_{j_1,j_2}$. By construction, we know that for $f(x) \in \text{Eis}(e_{j_1+1}, \mathbb{Q}_q(\zeta_{p^{j_1+1}})(j_2))$, we have that
$$u_1 \cdot \prod_{n:w(f_n(x))<e_{j_1}}(1+\alpha_nx^{i_n})^{p^{\beta_n}} \in U_{\frac{e}{p-1}}(\mathbb{Q}_q(\zeta_{p^{j_1+1}})(j_2)[x]/f(x)).
$$
This implies that
$$u_2^{-p^{j_1}} \cdot \prod_{n:w(f_n(x))<e_{j_1}}(1+\alpha_nx^{i_n})^{p^{\beta_n}+j_1+1} \in U_{\frac{p^{j_1+1}e}{p-1}}(\mathbb{Q}_q(\zeta_{p^{j_1+1}})(j_2)[x]/f(x)).
$$
Therefore all the other units that will be employed in order to write the full relation, cannot give a contribution to $(I_{\mathbb{Q}_q(\zeta_{p^{j_1+1}})(j_2)[x]/f(x)},\beta_{\mathbb{Q}_q(\zeta_{p^{j_1+1}})(j_2)[x]/f(x)})$, due to the presence of $\frac{pe}{p-1}$ in $I_{\mathbb{Q}_q(\zeta_{p^{j_1+1}})(j_2)[x]/f(x)}$, with $\beta_{\mathbb{Q}_q(\zeta_{p^{j_1+1}})(j_2)[x]/f(x)}(\frac{pe}{p-1})=j_1$ as guaranteed by Proposition \ref{when star is in  for tot ram}. Thus one concludes using the same argument employed for $\sigma_j$. 

We next prove that the maps $\sigma_j,\sigma_{j_1,j_2}$ satisfy $(P.2)$. We will do so for $\sigma_j$, the argument for $\sigma_{j_1,j_2}$ being basically the same with different notation. 

Given $f \in \text{Eis}(e_j, \mathbb{Q}_q(\zeta_{p^{j+1}}))$, let us begin expanding each of the coefficients of
$$\tilde{f}(x):=1+\sum_{i=1}^{e_j}\tilde{a}_jx^j 
$$
as
$$\tilde{a}_i=\sum_{k=1}^{\infty}\epsilon_{k,i}(1-\zeta_{p^{j+1}})^k,
$$
for $1 \leq i<e_j$ and for $i=e_j$ we write
$$\tilde{a}_{e_j}=\sum_{k=0}^{\infty}\epsilon_{k,e_j}(1-\zeta_{p^{j+1}})^k,
$$
where all $\epsilon_{k,i}$ are Teichm\"uller representatives of $
\mathbb{Z}_q$. We can consider any finite subset of the $\epsilon_{k,i}$ as independent random variables taking values in all possible Teichm\"uller representatives with the uniform distribution if $(k,i) \neq (0,e_j)$ and uniformly in the non-zero Teichm\"uller representatives of $\mathbb{Z}_q$ for $\epsilon_{0,e_j}$.

Next, for each $n \in \mathbb{Z}_{\geq 1}$, we set
$$f_n(x)=1+\sum_{i=1}^{e_j}a_i(n)x^i,
$$
and we let
$$a_i(n):=\sum_{k=1}^{\infty}\epsilon_{k,i}(n)(1-\zeta_{p^{j+1}})^k.
$$
the corresponding Teichm\"uller expansion with respect to $(1-\zeta_{p^{j+1}})$. The fact that the second sum started with $k=1$ is a consequence of the fact that the weights of $f_n(x)$ are strictly increasing as $n$ increases together with the definition of $H_{e_j}(\mathbb{Q}_q(\zeta_{p^{j+1}}))$.

For any fixed $n \in \mathbb{Z}_{\geq 0}$ the monomials $\epsilon_{k,i}(n)(1-\zeta_{p^{j+1}})^kx^i$ can be given the weight $ke_j+i$ if $\epsilon_{k,i}(n) \neq 0$ and $\infty$ otherwise. This induces a total order on the various non-zero monomials, and the weight $w(f_n(x))$ of $f_n(x)$ as defined before, equals the minimum weight of the various monomials as long as there is a monomial with weight less than $\frac{pe}{p-1}$, otherwise we have already arrived at the point where the sequence $f_n(x)$ is eventually constant. 

From the rule to obtain $f_{m+1}(x)$ out of $f_m(x)$ we see that for any $n \in \mathbb{Z}_{\geq 1}$, and any positive integer $\frac{pe}{p-1}>w_0>w(f_{n-1}(x))$, there exists a function $F_{w_0,n}$ taking as input the sequence of $(\epsilon_{k,i})_{e_jk+i<w_0}$ and giving as output a Teichm\"uller representative of $\mathbb{Z}_q$, in such a way that if we write $w_0=e_jq'+h$, the division with remainder of $w_0$ by $e_j$, we have that
$$\epsilon_{q',h}(n)=[F_{w_0,n}((\epsilon_{k,i})_{e_jk+i<w_0})+\epsilon_{q',h}]_{\text{Teich}},
$$
where for $a \in \mathbb{Z}_q$, the symbol $[a]_{\text{Teich}}$ denotes the unique Teichm\"uller representative $\epsilon$ in $\mathbb{Z}_q$ such that $\epsilon \equiv a \ \text{mod} \ p$. It thus follows at once that for the collection of $(q',h)$ with $\frac{pe}{p-1}>e_jq'+h>w(f_{n-1}(x))$, the variables $\epsilon_{q',h}(n)$ are independent random variables taking values in the Teichm\"uller representatives of $\mathbb{Z}_q$ with the uniform distribution. Therefore the change of weights from $w(f_{n-1}(x))$ to $w(f_n(x))$ is governed precisely by the rules of the shooting game. This ends the proof. 
\section{Finding jump sets inside an Eisenstein polynomial} \label{finding jump sets inside}
The primary goal of this Section is to establish Theorem \ref{valuation coefficients more general}, which is a generalization of Theorem \ref{valuation coefficients} from the Introduction. We will next specialize Theorem \ref{valuation coefficients more general} to obtain several consequences that aim to give a sense to the reader on how efficiently one can establish the value of $(I,\beta)$ in the range of the Theorem. Most notably we will see that for $q$ odd or for $j \geq 1$, the set of \emph{strongly} Eisenstein polynomials (see Definition \ref{strongly Eisenstein}) over $\mathbb{Q}_q(\zeta_{p^{j+1}})$  is precisely the set of polynomials giving the jump set that has the highest probability. Also we will see the relation between Theorem \ref{valuation coefficients more general} and Theorem \ref{counting rephrased}. Indeed we shall prove Theorem \ref{valuation coefficients more general}, by establishing the equality between the jump set of a shooting game coming from the valuation of the coefficients of an Eisenstein polynomial (denoted as $\tilde{\sigma}_j$ below) and (part of the) jump set of the shooting game constructed using the maps introduced during the proof of Theorem \ref{counting rephrased} (denoted as $\sigma_j$). Also we observe that Theorem \ref{counting rephrased} partially \emph{follows} as a direct counting from Theorem \ref{valuation coefficients more general}, namely it does so for the jump sets coming from the region of Eisenstein polynomials where Theorem \ref{valuation coefficients} applies (which for instance for $p=2$ and $j=0$ (i.e. over $\mathbb{Q}_2$) is empty, and for general $p$ it misses an open set of Eisenstein polynomials). Finally we shall give examples, showing that without the main assumption on the different, the conclusion of Theorem \ref{valuation coefficients more general} is not anymore valid in general. 

Let $j \in \mathbb{Z}_{\geq 0}$, $p$ a prime number, $f \in \mathbb{Z}_{\geq 1}$ and $q:=p^f$. Let $e \in p^{j}(p-1)\mathbb{Z}_{\geq 1}$. Recall the notation $e_j:=\frac{e}{p^j(p-1)}$, used during the proof of Theorem \ref{counting rephrased}. Let $g(x) \in \text{Eis}(e_{j}, \mathbb{Q}_q(\zeta_{p^{j+1}}))$ (see notation from the proof of Theorem \ref{counting rephrased}). We proceed to define a stopping shooting game attached to $g$, which will be denoted as
$$ {\tilde{\sigma}}_{j}(g(x)) \in \mathcal{S}_{e,\text{stop}}(\rho_{e,p},e_j,q).
$$
It is defined with the following simple rule. Write $g(x)=x^{e_j}+\sum_{i=0}^{e_j-1}a_ix^i$ and give to each monomial $a_ix^i$ weight $w(a_ix^i):=e_j\text{v}_{\mathbb{Q}_q(\zeta_{p^{j+1}})}(a_i)+i$.  Arrange $w(a_ix^i)$ as an increasing sequence $n \mapsto w(a_{i_n}x^{i_n})$. Then the sequence 
$$\tilde{\sigma_j}(g(x)):=\{(w(a_{i_n}x^{i_n}),v_p(i_n))\}_{n:w(a_{i_n}x^{i_n}) \leq e}$$
is an element of
$$\mathcal{S}_{e,\text{stop}}(\rho_{e,p},e_j,q).
$$
One can now see that the pair $(I_{g(x)},\beta_{g(x)})$ defined in the Introduction right before Theorem \ref{valuation coefficients} is simply the jump set of the shooting game $\tilde{\sigma}_j(g(x))$. We now explain more closely how one calculates this pair. It is clear that the smallest weight is precisely $e_j=w(x^{e_j})$, consistently with the fact that in $\tilde{\sigma}_j(g(x))$ the rabbit is supposed to start from $e_j$. So we start with $\alpha_0=e_j$. Next, given $\alpha_h$ (thus the rabbit being at $\text{v}_{\mathbb{Q}_q(\zeta_{p^{j+1}})}(a_{\alpha_h})e_j+\alpha_h$), to obtain a larger weight, either we find other weights that are contained in the interval $[\text{v}_{\mathbb{Q}_q(\zeta_{p^{j+1}})}(a_{\alpha_h})e_j,(\text{v}_{\mathbb{Q}_q(\zeta_{p^{j+1}})}(a_{\alpha_h})+1)e_j ]$ or there are no such other weights. In the first case the contribution comes only from the weights $\alpha$ that are larger than $\alpha_h$ (otherwise the weight is smaller). Among these, in order to have a change of shooters and thus a contribution to the jump set, we are only interested in those requiring a smaller shot-length, i.e. with smaller $\text{v}_{\mathbb{Q}_p}(\alpha_h)$, in good harmony with rule $(3)$ of the shooting game. Thus the first such weight with smaller $\text{v}_{\mathbb{Q}_p}$ is precisely where the shooter is changed. In the second case, the weight will be larger anyway, thus (as long as larger weight matters) we are now interested in examining all $\alpha$ with $\text{v}_{\mathbb{Q}_q(\zeta_{p^{j+1}})}(a_{\alpha})>\text{v}_{\mathbb{Q}_q(\zeta_{p^{j+1}})}(a_{\alpha_h})$. Again, among these we are only interested in those where $\text{v}_{\mathbb{Q}_p}(\alpha)$ becomes smaller. The smallest such weight, again, is the first place where the shot length became smaller and a new shooter came in giving the next contribution to the jump set of the shooting game. We formalize this explanation in the following procedure. 

\emph{Procedure.} \label{Procedure Eis}Let $g(x):=x^{e_j}+\sum_{i=0}^{e_j-1}a_ix^i \in \text{Eis}(e_j, \mathbb{Q}_q(\zeta_{p^{j+1}}))$. Set $\alpha_0=e_j$. Given $\alpha_h$, construct $\alpha_{h+1}$ as follows. Search if there is $\alpha$ such that $\text{v}_{\mathbb{Q}_q(\zeta_{p^{j+1}})}(a_{\alpha})=\text{v}_{\mathbb{Q}_q(\zeta_{p^{j+1}})}(a_{\alpha_h})$ and $\alpha \geq \alpha_h$. If such an $\alpha$ exists, search if there is among them one with $\text{v}_{\mathbb{Q}_p}(\alpha)<\text{v}_{\mathbb{Q}_p}(\alpha_h)$. If there is such $\alpha$, pick the smallest such $\alpha$ and declare $\alpha_{h+1}=\alpha$. If no such $\alpha$ exists, then look if there is an $\alpha$ such that $\text{v}_{\mathbb{Q}_q(\zeta_{p^{j+1}})}(a_{\alpha})>\text{v}_{\mathbb{Q}_q(\zeta_{p^{j+1}})}(a_{\alpha_{h}})$ and $e_j\text{v}_{\mathbb{Q}_q(\zeta_{p^{j+1}})}(a_{\alpha})<e$. If no such $\alpha$ exists then set $\alpha_{h+1}=\alpha_h$. Otherwise let $d$ be the next valuation that attains the above constraints. Look if there is $\alpha$ with $\text{v}_{\mathbb{Q}_p}(\alpha)<\text{v}_{\mathbb{Q}_p}(\alpha_h)$ and $\text{v}_{\mathbb{Q}_q(\zeta_{p^{j+1}})}(a_{\alpha})=d$, in that case take the smallest such $\alpha$ as $\alpha_{h+1}$. If there is none, go to the next valuation with the above constraints and do the same search, iterating until you either have to set $\alpha_{h+1}=\alpha_h$, or you have found an $\alpha_{h+1} \neq \alpha_h$. In this way the sequence $\{\alpha_i\}$ is produced. With this notation, writing $I_{\tilde{\sigma}_j(g(x))}=\{i_1<\ldots <i_s\}$, we have that $\beta_{\tilde{\sigma}_j}(i_k)=v_p(\alpha_k)$ and $p^{\beta_{\tilde{\sigma}_j}(i_k)}i_k=e_j\text{v}_{\mathbb{Q}_q(\zeta_{p^{j+1}})}(a_{\alpha_k})+\alpha_k$. 

Let $j$ be a positive integer. Recall that for an integer $e$ in $p^j(p-1)\mathbb{Z}_{\geq 1}$ we define $e_j:=\frac{e}{p^j(p-1)}$.
\begin{theorem}\label{valuation coefficients more general}
Let $j$ be a positive integer and let $e \in p^j(p-1)\mathbb{Z}_{\geq 1}$. For any $g(x) \in \emph{Eis}(e_j, \mathbb{Q}_q(\zeta_{p^{j+1}}))$, we have that the set 
$$i_1<\ldots <i_s,
$$
described in the above procedure, is equal to the set
$$ \{i \in I_{\mathbb{Q}_q(\zeta_{p^{j+1}})[x]/g(x)}:p^{\beta_{\mathbb{Q}_q(\zeta_{p^{j+1}})[x]/g(x)}(i)-j-1}i<e
\}
$$
and for each $k \in \{1,..,s\}$ we have that
$$\beta_{\mathbb{Q}_q(\zeta_{p^{j+1}})[x]/g(x)}(i_k)=\emph{v}_{\mathbb{Q}_p}(\alpha_k)+j+1.
$$
\begin{proof}
We proceed by looking more closely at the construction of the maps $\sigma_j$ in the proof of Theorem \ref{counting rephrased}. One first crucial ingredient is that, thanks to the shape of the conclusion, we can disregard, in the setting of the proof of Theorem \ref{counting rephrased}, monomials with weights larger than $e$, so that we can perform $p$-th powering as if we were in a characteristic $p$ field. Keeping this in mind one sees from the construction of the sequence $g_n(x)$ in the proof of Theorem \ref{counting rephrased}, that given an $\alpha_k$ as above, then as long as $n$ satisfies $w(g_n(x))<e_j\text{v}_{\mathbb{Q}_q(\zeta_{p^{j+1}})}(a_{\alpha_k})+\alpha_k$, then for each positive integer $w_0$ with
$$w(g_n(x))<w_0 \leq e_j\text{v}_{\mathbb{Q}_q(\zeta_{p^{j+1}})}(a_{\alpha_k})+\alpha_k,
$$
and
$$\text{v}_{\mathbb{Q}_p}(w_0) \leq \text{v}_{\mathbb{Q}_p}(\alpha_k),
$$
one sees that
 $$F_{w_0,n}((\epsilon_{k,i})_{e_jk+i<e_ja_{\alpha_k}})=0.$$
This is seen by induction on $n$ and direct inspection. The key observation is that, once we can disregard the multiples of $p$ in $p$-powering, when we perform a shot, as in the proof of Theorem \ref{counting rephrased}, it sends all the monomials having weight \emph{smaller} than $e_j\text{v}_{\mathbb{Q}_q(\zeta_{p^{j+1}})}(a_{\alpha_k})+\alpha_k$ only to monomials having an index with \emph{larger} $p$-adic valuation.
With this, the formula appearing at the end of the proof of Theorem \ref{counting rephrased} gives
$$\epsilon_{w_0,h}(n)=[F_{w_0,n}((\epsilon_{k,i})_{e_jk+i<w_0})+\epsilon_{q',h}]_{\text{Teich}}=[\epsilon_{q',h}]_{\text{Teich}},
$$
where $w_0=e_jq'+h$. In terms of the shooting game $\sigma_j(g(x))$, this means precisely that the rabbit will visit the position $e_j\text{v}_{\mathbb{Q}_q(\zeta_{p^{j+1}})}(a_{\alpha_k})+\alpha_k$, and that all the shots used before that event are of length strictly larger than $\text{v}_{\mathbb{Q}_p}(\alpha_k)$. Indeed the rabbit doesn't visit any of the positions $w_0<e_j\text{v}_{\mathbb{Q}_q(\zeta_{p^{j+1}})}(a_{\alpha_k})+\alpha_k$ with $\text{v}_{\mathbb{Q}_p}(w_0) \leq \text{v}_{\mathbb{Q}_p}(\alpha_k)$. But these are precisely the positions where a stop of the rabbit would have given a shot of length at most $\text{v}_{\mathbb{Q}_p}(\alpha_k)$ before the position $e_j\text{v}_{\mathbb{Q}_q(\zeta_{p^{j+1}})}(a_{\alpha_k})+\alpha_k$ was reached.

\end{proof}
\end{theorem}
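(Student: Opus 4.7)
The plan is to identify the combinatorial shooting game $\tilde{\sigma}_j(g(x))$ defined directly from the valuations of the coefficients of $g(x)$ with the relevant portion of the shooting game $\sigma_j(g(x))$ constructed during the proof of Theorem \ref{counting rephrased}. Once this identification is established, property $(P.1)$ of $\sigma_j$ already encodes the translation between the shooting game on one side and $(I_{K_{g(x)}}, \beta_{K_{g(x)}})$ on the other, so the theorem will follow directly. Concretely I would prove by induction on the step $n$ of the iterative construction of $\sigma_j(g(x))$ that the weight $w(f_n(x))$ and the shot length $\beta_n$ coincide with the quantities $e_j\,\text{v}_{\mathbb{Q}_q(\zeta_{p^{j+1}})}(a_{\alpha_n})+\alpha_n$ and $\text{v}_{\mathbb{Q}_p}(\alpha_n)$ produced by the Procedure applied to $g(x)$.

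The key technical ingredient is that since the truncation threshold for the iteration is $pe/(p-1)$ but the conclusion only concerns positions strictly below $e$, the effect of multiplying by $(1+\epsilon x^{i})^{p^\beta}$ may be analysed modulo terms of excessive weight. Modulo such terms the operator behaves like the characteristic-$p$ Frobenius $1+\epsilon^{p^\beta} x^{p^\beta i}$: every monomial is sent to monomials whose position has strictly larger $p$-adic valuation. In the Teichm\"uller-expansion language used at the end of the proof of Theorem \ref{counting rephrased}, this amounts to showing that the function $F_{w_0,n}$ vanishes identically on the inputs relevant for weights $w_0\le e_j\,\text{v}_{\mathbb{Q}_q(\zeta_{p^{j+1}})}(a_{\alpha_k})+\alpha_k$ with $\text{v}_{\mathbb{Q}_p}(w_0)\le \text{v}_{\mathbb{Q}_p}(\alpha_k)$. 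Consequently the relevant Teichm\"uller coefficient $\epsilon_{q',h}(n)$ equals the original Teichm\"uller coefficient $\epsilon_{q',h}$ coming from $g(x)$, and so the trajectory of the rabbit in $\sigma_j(g(x))$, up to the cut-off, is dictated purely by the valuation data $\{\text{v}_{\mathbb{Q}_q(\zeta_{p^{j+1}})}(a_i)\}_i$, i.e.\ by the Procedure.

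The main obstacle will be the careful bookkeeping of the interaction between $p$-th powering and the reduction modulo $g(x)$. When $(1+\epsilon x^i)^{p^\beta}$ is expanded, the binomial coefficients divisible by $p$ contribute cross terms $\binom{p^\beta}{k}\epsilon^k x^{ki}$; one must check that the $p$-adic valuation of these coefficients boosts the weight beyond the range where they could interfere. Moreover, the subsequent reduction modulo $g(x)$ replaces monomials $x^{e_j+\ell}$ by $-\sum_{i<e_j} a_i x^{i+\ell}$, and one has to verify that because $g(x)$ is Eisenstein the resulting cascade only produces monomials whose position has $p$-adic valuation at least as large as the shot length just used, so that new terms cannot lower the shot length prematurely. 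This weight bookkeeping, driven by the Eisenstein property together with the defining recursion for the sequence $\{\alpha_k\}$ in the Procedure, is the technical heart of the induction; once it is secured, the comparison with $(I_{K_{g(x)}},\beta_{K_{g(x)}})$ is merely the translation afforded by property $(P.1)$ of $\sigma_j$.
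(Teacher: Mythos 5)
Your proposal is correct and follows essentially the same route as the paper's own proof: both hinge on identifying $\tilde{\sigma}_j(g(x))$ with the truncation of $\sigma_j(g(x))$ below weight $e$, using the observation that since $\text{v}_L(p)=e$ for $L=\mathbb{Q}_q(\zeta_{p^{j+1}})[x]/g(x)$, every binomial term of $(1+\epsilon x^i)^{p^\beta}$ with a coefficient divisible by $p$ is pushed above weight $e$, so that $p$-th powering may be treated as the characteristic-$p$ Frobenius in the range that matters; this forces $F_{w_0,n}=0$ for the relevant $w_0$, hence $\epsilon_{q',h}(n)=[\epsilon_{q',h}]_{\text{Teich}}$, and the rabbit's trajectory up to the cut-off is read off directly from the valuations of the $a_i$, i.e.\ from the Procedure. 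The only difference is cosmetic: you spell out the bookkeeping (the cross terms from $\binom{p^\beta}{k}$ and the cascade from reduction modulo the Eisenstein polynomial) that the paper compresses into ``seen by induction on $n$ and direct inspection,'' and you explicitly invoke property $(P.1)$ at the end for the translation to $(I_K,\beta_K)$, which the paper leaves implicit.
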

Observe that from the \emph{Procedure} it is clear that the set of Eisenstein polynomials $g(x)$ such that the full jump sets of the field $\mathbb{Q}_q(\zeta_{p^{j+1}})[x]/g(x)$ can be reconstructed from Theorem \ref{valuation coefficients more general}, consists precisely of those polynomials $g(x)$ having a coefficient $a_i$, with $(i,p)=1$, such that $\text{v}_{\mathbb{Q}_q(\zeta_{p^{j+1}})}(a_i)<\text{v}_{\mathbb{Q}_q(\zeta_{p^{j+1}})}(p)=p^{j}(p-1)$. This condition is precisely equivalent to the condition on the different
$$\text{v}_{\mathbb{Q}_q(\zeta_{p^{j+1}})}(\delta(\mathbb{Q}_q(\zeta_{p^{j+1}})[x]/g)/\mathbb{Q}_q(\zeta_{p^{j+1}}))<p^j(p-1).
$$
For $j=0$, this shows Theorem \ref{valuation coefficients} from the Introduction. The only case where this is an empty set of Eisenstein polynomials is if $p=2$, $j=0$ and $2|e$: one would get a non extremal coefficient of an Eisenstein polynomial being a unit, which is impossible by definition. For all other values of $p$ and $j$ one obtains, with Theorem \ref{valuation coefficients more general}, a positive proportion of the Eisenstein polynomials where the jump set can be read off completely from the valuation of the coefficients of the polynomial, also in a fairly easy way. For $p$ or $j$ getting large the volume of this region gets quickly pretty large. In particular, if $(p,j) \neq (2,0)$, we next see that one can identify the set of Eisenstein polynomials giving the \emph{most likely} jump set.
\begin{definition}\label{strongly Eisenstein}
If $K$ is a local field, $d \geq 2$ an integer, and $g(x):=x^d+\sum_{i=0}^{d-1}a_ix^i \in \text{Eis}(d,K)$, we say that $g(x)$ is strongly Eisenstein if $\text{v}_K(a_1)=1$. 
\end{definition}
The following is a very special case of Theorem \ref{valuation coefficients more general}. Recall that if $e \in p^j(p-1)\mathbb{Z}_{\geq 1}$ we have the notation $e_j:=\frac{e}{p^j(p-1)}$.
\begin{theorem}\label{strongly Eisenstein characterized}
Let $p,j$, such that $(p,j) \neq (2,0)$. Let $e \in p^{j+1}(p-1)\mathbb{Z}_{\geq 1}$, $f$ a positive integer and set $q:=p^f$. Then $g(x) \in \emph{Eis}(e_j, \mathbb{Q}_q(\zeta_{p^{j+1}}))$ is strongly Eisenstein if and only if
$$I_{\mathbb{Q}_q(\zeta_{p^{j+1}})[x]/g(x)}=\{\frac{e}{p^{\emph{v}_{\mathbb{Q}_p}(e)}(p-1)},e_j+1\}
$$
with $$\beta_{\mathbb{Q}_q(\zeta_{p^{j+1}})[x]/g(x)}(\frac{e}{p^{\emph{v}_{\mathbb{Q}_p}(e)}(p-1)})=\emph{v}_{\mathbb{Q}_p}(e)+1,  \ \beta_{\mathbb{Q}_q(\zeta_{p^{j+1}})[x]/g(x)}(e_j+1)=j+1.
$$
\end{theorem}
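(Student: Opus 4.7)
The plan is to apply Theorem \ref{valuation coefficients more general} (or Theorem \ref{valuation coefficients} when $j = 0$, which is available since the hypothesis $(p,j) \neq (2,0)$ then forces $p \geq 3$) by tracing the procedure preceding those results, in both directions of the equivalence.

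For the forward direction, I would write $g(x) = x^{e_j} + \sum_{i=0}^{e_j - 1} a_i x^i$ with $\text{v}_L(a_1) = 1$ (where $L := \mathbb{Q}_q(\zeta_{p^{j+1}})$) and run the procedure step by step. Starting from $\alpha_0 = e_j$, the algorithm advances to $d = 1$ (no other $\alpha > e_j$ has $\text{v}_L(a_\alpha) = 0$), picks $\alpha_1 = 1$ as the smallest index with $\text{v}_L(a_1) = 1$ and $\text{v}_{\mathbb{Q}_p}(1) = 0 < \text{v}_{\mathbb{Q}_p}(e_j)$, and then halts because $\text{v}_{\mathbb{Q}_p}(1) = 0$ admits no further decrease. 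The procedure's output is $\{(e_0, \text{v}_{\mathbb{Q}_p}(e) + 1), (e_j + 1, j + 1)\}$, and by Theorem \ref{valuation coefficients more general} this equals $\{i \in I_K : p^{\beta_K(i) - j - 1} i < e\}$; both stated elements satisfy this bound precisely because $(p,j) \neq (2,0)$ ensures $e_j < e$. To upgrade this inclusion to equality, I would invoke Proposition \ref{how much torsion}: since $\zeta_{p^{j+1}} \in L \subseteq K$, $\beta_K(\max I_K) \geq j + 1$, so all $\beta_K$-values are $\geq j + 1$ by strict decrease; combined with $\beta_K(e_j + 1) = j + 1$ this forces $e_j + 1 = \max I_K$, ruling out elements of $I_K$ larger than $e_j + 1$.

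Any remaining hypothetical $i \in I_K$ would satisfy $e_0 < i < e_j + 1$, $\beta_K(i) \in \{j+2, \ldots, \text{v}_{\mathbb{Q}_p}(e)\}$, and $\rho^{\beta_K(i)}(i) \in (e^*, e^* + p^j)$ (the window dictated by strict increase of $i \mapsto \rho^{\beta(i)}(i)$ between $\rho^{\beta(e_0)}(e_0) = e^*$ and $\rho^{j+1}(e_j + 1) = e^* + p^j$). Setting $k^* := \min\{k : p^k i \geq E\}$ with $E := e/(p-1)$ and using $\rho^{\beta}(i) = p^{k^*} i + (\beta - k^*) e$ with $p^{k^*} i \in [E, pE)$, I would run a short case analysis on $m := \beta_K(i) - k^*$: the case $m = 0$ yields $\rho^{\beta}(i) < e^*$; the case $m = 1$ constrains the positive integer $p^{k^*} i - E$ to $(0, p^j)$, whose $p$-adic valuation (equal to $k^*$ when $k^* < \text{v}_{\mathbb{Q}_p}(e)$ and $\geq \text{v}_{\mathbb{Q}_p}(e)$ otherwise) forces either $\beta_K(i) \leq j$ (contradicting $\beta_K(i) \geq j + 1$) or $\text{v}_{\mathbb{Q}_p}(e) < j + 1$ (contradicting $e \in p^{j+1}(p-1)\mathbb{Z}$); and the case $m \geq 2$ gives $\rho^{\beta}(i) \geq E + 2e > e^* + p^j$ by the bound $e \geq p^{j+1}(p-1) > p^j$.

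For the converse, assume $(I_K, \beta_K) = \{(e_0, \text{v}_{\mathbb{Q}_p}(e) + 1), (e_j + 1, j + 1)\}$. Both elements lie in the set captured by the procedure, so Theorem \ref{valuation coefficients more general} gives that the procedure's output equals exactly this set. From $i_2 = e_j + 1$ and $\beta(i_2) = j + 1$, the procedural formulas $i_2 = (e_j \text{v}_L(a_{\alpha_1}) + \alpha_1)/p^{\text{v}_{\mathbb{Q}_p}(\alpha_1)}$ and $\beta(i_2) = \text{v}_{\mathbb{Q}_p}(\alpha_1) + j + 1$ yield $\text{v}_{\mathbb{Q}_p}(\alpha_1) = 0$ and $e_j \cdot \text{v}_L(a_{\alpha_1}) + \alpha_1 = e_j + 1$; combined with $\alpha_1 < e_j$ and $\text{v}_L(a_{\alpha_1}) \geq 1$ (Eisenstein), the only integral solution is $\alpha_1 = 1$ with $\text{v}_L(a_1) = 1$, so $g$ is strongly Eisenstein. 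The main obstacle will be the case analysis in the forward direction that excludes intermediate jump set elements between $e_0$ and $e_j + 1$, which demands simultaneously controlling the arithmetic progression $p^{k^*} i + m e$ and the $p$-adic valuation of the small residue $p^{k^*} i - E$, and invoking both the torsion bound $\beta \geq j + 1$ and the lower ramification bound $e \geq p^{j+1}(p-1)$.
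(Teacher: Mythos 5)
Your proof is correct and takes the same route the paper gestures at: the paper dismisses this as ``a very special case of Theorem \ref{valuation coefficients more general}'' and gives no details, so you are essentially supplying the missing verification. You correctly identified the real gap hiding behind that one-line remark, namely that Theorem \ref{valuation coefficients more general} only returns the partial jump set $\{i \in I_K : p^{\beta_K(i)-j-1}i < e\}$, and one still must argue that nothing else lies in $I_K$. Your use of Proposition \ref{how much torsion} (the torsion bound $\beta_K \geq j+1$ coming from $\zeta_{p^{j+1}} \in K$) to pin down $e_j+1 = \max I_K$, and of admissibility to pin down $E_0 = \min I_K$, is exactly what is needed; your case analysis on $m = \beta_K(i) - k^*$ then closes the door on intermediate elements. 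A small remark: that case analysis is slightly heavier than necessary. Since the positions $\rho^{\beta_K(i)-j-1}(i)$ are strictly increasing along $I_K$ and agree with $p^{\beta_K(i)-j-1}i$ whenever the latter is below $e^*$, the captured set $\{i : p^{\beta_K(i)-j-1}i < e\}$ is automatically an initial segment of $I_K$; once you know it contains $\max I_K = e_j+1$, it must be all of $I_K$, with no intermediate case analysis required. (Also, for $m = 1$ your sub-case $k^* \geq v_{\mathbb{Q}_p}(e)$ can never occur since $k^* = \beta_K(i) - 1 \leq v_{\mathbb{Q}_p}(e) - 1$, and you should record that $p \nmid i$ for $i \in T_{\rho}$, which is what makes $v_p(p^{k^*}i) = k^*$.) None of this affects correctness; your argument is a valid and more explicit version of what the paper leaves tacit.
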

Observe that this gives, explicitly, the counting that the above jump set, $\{\frac{e}{p^{\text{v}_{\mathbb{Q}_p}(e)}(p-1)},e_j+1\}$ with $\frac{e}{p^{\text{v}_{\mathbb{Q}_p}(e)}(p-1)} \mapsto \text{v}_{\mathbb{Q}_p}(e)+1$ and $e_j+1 \mapsto j+1$, occurs with probability $\frac{q-1}{q}$ among all totally ramified degree $e_j$-extensions of $\mathbb{Q}_q(\zeta_{p^{j+1}})$: this is the jump set occurring with highest probability. We know that this jump set occurs with probability $\frac{q-1}{q}$ also from Theorem \ref{counting rephrased}. So in particular this fact is true also for $(2,0)$. To see that explicitly for $e=2$, observe that among the $6$ totally ramified quadratic extension of $\mathbb{Q}_2$, the only ones not giving the above jump set are $\mathbb{Q}_2(\zeta_4)$ and $\mathbb{Q}_2(\zeta_4)(1)=\mathbb{Q}_2(\sqrt{3})$; they have same mass (as we saw in general) and it equals $\frac{1}{4}$, hence the remaining mass equals $\frac{1}{2}$. But we can immediately see that in this case the same conclusion of Theorem \ref{strongly Eisenstein characterized} does not hold. Consider for instance $x^2+2x+2 \in \text{Eis}(2, \mathbb{Q}_2)$: it is a strongly Eisenstein polynomial. But $\mathbb{Q}_2[x]/g(x)$ is isomorphic to the extension $\mathbb{Q}_2(\zeta_{4})$, whose jump set is merely $\{1\}$, with $1 \mapsto 2$, in contrast to the conclusion of Theorem \ref{strongly Eisenstein characterized}. Thus in Theorem \ref{strongly Eisenstein characterized} the requirement $(p,j) \neq (2,0)$ cannot be dropped, and so in particular the assumption in Theorem \ref{valuation coefficients} of being strongly separable cannot be avoided.

\section{Filtered inclusions of principal units}
\label{generalization of I beta}
In this section we explain how to attach to any strongly separable extension of local fields, $L/K$, a $\rho_{\infty,p}$-jump set $(I_{L/K},\beta_{L/K})$, which is an invariant of the filtered inclusion 
$$U_{\bullet}(K) \subseteq U_{\bullet}(L).
$$
Moreover for $K=\mathbb{Q}_{q}(\zeta_p)$, we will have that
$$(I_{L/K},\beta_{L/K})=(I_L,\beta_L).
$$
As we shall see, the fact that the extension is strongly separable will force $(I_{L/K},\beta_{L/K})$ to be a $\rho_{e_L,p}$-jump set as well for $e_L=v_L(p)$. 

We will begin to attach to any $u \in U_1(K)-U_2(K)$ a $\rho_{e_L,p}$-jump set $(I_{L/K}(u),\beta_{L/K}(u))$. We will immediately see that it is also a $\rho_{\infty,p}$-jump set, thanks to strong separability. Finally we will see the big effect of assuming strong separability: the jump set $(I_{L/K}(u),\beta_{L/K}(u))$ is independent on the choice of $u \in U_1(K)-U_2(K)$ and can be computed, by means of an immediate generalization of Theorem \ref{valuation coefficients}, from an Eisenstein polynomial giving the extension $L/\tilde{K}$, where $\tilde{K}$ is the largest unramified extension of $K$ in $L$. 

Let $u \in U_1(K)-U_2(K)$. Recall from Section \ref{reading jump inside} that we can attach to $u$ the function $g_{u,U_{\bullet}(L)}$. We have the following. The proof is along the same lines seen in Proposition \ref{reconstruction process} and is therefore omitted.
\begin{proposition} \label{boh}
There exists a unique jump set $(I_{L/K}(u),\beta_{L/K}(u))$ such that $g_{u,U_{\bullet}(L)}$ breaks at the elements of $\emph{Im}(\beta_{L/K}(u))-1$. Moreover if $i \in I_{L/K}(u)$, then 
$$g_{u,U_{\bullet}(L)}(i+1)=\rho_{e_L,p}^{\beta_{L/K}(u)(i)-1}(i).
$$
\end{proposition}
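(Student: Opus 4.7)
The plan is to imitate the argument of Proposition \ref{reconstruction process} (and its adaptation Proposition \ref{break points of g}), transferring the analysis from a distinguished ``standard'' vector $v_{(I,\beta)}$ in a free filtered module to a general element $u \in U_1(K) - U_2(K)$ viewed inside the quasi-free filtered $\mathbb{Z}_p$-module $U_\bullet(L)$.

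First I would fix a filtered presentation of $U_\bullet(L)$. By Proposition \ref{rho for loc fields} this module is $(f_L,\rho_L)$-quasi-free, so Proposition \ref{transitive action} provides a filtered epimorphism $\phi : F \twoheadrightarrow U_\bullet(L)$ from the appropriate free filtered $\mathbb{Z}_p$-module $F$ (namely $M_{\rho_L}^{f_L}$ if $\mu_p(L)=\{1\}$ and $M_{\rho_L}^{f_L-1} \oplus M_{\rho_L}^*$ otherwise). I would then lift $u$ to a vector $\tilde{u} \in F$ with $w_F(\tilde{u}) = w_{U_\bullet(L)}(u)$ (possible by Remark \ref{phi_1} combined with Proposition \ref{a lot surjective}), and apply Proposition \ref{reduction process} to obtain an automorphism $\theta \in \text{Aut}_{\text{filt}}(F)$ with $\theta(\tilde{u}) = v_{(I',\beta')}$ for a unique (extended) jump set $(I',\beta')$. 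This $(I',\beta')$ is the candidate for $(I_{L/K}(u),\beta_{L/K}(u))$.

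Next I would verify the claimed break pattern of $g_{u,U_\bullet(L)}$ by re-running the computation of Proposition \ref{reconstruction process} directly inside $U_\bullet(L)$. For each positive integer $n$, set $i_0 := \min\{i \in I' : \beta'(i) < n\}$, with the obvious convention when this set is empty. The lower bound $g_{u,U_\bullet(L)}(n) \geq \rho_L^{\beta'(i_0)}(i_0)$ comes from explicitly multiplying together the factors of the form $(1+\pi_L^i)^{p^{\beta'(i)}}$ indexed by $i \in I'$ with $\beta'(i) < n$, just as in the free-module case; the matching upper bound comes from the ultrametric inequality combined with the identity $w_L(y^{p^n}) = \rho_L^n(w_L(y))$ governing the $\rho_L$-structure of $U_\bullet(L)$. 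The off-by-one shift in the statement (break points at $\text{Im}(\beta)-1$ rather than $\beta(I)$, and $g(i+1) = \rho_L^{\beta(i)-1}(i)$ rather than $g(\beta(i)+1) = \rho_L^{\beta(i)}(i)$) is just bookkeeping: it records the fact that $u$ is chosen in $U_1(K)-U_2(K)$, so its $p$-adic ``coefficient vector'' under the chosen presentation starts at valuation $0$ rather than at valuation $1$.

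Uniqueness is then immediate: the set of break points of $g_{u,U_\bullet(L)}$ determines $\text{Im}(\beta_{L/K}(u))$, while the values $g_{u,U_\bullet(L)}(i+1)$ together with the injectivity of each $\rho_L^k$ and the strict monotonicity conditions of a jump set determine each element of $I_{L/K}(u)$. The main obstacle, exactly as in Proposition \ref{reconstruction process}, will be the upper-bound half of the weight computation --- one must rule out any ``unexpected'' cancellation in $U_\bullet(L)$ that would raise the weight of $u$ modulo $U_1(L)^{p^n}$ above $\rho_L^{\beta'(i_0)}(i_0)$. In the quasi-free case this is where care is needed around the torsion threshold $e_{\rho_L}^*$: since $\ker(\phi)$ is spanned by $v_{(I_L,\beta_L)}$, one has to show that this single extra relation is compatible with the predicted weights, proceeding just as in the proof of Proposition \ref{break points of g}.
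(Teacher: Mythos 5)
Your proposal follows exactly the route the paper intends (the paper itself declares the proof to be "along the same lines seen in Proposition \ref{reconstruction process}" and omits it): present $U_\bullet(L)$ via a free filtered module, lift $u$, invoke Propositions \ref{reduction process} and \ref{reconstruction process}, and treat the torsion relation as in Proposition \ref{break points of g}. One small imprecision worth noting: Proposition \ref{reduction process} is only stated for vectors in $\pi_R F$, so one should apply it to $p\tilde{u}$ rather than $\tilde{u}$, and the resulting unit shift in $\beta$ is the actual source of the off-by-one in the statement --- it is a normalization built into defining $\beta_{L/K}(u)$ (paralleling the $-N$ in Proposition \ref{break points of g}), not a consequence of the coefficient vector of $\tilde{u}$ starting at valuation zero (which may in fact fail, for instance when $u \in U_1(L)^p$).
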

In the torsion-free case, the jump set $(I_{L/K}(u),\beta_{L/K}(u))$ has a more familiar interpretation. In what follows the function $\text{filt-ord}$ (as introduced in Proposition \ref{bijection orbits jumps}) will always be with respect to the filtered module (denoted as) $U_{\bullet}(L)$.
\begin{proposition}
Let $u,L,K$ as above and suppose moreover that $\mu_p(L)=\{1\}$. Then
$$\emph{filt-ord}(u^p)=(I_{L/K}(u),\beta_{L/K}(u)).
$$
Moreover for $u_1,u_2 \in U_1(K)-U_2(K)$ we have that
$$(I_{L/K}(u_1),\beta_{L/K}(u_1))=(I_{L/K}(u_2),\beta_{L/K}(u_2)),
$$
if and only if $u_1,u_2$ are in the same orbit under $\emph{Aut}_{\emph{filt}}(U_{\bullet}(L))$. 
\begin{proof}
This is a simple consequence of Theorem \ref{no torsion} and Proposition \ref{bijection orbits jumps} combined.
\end{proof}
\end{proposition}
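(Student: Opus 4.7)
The plan is to derive $\text{filt-ord}(u^p)=(I_{L/K}(u),\beta_{L/K}(u))$ from the uniqueness clause of the preceding proposition using a single shift identity between $g_u$ and $g_{u^p}$, and to deduce the orbit statement from the injectivity of $p$-th powering on $U_1(L)$. The starting observation is Theorem \ref{no torsion}: since $\mu_p(L)=\{1\}$ one has $U_{\bullet}(L)\simeq_{\text{filt}} M_{\rho_L}^{f_L}$, so $U_1(L)$ is torsion-free over $\mathbb{Z}_p$ and all defects and codefects vanish, whence $w(x^p)=\rho_L(w(x))$ holds \emph{exactly} for every $x\in U_1(L)$. Maximising over cosets of $U_1(L)^{p^j}$, for $j\ge 1$,
\[
g_{u^p}(j)\;=\;\sup_{w\in U_1(L)} w\bigl((u\,w^{p^{j-1}})^p\bigr)\;=\;\rho_L\Bigl(\sup_{w} w(u\,w^{p^{j-1}})\Bigr)\;=\;\rho_L\bigl(g_u(j-1)\bigr),
\]
the middle step using that $\rho_L$ is strictly increasing and therefore commutes with $\sup$.

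Set $(I,\beta):=\text{filt-ord}(u^p)$; this is well defined because $u^p\in\pi_R\cdot U_{\bullet}(L)$ and $g_{u^p}$ depends only on the $\text{Aut}_{\text{filt}}$-orbit. Proposition \ref{reconstruction process}, applied to the orbit representative $v_{(I,\beta)}$, forces $g_{u^p}$ to be a non-increasing step function with breaks exactly on $\beta(I)$ and $g_{u^p}(\beta(i)+1)=\rho_L^{\beta(i)}(i)$ for every $i\in I$. Transferring through the shift identity, $g_u$ is a non-increasing step function with breaks exactly on $\text{Im}(\beta)-1$, and its value on the constancy interval indexed by $\beta(i)$ is $\rho_L^{\beta(i)-1}(i)$. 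Granting that $i+1$ lies in that very interval, one deduces $g_u(i+1)=\rho_L^{\beta(i)-1}(i)$; both defining conditions of the preceding proposition then hold for $(I,\beta)$, and the uniqueness clause forces $(I,\beta)=(I_{L/K}(u),\beta_{L/K}(u))$.

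For the orbit characterisation, torsion-freeness makes $x\mapsto x^p$ injective on $U_1(L)$, its kernel being $\mu_p(L)=\{1\}$. If $u_1,u_2$ lie in a common orbit, any $\theta\in\text{Aut}_{\text{filt}}(U_{\bullet}(L))$ realising this sends $u_1^p$ to $u_2^p$, hence preserves $\text{filt-ord}$, so the jump sets coincide by the first part. Conversely, equal jump sets yield $\text{filt-ord}(u_1^p)=\text{filt-ord}(u_2^p)$, so Theorem \ref{bijection orbits jumps} supplies some $\theta$ satisfying $\theta(u_1^p)=u_2^p=(\theta(u_1))^p$, and injectivity of $p$-th powering gives $\theta(u_1)=u_2$.

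The hard part will be the pointwise identification $g_u(i+1)=\rho_L^{\beta(i)-1}(i)$ in the second paragraph: the shift identity only pins down $g_{u^p}$ at $\beta(i)+1$, so one must argue that the integer $i+1$ falls in the constancy interval of $g_u$ indexed by $\beta(i)$, i.e.\ that $\beta(i)\le i+1\le \beta(i')-1$ (where $i'$ denotes the predecessor of $i$ in $I$, with the upper bound vacuous when $i=\min I$). This is where the standing strong-separability hypothesis on $L/K$ enters decisively---it excludes configurations, realisable over wildly ramified non-strongly-separable extensions, in which $\beta(i)$ can far exceed $i+1$ and the naive verification collapses.
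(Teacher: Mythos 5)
Your proof is essentially correct and follows exactly the route the paper's terse indication points to: Theorem \ref{no torsion} gives freeness, hence the exact identity $w(x^p)=\rho_L(w(x))$ and the shift formula $g_{u^p}(j)=\rho_L(g_u(j-1))$; transporting Proposition \ref{reconstruction process} through this shift and invoking the bijection of Theorem \ref{bijection orbits jumps} plus injectivity of $p$-th powering closes both halves. The first three paragraphs are fine.

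Your closing paragraph, however, misdiagnoses the issue. What you actually derive is $g_u(\beta(i))=\rho_L^{\beta(i)-1}(i)$, and that is the correct value clause: it is the shift by one of Proposition \ref{reconstruction process}'s $g_v(\beta(i)+1)=\rho^{\beta(i)}(i)$, so the argument of $g$ ought to be $\beta(i)$, not $i+1$. The ``$i+1$'' appearing in Proposition \ref{boh} (and likewise in Proposition \ref{break points of g}) is a typo, not an extra condition to be verified. The two-sided containment $\beta(i)\leq i+1\leq\beta(i')-1$ that you propose to extract from strong separability is in fact internally contradictory once $|I|\geq 2$: the predecessor $i'$ satisfies $i'+1\leq i$, so the lower bound applied to $i'$ would give $\beta(i')\leq i'+1\leq i$, while your upper bound for $i$ demands $\beta(i')\geq i+2$. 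No hypothesis can rescue the literal $g(i+1)$ reading, and indeed examples with $\beta(\min I)>\min I+1$ do occur among strongly separable $L/K$ once $p\mid e_{L/K}$ and $e_K$ is large enough. Strong separability plays a different role in this section: it is what makes $(I_{L/K}(u),\beta_{L/K}(u))$ a $\rho_{\infty,p}$-jump set and what gives independence of $u$ in Theorem \ref{independence}; it is not needed for the present proposition. Reading $g_u(\beta(i))$ in place of $g_u(i+1)$, your second paragraph already supplies both defining conditions, and the proof is complete.
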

We now show that the jump set of Proposition \ref{boh} is independent of the choice of $u \in U_1(K)-U_2(K)$ for all strongly separable extensions $L/K$. Recall the way we attached to any strongly separable Eisenstein polynomial $g(x)$ a jump set $(I_{g(x)},\beta_{g(x)})$ right after Theorem \ref{counting} in the Introduction. 
\begin{theorem} \label{independence}
Let $L/K$ be any strongly separable extension of local fields. Let $u_1,u_2 \in U_1(K)-U_2(K)$. Then
$$(I_{L/K}(u_1),\beta_{L/K}(u_1))=(I_{L/K}(u_2),\beta_{L/K}(u_2)).
$$
Denote by $(I_{L/K},\beta_{L/K}):=(I_{L/K}(u),\beta_{L/K}(u))$ for any $u \in U_1(K)-U_2(K)$. 
Denote by $\tilde{K}$ the maximal unramified extension of $K$ in $L$, and let $g(x)$ be any Eisenstein polynomial in $\tilde{K}[x]$ giving the extension $L/\tilde{K}$. We have that
$$(I_{L/K},\beta_{L/K})=(I_{g(x)},\beta_{g(x)}).
$$
\begin{proof}
This can be shown by precisely the same argument used in the proof of Theorem \ref{valuation coefficients more general}. 
\end{proof}
\end{theorem}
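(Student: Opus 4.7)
My strategy is to mimic the proof of Theorem~\ref{valuation coefficients more general} verbatim, with the element $u$ playing the role of the seed $\tilde{f}(x)$ used there and with a general Eisenstein polynomial $g(x)\in\tilde{K}[x]$ playing the role of the cyclotomic Eisenstein polynomial. As a first step I reduce to the case $K=\tilde{K}$: since $\tilde{K}/K$ is unramified, any $u\in U_1(K)-U_2(K)$ lies in $U_1(\tilde{K})-U_2(\tilde{K})$, and the function $g_{u,U_{\bullet}(L)}$ depends only on $u$ viewed inside $U_1(L)$. After this reduction, $g(x)\in K[x]$ is Eisenstein with root $\pi\in L$ a uniformizer, and $u$ has weight $e=[L:K]$ in $U_{\bullet}(L)$.

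I then run the iterative shooting on $u$: starting from $u_0:=u$, I set $u_{n+1}:=u_n\cdot (1+\gamma_n\pi^{i_n})^{p^{\beta_n}}$, where the triple $(i_n\in T_{\rho_{e_L,p}},\beta_n\in\mathbb{Z}_{\geq 0},\gamma_n)$ is uniquely determined (after fixing a Teichm\"uller system) by the requirement that $\rho_{e_L,p}^{\beta_n}(i_n)=w(u_n)$ and that the leading term of $u_n$ be cancelled. By the interpretation of Section~\ref{reading jump inside} (see Proposition~\ref{boh}), the sequence $\{(w(u_n),\beta_n)\}$ encodes $(I_{L/K}(u),\beta_{L/K}(u))$ through the breaking points of $g_{u,U_{\bullet}(L)}$, and defines a shooting game in the sense of Section~\ref{shooting game}.

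The heart of the argument is then to show that this shooting game agrees, below the relevant threshold, with the shooting game built directly from the valuations of the coefficients of $g(x)$ via the Procedure of Section~\ref{finding jump sets inside}, naturally generalized from the cyclotomic setting to an arbitrary Eisenstein polynomial over $\tilde{K}$. This is exactly the content of the proof of Theorem~\ref{valuation coefficients more general}: one expresses each $u_n$ as $1+\sum_{i\geq 1}a_i(n)\pi^i$ modulo $g(\pi)=0$, and verifies inductively that every contribution from the $p$-adic expansion of $(1+\gamma_n\pi^{i_n})^{p^{\beta_n}}$ produces terms of weight at least $\text{v}_L(p)$ above the current weight, while the polynomial contributions coming from reducing powers of $\pi$ modulo $g(\pi)$ are precisely those encoded by the coefficients of $g(x)$. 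The hypothesis $\text{v}_L(\delta_{L/K})<\text{v}_L(p)$ is exactly what ensures that the polynomial contributions dominate before the $p$-adic ones can interfere, yielding $(I_{L/K}(u),\beta_{L/K}(u))=(I_{g(x)},\beta_{g(x)})$, independent of $u$. The main obstacle is the uniform inductive bookkeeping of these two types of contributions along the iteration, which is handled exactly as in Theorem~\ref{valuation coefficients more general}.
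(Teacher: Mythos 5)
The approach is correct and is exactly the one the paper uses: the paper's proof of Theorem~\ref{independence} is literally a pointer to Theorem~\ref{valuation coefficients more general}, and your proposal carries out that transfer — replacing $\zeta_{p^{j+1}}$ by a general $u\in U_1(K)-U_2(K)$, reducing to $K=\tilde{K}$ (valid, since $U_1(K)-U_2(K)\subseteq U_1(\tilde{K})-U_2(\tilde{K})$ and the definition via $g_{u,U_{\bullet}(L)}$ sees only $U_{\bullet}(L)$), running the iterative shooting $u_{n+1}=u_n(1+\gamma_n\pi^{i_n})^{p^{\beta_n}}$, and matching the resulting game with $\tilde\sigma(g(x))$ below the cutoff $\text{v}_L(p)$.

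One small imprecision worth fixing in your third paragraph: it is not quite true that every $p$-adic (non–characteristic-$p$) contribution of $(1+\gamma_n\pi^{i_n})^{p^{\beta_n}}$ lands ``at least $\text{v}_L(p)$ above the current weight.'' The term $\binom{p^{\beta_n}}{p^{\beta_n-1}}\gamma_n^{p^{\beta_n-1}}\pi^{i_np^{\beta_n-1}}$, coming from multiplying the constant term $1$, has weight $e_L+i_np^{\beta_n-1}$, which can be strictly less than $w(u_n)+e_L=i_np^{\beta_n}+e_L$. What is true, and what the argument actually needs, is twofold: (i) all such $p$-adic error terms have weight $>e_L=\text{v}_L(p)$, so they lie above the cutoff and may be discarded (this is the ``characteristic-$p$ approximation'' step of the proof of Theorem~\ref{valuation coefficients more general}); and (ii) the surviving characteristic-$p$ cross-terms only move monomials to indices of \emph{strictly larger} $p$-adic valuation, which is the inductive statement $F_{w_0,n}=0$ in that proof. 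Strong separability then guarantees a shot of length $0$ occurs before the rabbit reaches $\text{v}_L(p)$, so the full jump set is read off in that range, giving the $u$-independence. With that correction the write-up matches the paper's intended argument.
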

In particular we find the following corollary.
\begin{corollary} \label{all in same orbit}
Let $L/K$ be a strongly separable extension of local fields, with $\mu_p(L)=\{1\}$. Then $U_1(K)-U_2(K)$ is contained in one orbit under $\emph{Aut}_{\emph{filt}}(U_{\bullet}(L))$. Call this orbit $\mathcal{O}_{L/K}$. The set $\mathcal{O}_{L/K}$ can be also characterized as follows
$$\mathcal{O}_{L/K}=\{u \in U_{\bullet}(L): u^p \in \emph{filt-ord}^{-1}((I_{L/K},\beta_{L/K}))\}.
$$ 
\end{corollary}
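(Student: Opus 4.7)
The plan is to reduce the statement to the orbit-versus-jump-set bijection of Theorem \ref{bijection orbits jumps} by exploiting the fact that, because $\mu_p(L)=\{1\}$, the $p$-th power map is injective on $U_1(L)$. By Theorem \ref{no torsion} the module $U_{\bullet}(L)$ is free, i.e. $U_{\bullet}(L) \simeq_{\text{filt}} M_{\rho_L}^{f_L}$, so for every $u \in U_1(L)$ the element $u^p$ lies in $p \cdot U_{\bullet}(L)$ and $\text{filt-ord}(u^p)$ is defined.

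First I would establish that $U_1(K)-U_2(K)$ lies in one orbit. Pick any $u_1, u_2 \in U_1(K)-U_2(K)$. The torsion-free proposition preceding Theorem \ref{independence} identifies $\text{filt-ord}(u_i^p)$ with $(I_{L/K}(u_i),\beta_{L/K}(u_i))$, and Theorem \ref{independence} says this common jump set equals $(I_{L/K},\beta_{L/K})$ for both values of $i$. Hence Theorem \ref{bijection orbits jumps} produces $\theta \in \text{Aut}_{\text{filt}}(U_{\bullet}(L))$ with $\theta(u_1^p)=u_2^p$, i.e.\ $\theta(u_1)^p=u_2^p$. Since $\mu_p(L)=\{1\}$ forces $p$-th powering to be injective on $U_1(L)$, this yields $\theta(u_1)=u_2$. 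So $u_1,u_2$ lie in the same orbit, which we christen $\mathcal{O}_{L/K}$.

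For the characterization of $\mathcal{O}_{L/K}$ as a set, the inclusion $\subseteq$ is direct: if $u=\theta(v)$ with $v \in U_1(K)-U_2(K)$ and $\theta$ a filtered automorphism, then $u^p=\theta(v^p)$, and since filtered automorphisms preserve the function $\text{filt-ord}$ we get $\text{filt-ord}(u^p)=\text{filt-ord}(v^p)=(I_{L/K},\beta_{L/K})$ by the step above. For the reverse inclusion, suppose $u \in U_{\bullet}(L)$ satisfies $\text{filt-ord}(u^p)=(I_{L/K},\beta_{L/K})$. Fix any reference element $v \in U_1(K)-U_2(K)$; we already know $\text{filt-ord}(v^p)=(I_{L/K},\beta_{L/K})$, so Theorem \ref{bijection orbits jumps} produces $\theta \in \text{Aut}_{\text{filt}}(U_{\bullet}(L))$ with $\theta(v^p)=u^p$, and injectivity of $p$-th powering promotes this to $\theta(v)=u$, placing $u$ in the orbit of $v$, which is precisely $\mathcal{O}_{L/K}$.

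There is no substantive obstacle: the argument is a clean assembly of three inputs — the jump-set parametrization of filtered orbits (Theorem \ref{bijection orbits jumps}), the $u$-independence of $(I_{L/K}(u),\beta_{L/K}(u))$ for strongly separable extensions (Theorem \ref{independence}), and the injectivity of $p$-th powering on $U_1(L)$ under the hypothesis $\mu_p(L)=\{1\}$. The only bit of care required is to pass consistently between the identifications $U_{\bullet}(L) \simeq M_{\rho_L}^{f_L}$ and ``$u \mapsto u^p$ corresponds to multiplication by the uniformizer $p$'', which is exactly what allows the free-module orbit theorem to be applied to $p$-th powers in $U_1(L)$.
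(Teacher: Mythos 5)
Your proof is correct, and it takes essentially the same route the paper intends: the paper states the corollary with no explicit proof because it is a direct consequence of the preceding proposition (identifying $\text{filt-ord}(u^p)$ with $(I_{L/K}(u),\beta_{L/K}(u))$ and characterizing orbits via that jump set) together with Theorem \ref{independence}, and your argument is a correct unpacking of exactly those ingredients, with the right care about $p$-th powering being injective and $\mathbb{Z}_p$-linear under $\mu_p(L)=\{1\}$.
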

In positive characteristic the statement further simplifies. 
\begin{corollary}
Let $L/K$ be a separable extension of local fields with $\text{char}(K)=p$. Then $U_1(K)-U_2(K)$ is contained in one orbit under $\emph{Aut}_{\emph{filt}}(U_{\bullet}(L))$. Call this orbit $\mathcal{O}_{L/K}$. The set $\mathcal{O}_{L/K}$ can be also characterized as follows
$$\mathcal{O}_{L/K}=\{u \in U_{\bullet}(L): u^p \in \emph{filt-ord}^{-1}((I_{L/K},\beta_{L/K})) \}.
$$ 
\end{corollary}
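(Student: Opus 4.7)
The plan is to reduce this corollary directly to Corollary \ref{all in same orbit} by checking that its two standing hypotheses automatically hold in the characteristic $p$ setting.

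First, I would observe that every separable extension $L/K$ of local fields of residue characteristic $p$ with $\mathrm{char}(K) = p$ is strongly separable. This is already recorded in the excerpt immediately after Definition \ref{strongly Eis}: in equal characteristic the valuation $\mathrm{v}_F(p)$ is infinite, so the defining inequality $\mathrm{v}_F(\delta_{F/E}) < \mathrm{v}_F(p)$ is automatic as soon as the extension is separable (equivalently, as soon as the different is nonzero of finite valuation).

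Second, I would note that in characteristic $p$ one has $\mu_p(L) = \{1\}$ for any field $L$, since $x^p - 1 = (x-1)^p$ in $L[x]$ has $1$ as its only root. Hence the hypothesis $\mu_p(L) = \{1\}$ of Corollary \ref{all in same orbit} is trivially satisfied.

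With these two observations in place, the statement is a verbatim specialization of Corollary \ref{all in same orbit}: the orbit $\mathcal{O}_{L/K}$ is well-defined by Theorem \ref{independence} (which gives the jump set $(I_{L/K}, \beta_{L/K})$ independently of the choice of $u \in U_1(K) - U_2(K)$), and the characterization in terms of $\mathrm{filt\text{-}ord}$ of $u^p$ is inherited. There is no real obstacle here; the corollary is a clean packaging of the previous result under the simplifying assumption $\mathrm{char}(K) = p$.
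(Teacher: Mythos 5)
Your proposal is correct and takes exactly the route the paper intends: the corollary is stated immediately after Corollary \ref{all in same orbit} with no proof precisely because it is the specialization you describe, using the observation already made in the text that separable and strongly separable coincide in characteristic $p$, together with the elementary fact that $\mu_p(L) = \{1\}$ when $\mathrm{char}(L) = p$.
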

\section{Jump sets under field extensions} \label{Jump sets under field extensions}
Let $K_1/\mathbb{Q}_p(\zeta_p)$ be a finite extension. Fix a positive integer $d$. Consider the following natural question. \\
\\
\textbf{Question}: Which extended admissible $\rho_{de_{K_1},p}$-jump sets are realizable as $(I_{K_2},\beta_{K_2})$ for some totally ramified extension $K_2/K_1$ of degree $d$?\\

In case $(d,p)=1$ the answer is very easy. 
\begin{proposition} \label{The tame case}
Let $K_2/K_1$ be totally ramified degree $d$ extension, with $(d,p)=1$. Then
$$I_{K_2}=dI_{K_1}
$$
with 
$$\beta_{K_2}(di)=\beta_{K_1}(i),
$$
for each $i \in I_{K_1}$.
\begin{proof}
First notice that, since $(d,p)=1$, we have $dT_{\rho_{e,p}}^{*} \subseteq T_{\rho_{de,p}}^{*}$. Moreover we notice that the assignment $(I_{K_2},\beta_{K_2})$ given in the statement is clearly an extended $\rho_{de_{K_1},p}$-jump set. Next we write
$$\prod_{i \in I_{K_1}}u_i^{p^{\beta_{K_1}(i)-1}}=\zeta_p,
$$
with $u_i \in U_i(K_1)-U_{i+1}(K_1)$ for each $i \in I_{K_1}$, and $\frac{pe_{K_1}}{p-1} \in I_{K_1}$ implies $u_{\frac{pe_{K_1}}{p-1}} \not \in K_1^{*p}$. We thus conclude with Corollary \ref{The relation between the units} by noticing that $u_i \in U_{di}(K_2)-U_{di+1}(K_2)$ for each $i \in I_{K_1}$, and that if $\frac{pe_{K_1}}{p-1} \in I_{K_1}$ then we must have that $u_{\frac{pe_{K_1}}{p-1}} \not \in K_2^{*p}$. Indeed taking a $p$-th root of $u_{\frac{pe_{K_1}}{p-1}}$ gives an unramified degree $p$ extension of $K_1$ which would contradict both that $(d,p)=1$ and that $K_2/K_1$ is totally ramified.
\end{proof} 
\end{proposition}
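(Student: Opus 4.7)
The plan is to reduce the statement to an application of Corollary \ref{The relation between the units}, which lets one recognize $(I_{K_2},\beta_{K_2})$ from a single well-chosen relation among units in $U_1(K_2)$. The starting point will be the ``canonical'' relation that defines the jump set of $K_1$: by Theorem \ref{mup neq 1} and Corollary \ref{The relation between the units} applied to $U_{\bullet}(K_1)$, there exist elements $u_i \in U_i(K_1)-U_{i+1}(K_1)$ for $i \in I_{K_1}$ such that
$$\prod_{i \in I_{K_1}} u_i^{p^{\beta_{K_1}(i)-1}} = \zeta_p,$$
and, if $pe_{K_1}/(p-1) \in I_{K_1}$, we may furthermore arrange that $u_{pe_{K_1}/(p-1)} \notin K_1^{*p}$. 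The plan is to transport this same relation into $K_2$ and verify the three hypotheses of Corollary \ref{The relation between the units} for the module $U_{\bullet}(K_2)$.

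For the weight condition, the key fact will be that $K_2/K_1$ is totally ramified, so $v_{K_2} = d \cdot v_{K_1}$ on $K_1^{*}$; consequently $u_i \in U_{di}(K_2) - U_{di+1}(K_2)$ for each $i \in I_{K_1}$. Next I would check that the pair $(dI_{K_1}, di \mapsto \beta_{K_1}(i))$ is a valid extended $\rho_{de_{K_1},p}$-jump set. The relevant combinatorial input is the identity $\rho_{de_{K_1},p}(di) = \min(di+de_{K_1}, pdi) = d\,\rho_{e_{K_1},p}(i)$, so the strict monotonicity of $\beta$ and of $i \mapsto \rho^{\beta(i)}(i)$ transfer directly from $K_1$ to $K_2$. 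Since $(d,p)=1$, multiplication by $d$ carries $T_{\rho_{e_{K_1},p}}$ into $T_{\rho_{de_{K_1},p}}$, and the star-point $pe_{K_1}/(p-1)$ is sent to $d \cdot pe_{K_1}/(p-1) = pe_{K_2}/(p-1)$, the star-point of $\rho_{de_{K_1},p}$; hence $dI_{K_1} \subseteq T_{\rho_{de_{K_1},p}}^{*}$.

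The only step with any real content, and what I expect to be the main obstacle, is verifying that the ``not a $p$-th power'' condition at the star is preserved when passing from $K_1$ to $K_2$. Concretely, if $pe_{K_1}/(p-1) \in I_{K_1}$, one needs $u_{pe_{K_1}/(p-1)} \notin K_2^{*p}$. Here I would invoke the remark recorded just before Proposition \ref{when the star is in I}: the extension $K_1(\sqrt[p]{u_{pe_{K_1}/(p-1)}})/K_1$ is the unique unramified degree-$p$ extension of $K_1$. Were $u_{pe_{K_1}/(p-1)}$ to become a $p$-th power in $K_2$, the field $K_2$ would contain this unramified degree-$p$ extension, contradicting both that $K_2/K_1$ is totally ramified and that $(d,p) = 1$. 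With this verified, Corollary \ref{The relation between the units} applies and yields $(I_{K_2},\beta_{K_2}) = (dI_{K_1}, di \mapsto \beta_{K_1}(i))$, completing the proof.
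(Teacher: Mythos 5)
Your proof is correct and follows essentially the same route as the paper: transport the defining relation $\prod_{i \in I_{K_1}} u_i^{p^{\beta_{K_1}(i)-1}} = \zeta_p$ to $K_2$, observe that total ramification multiplies weights by $d$, and invoke Corollary \ref{The relation between the units}, with the star-point condition preserved because an unramified degree-$p$ subextension would contradict $(d,p)=1$ and total ramification. You have merely spelled out a few details the paper takes as immediate, such as the identity $\rho_{de,p}(di)=d\,\rho_{e,p}(i)$ and the explicit reference to the remark preceding Proposition \ref{when the star is in I}.
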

The previous proof teaches us also what is the difficulty when $(d,p) \neq 1$ in answering Question. In this case the relation
$$\prod_{i \in I_{K_1}}u_i^{p^{\beta_{K_1}(i)-1}}=\zeta_p,
$$
cannot be directly used to calculate $(I_{K_2},\beta_{K_2})$, because $\text{v}_{K_2}(u_i-1) \not \in T_{\rho_{de_{K_1},p}}$ for each $i< \frac{pe_{K_1}}{p-1}$. Nevertheless, a more careful inspection shows that this relation can sometimes be used to extrapolate properties of $(I_{K_2},\beta_{K_2})$. This is the content of the next theorem, which, together with Theorem \ref{last guy}, contains as a very special case Proposition \ref{The tame case}. 
\begin{theorem} \label{constraining j.s. under ext.}
Let $d$ be a positive integer and $K_2/K_1$ a degree $d$ totally ramified extension. Let $i \in I_{K_1}$ with $i \neq \frac{pe_K}{p-1}$. Suppose that if the set $J:=\{j \in I_{K_1}: j<i\}$ is not empty, then 
$$ \beta_{K_1}(\emph{max}(J))-\beta_{K_1}(i)>\emph{v}_{\mathbb{Q}_p}(d).
$$
Then $$\frac{d}{p^{\emph{v}_{\mathbb{Q}_p}(d)}}i \in I_{K_2}$$ 
with
$$\beta_{K_2}(\frac{d}{p^{\emph{v}_{\mathbb{Q}_p}(d)}}i)=\beta_{K_1}(i)+\emph{v}_{\mathbb{Q}_p}(d).
$$
\begin{proof}
Take $i \neq \frac{pe_{K_1}}{p-1}$ as in the assumptions of this theorem. Write $$\prod_{i' \in I_{K_1}}u_{i'}^{p^{\beta_{K_1}(i')-1}}=\zeta_p,
$$
with $u_{i'} \in U_{i'}(K_1)-U_{i'+1}(K_1)$ for each $i' \in I_{K_1}$, and $\frac{pe_{K_1}}{p-1} \in I_{K_1}$ implies $u_{\frac{pe_{K_1}}{p-1}} \not \in K_1^{*p}$. Next, for each $i' \in I_{K_1}$, write
$$\prod_{j \in A(i')}u_{i',j}^{p^{\beta(i',j)}},
$$
with $A(i') \subseteq T_{\rho_{K_2}}^{*}$, $\text{v}_{K_2}(u_{i',j}-1)=j$ for each $j \in A(i')$ and $\frac{d}{p^{\text{v}_{\mathbb{Q}_p}(d)}}i' \in A(i')$ with $\beta(i',j)=\text{v}_{\mathbb{Q}_p}(d)$ and $\text{v}_{K_2}(u_{i',i'}^{p^{\beta(i',i')}}-1)<\text{v}_{K_2}(u_{i',j}^{p^{\beta(i',j)}}-1)$ for each $j \in A(i')-\{i'\}$. We now proceed to expand the above expression for $\zeta_p$. Attach to each term $u_{i',j}$ the pair $(\text{v}_{K_2}(u_{i',j}-1),\beta_{K_1}(i')+\beta(i,j))$. We see that the point attached to $u_{i,i}$, which is $(\frac{d}{p^{\text{v}_{\mathbb{Q}_p}(d)}}i,\beta_{K_1}(i)+\text{v}_{\mathbb{Q}_p}(d))$, is strictly smaller, with respect to $\leq_{\rho_{K_2}}$, than all the other points (and hence occurs precisely once). Indeed, using that $(I_{K_1},\beta_{K_1})$ is a jump set, we see that it must be smaller than any term coming from some $u_{i'}$ with $i'>i$. On the other hand for each $i'<i$, we use the fact that $\beta_{K_2}(i')>\beta_{K_2}(i)+\text{v}_{\mathbb{Q}_p}(d)$ to conclude that the point attached to $u_{i,i}$ must be smaller than any term attached to $u_{i',j}$ with $i'<i$. This is enough to conclude with Corollary \ref{The relation between the units}.

\end{proof}
\end{theorem}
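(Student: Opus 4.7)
The plan is to use Corollary \ref{The relation between the units} applied in $K_2$, starting from the defining jump-set relation for $K_1$. Concretely, I would first fix an equation
$$\prod_{i' \in I_{K_1}} u_{i'}^{p^{\beta_{K_1}(i')-1}} = \zeta_p$$
with $u_{i'} \in U_{i'}(K_1) - U_{i'+1}(K_1)$ (and $u_{pe_{K_1}/(p-1)} \notin K_1^{*p}$ if $pe_{K_1}/(p-1) \in I_{K_1}$), which encodes $(I_{K_1},\beta_{K_1})$ by Theorem \ref{mup neq 1} and Theorem \ref{classification of quasi free}. Reading this equation inside $K_2$ gives an a priori candidate relation, but the weights of the $u_{i'}$ become $d\cdot i'$, which in general do not lie in $T_{\rho_{K_2}}^*$, so Corollary \ref{The relation between the units} cannot be applied directly.

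The next step is to refine each factor. For every $i' \in I_{K_1}$, I would expand $u_{i'}^{p^{\beta_{K_1}(i')-1}}$ as a product $\prod_{j \in A(i')} u_{i',j}^{p^{\beta(i',j)}}$ with $j \in T_{\rho_{K_2}}^*$, arranging the expansion so that the \emph{leading} piece sits at weight $d_0 i' := (d/p^{v_{\mathbb{Q}_p}(d)}) i' \in T_{\rho_{K_2}}^*$ and carries exponent $p^{v_{\mathbb{Q}_p}(d)}$ relative to the factor $u_{i',i'}$ (so that $\beta(i',i') = v_{\mathbb{Q}_p}(d)$ and all other pairs $(i',j)$, $j \neq d_0 i'$, produce strictly larger $K_2$-weight after $p$-powering). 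Substituting these expansions into the equation above produces an identity in $K_2^*$ whose "data" in the sense of Corollary \ref{The relation between the units} is precisely the collection of pairs $(j,\, \beta_{K_1}(i')+\beta(i',j))$ as $i'$ runs through $I_{K_1}$ and $j \in A(i')$.

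It remains to show that the pair $(d_0 i,\, \beta_{K_1}(i)+v_{\mathbb{Q}_p}(d))$ coming from $u_{i,i}$ is strictly $\leq_{\rho_{K_2}}$-minimal among all these pairs, so that by Corollary \ref{The relation between the units} it must belong to the graph of $\beta_{K_2}$. The comparison against pairs coming from $i' > i$ is essentially automatic: since $(I_{K_1},\beta_{K_1})$ is a jump set we have $\rho_{K_1}^{\beta_{K_1}(i)}(i) < \rho_{K_1}^{\beta_{K_1}(i')}(i')$, and this inequality, read through the multiplication-by-$d$ map and the shift $\rho_{K_2}$, forces the corresponding pair over $i'$ to dominate. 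The delicate direction is the comparison against $i' < i$, and this is exactly where the hypothesis $\beta_{K_1}(\max J) - \beta_{K_1}(i) > v_{\mathbb{Q}_p}(d)$ is used: it guarantees that any pair $(j, \beta_{K_1}(i') + \beta(i',j))$ with $i' \in J$ has second coordinate strictly exceeding $\beta_{K_1}(i) + v_{\mathbb{Q}_p}(d)$, which prevents it from $\leq_{\rho_{K_2}}$-dominating the pair attached to $u_{i,i}$.

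The main obstacle I anticipate is producing the expansion $u_{i'}^{p^{\beta_{K_1}(i')-1}} = \prod_{j \in A(i')} u_{i',j}^{p^{\beta(i',j)}}$ with the required control on which pair is the leading one in each factor. This requires carefully lifting a weight-$di'$ unit to a product of units indexed by $T_{\rho_{K_2}}^*$, while tracking how the crucial property $U_i^p \subset U_{\rho(i)}$ from Example \ref{main example shift} interacts with $p$-th powering of each factor. Once this bookkeeping is in place, the strict-minimality check and the final appeal to Corollary \ref{The relation between the units} close the argument and yield both membership $d_0 i \in I_{K_2}$ and the exact value $\beta_{K_2}(d_0 i) = \beta_{K_1}(i) + v_{\mathbb{Q}_p}(d)$.
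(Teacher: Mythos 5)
Your proposal is correct and follows essentially the same route as the paper's proof: you start from the defining relation $\prod_{i' \in I_{K_1}} u_{i'}^{p^{\beta_{K_1}(i')-1}} = \zeta_p$ over $K_1$, re-expand each $u_{i'}$ as a product indexed by $T_{\rho_{K_2}}^{*}$ with a designated leading factor at weight $\tfrac{d}{p^{v_{\mathbb{Q}_p}(d)}}i'$, attach to each resulting factor a pair, argue $\leq_{\rho_{K_2}}$-minimality of the pair coming from $u_{i,i}$ (using the hypothesis on $\beta_{K_1}(\max J) - \beta_{K_1}(i)$ for the comparison against $i' < i$, and the jump-set property of $(I_{K_1},\beta_{K_1})$ for $i' > i$), and close with Corollary \ref{The relation between the units}. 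You also correctly flag the re-expansion step as the place where the bookkeeping lives, which is exactly where the paper is terse.
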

The case of $e_{K_1}^{*}$ requires no special assumptions and can be treated more easily in a different way. 
\begin{theorem} \label{last guy}
Let $d$ be a positive integer and $K_2/K_1$ a degree $d$ totally ramified extension. Suppose $\frac{pe_K}{p-1} \in I_{K_1}$. Then $di \in I_{K_2}$ and $\beta_{K_2}(di)=\beta_{K_1}(i)$.
\begin{proof}
This follows immediately from Proposition \ref{when star is in  for tot ram} and Proposition \ref{when the star is in I}.
\end{proof}
\end{theorem}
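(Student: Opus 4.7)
My plan is to read the statement as follows: write $i := e_{\rho_{K_1}}^{*} = \frac{p e_{K_1}}{p-1}$ and $j := \beta_{K_1}(i)$; since $K_2/K_1$ is totally ramified of degree $d$, one has $e_{K_2} = d e_{K_1}$, hence $e_{\rho_{K_2}}^{*} = di$. So the content of the theorem is that $e_{\rho_{K_2}}^{*} \in I_{K_2}$ and $\beta_{K_2}(e_{\rho_{K_2}}^{*}) = j$. I would deduce this by applying Proposition \ref{when star is in for tot ram} to $K_2$, which requires producing a unique $s \in \{1,\dots,p-1\}$ such that $K_2 \supseteq \mathbb{Q}_{p^{f}}(\zeta_{p^{j}})(s)$, where $f := f_{K_1} = f_{K_2}$ (the equality is because $K_2/K_1$ is totally ramified).

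First I would unpack the hypothesis on $K_1$. By Proposition \ref{how much torsion} together with the fact that $e_{\rho_{K_1}}^{*} \in I_{K_1}$ implies $\max(I_{K_1}) = e_{\rho_{K_1}}^{*}$, we obtain $\mu_{p^{\infty}}(K_1) = \langle \zeta_{p^{j}}\rangle$. Applying Proposition \ref{when star is in for tot ram} to $K_1$, there is a unique $s_0 \in \{1,\dots,p-1\}$ with $\mathbb{Q}_{p^{f}}(\zeta_{p^{j}})(s_0) \subseteq K_1 \subseteq K_2$. So the existence of at least one such $s_0$ is automatic.

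The main point is to verify the uniqueness of $s_0$ at the level of $K_2$, i.e. that no extra twisted cyclotomic subfield or $\zeta_{p^{j+1}}$ sneaks into $K_2$. Here the totally ramified hypothesis does all the work: the compositum of any two distinct $\mathbb{Q}_{p^{f}}(\zeta_{p^{j}})(s)$ with $s \in \{1,\dots,p\}$ (where by convention $\mathbb{Q}_{p^{f}}(\zeta_{p^{j}})(p) := \mathbb{Q}_{p^{f}}(\zeta_{p^{j+1}})$) contains $\mathbb{Q}_{p^{pf}}(\zeta_{p^{j}})$, hence in particular the unramified extension $\mathbb{Q}_{p^{pf}}$ of $\mathbb{Q}_{p^{f}}$; but $K_2$ has residue degree $f_{K_2} = f_{K_1} = f$, so $K_2$ cannot contain $\mathbb{Q}_{p^{pf}}$. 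Thus $K_2$ contains exactly one $\mathbb{Q}_{p^{f}}(\zeta_{p^{j}})(s)$ for $s \in \{1,\dots, p-1\}$ (namely $s_0$), and in particular does not contain $\zeta_{p^{j+1}}$.

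With this uniqueness in hand, Proposition \ref{when star is in for tot ram} applied to $K_2$ gives $e_{\rho_{K_2}}^{*} \in I_{K_2}$ and $\beta_{K_2}(e_{\rho_{K_2}}^{*}) = j$, which is the desired conclusion. I don't expect a serious obstacle; the only subtle point is the uniqueness step, which amounts to the simple compositum calculation above combined with total ramification.
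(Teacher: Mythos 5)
Your proof is correct and reflects the paper's intended argument. The paper's terse proof cites both Proposition \ref{when star is in  for tot ram} and Proposition \ref{when the star is in I}; you route the whole argument through the former (whose own proof already relies on the latter), supplying the compositum/residue-degree step explicitly, and this is precisely the mechanism the paper is pointing to: total ramification of $K_2/K_1$ forbids $K_2$ from acquiring any extra unramified content, so condition $(2)$ of Proposition \ref{when star is in  for tot ram} transfers unchanged from $K_1$ to $K_2$ with the same $j$.
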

\begin{remark}
In the very special case $K_1=\mathbb{Q}_q(\zeta_p)$ one recovers the restriction that $(I_{K_2},\beta_{K_2})$ must be an \emph{admissible} extended $\rho_{d,p}$-jump set as a very special case of Theorem \ref{constraining j.s. under ext.}, see Theorem \ref{realizable j.s. are realized}.
\end{remark}
In particular Theorem \ref{constraining j.s. under ext.} implies the following fact.\footnote{We take the opportunity here to signal a typo in the way this result was mentioned in \cite{de Boer--Pagano}, where the assumption of Theorem \ref{constraining j.s. under ext.} and the conclusion of Corollary \ref{all of them} were accidentally merged in transcribing the statement. It was stated only with the assumption of Theorem \ref{constraining j.s. under ext.}, but the conclusion mentioned there is about \emph{both} consecutive indexes, which we can guarantee, instead, only under the assumption of Corollary \ref{all of them}.} 
\begin{corollary} \label{all of them}
Let $d$ be a positive integer and $K_2/K_1$ a degree $d$ totally ramified extension. Suppose that for any two consecutive elements $i,j$ in $I_{K_1}$ (that is $(i,j) \cap I_{K_1}=\emptyset$) we have that
$$\beta_{K_1}(i)-\beta_{K_1}(j)>\emph{v}_{\mathbb{Q}_p}(d).
$$
Then
$$\frac{d}{p^{\emph{v}_{\mathbb{Q}_p}(d)}}(I_{K_1}-\{e_{K_1}^{*}\}) \subseteq I_{K_2},
$$
with 
$$\beta_{K_2}(\frac{d}{p^{\emph{v}_{\mathbb{Q}_p}(d)}}i)=\beta_{K_1}(i)+\emph{v}_{\mathbb{Q}_p}(d)
$$
for each $i \in I_{K_1}-\{e_{K_1}^{*} \}$. 
\end{corollary}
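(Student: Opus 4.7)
The plan is to deduce Corollary \ref{all of them} by applying Theorem \ref{constraining j.s. under ext.} separately to each index $i \in I_{K_1} - \{e_{K_1}^{*}\}$. The quantitative hypothesis on gaps of $\beta_{K_1}$ between consecutive elements of $I_{K_1}$ is engineered precisely so that the hypothesis of that Theorem is satisfied at every such $i$ simultaneously, so the corollary should follow with essentially no extra combinatorics.

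Concretely, I would fix $i \in I_{K_1} - \{e_{K_1}^{*}\}$ and set $J := \{j \in I_{K_1} : j < i\}$. If $J = \emptyset$, the ``if $J$ is not empty'' clause of Theorem \ref{constraining j.s. under ext.} is vacuous and the theorem applies without further input, yielding
$$\tfrac{d}{p^{v_{\mathbb{Q}_p}(d)}} i \in I_{K_2}, \qquad \beta_{K_2}\bigl(\tfrac{d}{p^{v_{\mathbb{Q}_p}(d)}} i\bigr) = \beta_{K_1}(i) + v_{\mathbb{Q}_p}(d).$$
If instead $J \neq \emptyset$, let $j_0 := \max(J)$. By definition $j_0$ is the element of $I_{K_1}$ immediately preceding $i$, i.e.\ $(j_0,i) \cap I_{K_1} = \emptyset$, so $(j_0,i)$ is a pair of consecutive elements of $I_{K_1}$ in the sense of the corollary. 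The standing hypothesis therefore gives
$$\beta_{K_1}(j_0) - \beta_{K_1}(i) > v_{\mathbb{Q}_p}(d),$$
which is precisely the hypothesis of Theorem \ref{constraining j.s. under ext.} at the index $i$. Applying the theorem yields the same membership and formula as above.

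Iterating this argument over all $i \in I_{K_1} - \{e_{K_1}^{*}\}$ gives the asserted inclusion $\tfrac{d}{p^{v_{\mathbb{Q}_p}(d)}}(I_{K_1}-\{e_{K_1}^{*}\}) \subseteq I_{K_2}$ together with the claimed formula for $\beta_{K_2}$ on this image. There is no substantive obstacle: the entire content of the proof is the verification that ``consecutive in $I_{K_1}$'' in the statement of the corollary matches ``immediate predecessor of $i$ in $I_{K_1}$'' in the statement of Theorem \ref{constraining j.s. under ext.}, which is immediate from the definitions. No global compatibility check between different indices $i$ is required, since the theorem is applied one $i$ at a time and each conclusion concerns a distinct element of $I_{K_2}$.
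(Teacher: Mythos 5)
Your proof is correct and is exactly the paper's intended argument: the paper presents Corollary \ref{all of them} as an immediate consequence of Theorem \ref{constraining j.s. under ext.} applied index by index, and your verification that the gap hypothesis on consecutive elements of $I_{K_1}$ supplies the hypothesis of that theorem at each $i \in I_{K_1}-\{e_{K_1}^*\}$ is precisely what is needed.
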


\end{document}